\newtheorem{definition}{Definition}[section] 
\newtheorem{theorem}{Theorem}[section] 
\newtheorem{lemma}{Lemma}[section] 
\newtheorem{proposition}{Proposition}[section] 
\newtheorem*{yano}{Yano's Theorem} 
\newtheorem*{Annulus}{{\sc Cat}-Annulus Conjecture} 
\newtheorem*{corollary*}{Corollary} 
\newtheorem{corollary}{Corollary}[section] 
\newtheorem{remark}{Remark}[section] 
\newtheorem{claim}{Claim}
\numberwithin{equation}{section}
\theoremstyle{plain}
\newtheorem{maintheorem}{Theorem}
\title[]{Genericity of infinite entropy for maps \\ with low regularity}
\subjclass[2010]{Primary: 37B40; Secondary:  37E99, 46E35, 26A16.}
\keywords{Entropy, genericity, H\"older classes, Sobolev classes.}
\thanks{
This work has been supported by 
``Projeto Tem\'atico Din\^amica em Baixas Dimens\~oes'' FAPESP Grant 2011/16265-2, 
FAPESP Grant 2015/17909-7, 
Projeto PVE CNPq 401020/2014-2,
and EU Marie-Curie IRSES Brazilian-European partnership in Dynamical Systems (FP7-PEOPLE-2012-IRSES 318999 BREUDS)
}
\author{Edson de Faria} 
\address{Edson de Faria, Instituto de Matem\'{a}tica e Estat\'{i}stica, USP, S\~{a}o Paulo, SP, Brazil}
\email[]{edson@ime.usp.br}
\author{Peter Hazard} 
\address{Peter Hazard, Instituto de Matem\'{a}tica e Estat{\'i}stica, USP, S\~{a}o Paulo, SP, Brazil}
\email[]{pete@ime.usp.br}
\author{Charles Tresser} 
\address{Charles Tresser, 301 W. 118th Street, \#8B, New York, NY 10026, USA
}
\email[]{tresser.charles@gmail.com}
\date{\today}
\begin{document}



\begin{abstract}
For bi-Lipschitz homeomorphisms of a compact manifold it is known that topological entropy is always finite.
For compact manifolds of dimension two or greater, we show that in the closure of the space of bi-Lipschitz homeomorphisms, with respect to either 
the H\"older or the Sobolev topologies, topological entropy is generically infinite.
We also prove versions of the $C^1$-Closing Lemma in either of these spaces.
Finally, we give examples of homeomorphisms with infinite topological entropy which are H\"older and/or Sobolev of every exponent. 
\end{abstract}

\maketitle

\section{Introduction}
\subsection{Background.} 
In 1974 Palis, Pugh, Shub and Sullivan~\cite{PPSS74} 
published a list of dynamical properties satisfied by a generic
homeomorphism acting on an arbitrary compact manifold. 
Six years later, Yano~\cite{Yano80} submitted 
an extra striking property in relation to {\it topological entropy\/}.
Recall that 
topological entropy is a non-negative extended real number 
defined for any continuous self-map of a compact space, and that 
this number is an invariant of topological conjugacy. 
It was first introduced by 
Adler, Konheim, and McAndrew~\cite{AKM} as an 
analogue, 
in the topological category, 
of the Kolmogorov-Sinai entropy 
for measure-preserving transformations.
Topological entropy, whose precise definition will be given below, is a useful way of 
quantifying topological aspects of chaos.
Yano proved the following result.

\begin{yano}
For 
homeomorphisms of compact manifolds of dimension greater than one 
topological entropy is generically infinite.
\end{yano}

Here, the space of homeomorphisms is endowed with the uniform topology.
Yano also states the same result in the case of endomorphisms on compact manifolds of dimension one or greater.
However, in this article we will focus on the homeomorphism case.
Note that, in Yano's result, the fact that the space being acted 
upon is a manifold matters:
there are compact metric spaces for which a generic homeomorphism has zero topological entropy~\cite{AGW}.

For smooth maps on compact manifolds it had already been demonstrated, 
by Ito~\cite{Ito70} for homeomorphisms and soon afterwards by Bowen~\cite{Bowen71} for general endomorphisms,
that the topological entropy is always finite.
Thus, in~\cite{Yano80}, Yano also obtained the following result as a consequence.

\vspace{5pt}

\begin{corollary*}
A generic homeomorphism 
 of a compact manifold of dimension two or greater is not topologically conjugate 
 to any diffeomorphism. 
\end{corollary*}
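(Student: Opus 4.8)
The plan is to obtain the Corollary as an immediate consequence of Yano's Theorem together with two classical facts: the invariance of topological entropy under topological conjugacy, and the finiteness of topological entropy for $C^1$ self-maps of compact manifolds (Ito~\cite{Ito70}, Bowen~\cite{Bowen71}). The whole argument is essentially a contrapositive.

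First I would recall that, by Yano's Theorem, the set $\mathcal{R}\subseteq\mathrm{Homeo}(M)$ of homeomorphisms of a compact manifold $M$ with $\dim M\geq 2$ having infinite topological entropy is residual (contains a dense $G_\delta$) in the uniform topology. It therefore suffices to prove that no element of $\mathcal{R}$ is topologically conjugate to a diffeomorphism. Next I would invoke invariance of entropy under conjugacy: if $f\in\mathrm{Homeo}(M)$ is topologically conjugate to a map $g$, i.e. $h\circ f=g\circ h$ for some homeomorphism $h$ of $M$, then $h_{\mathrm{top}}(f)=h_{\mathrm{top}}(g)$. This is a formal consequence of the definition via $(n,\varepsilon)$-separated or $(n,\varepsilon)$-spanning sets, since $h$ (being a homeomorphism of a compact space, hence uniformly continuous together with its inverse) transports such sets for one map to such sets for the other with a controlled loss in the scale $\varepsilon$.

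Then I would conclude by contradiction. Suppose some $f\in\mathcal{R}$ were topologically conjugate to a diffeomorphism $g\colon M\to M$. Since $g$ is in particular a $C^1$ self-map of the compact manifold $M$, the Ito--Bowen theorem gives $h_{\mathrm{top}}(g)<\infty$; by invariance, $h_{\mathrm{top}}(f)=h_{\mathrm{top}}(g)<\infty$, contradicting $f\in\mathcal{R}$. Hence no $f\in\mathcal{R}$ is conjugate to a diffeomorphism, and since $\mathcal{R}$ is residual the Corollary follows. There is no genuine obstacle here, as all the substance lies in the two inputs already assumed; the only point worth stating explicitly is that ``topologically conjugate to a diffeomorphism'' is meant with the conjugacy merely a homeomorphism — and it is precisely under this weak form of conjugacy that entropy remains invariant, which is exactly what makes the argument go through.
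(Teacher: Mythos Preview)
Your argument is correct and matches the paper's approach exactly: the paper does not give a standalone proof but simply states that the Corollary follows from Yano's Theorem together with the Ito--Bowen finiteness of entropy for smooth maps, which is precisely the contrapositive you have written out. The only content you have added beyond the paper is the explicit reminder that topological entropy is a conjugacy invariant, which the paper takes for granted.
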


\begin{comment} 
We remark that in~\cite{PPSS74}, the authors of that paper where more interested 
in describing generic properties common to both continuous and smooth maps, 
rather than in finding means of distinguishing them.
This leads to the natural question whether one may hope to find a complete list 
of the properties that are generic (or typical) in categories of continuous maps and not in smooth ones, or the other way around.      
\end{comment}

Let us now recall the definition of topological entropy~\cite{AKM}.
As we will be working only in compact metric spaces, 
we give the reformulation in this setting due to Bowen\footnote{
Michel H\'enon pointed out to one of the authors that 
this definition has a significant advantage over the original definition 
when trying to compute entropy for specific systems:  
only the forward iterates of the map must be considered, 
rather than the backward iterates!
}.
Let $f$ be a continuous self-mapping of a compact metric space $(X,d)$.
A subset $E$ of $X$ is $(n,\epsilon)$-separated for $f$ if for all distinct points $x,y\in E$ there exists
a non-negative integer $k<n$ such that $d(f^k(x),f^k(y))>\epsilon$.
Let $S_f(n,\epsilon)$ denote the maximal cardinality of an $(n,\epsilon)$-separated set.
Then the {\it topological entropy\/} of $f$ is given by
\begin{equation*}
h_{\mathrm{top}}(f)=\lim_{\epsilon\to 0}\limsup_{n\to\infty}\frac{1}{n}\log S_f(n,\epsilon) \ .
\end{equation*}
That is, topological entropy is the growth rate of the maximal size of $(n,\epsilon)$-separated sets at arbitrarily small scales $\epsilon$.
Note that in some applications, one prefers to compute the metric entropy 
$h_{\mu} (f)$ of $f$ with respect to some $f$-invariant measure $\mu$.
However, by virtue of the variational principle~\cite[Theorem 8.6]{WaltersBook}, topological entropy is always the supremum 
(and in some important cases the maximum) of the metric entropies $h_{\mu} (f)$, where $\mu$ varies over all $f$-invariant Borel probability measures.

As was already stated, for smooth maps on manifolds the topological entropy is finite.
In fact, if $X$ is a compact metric space of Hausdorff 
dimension $\dim_H(X)$ and $f$ is a self-map of $X$ 
with Lipschitz constant $L$, then
\begin{equation*}
h_{\mathrm{top}}(f)\leq \dim_H(X)\cdot \log^+(L) \ .
\end{equation*}
See, for instance,~\cite[Theorem 3.2.9]{KandH}.
We note that, in the case of smooth maps acting on smooth manifolds, 
this bound was already implicit in the work of Ito~\cite{Ito70} and Bowen~\cite{Bowen71}.

The above discussion thus shows that the existence of bounds on $h_{\mathrm{top}}(f)$ 
changes dramatically when the regularity 
goes from just continuity to Lipschitz continuity. 
However, the notion of ``going from'' continuity to 
Lipschitz continuity must be treated with care. 
In this paper we make an initial foray into the 
problem of determining what occurs between these two cases 
by considering mappings in H\"older and 
Sobolev classes. 
These are perhaps two of the most classical ways of 
interpolating between $C^0$- and Lipschitz-regularity.
Homeomorphisms with H\"older or Sobolev regularity have been 
of interest recently in the study of certain PDE's, 
such as the Ball-Evans Problem in nonlinear elasticity 
(cf.~\cite{IKO} and references therein).
However, the study of the dynamics of maps in either 
of these classes has essentially remained untouched.
With this work we hope to remedy this situation while also laying 
the groundwork for further dynamical investigations. 

One possible reason for why the dynamics of maps in these 
spaces has not been considered before is that $\alpha$-H\"older and $W^{1,p}$-Sobolev 
classes are not closed under composition.
This being said, spaces of such maps are closed under pre- and post-composition 
by Lipschitz maps, and the union of such spaces, under $\alpha$ and $p$ 
respectively, {\it is} closed under composition.
This allows us to still make local perturbations of these maps.
Also note that $C^1$, or even Lipschitz, 
is not in general dense in any H\"older or Sobolev class.
Thus results concerning such classes cannot be derived 
from direct approximation arguments.

Below we will show that, in the closure of the space of 
bi-Lipschitz maps in either of these topologies, 
for suitable parameters of regularity,
infinite entropy is a generic property. 
It also follows from our results that there is no ``barrier'' 
separating infinite entropy maps from the 
space of Lipschitz maps (we even give explicit 
examples of homeomorphisms 
with infinite entropy which are H\"older or 
Sobolev of every exponent).

\subsection{Summary of our results.}\label{subsect:summary_of_results}
To topologise the space of H\"older or Sobolev homeomorphisms on a smooth manifold one requires 
(in principle) additional structure:
a distance function in the first case and a Riemannian structure in the second.
We take a different approach by defining topologies 
on function spaces which are analogous to the Whitney topology.
More specifically, for $0\leq \alpha<1$,
let $\mathcal{H}^\alpha(M)$ denote the space of homeomorphisms on $M$ which are bi-$\alpha$-H\"older continuous in all local charts.
We also denote by $\mathcal{H}^1(M)$ the space of homeomorphisms which are bi-Lipschitz in all local charts. 
In Section~\ref{sec:Holder_prelim} we define a topology on $\mathcal{H}^\alpha(M)$ which we call the {\it $\alpha$-H\"older-Whitney topology}.
For $0\leq \alpha<\beta\leq 1$, we denote by $\mathcal{H}_\alpha^\beta(M)$ the closure
of $\mathcal{H}^\beta(M)$ with respect to the $\alpha$-H\"older-Whitney topology.
Recall that a property is {\it generic} in a Baire space if the set of 
points satisfying this property contains a residual subset 
({\it i.e.\/}, a countable intersection of open and dense subsets).
We show the following.
\begin{maintheorem}[Generic Infinite Entropy for H\"older Classes]\label{thm:generic_little-Holder_homeomorphism}
Let $M$ be a smooth compact manifold of dimension $d$ greater than or equal to two.
For $0\leq \alpha< 1$,
the following holds.
In $\mathcal{H}^{1}_{\alpha}(M)$,
infinite topological entropy is a generic property.
\end{maintheorem}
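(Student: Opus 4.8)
The plan is to run the classical Yano-type scheme: for each positive integer $N$ I would produce an open dense subset $\mathcal{O}_N\subseteq\mathcal{H}^1_\alpha(M)$ all of whose members have $h_{\mathrm{top}}>N$, and then, since $\mathcal{H}^1_\alpha(M)$ — being a closed subspace of the (completely metrizable) space $\mathcal{H}^\alpha(M)$ — is a Baire space (Section~\ref{sec:Holder_prelim}), conclude that $\bigcap_{N\geq 1}\mathcal{O}_N$ is residual and consists of homeomorphisms of infinite topological entropy. The two features distinguishing this from the purely continuous setting are that $\mathcal{O}_N$ must be dense in the \emph{closure of the bi-Lipschitz maps}, and that the perturbations building $\mathcal{O}_N$ must be small in the finer $\alpha$-H\"older--Whitney topology rather than merely in the uniform one.

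I would define $\mathcal{O}_N$ to be the set of $g\in\mathcal{H}^1_\alpha(M)$ admitting a \emph{robust topological horseshoe of entropy exceeding $N$}: pairwise disjoint boxes $D_1,\dots,D_s$ (with a fixed product/splitting structure) inside a single chart, together with an integer $k\geq 1$ with $\tfrac1k\log s>N$, such that $g^k$ maps each $D_i$ across $D_1\cup\cdots\cup D_s$ in the Smale-horseshoe (correctly-aligned-windows) pattern. This crossing condition is open under $C^0$-perturbations of $g^k$, and hence of $g$ — on the compact manifold $M$, if $\hat g\to g$ uniformly then $\hat g^k\to g^k$ uniformly — and since the $\alpha$-H\"older--Whitney topology is finer than the uniform topology, $\mathcal{O}_N$ is open in $\mathcal{H}^1_\alpha(M)$. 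Moreover such a configuration produces an invariant set on which $g^k$ semiconjugates onto the full shift on $s$ symbols, so $h_{\mathrm{top}}(g)\geq\tfrac1k\log s>N$ (cf.~\cite{KandH}). Thus $\mathcal{O}_N\subseteq\{g:h_{\mathrm{top}}(g)>N\}$.

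For density, fix $f\in\mathcal{H}^1_\alpha(M)$ and $\epsilon>0$. Since $\mathcal{H}^1_\alpha(M)$ is by definition the $\alpha$-H\"older--Whitney closure of $\mathcal{H}^1(M)$, I would first pass to a bi-Lipschitz $\tilde f$ with $d_\alpha(f,\tilde f)<\epsilon/3$. Using recurrence of $\tilde f$ (possibly after a further $\epsilon/3$-small perturbation of the type furnished by our versions of the Closing Lemma), choose a chart, a point $x_0$ and an integer $k$ with $x_0,\tilde f(x_0),\dots,\tilde f^{k-1}(x_0)$ distinct and $\tilde f^k(x_0)$ as close to $x_0$ as desired; then pick $\delta>0$ so small that $B:=B_\delta(x_0)$ and its first $k-1$ images under $\tilde f$ are pairwise disjoint. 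Now modify $\tilde f$ only inside $B$: set $g=\psi\circ\tilde f$, where $\psi$ is a bi-Lipschitz homeomorphism equal to the identity off $B$, designed so that the $g$-first-return map to $B$ near $x_0$ — which on a much smaller rectangle $R\subseteq B$, chosen so small that $\tilde f^k(R)\subseteq B$, equals $\psi\circ\tilde f^k|_R$ — straightens and folds $\tilde f^k(R)$ into $s$ full strips across $R$, with $s>e^{kN}$. This realizes a robust $s$-horseshoe for $g^k$, so $g\in\mathcal{O}_N$, and $g$, being a composition of bi-Lipschitz homeomorphisms, lies in $\mathcal{H}^1(M)\subseteq\mathcal{H}^1_\alpha(M)$.

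It then remains to check $d_\alpha(g,f)<\epsilon$, which reduces to controlling $d_\alpha(\psi\circ\tilde f,\tilde f)$. The map $\psi$ has $\supp(\psi-\id)\subseteq B_\delta$ and displaces points at most $2\delta$, but its Lipschitz constant may be as large as some $C=C(s,k,\mathrm{Lip}(\tilde f))$, since it must generate $s$ strips and undo the (bounded) distortion of $\tilde f^k$. The elementary bound $\|h-\id\|_{C^{0,\alpha}}\leq(\mathrm{Lip}(h-\id)+4)\,\delta^{1-\alpha}$, valid for any $h$ with $\supp(h-\id)\subseteq B_\delta$ (interpolate between the $C^0$-bound $4\delta$ and the Lipschitz bound), together with the fact that composition with the fixed bi-Lipschitz map $\tilde f$ has controlled effect on the $\alpha$-H\"older--Whitney distance (Section~\ref{sec:Holder_prelim}), so that $d_\alpha(\psi\circ\tilde f,\tilde f)$ is bounded in terms of $\mathrm{Lip}(\tilde f)$ and $\|\psi-\id\|_{C^{0,\alpha}}$, yields $d_\alpha(g,\tilde f)\leq C'\delta^{1-\alpha}$ with $C'$ depending only on $s,k,\tilde f$. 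Because $\alpha<1$ the exponent $1-\alpha$ is strictly positive, so shrinking $\delta$ forces $d_\alpha(g,\tilde f)<\epsilon/3$ however large $C'$ is, whence $d_\alpha(g,f)<\epsilon$. This is precisely where the hypothesis $\alpha<1$ enters, and it must, since for $\alpha=1$ the entropy is finite. The step I expect to be most delicate is the explicit local construction of $\psi$: inside a tiny ball one must turn the a priori uncontrolled set $\tilde f^k(R)$ into a genuine full $s$-shift horseshoe \emph{robustly} — producing the $C^0$-open crossing condition, not merely a single orbit — while keeping $\supp(\psi-\id)$ inside $B_\delta$ so that the H\"older estimate applies; packaging the recurrence/closing input, and verifying that the $k-1$ intermediate iterates are irrelevant, also require care.
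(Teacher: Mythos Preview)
Your proposal is correct in outline and shares the paper's overall scheme: Baire category via open sets $\mathcal{O}_N$ of maps carrying robust topological horseshoes, density obtained by first passing to a bi-Lipschitz approximant, exploiting recurrence to find a near-periodic orbit segment of some length $k$, inserting a horseshoe by a compactly supported bi-Lipschitz perturbation, and using the $\delta^{1-\alpha}$ interpolation (the H\"older Rescaling Principle) to make that perturbation $C^\alpha$-small regardless of its Lipschitz constant. Your identification of the delicate step --- the explicit construction of $\psi$ --- is exactly right.

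The main organisational difference is that the paper perturbs at \emph{every} point $x^0,\dots,x^{k_0-1}$ of the orbit segment, not just at the return. Concretely, it places a rigid solid cylinder near each $x^j$, and post-composes $f$ near each $x^{j+1}$ by a diffeomorphism $\phi^j$ so that the new map sends the $j$th cylinder across the $(j{+}1)$st as an $N$-branched horseshoe; a further pre-composition near $x^0$ closes the orbit. Composing, $g^{k_0}$ maps the initial cylinder across itself with $N^{k_0}$ branches. The point of this distributed construction is that each $\phi^j$ only has to straighten \emph{one} application of the bi-Lipschitz map $f$, so $[\phi^j]_{\mathrm{Lip}}$ is bounded in terms of $N$ and $\mathrm{Lip}(f)$ alone, independently of $k_0$; the paper then packages these local moves explicitly in Appendices~\ref{sect:basic_moves}--\ref{sect:cylinder_perturbations} (Corollaries~\ref{cor:cylinder_isometry_2} and~\ref{cor:transport_cylinder+horseshoe_lip}). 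Your single perturbation $\psi$ must instead straighten and fold the set $\tilde f^{\,k}(R)$, whose geometry is distorted by $\mathrm{Lip}(\tilde f)^k$; this is still feasible --- indeed Corollary~\ref{cor:transport_cylinder+horseshoe_lip} applied to the bi-Lipschitz map $\tilde f^{\,k}$ supplies exactly such a $\psi$ --- but the resulting Lipschitz bound blows up with $k$, which you correctly absorb into $\delta^{1-\alpha}$. So your route is a legitimate shortcut that trades a simpler perturbation pattern for a larger (but still harmless) Lipschitz constant; the paper's route buys explicit, $k$-independent local estimates at the cost of more bookkeeping. One small imprecision: with $g=\psi\circ\tilde f$ and $\mathrm{supp}(\psi-\id)\subseteq B$, the map $g$ differs from $\tilde f$ on $\tilde f^{-1}(B)$ rather than on $B$, and it is the inverse $g^{-1}-\tilde f^{-1}$ that is supported in $B$; both sets have diameter $O(\delta)$, so your estimates go through unchanged, but the control of $[g^{-1}-\tilde f^{-1}]_\alpha$ should be stated separately (the paper does this via the First and Second H\"older Gluing Principles).
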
 
Similarly, for $1\leq p,p^*<\infty$,
let $\mathcal{S}^{p,p^*}(M)$ denote the space of 
homeomorphisms on $M$ which in all local charts 
are of Sobolev class $W^{1,p}$ and whose inverse 
is of Sobolev class $W^{1,p^*}$.
In Section~\ref{sec:Sobolev_prelim} we define a topology on $\mathcal{S}^{p,p^*}(M)$ which we call the {\it $(p,p^*)$-Sobolev-Whitney topology}. 
\begin{maintheorem}[Generic Infinite Entropy for Sobolev Classes]\label{thm:generic_Sobolev_homeomorphism}
Let $M$ be a smooth compact manifold of dimension $d$.
\begin{itemize}
\item[(a)]
If $d=2$ and $1\leq p,p^*<\infty$
then, in $\mathcal{S}^{p,p^*}(M)$,  
infinite topological entropy is a generic property.
\item[(b)]
If $d>2$ and $d-1<p,p^*<\infty$,
then, in $\mathcal{S}^{p,p^*}(M)$,  
infinite topological entropy is a generic property.
\end{itemize}
\end{maintheorem}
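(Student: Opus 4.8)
\emph{Strategy.} We run a Yano-type argument, modified in two ways dictated by the low-regularity setting. First, topological entropy is \emph{not} lower semicontinuous in the $(p,p^*)$-Sobolev-Whitney topology, so instead of entropy itself we track the presence of a \emph{topological horseshoe}, which is $C^0$-robust. Second, the relevant function classes are not closed under composition, so every perturbation must be checked to keep us inside $\mathcal{S}^{p,p^*}(M)$. The skeleton is then standard: (1) with the $(p,p^*)$-Sobolev-Whitney topology, $\mathcal{S}^{p,p^*}(M)$ is a Baire space — on the compact manifold $M$ this topology comes from a complete metric controlling $f$, $f^{-1}$ uniformly and $Df$, $Df^{-1}$ in $L^p$, resp.\ $L^{p^*}$, over a finite atlas; (2) since $\{f:h_{\mathrm{top}}(f)=\infty\}=\bigcap_{N\ge1}\{f:h_{\mathrm{top}}(f)>N\}$, it suffices to produce, for each $N$, an open dense set $\mathcal{U}_N\subseteq\{f:h_{\mathrm{top}}(f)>N\}$; (3) take $\mathcal{U}_N$ to be the set of maps admitting, for some iterate $g^{\ell}$, a topological horseshoe with more than $e^{\ell N}$ legs.

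\emph{Openness.} A topological horseshoe for $g^{\ell}$ — pairwise disjoint topological boxes $D_1,\dots,D_k$ inside a box $D$, with covering relations $D_j\Rightarrow D$ under $g^{\ell}$ in the sense of topological-horseshoe theory (e.g.\ Kennedy--Yorke, Zgliczy\'nski) — yields the full shift on $k$ symbols as a topological factor of $g^{\ell}$ restricted to an invariant compact set, so $h_{\mathrm{top}}(g)=\tfrac1\ell h_{\mathrm{top}}(g^{\ell})\ge\tfrac1\ell\log k$, which exceeds $N$ when $k>e^{\ell N}$; hence $\mathcal{U}_N\subseteq\{h_{\mathrm{top}}>N\}$. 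For each fixed $\ell$ the covering relations depend only on the restriction of $g^{\ell}$ to the compact sets $D_j$ and are stable under $C^0$-perturbation; since the $(p,p^*)$-Sobolev-Whitney topology refines the $C^0$ topology on homeomorphisms, $\mathcal{U}_N$ is open.

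\emph{Density.} Fix $f$, $N$, and a neighbourhood $\mathcal{N}$ of $f$. By the $(p,p^*)$-Sobolev-Whitney Closing Lemma there is $f_1\in\mathcal{N}$ with a periodic point $q$ of some minimal period $m$. A further perturbation of arbitrarily small $(p,p^*)$-Sobolev-Whitney size makes $f_1$ bi-H\"older (in charts) on a neighbourhood of the orbit of $q$, and hence $f_1^{m}$ bi-H\"older near $q$. Choosing a small charted ball $W$ about $q$ with $W,f_1(W),\dots,f_1^{m-1}(W)$ pairwise disjoint, we then insert a building block $\phi$: a bi-Lipschitz homeomorphism of $W$, equal to the identity near $\partial W$, whose (large) derivative is concentrated on a set of tiny measure, chosen so that the return map $g^{m}|_W=\phi\circ(f_1^{m}|_W)$ of $g:=\phi\circ f_1$ exhibits a multi-fold horseshoe with more than $e^{mN}$ legs. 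Because the derivative of $\phi$ is large only on a set of tiny measure, the $(p,p^*)$-Sobolev norm of $\phi$ — and thus the distance from $g$ to $f_1$ — is as small as we please; $g\in\mathcal{S}^{p,p^*}(M)$ because that class is closed under composition with bi-Lipschitz homeomorphisms; and $g\in\mathcal{U}_N$. Thus $\mathcal{U}_N$ is dense, $\bigcap_N\mathcal{U}_N$ is residual and contained in $\{h_{\mathrm{top}}=\infty\}$, and Theorem~\ref{thm:generic_Sobolev_homeomorphism} follows.

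\emph{The main obstacle.} Steps (1)--(3) and the openness are essentially formal; the whole weight of the proof is in the density step, and there in reconciling three competing demands on the surgery: it must (a) create a horseshoe with as many legs as we like, hence stretch by an arbitrarily large factor somewhere; (b) be small in the $(p,p^*)$-Sobolev-Whitney norm; and (c) leave us inside $\mathcal{S}^{p,p^*}(M)$ — all over an \emph{a priori} arbitrary, possibly very irregular, homeomorphism $f_1$. This forces, on the one hand, the preliminary regularisation of $f_1$ near the periodic orbit at negligible cost, which rests on bi-Sobolev homeomorphisms restricting to H\"older maps on almost every lower-dimensional slice and admitting local modifications with controlled Sobolev norm; and, on the other hand, a careful design of the block $\phi$ — concentrating its large derivative on a set of small measure — whose parameters must be matched to the regularity achieved in the first step so that (a) and (b) hold simultaneously. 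It is exactly here that the hypotheses enter: the required regularity properties hold for planar bi-Sobolev homeomorphisms with no restriction beyond $1\le p,p^*<\infty$, yielding part (a), whereas for $d>2$ they hold precisely when $d-1<p,p^*<\infty$, yielding part (b).
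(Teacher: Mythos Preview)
Your skeleton---Baire space, open-dense sets via $C^0$-robust horseshoes, Closing Lemma to initiate density---matches the paper's, and the openness argument is fine. The gap is in the density step: the assertion that ``a further perturbation of arbitrarily small $(p,p^*)$-Sobolev-Whitney size makes $f_1$ bi-H\"older near the orbit of $q$'' is exactly the hard part, and you have not proved it. Slice regularity of bi-Sobolev homeomorphisms is not a modification mechanism---knowing that $f_1$ is absolutely continuous (or even H\"older) on almost every line does not tell you how to \emph{replace} $f_1$ on a neighbourhood by a more regular homeomorphism at small Sobolev cost while keeping it a global homeomorphism. The paper carries out precisely this local regularisation in its ``first argument'' (the Replacement Trick), but only for $d=2$ and $p^*=1$, using the generalised Rad\'o--Kneser--Choquet theorem for $p$-harmonic mappings to replace $f_1$ on a small disk by a smooth diffeomorphism with the same boundary values and no larger $p$-energy; the authors explicitly note that this tool has no known analogue in higher dimensions. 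A second issue: even granting that $f_1^m$ is bi-H\"older near $q$, it is not clear that a single bi-Lipschitz block $\phi$ supported in $W$ can fold $f_1^m(W)$ into a multi-leg horseshoe over $W$, since bi-H\"older control does not force $f_1^m(W)$ to have any cylinder-like shape.

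The paper's general proof (its ``second argument'', covering all cases of the theorem) sidesteps both issues by a different construction. First it approximates $f$ \emph{globally} by a bi-Lipschitz map. Then, instead of inserting a single block at a periodic point and analysing the return map $g^m=\phi\circ f_1^m$, it perturbs at \emph{every} point along a recurrent orbit segment $x^0,\ldots,x^{k_0}$: at each step it post-composes $f$ with a diffeomorphism $\phi^k$, of Lipschitz constant bounded only in terms of $N$ and the bi-Lipschitz constant of $f$, supported in a tiny ball near $x^{k+1}$, so that $\phi^k\circ f$ maps a small rigid cylinder near $x^k$ across one near $x^{k+1}$ as an $N$-branched horseshoe (bi-Lipschitz control of $f$ is what guarantees the image of a thin cylinder is still comparable to a cylinder). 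A final closing-up diffeomorphism near $x^0$ makes the chain periodic. Because the $\phi^k$ have \emph{uniformly bounded} derivative---not ``large derivative on a tiny set''---the Sobolev smallness of $g-f$ comes purely from the smallness of the supports via absolute continuity of the integral $\int|Df_n|^p\,d\mu$. No local regularisation of $f$ near the orbit, and no control of any high iterate $f^m$, is ever required.
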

Additionally, we give an alternative proof of (a) in the case when $p^*=1$.
This proof uses a variant of the Rad\'o-Kneser-Choquet 
theorem for $p$-harmonic mappings~\cite{AS,IKO}.
We do not know whether this approach extends to higher dimensions, 
though we suspect not, as there exists a counterexample to the 
classical Rad\'o-Kneser-Choquet theorem in dimension three (see, {\it e.g.\/},~\cite[Section 3.7]{DurenBook}).

Let us also note that we prove two versions of the Closing Lemma along the way.
Namely, for both the spaces of bi-H\"older homeomorphisms and bi-Sobolev homeomorphisms stated above, 
we show that the analogue of Pugh's $C^1$-Closing Lemma holds. 
It would be interesting to determine whether there is 
another, more direct, approach using Pugh's $C^1$-result and an approximation argument, 
demonstrating that a homeomorphism of bi-H\"older or bi-Sobolev type 
is approximable by $C^1$-diffeomorphisms.

\subsection{Structure of the paper.}
In Part I we investigate some properties of bi-H\"older homeomorphisms.
After the preliminary Section~\ref{sec:Holder_prelim}, where a suitable 
H\"older-Whitney topology is given on the space 
of bi-H\"older homeomorphisms between manifolds,
the Closing Lemma for this class of maps is proved in Section~\ref{sec:holderclosing}.
Following this the genericity of infinite topological 
entropy is shown in Section~\ref{sect:Holder-infinite_entropy}.

Part II investigates bi-Sobolev homeomorphisms.
The structure of Part II mirrors that of Part I, 
with the exception that we also give another  
proof of the genericity of infinite entropy in 
the special case of compact surfaces.
Specifically, in Section~\ref{sec:Sobolev_prelim} we introduce the space of bi-Sobolev homeomorphisms together with the Sobolev-Whitney topology.
We prove a Closing Lemma for maps in this class in Section~\ref{sec:Sobolev_Closing}. 
Two different proofs of the genericity of infinite topological entropy, 
one specific to dimension two and another for dimensions two and greater, are given in Section~\ref{sec:Sobolev-infinite_entropy}.

In Appendix A we give explicit examples of homeomorphisms in dimension two 
with infinite topological entropy which lie in {\it all\/} H\"older or Sobolev classes.
These examples can be thought of as certain perturbations of the identity transformation.
Finally, the perturbation tools used throughout the paper are collected in Appendices~\ref{sect:basic_moves} and~\ref{sect:cylinder_perturbations}.

\subsection{Notation and terminology.}    
Throughout this article, we use the following notation.
We denote the Euclidean norm in $\mathbb{R}^d$ by $|\cdot|_{\mathbb{R}^d}$.
We denote the Euclidean distance by $d(\cdot,\cdot)$.
Denote the open $r$-ball about the point $x$ in $\mathbb{R}^d$ by $B^d(x,r)$.
When the dimension is clear we will write this as $B(x,r)$.
In the special case of the unit ball in $\mathbb{R}^d$ centred at the origin we denote this by $B^d$.

Given a manifold $M$ endowed with distance function $d_M(\cdot,\cdot)$
denote the open $r$-ball about $\xi$ in $M$, with respect to $d_M$, by $B_M(\xi,r)$. 
Given points $a,b\in\mathbb{R}^d$ and $r>0$ define
\begin{equation*}
E(a,b;r)=\left\{x\in \mathbb{R}^d : d\left(x,tp+(1-t)q\right)<r, \ \mbox{some} \ t\in[0,1]\right\} 
\end{equation*}
We call such a set an {\it elongated neighbourhood\/}.
Given subsets $\Omega_0$ and $\Omega_1$ in some metric space we denote the Hausdorff distance 
between $\Omega_0$ and $\Omega_1$ by $\mathrm{dist}_H(\Omega_0,\Omega_1)$, {\it i.e.\/},
\begin{equation*}
\mathrm{dist}_H(\Omega_0,\Omega_1)
=
\max\left\{\sup_{x_0\in\Omega_0}\inf_{x_1\in\Omega_1}d(x_0,x_1),\sup_{x_1\in\Omega_1}\inf_{x_0\in\Omega_0}d(x_0,x_1)\right\}
\end{equation*}
and the diameter of $\Omega_0$ by $\mathrm{diam}(\Omega_0)$.

\section{Part I -- H\"older Mappings}
\subsection{Preliminaries.}\label{sec:Holder_prelim}
We recall some basic definitions and facts concerning H\"older maps.
Much of what we state here is classical and proofs are left to the reader.
\subsubsection*{H\"older mappings between metric spaces.}
Let $\Omega$ and $\Omega^*$ be metric spaces.
For each $\alpha\in (0,1)$, 
let 
$C^\alpha(\Omega,\Omega^*)$ 
denote the space of all maps $f$ from 
$\Omega$ to $\Omega^*$ satisfying 
the following {\it $\alpha$-H\"older condition} 
\begin{equation*}\label{cond:Holder_condition}
[f]_{\alpha,\Omega}
\;\stackrel{\tiny{\mathrm{def}}}{=}\;
\sup_{x,y\in \Omega; x\neq y} 
\frac{d_{\Omega^*}(f(x),f(y))}{d_{\Omega}(x,y)^\alpha}
<\infty \ .
\end{equation*}
When the domain of $f$ is clear we will write $[f]_{\alpha}$ instead of $[f]_{\alpha,\Omega}$.
In the case when $\Omega^*=\mathbb{R}^d$, the set
$C^\alpha(\Omega,\mathbb{R}^d)$ has a linear structure and
$[\,\cdot\,]_{\alpha,\Omega}$ 
defines a semi-norm\footnote{This also induces a pseudo-distance which we will call the {\it $C^\alpha$-pseudo-distance}.}, which we call the {\it $C^\alpha$-semi-norm}.
Consequently
\begin{equation*}
\|f\|_{C^\alpha(\Omega,\mathbb{R}^d)}
\;\stackrel{\tiny{\mathrm{def}}}{=}\;
\|f\|_{C^0(\Omega,\mathbb{R}^d)}+[f]_{\alpha,\Omega}
\end{equation*}
defines a complete norm on 
$C^\alpha(\Omega,\mathbb{R}^d)$. 
(Note that, in this case we will 
often consider the expression 
$[f-g]_{\alpha,\Omega}$ 
which obviously has no meaning 
unless $\Omega^*$ is contained in some linear space.)

Let $\mathcal{H}^\alpha(\Omega,\Omega^*)$ 
denote the space of invertible maps 
$f$ from $\Omega$ to $\Omega^*$
for which 
$f\in C^\alpha(\Omega,\Omega^*)$ 
and
$f^{-1}\in C^\alpha(\Omega^*,\Omega)$.
The {\it bi-$\alpha$-H\"older constant} of $f$ in $\mathcal{H}^\alpha(\Omega,\Omega^*)$ is the positive real number
$\max([f]_{\alpha,\Omega},[f^{-1}]_{\alpha,\Omega^*})$.

\subsubsection*{H\"older mappings between manifolds.}
On spaces more general than Euclidean 
domains, there are several ways to 
define H\"older continuity.
A direct way is to endow the 
space with a distance function. 
However, this leads to difficulties 
in defining a topology on the space 
of H\"older maps.
(Either we could introduce a distance 
function $d$ on the range and consider 
$[d(f,g)]_{\alpha,\Omega}$ 
or, if 
$\delta_{f,\alpha}(x,y)$ 
denotes the $\alpha$-H\"older difference 
quotient with respect to $f$, 
then we could consider 
$\sup_{x\neq y} |\delta_{f,\alpha}(x,y)-\delta_{g,\alpha}(x,y)|$.
Only when the range is contained in a normed 
linear space and the natural distance function 
is used do these definitions coincide, 
with both expressions being equal to 
$[f-g]_{\alpha,\Omega}$.)

Instead, as we only consider the case when 
the underlying spaces are manifolds, 
we proceed with the following construction, 
which is analogous to the construction 
of the $C^r$-Whitney topology~\cite{HirschBook}.

Take smooth compact manifolds $M$ and $N$.
We say that $f\in C^0(M,N)$ is $\alpha$-H\"older 
continuous if, for any pair of charts 
$(U,\varphi)$ on $M$ and $(V,\psi)$ on $N$, 
the map 
$\psi\circ f\circ\varphi^{-1}$ 
is $\alpha$-H\"older continuous on the 
Euclidean domain $\varphi(U\cap f^{-1}(V))$.
(Note: in a given pair of charts, 
since any smooth metric is Lipschitz 
equivalent to the Euclidean metric, 
this definition will coincide with 
the definition above.)  
Let $C^\alpha(M,N)$ denote the set of 
$\alpha$-H\"older continuous maps from $M$ to $N$.
Denote by $\mathcal{H}^\alpha(M,N)$ the subspace of 
homeomorphisms $f$ such that $f\in C^\alpha(M,N)$ 
and $f^{-1}\in C^\alpha(N,M)$.
When $M$ and $N$ coincide we denote this subspace by $\mathcal{H}^\alpha(M)$.

\subsubsection*{Spaces of bi-H\"older mappings.}
We define a topology 
on $\mathcal{H}^\alpha(M,N)$ as follows.
Given 
$f\in\mathcal{H}^\alpha(M,N)$,
take
$\epsilon>0$,
charts 
$(U,\varphi)$ on $M$ and 
$(V,\psi)$ on $N$, 
such that 
$f(U)\cap V\neq \emptyset$, 
and compact sets 
$K\subset U\cap f^{-1}(V)$, 
$L\subset f(U)\cap V$, 
which are the closure of open sets. 
Denote by $\mathcal{N}_{C^\alpha}(f;(U,\varphi),(V,\psi),K,L,\epsilon)$ the set of maps 
$g\in\mathcal{H}^\alpha(M,N)$ such that 
$g(K)\subseteq V$,
$g^{-1}(L)\subseteq U$,
\begin{equation*}
\|\psi\circ f\circ \varphi^{-1}-\psi\circ g\circ\varphi^{-1}\|_{C^\alpha(\varphi(K),\mathbb{R}^d)}
<\epsilon \ ,
\end{equation*}
and 
\begin{equation*}
\|\varphi\circ f^{-1}\circ \psi^{-1}-\varphi\circ g^{-1}\circ\psi^{-1}\|_{C^\alpha(\psi(L),\mathbb{R}^d)}
<\epsilon \ .
\end{equation*}
The collection of all sets defined in this way form a subbasis for a topology on $\mathcal{H}^\alpha(M,N)$.
We call it the {\it (weak) $\alpha$-H\"older-Whitney topology\/}.
As shorthand, we will also refer to it as the {\it (weak) $C^\alpha$-Whitney topology}.
Observe that the definition is analogous to the (weak) $C^r$-Whitney topology,
the only difference being the choice of norm we use in each chart.
As in the $C^r$-case (see, {\it e.g.\/},~\cite[Chapter 2]{HirschBook}) this topology is Hausdorff, and one can show the following.
\begin{proposition}
For each $\alpha\in (0,1)$, and each pair of smooth 
compact manifolds $M$ and $N$ (possibly with boundary), 
the space
$\mathcal{H}^\alpha(M,N)$, 
endowed with the (weak) $C^\alpha$-Whitney topology, 
satisfies the Baire property.
\end{proposition}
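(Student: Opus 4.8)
The plan is to realize $\mathcal{H}^\alpha(M,N)$, with the weak $C^\alpha$-Whitney topology, as homeomorphic to a closed subspace of a complete metric space; since a closed subspace of a complete metric space is itself completely metrizable, and since a space homeomorphic to a Baire space is Baire, the Baire property follows from the Baire Category Theorem. This is the route used for the $C^r$-Whitney topology in~\cite{HirschBook}, and I would transcribe it to the present setting, the only genuinely new point being bookkeeping with $C^\alpha$-seminorms in place of $C^r$-norms.

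First I would fix a finite amount of data, using compactness. Choose finite atlases $\{(U_i,\varphi_i)\}_{i=1}^m$ of $M$ and $\{(V_j,\psi_j)\}_{j=1}^n$ of $N$, together with compact sets $K_i=\overline{W_i}\subset U_i$ and $L_j=\overline{W_j'}\subset V_j$, where $W_i$, $W_j'$ are open and $\{W_i\}$, $\{W_j'\}$ cover $M$ and $N$ respectively (the presence of boundary is immaterial). Fix a Whitney embedding $N\hookrightarrow\mathbb{R}^q$. On $C^\alpha(M,\mathbb{R}^q)$ put $\|h\|=\max_i\|h\circ\varphi_i^{-1}\|_{C^\alpha(\varphi_i(K_i),\mathbb{R}^q)}$. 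Because each $\varphi_i$ is smooth, hence bi-Lipschitz on the compact $K_i$, and smooth metrics are Lipschitz-equivalent to Euclidean ones in charts, this is a genuine norm; and $C^\alpha(M,\mathbb{R}^q)$ is complete for it, since a Cauchy sequence is Cauchy in each Banach space $C^\alpha(\varphi_i(K_i),\mathbb{R}^q)$ (completeness of these was recorded above) and the finitely many chart-wise limits patch to a global $C^\alpha$ map (uniform limits being unique on overlaps). The subset $C^\alpha(M,N)=\{h:h(M)\subset N\}$ is a $C^0$-closed, hence closed, subspace, therefore complete. Finally --- and here lies the one step requiring care --- for compact $M,N$ the weak $C^\alpha$-Whitney topology on $C^\alpha(M,N)$ agrees with the subspace topology from $(C^\alpha(M,\mathbb{R}^q),\|\cdot\|)$: a subbasic neighbourhood controlling $\psi_j\circ f\circ\varphi_i^{-1}$ on some compact $K\subset U_i\cap f^{-1}(V_j)$ is compared with control of $f\circ\varphi_i^{-1}$ on the fixed sets $\varphi_i(K_i)$ using that, on compact manifolds, the Whitney topology is equivalently generated by a fixed finite cover, and that pre- and post-composition by the bi-Lipschitz maps $\varphi_i$, $\psi_j$ and the inclusion $N\hookrightarrow\mathbb{R}^q$ are homeomorphisms of the relevant $C^\alpha$-spaces, via estimates $[\ell\circ h]_\alpha\le\mathrm{Lip}(\ell)\,[h]_\alpha$ and $[h\circ\ell]_\alpha\le\mathrm{Lip}(\ell)^\alpha\,[h]_\alpha$. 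The same construction with $M$ and $N$ interchanged (and an embedding $M\hookrightarrow\mathbb{R}^{q'}$) shows $C^\alpha(N,M)$ is a complete metric space carrying its $C^\alpha$-Whitney topology.

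With this in hand, observe that by its very definition the weak $C^\alpha$-Whitney topology on $\mathcal{H}^\alpha(M,N)$ is precisely the topology making the injection $\iota\colon f\mapsto(f,f^{-1})$ an embedding into the complete metrizable product $C^\alpha(M,N)\times C^\alpha(N,M)$: each generating set $\mathcal{N}_{C^\alpha}(f;(U,\varphi),(V,\psi),K,L,\epsilon)$ is an intersection of a constraint on $f$ (pulled back from the first factor) with one on $f^{-1}$ (pulled back from the second), the containment conditions $g(K)\subseteq V$, $g^{-1}(L)\subseteq U$ being automatically open. The image $\iota(\mathcal{H}^\alpha(M,N))$ equals $\Gamma=\{(f,g):g\circ f=\id_M,\ f\circ g=\id_N\}$. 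Now composition viewed as a map $C^\alpha(M,N)\times C^\alpha(N,M)\to C^0(M,M)$, $(f,g)\mapsto g\circ f$, is continuous --- $C^\alpha$-convergence forces uniform convergence, and $\|g_k\circ f_k-g\circ f\|_{C^0}\le\|g_k-g\|_{C^0}+\omega_g(\|f_k-f\|_{C^0})$ with $\omega_g$ the uniform modulus of continuity of $g$ on the compact $N$ --- and likewise $(f,g)\mapsto f\circ g$. Hence the two defining equations of $\Gamma$ are closed conditions, so $\Gamma$ is closed in the complete metric space $C^\alpha(M,N)\times C^\alpha(N,M)$, thus completely metrizable and Baire. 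Since $\mathcal{H}^\alpha(M,N)\cong\Gamma$, it too is Baire.

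The main obstacle is the identification asserted at the end of the second paragraph: that the intrinsic, moving-domain, chart-by-chart $C^\alpha$-Whitney topology coincides with the subspace topology coming from a single fixed finite atlas and a single ambient Euclidean norm. The two delicate points are that the natural domains $\varphi_i(U_i\cap f^{-1}(V_j))$ of the local representatives vary with $f$ --- which one circumvents by working on compact sets inside the interiors of chart domains and shrinking neighbourhoods so that $C^0$-nearby maps still carry these into the same target chart --- and the quantitative stability of $\alpha$-H\"older regularity and of the $C^\alpha$-pseudo-distance under pre- and post-composition by the transition and chart maps. Both are routine, and with them in place the remainder is a direct adaptation of the classical treatment of the $C^r$-Whitney topology.
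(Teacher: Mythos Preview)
The paper does not actually prove this proposition: it is stated without proof, the authors merely remarking that the topology is defined analogously to the $C^r$-Whitney topology and that ``as in the $C^r$-case (see, {\it e.g.\/},~\cite[Chapter 2]{HirschBook})'' one can show the result. Your proposal is therefore not comparable to a proof in the paper, but it is precisely the argument the authors are gesturing at --- the Hirsch-style embedding into a complete metric space via a fixed finite atlas, with the only new ingredient being the substitution of $C^\alpha$-seminorms for $C^r$-norms and the Lipschitz-stability estimates for H\"older quotients under chart changes --- and it is correct.
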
 
For Lipschitz maps we can define all the objects above as in the H\"older case.
However, for clarity we will use a different notation.
Namely, denote by $C^{\mathrm{Lip}}(\Omega,\Omega^*)$ the space of all Lipschitz continuous maps from $\Omega$ to $\Omega^*$ and denote the Lipschitz constant by
$[f]_{\mathrm{Lip},\Omega}$.
Abusing notation slightly, 
we denote by $\mathcal{H}^{1}(\Omega,\Omega^*)$  
the subspace of bi-Lipschitz maps from $\Omega$ to $\Omega^*$. 
The {\it bi-Lipschitz constant} of the bi-Lipschitz map
$f$ in $\mathcal{H}^{1}(\Omega,\Omega^*)$ is the positive real number 
$\max([f]_{\mathrm{Lip},\Omega},[f^{-1}]_{\mathrm{Lip},\Omega^*})$.

For manifolds $M$ and $N$ we may also define the {\it (weak) Lipschitz-Whitney topology} on
$\mathcal{H}^{1}(M,N)$, the space of bi-Lipschitz homeomorphisms 
from $M$ to $N$, as in the H\"older case.
For $0\leq \alpha<\beta\leq 1$,
let 
$\mathcal{H}^\beta_\alpha(M,N)$ 
denote the closures of 
$\mathcal{H}^\beta(M,N)$ 
with respect to the $C^\alpha$-Whitney topology.
\begin{remark}
As previously mentioned, the 
$C^\alpha$-Whitney topology does not 
require the existence of a distance 
function on the manifold $M$.
However, we fix now, once and for all, a distance function $d_M$ on $M$.
This is merely to simplify notation in the construction of open sets, etc.
In particular, our results do not depend on this metric.
\end{remark}
\subsubsection*{Basic properties of H\"older mappings.}
In the remainder of this subsection we collect the following straightforward, though useful, results.
\begin{lemma}[H\"older Arzela-Ascoli Principle]\label{lem:holderAAprinciple_simple}
For $\alpha\in (0,1)$, and $\beta\in (\alpha,1)$ or $\beta=\mathrm{Lip}$,
the space 
$C^\beta(\Omega,\mathbb{R}^d)$ 
embeds compactly into 
$C^\alpha(\Omega,\mathbb{R}^d)$.
\end{lemma}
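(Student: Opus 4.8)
The plan is to prove the compact embedding $C^\beta(\Omega,\mathbb{R}^d)\hookrightarrow C^\alpha(\Omega,\mathbb{R}^d)$ in two stages: first show that the inclusion map is continuous (i.e.\ bounded), then show that bounded subsets of $C^\beta$ are precompact in $C^\alpha$. For the first stage, one observes that for any $f\in C^\beta(\Omega,\mathbb{R}^d)$ and any $x\neq y$ in $\Omega$, if $d_\Omega(x,y)\le 1$ then $d_\Omega(x,y)^\beta\le d_\Omega(x,y)^\alpha$ (since $\beta>\alpha$), giving control of the $\alpha$-difference quotient at small scales by $[f]_\beta$, while at scales $d_\Omega(x,y)>1$ the $\alpha$-difference quotient is bounded by $2\|f\|_{C^0}$. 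Hence $[f]_\alpha\le [f]_\beta + 2\|f\|_{C^0}$ (up to adjusting for the diameter of $\Omega$, which is finite by compactness of the manifolds involved), so $\|f\|_{C^\alpha}\le C\|f\|_{C^\beta}$. The case $\beta=\mathrm{Lip}$ is identical with $[f]_{\mathrm{Lip}}$ in place of $[f]_\beta$.

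For the second and main stage, let $(f_n)$ be a sequence in $C^\beta(\Omega,\mathbb{R}^d)$ with $\|f_n\|_{C^\beta}\le R$ for all $n$; I want to extract a subsequence converging in $C^\alpha$. The uniform bound on $[f_n]_\beta$ makes the family equicontinuous, and the uniform bound on $\|f_n\|_{C^0}$ makes it uniformly bounded, so by the classical Arzel\`a--Ascoli theorem there is a subsequence (not relabelled) converging uniformly to some $f\in C^0(\Omega,\mathbb{R}^d)$; passing to the limit in the $\beta$-H\"older inequality shows $f\in C^\beta$ as well, in fact $[f]_\beta\le R$. It remains to upgrade uniform convergence to $C^\alpha$-convergence, i.e.\ to show $[f_n-f]_\alpha\to 0$. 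This is the crux. Fix $\delta>0$. For pairs with $d_\Omega(x,y)\ge\delta$, the $\alpha$-difference quotient of $f_n-f$ is at most $2\|f_n-f\|_{C^0}\,\delta^{-\alpha}\to 0$. For pairs with $d_\Omega(x,y)<\delta$, write
\begin{equation*}
\frac{|(f_n-f)(x)-(f_n-f)(y)|}{d_\Omega(x,y)^\alpha}
= \frac{|(f_n-f)(x)-(f_n-f)(y)|}{d_\Omega(x,y)^\beta}\, d_\Omega(x,y)^{\beta-\alpha}
\le [f_n-f]_\beta\,\delta^{\beta-\alpha}
\le 2R\,\delta^{\beta-\alpha}.
\end{equation*}
Thus $[f_n-f]_\alpha \le 2\|f_n-f\|_{C^0}\,\delta^{-\alpha} + 2R\,\delta^{\beta-\alpha}$; choosing $\delta$ small to make the second term small and then $n$ large to kill the first term shows $[f_n-f]_\alpha\to 0$, hence $\|f_n-f\|_{C^\alpha}\to 0$. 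When $\beta=\mathrm{Lip}$ one uses $d_\Omega(x,y)^{1-\alpha}$ in the same interpolation.

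The main obstacle, such as it is, is purely the interpolation estimate in the small-scale regime — the trick of splitting the difference quotient as (the $\beta$-quotient) times $d_\Omega(x,y)^{\beta-\alpha}$ and using that the exponent $\beta-\alpha$ is positive so the extra factor is a small power of a small number. Everything else is a standard Arzel\`a--Ascoli argument together with the elementary observation that the $C^\beta$-bound passes to the uniform limit. I would also remark that one should be mildly careful about whether $\Omega$ has infinite diameter, but in the applications here $\Omega$ is (the image under a chart of) a compact set, so $\mathrm{diam}(\Omega)<\infty$ and the "large-scale" terms are harmless; if one wants the statement for a general metric space $\Omega$ one simply assumes $\Omega$ bounded, or restricts attention to $\alpha$-H\"older quotients at scales below a fixed threshold.
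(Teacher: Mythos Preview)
Your proof is correct and is precisely the standard classical argument; the paper does not actually supply a proof of this lemma, stating at the outset of \S\ref{sec:Holder_prelim} that ``much of what we state here is classical and proofs are left to the reader.'' The interpolation-plus-Arzel\`a--Ascoli route you take is exactly what is intended.
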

\begin{proposition}[H\"older Rescaling Principle]\label{prop:holderrescaling1}
Let $0\leq \alpha<\beta\leq 1$.
Let $\Omega$, $\Omega_0$ and $\Omega_1$ be bounded subsets of $\mathbb{R}^d$. 
Let $f\colon \Omega\to\Omega$ be $\beta$-H\"older continuous.
Let $\phi_0\colon \Omega\to\Omega_0$ and $\phi_1\colon \Omega\to\Omega_1$ be bi-Lipschitz continuous bijections.
Let $g=\phi_1\circ f\circ \phi_0^{-1}\colon\Omega_0\to\Omega_1$.
Then
\begin{equation*}
[g]_{\alpha}
\leq [\phi_1]_{\mathrm{Lip}}[f]_{\beta}[\phi_0^{-1}]_{\mathrm{Lip}}^\beta \mathrm{diam}(\Omega_0)^{\beta-\alpha} \ .
\end{equation*}
\end{proposition}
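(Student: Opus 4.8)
The plan is to estimate the $\alpha$-H\"older difference quotients of $g$ pointwise by peeling off the three maps in the composition, and then take a supremum. First note that $g=\phi_1\circ f\circ\phi_0^{-1}$ is indeed well defined on $\Omega_0$: since $\phi_0$ is a bijection onto $\Omega_0$, the map $\phi_0^{-1}$ sends $\Omega_0$ into $\Omega$; since $f(\Omega)\subseteq\Omega$, the image under $f$ stays in $\Omega$; and $\phi_1$ is defined on $\Omega$. Now fix distinct points $x,y\in\Omega_0$ and estimate $|g(x)-g(y)|$ in three steps: use that $\phi_1$ is Lipschitz to extract a factor $[\phi_1]_{\mathrm{Lip}}$, then that $f$ is $\beta$-H\"older to extract a factor $[f]_{\beta}$ together with the term $|\phi_0^{-1}(x)-\phi_0^{-1}(y)|^{\beta}$, and finally that $\phi_0^{-1}$ is Lipschitz to replace $|\phi_0^{-1}(x)-\phi_0^{-1}(y)|$ by $[\phi_0^{-1}]_{\mathrm{Lip}}\,|x-y|$, all raised to the power $\beta$. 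This yields
\[
|g(x)-g(y)|\;\le\;[\phi_1]_{\mathrm{Lip}}\,[f]_{\beta}\,[\phi_0^{-1}]_{\mathrm{Lip}}^{\beta}\,|x-y|^{\beta}\ .
\]

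The second step converts the exponent $\beta$ into the required exponent $\alpha$. Writing $|x-y|^{\beta}=|x-y|^{\alpha}\,|x-y|^{\beta-\alpha}$ and using that $\beta-\alpha\ge 0$ together with $|x-y|\le \mathrm{diam}(\Omega_0)<\infty$ (here $\Omega_0$ bounded is used), one gets $|x-y|^{\beta-\alpha}\le \mathrm{diam}(\Omega_0)^{\beta-\alpha}$. Dividing the displayed inequality by $|x-y|^{\alpha}$ and taking the supremum over all distinct $x,y\in\Omega_0$ gives precisely
\[
[g]_{\alpha}\;\le\;[\phi_1]_{\mathrm{Lip}}\,[f]_{\beta}\,[\phi_0^{-1}]_{\mathrm{Lip}}^{\beta}\,\mathrm{diam}(\Omega_0)^{\beta-\alpha}\ .
\]
When $\beta=\mathrm{Lip}$ (i.e.\ $\beta=1$) the argument applies verbatim, reading $[f]_{\beta}$ as $[f]_{\mathrm{Lip}}$ and the exponent $\beta$ as $1$.

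There is no real obstacle here; this is a routine chain of inequalities. The only points that deserve a word of care are bookkeeping: verifying that the composition is well defined on $\Omega_0$ (using $f(\Omega)\subseteq\Omega$ and bijectivity of $\phi_0,\phi_1$); observing that the replacement $|x-y|^{\beta-\alpha}\le\mathrm{diam}(\Omega_0)^{\beta-\alpha}$ relies on $\beta-\alpha$ being nonnegative; and noting that the estimate still holds in the degenerate case $\alpha=0$, where $[g]_{0}$ is simply $\mathrm{diam}(g(\Omega_0))$.
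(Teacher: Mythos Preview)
Your proof is correct and is exactly the routine chain of inequalities the paper has in mind; the authors state this proposition without proof, leaving it to the reader, and what you wrote is the standard verification.
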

Observe that the following Gluing Principles 
allow us to show that H\"older maps constructed 
by gluing with charts are H\"older in the more 
usual sense, when the manifold is endowed with a smooth metric.
\begin{proposition}[First H\"older Gluing Principle]\label{prop:Holder_gluing_i}
For $\alpha\in (0,1)$ the following holds.
Let $\Omega\subset \mathbb{R}^d$ be a connected bounded open domain.
Let $\Omega_1,\Omega_2\subset \Omega$ be disjoint subdomains such that 
$\overline{\Omega}_1\cup \overline{\Omega}_2=\overline{\Omega}$.
Let 
$f_1\in C^\alpha\left(\overline{\Omega}_1,\mathbb{R}^d\right)$ and 
$f_2\in C^\alpha\left(\overline{\Omega}_2,\mathbb{R}^d\right)$ 
have the property that they extend to a continuous function $f$ on $\mathrm{\Omega}$.
Then $f$ is $\alpha$-H\"older continuous.
In fact,
\begin{equation*}
[f]_{\alpha}
\leq 
C\max\left\{
[f_1]_{\alpha}, [f_2]_{\alpha}
\right\} \ ,
\end{equation*}
where $C>0$ is a constant depending upon $\alpha$, $\Omega_1$ and $\Omega_2$ only.
\end{proposition}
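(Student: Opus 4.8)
The plan is to reduce the estimate to a local statement near the common boundary $\Gamma = \overline{\Omega}_1 \cap \overline{\Omega}_2$, since away from $\Gamma$ the function $f$ agrees with one of the $f_i$ on an open set and the H\"older bound is immediate. So fix distinct points $x, y \in \Omega$; we must bound $d(f(x),f(y))/d(x,y)^\alpha$. If $x$ and $y$ both lie in $\overline{\Omega}_1$ or both in $\overline{\Omega}_2$, then $d(f(x),f(y)) \le \max\{[f_1]_\alpha,[f_2]_\alpha\}\, d(x,y)^\alpha$ and we are done with $C = 1$. The only nontrivial case is when, say, $x \in \Omega_1$ and $y \in \Omega_2$.

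In that case the segment $[x,y]$ (or, if $\Omega$ is not convex, a polygonal path inside $\Omega$ joining $x$ to $y$ of length comparable to $d(x,y)$ — this is where I would use that $\Omega$ is a connected bounded open domain, possibly invoking a quasiconvexity hypothesis that should be recorded in the statement) crosses $\Gamma$ at some point $z$. Then $z \in \overline{\Omega}_1 \cap \overline{\Omega}_2$, so I can estimate
\begin{equation*}
d(f(x),f(y)) \le d(f(x),f(z)) + d(f(z),f(y)) \le [f_1]_\alpha\, d(x,z)^\alpha + [f_2]_\alpha\, d(z,y)^\alpha.
\end{equation*}
Continuity of $f$ is what guarantees $f_1(z) = f(z) = f_2(z)$, so the two pieces genuinely splice. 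Since $z$ lies on a path from $x$ to $y$ of length $\le \lambda\, d(x,y)$ (with $\lambda$ the quasiconvexity constant of $\Omega$, $\lambda = 1$ if $\Omega$ is convex), we have $d(x,z), d(z,y) \le \lambda\, d(x,y)$, hence
\begin{equation*}
d(f(x),f(y)) \le 2\lambda^\alpha \max\{[f_1]_\alpha,[f_2]_\alpha\}\, d(x,y)^\alpha,
\end{equation*}
which gives the claim with $C = 2\lambda^\alpha$, a constant depending only on $\Omega_1, \Omega_2$ (through the geometry of $\Omega = \overline{\Omega_1}\cup\overline{\Omega_2}$) and on $\alpha$.

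The main obstacle is the path-length control: for a general connected bounded open $\Omega$ the Euclidean distance between two interior points need not be comparable to their geodesic distance inside $\Omega$, so the naive ``draw the segment and pick the crossing point'' argument can fail, and the constant could blow up near cusps of $\partial\Omega$. I would handle this either by strengthening the hypothesis to ``$\Omega$ quasiconvex'' (which is the natural setting and is satisfied in all the applications in this paper, where $\Omega$ is a ball or a cube) or by localising: cover $\overline{\Omega}$ by finitely many convex pieces, prove the bound on each, and patch the finitely many estimates together, absorbing the combinatorics into $C$. A secondary subtlety is the possibility that the chosen path meets $\Gamma$ in more than one point or along a set rather than transversally; this is harmless, as one only needs \emph{some} point $z$ of the path lying in $\overline{\Omega}_1 \cap \overline{\Omega}_2$, and such a $z$ exists whenever the path starts in $\Omega_1$ and ends in $\Omega_2$ and stays in $\overline{\Omega} = \overline{\Omega}_1 \cup \overline{\Omega}_2$, by connectedness of the path.
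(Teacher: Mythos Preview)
The paper does not actually give a proof of this proposition: in \S\ref{sec:Holder_prelim} the authors state that the results of that subsection are classical and ``proofs are left to the reader.'' Your argument is exactly the standard one, and it is the argument implicitly appealed to later when the (commented-out) proof of the Second H\"older Gluing Principle invokes Proposition~\ref{prop:Holder_gluing_i} for the mixed case $x\in\Omega_m$, $y\in\Omega_0$. Your identification of the quasiconvexity issue is accurate and honest; in every application the paper makes of this proposition the domains involved are balls, elongated neighbourhoods, or images of such under charts, so the constant $\lambda$ is controlled, which is consistent with the statement allowing $C$ to depend on $\Omega_1,\Omega_2$.
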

We will say that a collection of pairwise disjoint bounded open subsets 
$\Omega_1,\Omega_2,\ldots$ of a metric space $(\Omega,d)$
are {\it $\kappa$-well-positioned} if
\begin{equation}\label{eq:well_positioned}
\frac{\max_{m}\mathrm{diam}(\Omega_m)}{\min_{i<j}\mathrm{dist}_H(\Omega_i,\Omega_j)}
\leq 
\kappa \ .
\end{equation} 
\begin{proposition}[Second H\"older Gluing Principle]\label{prop:Holder_gluing_ii}
For $\alpha\in(0,1)$ the following holds.
Let 
$(\Omega,d)$ and $(\Omega^*,d^{*})$ be 
connected metric spaces.
Take pairwise disjoint bounded open sets 
$\Omega_1,\Omega_2,\ldots,\Omega_n\subset \Omega$ 
which are $\kappa$-well positioned for some positive real number $\kappa$,
and define 
$\Omega_0=\Omega\setminus \bigcup_{1\leq m\leq n}\Omega_m$.
Let 
$f\in C^0(\Omega,\Omega^*)$ 
have 
restrictions
$f|_{\Omega_0}$ 
and 
$f|_{\overline{\Omega}_m}$, $m=1,2,\ldots,n$, 
which are $\alpha$-H\"older continuous.
Then $f$ is $\alpha$-H\"older continuous on $\Omega$ and
\begin{equation*}\label{eq:Holder_gluing_ii}
[f]_{\alpha,\Omega}
\leq K \max_{0\leq m\leq n}[f]_{\alpha,\overline{\Omega}_m} \ ,
\end{equation*}
where $K$ is a constant depending only upon $\alpha$ and $\kappa$.
\end{proposition}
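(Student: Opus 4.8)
The plan is to reduce the general statement to the First Hölder Gluing Principle by a covering argument, controlling constants via the well-positionedness hypothesis. Fix two distinct points $x,y\in\Omega$; I must bound $d^{*}(f(x),f(y))/d(x,y)^{\alpha}$. The first step is a dichotomy based on the scale $d(x,y)$ relative to the geometry of the pieces $\Omega_1,\ldots,\Omega_n$. Set $\delta=\min_{i<j}\mathrm{dist}_H(\Omega_i,\Omega_j)$ and $D=\max_m\mathrm{diam}(\Omega_m)$, so that $D\leq\kappa\delta$ by hypothesis. \emph{Case 1: $d(x,y)\geq\delta/2$.} Then we may use the trivial bound $d^{*}(f(x),f(y))\leq\mathrm{diam}(f(\Omega))$ together with $d(x,y)^{\alpha}\geq(\delta/2)^{\alpha}$; here $\mathrm{diam}(f(\Omega))$ is finite because $\Omega$ is a connected union of finitely many sets on each of which $f$ is $\alpha$-Hölder, hence bounded, and one checks $\mathrm{diam}(f(\Omega))\leq C'\max_m[f]_{\alpha,\overline{\Omega}_m}$ for a constant $C'$ depending only on $\alpha$, $\kappa$, and $n$. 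Actually this last estimate itself needs the First Gluing Principle applied along a chain of overlapping closures, so it is the same mechanism as Case 2; I would present the chaining argument once and invoke it in both cases.

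\emph{Case 2: $d(x,y)<\delta/2$.} The key geometric observation is that a segment (or, in a general metric space, a short path — but since we ultimately apply this on Euclidean charts, I will assume $\Omega$ is a Euclidean domain, or work with the intrinsic structure) joining $x$ to $y$ of length comparable to $d(x,y)$ can meet at most one of the pieces $\Omega_1,\ldots,\Omega_n$, precisely because distinct pieces are separated by distance at least $\delta>2d(x,y)$. Hence $x$ and $y$ lie in $\overline{\Omega}_0\cup\overline{\Omega}_m$ for a single index $m$, and on this set $f$ is the continuous extension of its two $\alpha$-Hölder restrictions $f|_{\overline{\Omega}_0\cap(\text{relevant piece})}$ and $f|_{\overline{\Omega}_m}$. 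Applying the First Hölder Gluing Principle (Proposition 2.4) to this pair of pieces gives
\begin{equation*}
d^{*}(f(x),f(y))\leq C\max\{[f]_{\alpha,\overline{\Omega}_0},[f]_{\alpha,\overline{\Omega}_m}\}\,d(x,y)^{\alpha},
\end{equation*}
where $C$ depends only on $\alpha$ and on the shapes of the two subdomains involved. The delicate point is that Proposition 2.4 as stated has a constant depending on $\Omega_1,\Omega_2$; I need this constant to be controlled \emph{uniformly} in $m$ by something depending only on $\alpha$ and $\kappa$. This is where well-positionedness does the real work: rescaling each configuration $(\Omega_0\cap U_m,\Omega_m)$ — where $U_m$ is a neighbourhood of $\overline{\Omega}_m$ of radius $\sim\delta$ — to unit scale, the rescaled pieces are separated and sized by quantities comparable to $1$ with comparison constants depending only on $\kappa$, and the Hölder constants transform by the Hölder Rescaling Principle (Proposition 2.3) in a scale-covariant way, so the $d(x,y)^{\alpha}$ homogeneity is preserved and the geometric constant becomes a function of $\kappa$ and $\alpha$ alone.

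Combining the two cases, and absorbing $[f]_{\alpha,\overline{\Omega}_0}$ and the $[f]_{\alpha,\overline{\Omega}_m}$ into $\max_{0\leq m\leq n}[f]_{\alpha,\overline{\Omega}_m}$, yields $[f]_{\alpha,\Omega}\leq K\max_{0\leq m\leq n}[f]_{\alpha,\overline{\Omega}_m}$ with $K=K(\alpha,\kappa)$; note the bound is independent of $n$ because in Case 2 only one piece is ever seen at a time, and in Case 1 the chaining over $n$ pieces is controlled because the chain has bounded overlap-multiplicity again by well-positionedness. \textbf{The main obstacle} I anticipate is precisely the uniformity of the constant $C$ from the First Gluing Principle across the $n$ configurations: without the well-positioned hypothesis, thin necks or wildly varying sizes between $\Omega_0$ and a given $\Omega_m$ could blow up that constant, so the whole argument hinges on extracting from \eqref{eq:well_positioned} a single rescaling normalization that tames every local gluing simultaneously. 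A secondary technical annoyance is handling the general metric-space formulation (paths versus segments) cleanly; since the applications are on Euclidean charts, I would either restrict to that setting or add a mild local-geodesic hypothesis, remarking that this suffices for all later uses.
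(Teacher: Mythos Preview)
Your Case 1 is a genuine gap. The bound $\mathrm{diam}(f(\Omega))/(\delta/2)^{\alpha}$ is not of the required form $K(\alpha,\kappa)\max_{m}[f]_{\alpha,\overline{\Omega}_m}$: nothing in the hypotheses forces $\Omega$ (or $\Omega_0$) to be bounded, so $\mathrm{diam}(f(\Omega))$ need not even be finite; and even when it is, the quantity $\mathrm{diam}(f(\Omega))/\delta^{\alpha}$ scales like $\mathrm{diam}(\Omega)^{\alpha}/\delta^{\alpha}$, which is a geometric datum of $\Omega$ having nothing to do with $\kappa$. Your attempted repair --- bounding $\mathrm{diam}(f(\Omega))$ by a chain estimate and then arguing the chain has ``bounded overlap-multiplicity'' --- does not recover a constant depending only on $\alpha$ and $\kappa$; the chain length through $\Omega_0$ is uncontrolled.

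The paper avoids the scale dichotomy entirely. It splits instead on \emph{which pieces contain $x$ and $y$}. The only nontrivial case is $x\in\Omega_j$, $y\in\Omega_k$ with $j\neq k$ (both nonzero); there one picks boundary points $x'\in\partial\Omega_j$, $y'\in\partial\Omega_k$ realising the minimal distance between the closures, so that $d(x',y')\leq d(x,y)$, and uses the three-term triangle inequality
\[
d^{*}(f(x),f(y))\leq [f]_{\alpha,\overline{\Omega}_j}d(x,x')^{\alpha}+[f]_{\alpha,\Omega_0}d(x',y')^{\alpha}+[f]_{\alpha,\overline{\Omega}_k}d(y',y)^{\alpha}.
\]
The well-positioned hypothesis now enters directly: $d(x,x')\leq\mathrm{diam}(\Omega_j)\leq\kappa\,\mathrm{dist}_H(\Omega_j,\Omega_k)\leq\kappa\,d(x,y)$, and likewise for $d(y,y')$, yielding the explicit constant $K=1+2\kappa^{\alpha}$. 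No global diameter, no rescaling normalisation, and no $n$-dependence, because only three pieces ($\Omega_j,\Omega_0,\Omega_k$) are ever touched. Your Case 2 observation that a short pair meets at most one $\Omega_m$ is correct and is essentially how the paper handles the mixed case $x\in\Omega_m$, $y\in\Omega_0$; but the cross-piece case is where the argument lives, and there the direct boundary-point estimate replaces your diameter bound.
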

A consequence of the H\"older Rescaling Principle 
(Proposition~\ref{prop:holderrescaling1}) 
is the following.
\begin{lemma}\label{lem:Lipschitz_coord_change}
Let $0\leq \alpha<\beta\leq 1$.
Let $(\Omega,d)$ and $(\Omega^*,d^{*})$ be compact metric spaces.
For any 
$f\in \mathcal{H}^{\beta}_{\alpha}(\Omega,\Omega^*)$, 
$\phi\in\mathcal{H}^{1}(\Omega)$, 
and 
$\psi\in\mathcal{H}^{1}(\Omega^*)$,
the map
$\psi\circ f\circ \phi$ 
lies in 
$\mathcal{H}^{\beta}_{\alpha}(\Omega,\Omega^*)$.
\end{lemma}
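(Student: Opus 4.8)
The plan is to reduce the statement to a stability property of the class $\mathcal{H}^\beta_\alpha$ under pre- and post-composition by bi-Lipschitz maps, and then to prove that stability using the H\"older Rescaling Principle together with a density/approximation argument. First I would recall that by definition $\mathcal{H}^\beta_\alpha(\Omega,\Omega^*)$ is the closure of $\mathcal{H}^\beta(\Omega,\Omega^*)$ in the $C^\alpha$-pseudo-distance (or, on manifolds, the $C^\alpha$-Whitney topology). So it suffices to show two things: (i) if $g\in\mathcal{H}^\beta(\Omega,\Omega^*)$, $\phi\in\mathcal{H}^1(\Omega)$ and $\psi\in\mathcal{H}^1(\Omega^*)$, then $\psi\circ g\circ\phi\in\mathcal{H}^\beta(\Omega,\Omega^*)$; and (ii) the map $g\mapsto\psi\circ g\circ\phi$ is continuous with respect to the $C^\alpha$-pseudo-distance. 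Given (i) and (ii), if $f\in\mathcal{H}^\beta_\alpha$ is a $C^\alpha$-limit of maps $g_n\in\mathcal{H}^\beta$, then $\psi\circ f\circ\phi$ is the $C^\alpha$-limit of $\psi\circ g_n\circ\phi\in\mathcal{H}^\beta$, hence lies in $\mathcal{H}^\beta_\alpha$.

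For step (i), the point is that $\beta$-H\"older regularity is preserved under composition with bi-Lipschitz maps, and moreover the bi-$\beta$-H\"older constant is controlled: this is exactly the content of the H\"older Rescaling Principle (Proposition~\ref{prop:holderrescaling1}), applied once with $\phi_0=\phi^{-1}$, $\phi_1=\psi$ (and once more to the inverse, writing $(\psi\circ g\circ\phi)^{-1}=\phi^{-1}\circ g^{-1}\circ\psi^{-1}$). Since $\phi,\psi$ are homeomorphisms and $g$ is a homeomorphism, the composition is a homeomorphism, and both it and its inverse are $\beta$-H\"older, so it lies in $\mathcal{H}^\beta$. One small technical point on manifolds: one must check this chart-by-chart, using that the transition maps and the fixed smooth metric are locally bi-Lipschitz, so no new difficulty arises beyond the Euclidean statement.

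For step (ii), I would estimate, for $g,h\in\mathcal{H}^\beta$,
\begin{equation*}
[\psi\circ g\circ\phi-\psi\circ h\circ\phi]_{\alpha}
\leq [\psi]_{\mathrm{Lip}}\,[g\circ\phi-h\circ\phi]_{\alpha}
\leq [\psi]_{\mathrm{Lip}}\,[g-h]_{\alpha}\,[\phi]_{\mathrm{Lip}}^{\alpha} \ ,
\end{equation*}
using that post-composition with the Lipschitz map $\psi$ multiplies the $C^\alpha$-semi-norm of a difference by at most $[\psi]_{\mathrm{Lip}}$, and pre-composition with $\phi$ multiplies it by at most $[\phi]_{\mathrm{Lip}}^{\alpha}$; the $C^0$-parts are estimated similarly. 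The same estimate applied to the inverses, with $[\phi^{-1}]_{\mathrm{Lip}}$ and $[\psi^{-1}]_{\mathrm{Lip}}$ in place of the above, controls $[\varphi\circ(\psi g\phi)^{-1}\circ\psi^{-1}-\ldots]_\alpha$. Hence $g\mapsto\psi\circ g\circ\phi$ is Lipschitz in the $C^\alpha$-pseudo-distance (on both the map and its inverse), which is more than enough for continuity and for passing to $C^\alpha$-limits. On manifolds one carries this out in charts, possibly shrinking the compact sets $K,L$ to compensate for the images under $\phi,\psi$ no longer matching the chart domains, exactly as in the $C^r$-Whitney setting.

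The main obstacle I anticipate is not analytic but bookkeeping: making the chart-dependent definition of the $C^\alpha$-Whitney topology interact cleanly with the compositions $\psi\circ f\circ\phi$, since $\phi$ and $\psi$ need not respect any given atlas, so the compact sets and charts appearing in a basic neighbourhood of $\psi\circ f\circ\phi$ must be pulled back through $\phi,\psi$ and intersected with the original chart domains. This is precisely the kind of verification that is routine in the $C^r$-Whitney theory (cf.~\cite[Chapter 2]{HirschBook}), and since $\phi,\psi$ are bi-Lipschitz the relevant semi-norms transform by the Rescaling Principle with no loss, so the argument goes through; I would simply remark that the details are analogous to the $C^r$-case and omit them.
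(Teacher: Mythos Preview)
Your overall two–step plan (stability of $\mathcal{H}^\beta$ under bi-Lipschitz composition, plus continuity of $g\mapsto\psi\circ g\circ\phi$ in $C^\alpha$) is the natural one, and step~(i) is fine. The gap is in step~(ii): the displayed inequality
\[
[\psi\circ F-\psi\circ G]_{\alpha}\;\leq\;[\psi]_{\mathrm{Lip}}\,[F-G]_{\alpha}
\]
is \emph{false} for post-composition, even when $\psi$ is a bi-Lipschitz homeomorphism. (Take $\alpha=\tfrac12$, $\psi(t)=t$ for $t\ge 0$ and $\psi(t)=2t$ for $t<0$, $F(x)=x^{1/2}$ on $[0,1]$, $G_n(x)=x^{1/2}-c_n$ with $c_n\to 0$: then $[F-G_n]_{1/2}=0$ but $[\psi\circ F-\psi\circ G_n]_{1/2}\ge 1$.) Unlike pre-composition, post-composition by a Lipschitz map is \emph{not} continuous on $C^\alpha$; your ``Lipschitz in the $C^\alpha$-pseudo-distance'' claim therefore does not hold as stated.

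The fix uses exactly what membership in $\mathcal{H}^\beta_\alpha$ buys you: the limit $f$ is automatically \emph{little} $\alpha$-H\"older (being a $C^\alpha$-limit of $C^\beta$ maps). Given this, one gets continuity at $f$ by interpolating the two trivial bounds
\[
|(\psi\circ g_n-\psi\circ f)(x)-(\psi\circ g_n-\psi\circ f)(y)|\leq
\min\bigl(2L\|g_n-f\|_{C^0},\,L([g_n-f]_\alpha+2[f]_{\alpha,B(x,|x-y|)})|x-y|^\alpha\bigr),
\]
and letting the local seminorm $[f]_{\alpha,B(x,r)}\to 0$ as $r\to 0$. Equivalently (and more in the spirit of the paper's one-line justification), the Rescaling Principle applied on small balls shows directly that the little-bi-$\alpha$-H\"older condition is preserved under bi-Lipschitz pre- and post-composition, and this is the only property of $\mathcal{H}^\beta_\alpha$ that is actually needed. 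So your route is salvageable, but the continuity step must be argued via the little-H\"older property of the limit, not via the false pointwise inequality.
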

The Second H\"older Gluing Principle combines with Lemma~\ref{lem:Lipschitz_coord_change} above to give the following.
\begin{corollary}\label{cor:Lip-alpha-perturbation}
Let $0\leq \alpha<\beta\leq 1$.
Let $(\Omega,d)$ and $(\Omega^*,d^{*})$ be compact metric spaces and 
let $\Omega_1, \Omega_2, \ldots$ be pairwise disjoint open subsets of $\Omega$.
Take $f\in\mathcal{H}^{\beta}_{\alpha}(\Omega,\Omega^*)$ and,
for $k=1,2,\ldots$, take homeomorphisms 
$\phi_k\in\mathcal{H}^{1}(\Omega)$, supported in $\Omega_k$. 
Define
\begin{equation*}
g=\left\{\begin{array}{ll}
f\circ\phi_k & \mbox{in} \ \Omega_k\\
f & \mbox{in} \ \Omega\setminus\bigcup_k \Omega_k
\end{array}\right. \ .
\end{equation*}
Then $g$ lies in $\mathcal{H}^{\beta}_{\alpha}(\Omega,\Omega^*)$.
\end{corollary}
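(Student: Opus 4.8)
The plan is to reduce the statement to the Second H\"older Gluing Principle (Proposition~\ref{prop:Holder_gluing_ii}) combined with Lemma~\ref{lem:Lipschitz_coord_change}, but there is a subtlety: $g$ is defined using \emph{infinitely many} perturbations $\phi_k$, whereas both of those results are stated for finitely many pieces. So the heart of the argument is a limiting procedure. First I would fix $f\in\mathcal{H}^\beta_\alpha(\Omega,\Omega^*)$ and the sequence $\phi_k$, and for each $N\geq 1$ define the finite modification $g_N$ which agrees with $f\circ\phi_k$ on $\Omega_k$ for $k\leq N$ and with $f$ elsewhere. Since only finitely many $\phi_k$ are involved, and each $\phi_k$ is bi-Lipschitz and supported in $\Omega_k$, one shows $g_N\in\mathcal{H}^\beta_\alpha(\Omega,\Omega^*)$: indeed on each $\overline\Omega_k$ ($k\leq N$) the map $g_N=f\circ\phi_k$ lies in $\mathcal{H}^\beta_\alpha$ by Lemma~\ref{lem:Lipschitz_coord_change} (post-composing with the identity and pre-composing with $\phi_k\in\mathcal{H}^1(\Omega)$), and the gluing of these pieces with $f|_{\Omega_0}$ across the finitely many interfaces is handled by Proposition~\ref{prop:Holder_gluing_ii}, provided the relevant pieces are $\kappa$-well-positioned. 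One should remark that since $\mathcal{H}^\beta_\alpha$ is a closure of $\mathcal{H}^\beta$ in the $C^\alpha$-Whitney topology and $g_N$ only differs from $f$ on a compact set where it is built from bi-Lipschitz data, an approximation of $f$ by genuine $\beta$-H\"older maps can be transported through the finite construction.

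Next I would show $g_N\to g$ in the $\alpha$-H\"older-Whitney topology. The key point is that the supports $\Omega_k$ are pairwise disjoint, so $g_N$ and $g$ differ only on $\bigcup_{k>N}\Omega_k$, a set whose pieces shrink: since the $\phi_k$ have uniformly bounded bi-Lipschitz constants would be too strong an assumption to make silently, so instead I would use that $g=f$ outside $\bigcup_k\Omega_k$ and on each $\Omega_k$ we have $g=f\circ\phi_k$ with $\phi_k(\Omega_k)=\Omega_k$; one estimates the $C^\alpha$-distance between $g_N$ and $g$ in any given chart over any given compact set $K$ by the contribution coming from the finitely many $\Omega_k$ with $k>N$ meeting $K$, and then invokes the H\"older Rescaling Principle (Proposition~\ref{prop:holderrescaling1}) with $\beta>\alpha$ to get a factor $\mathrm{diam}(\Omega_k)^{\beta-\alpha}\to 0$. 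This is exactly where low regularity ($\alpha<\beta$) is essential: the gain in the exponent makes the tail contributions vanish. I would carry this out for both $g_N$ and its inverse $g_N^{-1}$ (noting $g^{-1}$ equals $\phi_k^{-1}\circ f^{-1}$ on $f(\Omega_k)$, and that $f(\Omega_k)$ are pairwise disjoint too since $f$ is a homeomorphism), so that convergence holds in the full bi-H\"older-Whitney sense.

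Finally, since $\mathcal{H}^\beta_\alpha(\Omega,\Omega^*)$ is by definition closed in the $\alpha$-H\"older-Whitney topology and each $g_N$ lies in it, the limit $g$ lies in $\mathcal{H}^\beta_\alpha(\Omega,\Omega^*)$, which is the claim. I expect the main obstacle to be the tail estimate in the second paragraph: one must be careful that the constants produced by the gluing principle (which depend on $\kappa$ and hence on how the $\Omega_k$ are positioned relative to each other) do not blow up as $N\to\infty$. The clean way around this is to observe that the $C^\alpha$-Whitney topology only tests against \emph{compact} sets $K$ in a single chart, and any such $K$ meets only finitely many of the $\Omega_k$ (they are disjoint open sets in a metric space, and on a compact set a summable-diameter-type argument or just local finiteness applies after passing to a locally finite refinement), so effectively the infinite gluing is, chart by chart, a finite gluing plus an arbitrarily small tail — sidestepping any need for a uniform global $\kappa$.
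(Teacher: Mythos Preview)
The paper's proof is a single sentence: ``The Second H\"older Gluing Principle combines with Lemma~\ref{lem:Lipschitz_coord_change} above to give the following.'' In every application in the paper (the H\"older Closing Lemma and Theorem~\ref{thm:open+dense_entropy>logN-little_Holder}) the corollary is invoked with only \emph{finitely many} perturbations $\phi_k$, and the Second H\"older Gluing Principle is itself stated only for finitely many pieces $\Omega_1,\ldots,\Omega_n$. So the intended content of the corollary, and what the one-line proof actually establishes, is the finite case. Your construction of $g_N$ and the verification that $g_N\in\mathcal{H}^\beta_\alpha$ via Lemma~\ref{lem:Lipschitz_coord_change} plus Proposition~\ref{prop:Holder_gluing_ii} \emph{is} the paper's argument; everything after that is extra.

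Your limiting argument for the genuinely infinite case has real gaps. First, the claim that a compact $K$ meets only finitely many of the $\Omega_k$ is false: take $\Omega=[0,1]^2$ and $\Omega_k=(0,1)\times(\tfrac{1}{k+1},\tfrac{1}{k})$; then $K=\Omega$ meets all of them. Second, the H\"older Rescaling Principle gives you a factor $\mathrm{diam}(\Omega_k)^{\beta-\alpha}$, but nothing in the hypotheses forces $\mathrm{diam}(\Omega_k)\to 0$ (the same strip example has diameters bounded below). Third, and more seriously, without a bound on the bi-Lipschitz constants $[\phi_k]_{\mathrm{Lip}}$ the map $g$ need not even be continuous: for $x\in\Omega_k$ one has $|\phi_k(x)-x|\lesssim [\phi_k]_{\mathrm{Lip}}\cdot d(x,\partial\Omega_k)$, and if the Lipschitz constants blow up faster than the $\Omega_k$ thin out, continuity fails at accumulation points of $\bigcup_k\Omega_k$. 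So the infinite version of the statement, read literally, requires additional hypotheses (uniform bi-Lipschitz bounds, or shrinking supports) that the paper does not state --- which is further evidence that the finite reading is the intended one.

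In short: your first paragraph already contains the paper's proof; the rest attempts to prove a stronger statement than the paper does, and that stronger statement is not true without extra assumptions.
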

\begin{remark}
In Corollary~\ref{cor:Lip-alpha-perturbation} if, 
instead of pre-composition by bi-Lipschitz mappings 
with compact pairwise disjoint supports, once considers 
is replaced by post-composition, then the equivalent statement is also valid.
\end{remark}
%
%
%
%

%
%
%
\subsection{The H\"older Closing Lemma}\label{sec:holderclosing}
In this section we will consider spaces of 
homeomorphisms on compact manifolds of 
dimension greater than one.
We prove an analogue of Pugh's $C^1$-Closing 
Lemma~\cite{Pugh67a,Pugh67b} in a subspace of 
bi-H\"older maps.

Recall that,
given a continuous self-map $f$ of a topological space $X$, 
a point $x$ in $X$ is {\it non-wandering} if for all 
neighbourhoods $U$ of $x$ there exists some positive integer 
$n$ for which
$f^n(U)\cap U\neq\emptyset$.

\begin{comment}
\begin{theorem}[Pugh's $C^1$-Closing Lemma]
Let $M$ be a smooth compact manifold.
Let $f\in \mathrm{Diff}^1(M)$ and $x$ a point 
in the non-wandering set of $f$.
Then in any neighbourhood $\mathcal{N}$ of $f$ 
in $\mathrm{Diff}^1(M)$ there exists $g$ such 
that $x$ is a periodic point of $g$.
\end{theorem}
\end{comment}

\vspace{5pt}

\begin{theorem}[H\"older Closing Lemma]~\label{lem:Holder_closing}
Let $M$ be a smooth compact manifold.
For $0\leq \alpha<\beta\leq 1$, the following holds:
Take $f\in \mathcal{H}^{\beta}_{\alpha}(M)$ and 
let $y$ be a non-wandering point of $f$.
For each neighbourhood $W$ of $y$ in $M$ and each neighbourhood $\mathcal{N}$ of $f$ in $\mathcal{H}^{\beta}_{\alpha}(M)$ 
there exists $g$ in $\mathcal{N}$ and a point $x$ in $W$ such that $x$ is a periodic point of the map $g$.
\end{theorem}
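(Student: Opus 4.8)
The plan is to follow the architecture of Pugh's original argument, but carry it out inside the bi-Lipschitz category so that the resulting perturbation automatically lies in $\mathcal{H}^{\beta}_{\alpha}(M)$ by the composition/closure results already established. First I would work in a single chart: since $y$ is non-wandering for $f$, pick a small chart $(U,\varphi)$ around $y$ and a round ball $B = B(y,r) \subset W \cap U$ such that there is an integer $n \geq 1$ with $f^n(B) \cap B \neq \emptyset$. Choose a point $p \in B$ with $f^n(p) \in B$; along the orbit segment $p, f(p), \dots, f^n(p)$ we may assume (shrinking $r$ and using continuity) that consecutive points are close and that the whole segment is covered by finitely many small charts. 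The goal is to produce a bi-Lipschitz homeomorphism $h$ of $M$, supported in a small neighbourhood $V$ of $y$ (inside $W$), $C^\alpha$-close — in fact Lipschitz-Whitney-close — to the identity, such that $g = h \circ f$ has $p$ (or a nearby point) as a periodic point of period $n$. Then $g = h\circ f$ lies in $\mathcal{H}^{\beta}_{\alpha}(M)$ by Corollary~\ref{cor:Lip-alpha-perturbation} (post-composition version, via the subsequent remark), and provided $h$ is close enough to $\id$ in the Lipschitz sense, $g \in \mathcal{N}$.

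The core technical step is the "lifting"/spreading lemma: given two points $a = f^n(p)$ and $b = p$ that are both inside the small ball $B$ and within distance $\epsilon$ of each other, construct a bi-Lipschitz homeomorphism $h$ supported in a controlled elongated neighbourhood $E(a,b;\rho)$ (the notation is already set up in the excerpt) with $h(a) = b$, bi-Lipschitz constant bounded independently of how small $\epsilon$ is, and $\|h - \id\|$ small in $C^0$ (hence, since the Lipschitz constant stays bounded, small enough in the relevant Whitney sense once one also checks the $C^\alpha$-part, which is controlled by interpolation/Lemma~\ref{lem:holderAAprinciple_simple} together with $\beta$-regularity — this is where the gap $\alpha < \beta$ is used). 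This is exactly the kind of move collected in the appendices on "basic moves" and "cylinder perturbations," so I would invoke those: the key point that makes this work, and which fails for generic homeomorphisms, is that a bi-Lipschitz "push" of a point a distance $\epsilon$ can be done with displacement localised in a neighbourhood of diameter $O(\epsilon)$ while keeping the bi-Lipschitz constant uniformly bounded. One must also ensure the supports used along different pieces of the orbit segment (if a multi-step perturbation is needed) are pairwise disjoint, which is where the $\kappa$-well-positioned hypothesis in the Second H\"older Gluing Principle enters.

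With the spreading lemma in hand, the argument concludes as follows. If the orbit segment $p, f(p),\dots, f^{n}(p)$ returns close to $p$, decompose the needed correction: either do a single push near $y$ taking $f^n(p)$ back to $p$ (when $n$ can be taken so that the whole return happens within one small ball), or — in the general case — perturb $f$ at one intermediate step by a bi-Lipschitz map supported in a small flow-box-like neighbourhood disjoint from the rest of the orbit segment, so that the perturbed orbit of $p$ closes up after $n$ steps. Either way we obtain $g \in \mathcal{N}$ with $g^n(p) = p$, so $x = p \in W$ is the desired periodic point. The main obstacle I anticipate is the quantitative control in the spreading lemma: one needs the perturbation to be simultaneously (i) localised in a set of diameter comparable to the displacement, (ii) of uniformly bounded bi-Lipschitz constant, and (iii) small in the $C^\alpha$-Whitney topology on $\mathcal{H}^{\beta}_{\alpha}(M)$ — and reconciling (ii) with (iii) requires the Rescaling Principle (Proposition~\ref{prop:holderrescaling1}) to convert the $\mathrm{diam}^{\beta-\alpha}$ factor into smallness, which is precisely why the statement requires $\alpha < \beta$ and why bi-Lipschitz (rather than merely continuous) perturbations are essential.
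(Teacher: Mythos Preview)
Your proposal captures the paper's architecture: perturb by composing $f$ with a bi-Lipschitz push of uniformly bounded constant supported on a set of small diameter, and use the Rescaling Principle (equivalently, the little-$\alpha$-H\"older property of maps in $\mathcal{H}^\beta_\alpha$ for $\alpha<\beta$) to make the $C^\alpha$-Whitney distance small. The one genuine gap is the orbit-avoidance step. For $g=h\circ f$ to close up the orbit you need not only $h(f^n(p))=p$ but also $f^j(p)\notin\supp(h)$ for $0<j<n$; your two proposed alternatives (a single push in one small ball, or perturbing at an intermediate step ``disjoint from the rest of the orbit segment'') both presuppose that such disjointness can be arranged, and this can fail if some intermediate $f^j(p)$ happens to lie near the segment $[p,f^n(p)]$. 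The paper resolves this with Lemma~\ref{lem:non-wandering} (essentially Pugh's Fundamental Lemma): from any returning orbit segment one can extract a pair $x,f^k(x)\in B(y,\eta)$ such that every intermediate iterate $f^j(x)$, $0<j<k$, stays outside $B(x,\tfrac34\rho)\cup B(f^k(x),\tfrac34\rho)$ where $\rho=d_M(x,f^k(x))$; hence a single push supported in an elongated neighbourhood $E(x,f^k(x);c\rho)$ always suffices, and no multi-step construction is needed. With that lemma supplied, the rest of your outline matches the paper's proof.
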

\begin{remark}\label{rmk:sub-basic_nhds-Holder_closing}
It suffices to show that such a map $g$ exists in any finite intersection of sub-basic sets
of the form 
$\mathcal{N}_{C^\alpha}(f;(U,\varphi),(V,\psi),K,L,\epsilon)$,
since such neighbourhoods form a local basis about $f$, {\it i.e.}, 
any neighbourhood of $f$ will contain such a finite intersection.
\end{remark}
To prove Theorem~\ref{lem:Holder_closing} 
we need the following preparatory lemma.
\begin{lemma}\label{lem:non-wandering}
For each non-wandering point $y$ and each sufficiently small, positive real number $\eta$ the following holds: 
there exists 
a point $x$ in $B(y,\eta)$ and 
a positive integer $k$ 
such that 
$f^{k}(x)$ also lies in $B(y,\eta)$ and, for all $j=1,2,\ldots,k-1$, 
\begin{equation*}
f^j(x)\notin B\left(x,\tfrac{3}{4}\rho\right)\cup B\left(f^{k}(x),\tfrac{3}{4}\rho\right) \ ,
\end{equation*}
where $\rho=d_M(x,f^{k}(x))$.
\end{lemma}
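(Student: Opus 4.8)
\textbf{Proof plan for Lemma~\ref{lem:non-wandering}.}

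The plan is to combine the definition of a non-wandering point with a pigeonhole-type argument applied to the first return map. First I would fix a non-wandering point $y$ and take $\eta$ small enough that $B(y,2\eta)$ lies inside a single chart and that $f$ restricted to this ball is injective (possible since $f$ is a homeomorphism). Given such an $\eta$, non-wandering-ness of $y$ provides arbitrarily small $\delta\in(0,\eta)$ and a positive integer $n=n(\delta)$ with $f^n(B(y,\delta))\cap B(y,\delta)\neq\emptyset$; pick a point $x_0$ in this intersection, so that $x_0\in B(y,\delta)$ and $f^n(x_0)\in B(y,\delta)$. The difficulty is that the intermediate iterates $f^j(x_0)$, $1\le j\le n-1$, may come back very close to $x_0$ or to $f^n(x_0)$, which is exactly what the conclusion forbids with $\rho=d_M(x,f^k(x))\le 2\delta$.

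To fix this I would pass to the \emph{first return time}: among all $j\ge 1$ with $f^j(x_0)\in B(y,\delta)$, the conclusion should be applied not to the full orbit segment but to a minimal one. Concretely, set $k$ to be the least positive integer such that $f^k(x_0)\in B(y,\delta)$ — this exists because $n$ works — and relabel $x=x_0$. Then $\rho=d_M(x,f^k(x))<2\delta$, and by minimality none of $f^1(x),\dots,f^{k-1}(x)$ lies in $B(y,\delta)$. If $\delta$ is chosen so that $2\delta<\tfrac14(\eta-\delta)$, say $\delta<\eta/10$, then $B(x,\tfrac34\rho)\cup B(f^k(x),\tfrac34\rho)\subset B(y,\tfrac32\rho+\delta)\subset B(y,\delta+3\delta)=B(y,4\delta)\subset B(y,\delta)$ — wait, this containment is backwards, so I would instead argue the other direction: since $x, f^k(x)\in B(y,\delta)$ we have $B(x,\tfrac34\rho)\cup B(f^k(x),\tfrac34\rho)\subset B(y,\delta+\tfrac34\rho)\subset B(y,\delta+\tfrac32\delta)=B(y,\tfrac52\delta)$, which is \emph{not} inside $B(y,\delta)$. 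So the balls $B(x,\tfrac34\rho)$ etc.\ can stick out of $B(y,\delta)$, and first-return in $B(y,\delta)$ alone does not immediately kill the bad intermediate returns.

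The genuine fix, and the step I expect to be the main obstacle, is a two-scale choice. Fix the target $\eta$ first. Then choose $\delta>0$ much smaller than $\eta$, and apply non-wandering-ness at scale $\delta$ to get $x_0$ and $n$ as above. Now let $k$ be the least positive integer with $f^k(x_0)\in B(x_0,\tfrac34\rho_0)$ where $\rho_0$ would be circular — so instead: among $j\in\{1,\dots,n\}$ define $k$ to be a value of $j$ minimizing $d_M(x_0,f^j(x_0))$ over the return, or better, run the following selection. Consider the finite orbit segment $x_0,f(x_0),\dots,f^n(x_0)$ with $d_M(x_0,f^n(x_0))<2\delta$. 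If some intermediate $f^j(x_0)$, $0<j<n$, lies in $B(x_0,\tfrac34\cdot 2\delta)\cup B(f^n(x_0),\tfrac34\cdot2\delta)$, replace the pair $(x_0,n)$ by either $(x_0,j)$ or $(f^j(x_0),\,n-j)$ — whichever has the two endpoints within $B(y,\eta)$, which holds because all these points are within $3\delta<\eta$ of $y$ — thereby strictly decreasing the return time. Iterating this finitely many times (the return time is a positive integer and strictly decreases) terminates at a pair $(x,k)$ with $x, f^k(x)\in B(y,\eta)$, $\rho=d_M(x,f^k(x))<2\delta\le 2\eta$, and no intermediate iterate $f^j(x)$, $0<j<k$, in $B(x,\tfrac34\rho)\cup B(f^k(x),\tfrac34\rho)$, which is the assertion. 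I would then note that at the cost of shrinking $\eta$ one may also assume $B(y,\eta)$ is contained in a chart, which is all that is needed downstream; the quantitative threshold "sufficiently small $\eta$" in the statement absorbs this and the injectivity requirement. The care needed in making the descent argument rigorous — verifying that at each replacement the new endpoints still lie in $B(y,\eta)$ and that the process halts — is the one non-routine point.
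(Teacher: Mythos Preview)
Your plan is correct and matches the paper's proof: start from a close return inside a much smaller ball (the paper uses $B(y,\eta/10)$), and whenever an intermediate iterate falls into one of the two $\tfrac34\rho$-balls pass to the corresponding shorter sub-segment; the factor $\tfrac34$ gives geometric decay of $\rho$, hence a summable bound on the total drift of the endpoints, which is exactly the containment check you flag as the one non-routine point. Your termination via strict decrease of the return time is a minor (and arguably cleaner) variant of the paper's argument, which instead uses that the successively selected points are pairwise distinct in a finite orbit segment.
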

%
%
\begin{remark}
As was pointed out to us by Charles Pugh, this is, in essence, the {\it Fundamental Lemma} given in his paper~\cite{Pugh67a}.
However, we include this version here for completeness.
\end{remark}
\begin{proof}[Proof of Lemma~\ref{lem:non-wandering}]
As $y$ is a non-wandering point, there exists 
a point $z$ in $B(y,\frac{\eta}{10})$ 
such that 
$f^{m_1}(z)$ also lies in $B(y,\frac{\eta}{10})$ 
for some positive integer $m_1$.
Let $z_m=f^m(z)$ for each integer $m$ and denote the orbit segment $\{z_0,z_1,\ldots,z_{m_1}\}$ by $O$.
Also let $m_0=0$.
Let $\rho_1=d_M(z_{m_1},z_{m_0})$.
If 
\begin{equation*}
\left(B\left(z_{m_0},\tfrac{3}{4}\rho_1\right)\cup B\left(z_{m_1},\tfrac{3}{4}\rho_1\right)\right)\cap O
=\{z_{m_0},z_{m_1}\}
\end{equation*}
holds, then we are done.
Otherwise there exists a point $z_{m_2}$ in the orbit segment $O$, with $m_2$ different from $m_0$ and $m_1$, such that 
$d_M(z_{m_2},z_{m_1})<\frac{3}{4}d_M(z_{m_1},z_{m_0})$, say.
Let $\rho_2=d_M(z_{m_2},z_{m_1})$.
If
\begin{equation*}
\left(B\left(z_{m_1},\tfrac{3}{4}\rho_2\right)\cup B\left(z_{m_2},\tfrac{3}{4}\rho_2\right)\right)\cap O
=\{z_{m_1},z_{m_2}\}
\end{equation*}
holds, then we are done.
Otherwise there exists a point $z_{m_3}$ in the orbit segment $O$, with $m_3$ different from $m_1$ and $m_2$, such that
$d_M(z_{m_3},z_{m_2})<\frac{3}{4}d_M(z_{m_2},z_{m_1})$, say, etc.

Continuing in this way, we move from 
one pair of points, $z_{m_n}$ and $z_{m_{n-1}}$, 
to the next, $z_{m_{n+1}}$ and $z_{m_{n}}$. 
Since there are only finitely many points in the orbit segment $O$, 
and since the distance between pairs decreases at least geometrically 
(which implies that $z_{m_a}\neq z_{m_b}$ for $a\neq b$), 
it follows that this process must terminate. 
Hence there are
points $z_{m_{N}}$ and $z_{m_{N-1}}$ in the orbit segment $O$ such that
\begin{equation*}
\left(B\left(z_{m_{N}},\tfrac{3}{4}\rho_{N}\right)\cup B\left(z_{m_{N-1}},\tfrac{3}{4}\rho_{N}\right)\right)\cap O
=\{z_{m_{N}},z_{m_{N-1}}\} \ ,
\end{equation*}
where $\rho_{N}=d_M(z_{m_{N}},z_{m_{N-1}})$.
Moreover, as distances between subsequent pairs of points decreases at least geometrically, 
the distance between the initial point $z_0=z_{m_0}$ and the terminal point $z_{m_{N}}$ satisfies the following upper bound
\begin{align*}
d_M(z_0,z_{m_{N}})
\leq \sum_{n=0}^{N} d_M(z_{m_{n+1}},z_{m_n})
\leq \sum_{n=0}^{N} \left(\frac{3}{4}\right)^n d_M(z_{m_0},z_{m_1})
=\frac{4}{5}\eta \ .
\end{align*}
Consequently the points $z_{m_{N}}$ and $z_{m_{N-1}}$ also lie in $D(y,\eta)$.
Consider the case when
$m_{N}<m_{N-1}$.
Then setting $x=z_{m_{N}}$ and $k=m_{N-1}-m_{N}$, 
so that $f^{k}(x)=z_{m_{N-1}}$, 
we find that the point $x$ and integer $k$ satisfy the conclusion of the lemma.
The case when $m_{N-1}<m_{N}$ is similar. 
Hence the lemma is shown.
\end{proof}
We now proceed with the proof of the H\"older Closing Lemma (Theorem~\ref{lem:Holder_closing}).
\begin{proof}[Proof of the H\"older Closing Lemma]
We will prove the theorem in the case $\beta=1$, {\it i.e.}, 
for maps in the $C^\alpha$-closure of the space of bi-Lipschitz homeomorphisms.
The general case follows analogously as the only property being used here is that the map $f$ satisfies the {\it little H\"older condition}, {\it i.e.},
for each $x\in M$, $[f]_{\alpha,B(x,r)}=o(r)$.

\vspace{5 pt}

\noindent
{\sl Setup:\/}
Following Remark~\ref{rmk:sub-basic_nhds-Holder_closing},
it suffices to construct the perturbation $g$ so that it 
lies in the intersection $\mathcal{N}$ of a finite collection of 
sub-basic sets of the form
\begin{equation}\label{def:sub-basic}
\mathcal{N}_{C^\alpha}(f;(U_n,\varphi_n),(V_n,\psi_n),K_n,L_n,\epsilon_n)
\end{equation}
as defined in Section~\ref{sec:Holder_prelim}.
By adding to the collection of sub-basic sets, 
if necessary, we may assume that $y$ is contained in $U_0\cap f^{-1}(V_0)$.
Take compact neighbourhoods 
$\mathsf{U}_n$ of $K_n$ in $U_n\cap f^{-1}(V_n)$
and
$\mathsf{V}_n$ of $L_n$ in $f(U_n)\cap V_n$.
We will also take a compact neighbourhood $\mathsf{W}_0$ in $U_0$ which contains $y$ in its interior.

To simplify notation, for each index $n$ define the map
\begin{equation}\label{def:fn}
f_n=\psi_n\circ f\circ\varphi_n^{-1}\colon \varphi_n(U_n\cap f^{-1}(V_n))\longrightarrow \psi_n(f(U_n)\cap V_n) \ .
\end{equation}
When considering a perturbation $g$ of $f$ we will also use the notation
\begin{equation}\label{def:gn}
g_n=\psi_n\circ g\circ\varphi_n^{-1}\colon \varphi_n(U_n\cap g^{-1}(V_n))\longrightarrow \psi_n(g(U_n)\cap V_n) \ .
\end{equation}
Fix a positive real number $\epsilon$. 
This will denote the order of the size of the perturbation.
Take a positive real number $\delta$.
This will denote the size of the support of the local perturbation.
Take $\delta$ sufficiently small so that
\begin{enumerate}
\item[(a)]
$B_M(y,\delta)$ is contained in $W\cap \mathsf{W}_0$
\item[(b)]
$\mathsf{U}_n$ contains a $2\delta$-neighbourhood of the compact set $K_n$,
and
$\mathsf{V}_n$ contains a $2\delta$-neighbourhood of compact set $L_n$.
\item[(c)]
$\max\left\{
[f_n]_{\alpha,\varphi_n(B(y,\delta))},
[f_n^{-1}]_{\alpha,f_n\circ\varphi_n(B(y,\delta))}
\right\}\leq \epsilon \ $.
\end{enumerate}
The neighbourhood 
$W\cap\mathsf{W}_0$ 
will contain the support of our perturbation.
However, we also need to control the size of the perturbation in charts other than $(U_0,\varphi_0)$. 
(This explains why we consider $W\cap\mathsf{W}_0$ and not just the open set $W$.)
Therefore we will also assume that, for any index $n$
\begin{equation}\label{ineq:transition-Lip-Holder-closing}
\max\left\{
[\varphi_n\circ \varphi_0^{-1}]_{\mathrm{Lip},\varphi_0(\mathsf{W}_0\cap \mathsf{U}_n)}, \ 
[\varphi_0\circ \varphi_n^{-1}]_{\mathrm{Lip},\varphi_n(\mathsf{W}_0\cap \mathsf{U}_n)}
\right\}
<c_1 \ .
\end{equation}
\vspace{5 pt}

\noindent
{\sl Construction of the perturbation:\/}
Given a point $x^0$ in $M$, for each integer $k$ let $x^k=f^k(x^0)$.
For each index $n$, let $x^k_n=\varphi_n(x^k)$, whenever it is defined.
As $y$ is a non-wandering point of $f$, a consequence of Lemma~\ref{lem:non-wandering}, is the following.
\begin{claim}
There exists a positive real number $c$ with the following property:
for each sufficiently small positive real number $\delta$
there exists 
a point $x^0$ in $M$ and 
a positive integer $k_0$
such that
\begin{enumerate}
\item[(1)] 
$x^0, x^{k_0}\in B(y,\delta)$, 
\item[(2)]
setting 
$r_0=|x^0_0-x^{k_0}_0|$, 
for each integer $k$, where $0<k<k_0$, 
either $x^k_0$ is not defined or $x^k_0$ is defined and
$x^k_0\notin 
E(x^0_0,x^{k_0}_0;c r_0)
$
\item[(3)] 
$E(x^0_0,x^{k_0}_0;c r_0)
\subset \varphi_0(B(y,\delta))$, 
\end{enumerate}
\end{claim}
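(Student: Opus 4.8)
The plan is to extract the point $x^0$ and integer $k_0$ directly from Lemma~\ref{lem:non-wandering}, applied in the chart $(U_0,\varphi_0)$, and then to translate the metric statements about balls $B\bigl(\cdot,\tfrac34\rho\bigr)$ in $M$ into statements about elongated neighbourhoods $E(\cdot,\cdot;cr_0)$ in the Euclidean domain $\varphi_0(B(y,\delta))$. The constant $c$ will come from the bi-Lipschitz distortion of $\varphi_0$ on a fixed compact neighbourhood of $y$ together with an elementary geometric comparison between an elongated neighbourhood of a segment and the union of two balls centred at its endpoints.

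First I would fix, once and for all, a compact neighbourhood $\mathsf{W}_0$ of $y$ contained in $U_0$ and let $\Lambda\geq 1$ be a bi-Lipschitz constant for $\varphi_0$ on $\mathsf{W}_0$, so that $\Lambda^{-1}d_M(p,q)\leq |\varphi_0(p)-\varphi_0(q)|\leq \Lambda\, d_M(p,q)$ for $p,q\in\mathsf{W}_0$. I would then set $c=\tfrac14\Lambda^{-2}$ (any constant of this order works; the precise value is irrelevant). Now take $\delta$ small enough that $B_M(y,\delta)\subset \mathsf{W}_0$ and small enough to apply Lemma~\ref{lem:non-wandering} with $\eta=\delta$; this produces a point $x^0\in B(y,\delta)$ and a positive integer $k_0$ with $x^{k_0}=f^{k_0}(x^0)\in B(y,\delta)$ such that, writing $\rho=d_M(x^0,x^{k_0})$, we have $f^j(x^0)\notin B\bigl(x^0,\tfrac34\rho\bigr)\cup B\bigl(x^{k_0},\tfrac34\rho\bigr)$ for all $0<j<k_0$. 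This immediately gives condition~(1).

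For condition~(3): if $w\in E(x^0_0,x^{k_0}_0;cr_0)$, then $w$ is within Euclidean distance $cr_0$ of some point on the segment joining $x^0_0$ and $x^{k_0}_0$, hence within distance $cr_0+|x^0_0-x^{k_0}_0|=(1+c)r_0$ of $x^0_0$; since $r_0=|x^0_0-x^{k_0}_0|\leq \Lambda\rho$ and $\rho\leq d_M(x^0,y)+d_M(y,x^{k_0})<2\delta$, one gets $|w-x^0_0|<(1+c)\cdot 2\Lambda\delta$, and choosing $\delta$ a further fixed fraction smaller if necessary forces $w\in\varphi_0(B(y,\delta))$ (here one uses that $x^0_0$ is within $\Lambda\delta$ of $\varphi_0(y)$ and that $\varphi_0(B(y,\delta))$ contains a Euclidean ball about $\varphi_0(y)$ of radius $\sim\Lambda^{-1}\delta$ — so in fact $c$ and the shrinking factor must be calibrated together; this is the one place requiring a little care). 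For condition~(2): suppose $0<k<k_0$ and $x^k_0$ is defined, i.e. $x^k=f^k(x^0)\in U_0$; we must show $x^k_0\notin E(x^0_0,x^{k_0}_0;cr_0)$. If $x^k\notin\mathsf{W}_0$ this is clear once $\delta$ is small (the elongated neighbourhood lies deep inside $\varphi_0(\mathsf{W}_0)$ by the estimate above). If $x^k\in\mathsf{W}_0$, then by the bi-Lipschitz bound $|x^k_0-x^0_0|\geq \Lambda^{-1}d_M(x^k,x^0)\geq \Lambda^{-1}\tfrac34\rho$ and similarly $|x^k_0-x^{k_0}_0|\geq \Lambda^{-1}\tfrac34\rho$, by Lemma~\ref{lem:non-wandering}. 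On the other hand every point of $E(x^0_0,x^{k_0}_0;cr_0)$ lies within $cr_0\leq c\Lambda\rho=\tfrac14\Lambda^{-1}\rho$ of the segment, hence within $cr_0+\tfrac12 r_0\leq\bigl(\tfrac14\Lambda^{-1}+\tfrac12\Lambda\bigr)\rho$… — so in fact one should rather note that a point within $cr_0$ of the segment is within $cr_0$ of one of the two endpoints or else has distance $<\tfrac34\Lambda^{-1}\rho$ to at least one endpoint provided $c$ is small; comparing with the two lower bounds $\Lambda^{-1}\tfrac34\rho$ yields a contradiction as long as $c<\tfrac34\Lambda^{-2}$, which holds by our choice. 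This proves~(2) and completes the Claim.

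The main obstacle is purely bookkeeping: one must choose the single constant $c$ and then a sufficiently small $\delta$ so that all three conditions hold simultaneously, where the distortion constant $\Lambda$ of the chart $\varphi_0$ on a fixed compact neighbourhood of $y$ mediates between the intrinsic metric $d_M$ (in which Lemma~\ref{lem:non-wandering} is phrased) and the Euclidean metric of the chart (in which the elongated neighbourhoods are defined). There is no real analytic difficulty — the geometry of an elongated neighbourhood versus two endpoint balls is elementary — but the quantifier order ($c$ first, depending only on $\Lambda$; then $\delta$, depending on $c$, $\Lambda$, $W$, and the chart) must be respected, exactly as in the statement of the Claim.
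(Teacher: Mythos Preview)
Your strategy---apply Lemma~\ref{lem:non-wandering} in the metric $d_M$ and then transfer to the chart via the bi-Lipschitz constant $\Lambda$---breaks down at condition~(2). The dichotomy you invoke (``within $cr_0$ of an endpoint, or else within $\tfrac34\Lambda^{-1}\rho$ of an endpoint'') is false: a point of $E(x^0_0,x^{k_0}_0;cr_0)$ near the midpoint of the segment is at distance roughly $r_0/2$ from each endpoint, and in general one only has
\[
\min\bigl(|p-x^0_0|,\,|p-x^{k_0}_0|\bigr)\;\le\;\bigl(c+\tfrac12\bigr)r_0\;\le\;\bigl(c+\tfrac12\bigr)\Lambda\rho\,.
\]
Comparing this with your lower bound $\tfrac34\Lambda^{-1}\rho$ forces $c<\tfrac34\Lambda^{-2}-\tfrac12$, not $c<\tfrac34\Lambda^{-2}$. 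The extra $-\tfrac12$ is fatal: as soon as $\Lambda>\sqrt{3/2}\approx 1.22$ (a condition you have no control over, since the chart $(U_0,\varphi_0)$ is handed to you by the sub-basic set), there is no positive $c$ at all, and the argument collapses.

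The remedy is to drop the comparison with $d_M$ and run the \emph{proof} of Lemma~\ref{lem:non-wandering} directly in the Euclidean metric of the chart, for the non-wandering point $\varphi_0(y)$ of the local map $\varphi_0\circ f\circ\varphi_0^{-1}$. Non-wandering is a topological property, so it survives; the iterated pairs $z_{m_n}$ produced by the lemma's argument stay in a fixed Euclidean neighbourhood of $\varphi_0(y)$ (by the same geometric-sum estimate), hence in the chart image; and at termination the exclusion balls have Euclidean radius exactly $\tfrac34 r_0$, so any $c<\tfrac14$ gives $E(x^0_0,x^{k_0}_0;cr_0)\subset B(x^0_0,\tfrac34 r_0)\cup B(x^{k_0}_0,\tfrac34 r_0)$, yielding~(2) with a constant independent of the chart distortion. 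Condition~(3) is then handled exactly as the paper hints: start the argument in a ball of radius $\eta$ a fixed ($\Lambda$-dependent) fraction of $\delta$, so the elongated neighbourhood around the smaller return fits inside $\varphi_0(B(y,\delta))$.
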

\vspace{5 pt}

\noindent
(Claim 1(3) follows by applying the Lemma~\ref{lem:non-wandering} to a slightly smaller disk
.)
Define
\begin{equation*}
E=E\bigl(x^0_0,x^{k_0}_0,cr_0\bigr)\ , \qquad
E'=E\bigl(x^0_0,x^{k_0}_0;\tfrac{c}{2}r_0\bigr)
\end{equation*}
and let
$E_M=\varphi_0^{-1}(E)$
and
$E'_M=\varphi_0^{-1}(E')$.
Applying Lemma~\ref{lem:E-perturbation} to the neighbourhoods $E'$ and $E$, 
there exists a diffeomorphism $\phi$ supported on $E$ such that 
\begin{equation}\label{eq:phi_0-closing-up}
\phi(x^{k_0}_0)=x^0_0 \ .
\end{equation}
Moreover, there exists a positive real number $c_1$, independent of $\epsilon$, such that 
\begin{equation}\label{ineq:phi_Lip-Holder_closing}
[\phi]_{\mathrm{Lip}}\leq c_1 \ . 
\end{equation} 
Define the self-map $g$ on $M$ by 
\begin{equation}
g=\left\{\begin{array}{ll}
f\circ\varphi_0^{-1}\circ \phi\circ\varphi_0 & \ \mbox{in} \ E_M\\
f & \ \mbox{elsewhere}
\end{array}\right. \ .
\end{equation}
Since $\phi_0$ is supported in $E$, it is clear that $g$ is a homeomorphism.
In fact, 
Corollary~\ref{cor:Lip-alpha-perturbation} implies that the map
$g$ lies in $\mathcal{H}^\beta_\alpha(M)$.
By equality~\eqref{eq:phi_0-closing-up}, and since $x^k\notin E_M$ for $0<k<k_0$, 
we also know that
\begin{equation*}
g^{k_0}(x^{k_0})
=f^{k_0}\circ\varphi_0^{-1}\circ\phi\circ\varphi_0(x^{k_0})
=x^{k_0} \ .
\end{equation*}
Thus $g$ possesses a periodic point in the neighbourhood $W$.
Below it will be important to observe that, for each index $n$, 
we also have the expression
\begin{equation}\label{eq:gn_ii}
g_n=\left\{\begin{array}{ll}
f_n\circ\phi_n & \ \mbox{in} \ \varphi_n(E_M)\\
f_n & \ \mbox{elsewhere}
\end{array}\right. \ .
\end{equation}
where
\begin{equation}\label{eq:phin}
\phi_n=\varphi_n\circ\varphi_0^{-1}\circ\phi\circ\varphi_0\circ\varphi_n^{-1} \ .
\end{equation}

\vspace{5 pt}

\noindent
{\sl Size of the perturbation:\/}
It remains to estimate the $C^\alpha$-pseudo-distance between $f$ and $g$ corresponding to each of the sub-basic sets.
Fix an index $n$. 
First, we must estimate
$[f_n-g_n]_{\alpha,\varphi_n(K_n\cap E_M)}$.
If $K_n$ and $E_M$ are disjoint, there is nothing to show.
Otherwise, by (b) above,
$E_M$ is contained in $\mathsf{U}_n$.
By the triangle inequality
\begin{equation}\label{ineq:triangle-Holder_closing}
[f_n-g_n]_{\alpha,\varphi_n(K_n\cap E_M)}
\leq 
[f_n-g_n]_{\alpha,\varphi_n(E_M)}
\leq
[f_n]_{\alpha,\varphi_n(E_M)}+[g_n]_{\alpha,\varphi_n(E_M)} \ .
\end{equation}
By the expression~\eqref{eq:gn_ii} 
and the observation that $\phi_n\circ\varphi_n(E_M)=\varphi_n(E_M)$, 
the H\"older Rescaling Principle (Proposition~\ref{prop:holderrescaling1}) 
gives
\begin{equation}\label{ineq:gn_Lip-Holder_closing}
[g_n]_{\alpha,\varphi_n(E_M)}
\leq [f_n]_{\alpha,\varphi_n(E_M)}[\phi_n]_{\mathrm{Lip},\varphi_n(E_M)} \ .
\end{equation}
Next, applying the H\"older Rescaling Principle (Proposition~\ref{prop:holderrescaling1}) 
to the expression~\eqref{eq:phin}, after observing that $\phi(E)=E$, gives
\begin{equation*}
[\phi_n]_{\mathrm{Lip},\varphi_n(E_M)}
\leq 
[\varphi_n\circ\varphi_0^{-1}]_{\mathrm{Lip},\varphi_0(E_M)}
[\phi]_{\mathrm{Lip},E}
[\varphi_0\circ\varphi_n^{-1}]_{\mathrm{Lip},\varphi_n(E_M)} \ .
\end{equation*}
By~\eqref{ineq:phi_Lip-Holder_closing} and~\eqref{ineq:transition-Lip-Holder-closing},
which we apply as $E_M$ is contained in $\mathsf{W}_0\cap \mathsf{U}_n$,
this implies that $[\phi_n]_{\mathrm{Lip},\varphi_n(E_M)}$ is bounded from above independently of $\epsilon$.
By (c), together with~\eqref{ineq:triangle-Holder_closing} and~\eqref{ineq:gn_Lip-Holder_closing},
this implies that there exists a positive real number $c_3$, independent of $\epsilon$, such that
\begin{equation*}
[f_n-g_n]_{\mathrm{Lip},\varphi_n(K_n\cap E_M)}\leq c_3\epsilon \ .
\end{equation*}
Applying the First H\"older Gluing Principle (Proposition~\ref{prop:Holder_gluing_i}), 
there exists a positive real number $c_4$, also independent of $\epsilon$, for which
\begin{equation*}
[f_n-g_n]_{\mathrm{Lip},\varphi_n(K_n)}\leq c_4\epsilon \ .
\end{equation*}
Since the $C^0$-distance between $f_n$ and $g_n$ can be made arbitrarily small by shrinking the support of $\phi$, 
the size of the perturbation $\epsilon$ may be chosen so that
\begin{equation*}
\|f_n-g_n\|_{C^\alpha(\varphi_n(K_n),\mathbb{R}^d)}< \epsilon_n \ .
\end{equation*}
The same argument applied to the inverse mappings shows that, after shrinking $\epsilon$ is necessary, we also have 
\begin{equation*}
\|f_n-g_n\|_{C^\alpha(\psi_n(L_n),\mathbb{R}^d)}<\epsilon_n \ .
\end{equation*}
Thus, taking the minimum of all suitable $\epsilon$ over all indices $n$, of which there are finitely many,
the resulting map $g$ will lie in the common intersection of all the sub-basic sets above.
Thus the theorem is shown.
\end{proof}
\begin{remark}
The reader may wonder why the Closing Lemma is much simpler in the H\"older category than the $C^1$ category.
While in both cases perturbations may be made by pre- or post-composing by diffeomorphisms supported on a small neighbourhood,
by shrinking the neighbourhood and conjugating the perturbation by a dilation, this leaves the $C^1$-size of the perturbation unchanged,
whereas, by the H\"older Rescaling Principle, the $\alpha$-H\"older size of the perturbation can be made arbitrarily small.
\end{remark}
The proof of the H\"older Closing Lemma (Theorem~\ref{lem:Holder_closing}) above also yields the following corollary.
\begin{corollary}\label{cor:Holder_closing}
Let $M$ be a smooth compact manifold.
For $0\leq \alpha<\beta\leq 1$, the following holds:
Take $f\in \mathcal{H}^{\beta}_{\alpha}(M)$ and
let $y$ be a recurrent point of $f$.
For each neighbourhood $\mathcal{N}$ of $f$ in $\mathcal{H}^{\beta}_{\alpha}(M)$ 
there exists $g$ in $\mathcal{N}$ and a positive integer $k$ such that $f^k(x)$ is a periodic point of the map $g$.
\end{corollary}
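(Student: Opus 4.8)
The plan is to deduce Corollary~\ref{cor:Holder_closing} essentially for free from the machinery already assembled in the proof of Theorem~\ref{lem:Holder_closing}. The point is that a recurrent point is a special kind of non-wandering point: if $y$ is recurrent, then $y$ itself returns arbitrarily close to itself, so the "Setup" portion of the proof applies verbatim with $y$ in place of the auxiliary point $z$ of Lemma~\ref{lem:non-wandering}. Concretely, given the finite intersection $\mathcal{N}$ of sub-basic neighbourhoods, and having chosen the charts, the compact neighbourhoods $\mathsf{U}_n,\mathsf{V}_n,\mathsf{W}_0$, and the perturbation-size parameter $\epsilon$ as in the proof of Theorem~\ref{lem:Holder_closing}, I would use recurrence to pick, for each small $\delta>0$, an integer $k_0>0$ with $x^0:=y$ and $x^{k_0}=f^{k_0}(y)$ both in $B(y,\delta)$. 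There is, however, one genuine difference from the non-wandering case: Lemma~\ref{lem:non-wandering} was invoked to guarantee the intermediate orbit points $f^j(x^0)$, $0<j<k_0$, avoid the elongated neighbourhood $E(x^0_0,x^{k_0}_0;cr_0)$, and with a bare recurrence return time this separation may fail.

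To handle this, I would first pass to a \emph{good} return time. Fix any small $\eta$ and apply Lemma~\ref{lem:non-wandering} with the non-wandering point $y$; it yields a point $x$ in $B(y,\eta)$ and an integer $k$ with $f^k(x)\in B(y,\eta)$ and $f^j(x)\notin B(x,\tfrac34\rho)\cup B(f^k(x),\tfrac34\rho)$ for $0<j<k$, where $\rho=d_M(x,f^k(x))$. This is exactly what drives the Claim inside the proof of Theorem~\ref{lem:Holder_closing}, producing a point $x^0$ (close to $y$, hence with $f^{k}(x^0)$ close to $y$) and an integer $k_0$ satisfying items (1)--(3) of that Claim. Having reached that Claim, the rest of the construction is identical: set $E=E(x^0_0,x^{k_0}_0;cr_0)$, $E'=E(x^0_0,x^{k_0}_0;\tfrac{c}{2}r_0)$, $E_M=\varphi_0^{-1}(E)$, invoke Lemma~\ref{lem:E-perturbation} to get a diffeomorphism $\phi$ supported in $E$ with $\phi(x^{k_0}_0)=x^0_0$ and $[\phi]_{\mathrm{Lip}}\le c_1$, and define $g$ by $g=f\circ\varphi_0^{-1}\circ\phi\circ\varphi_0$ on $E_M$ and $g=f$ elsewhere. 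Corollary~\ref{cor:Lip-alpha-perturbation} gives $g\in\mathcal{H}^\beta_\alpha(M)$, and the "Size of the perturbation" estimates --- using the H\"older Rescaling Principle (Proposition~\ref{prop:holderrescaling1}), the transition-map bound~\eqref{ineq:transition-Lip-Holder-closing}, and the First H\"older Gluing Principle (Proposition~\ref{prop:Holder_gluing_i}) --- show $g\in\mathcal{N}$ once $\epsilon$ is taken small enough.

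The only thing that differs in the \emph{conclusion} is the location of the periodic point. Since $x^k\notin E_M$ for $0<k<k_0$ and $\phi(x^{k_0}_0)=x^0_0$, one computes $g^{k_0}(x^{k_0})=f^{k_0}\circ\varphi_0^{-1}\circ\phi\circ\varphi_0(x^{k_0})=f^{k_0}(x^0)=x^{k_0}$, so $x^{k_0}=f^{k_0}(x^0)$ is a periodic point of $g$ of period dividing $k_0$. Because $x^0$ was chosen $\eta$-close to the recurrent point $y$ and $\eta$ is at our disposal, $x^0$ lies as close to $y$ as we wish; writing $x:=x^0$, we obtain a point $x$ near $y$ for which some forward iterate $f^{k}(x)$ (namely $k=k_0$) is periodic for $g$, which is precisely the assertion of the corollary. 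I would state this by simply remarking that "the proof of Theorem~\ref{lem:Holder_closing} goes through with $x^0$ chosen at distance less than $\eta$ from $y$, using recurrence of $y$ in place of the opening paragraph of the proof of Lemma~\ref{lem:non-wandering}," and then pointing to the identity $g^{k_0}(f^{k_0}(x^0))=f^{k_0}(x^0)$.

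The main obstacle --- and it is a minor one --- is bookkeeping: being careful that recurrence really does let us take the \emph{base} point of the pseudo-orbit to be $y$ (or a point arbitrarily near $y$), rather than merely some uncontrolled point in the non-wandering set, so that the periodic point of $g$ can be pinned to a prescribed small neighbourhood of $y$ while still landing on a forward $f$-iterate of a near-$y$ point. Everything else is a verbatim reuse of the estimates already carried out, so no new analytic input is needed.
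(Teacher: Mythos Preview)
Your proposal is correct and follows exactly the route the paper takes, which is no new proof at all: the paper simply remarks that the argument for Theorem~\ref{lem:Holder_closing} already produces a periodic point of the form $x^{k_0}=f^{k_0}(x^0)$. The one sharpening worth recording (and the likely intended reading of the statement, with the stray $x$ meaning $y$) is that recurrence lets you take $z=y$ in the proof of Lemma~\ref{lem:non-wandering}, so that $x^0=f^{m}(y)$ for some $m\ge 0$ and the periodic point is $f^{m+k_0}(y)$; you gesture at this but then set $x:=x^0$, which slightly undersells the conclusion.
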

\vspace{5pt}
%
%
%
%
%
\subsection{Genericity of infinite topological entropy for H\"older mappings.}~\label{sect:Holder-infinite_entropy}
In this section we prove that infinite 
topological entropy is a generic property in the 
H\"older context.
Theorem~\ref{thm:generic_little-Holder_homeomorphism} will follow from 
Theorem~\ref{thm:open+dense_entropy>logN-little_Holder} below.
Before we can state this we need to introduce the following terminology.
Call $B^{d-1}\times B^1$ the {\it standard solid cylinder\/} in $\mathbb{R}^d$.
We call 
images under affine transformations of the standard solid cylinder {\it rigid solid cylinders\/} and 
homeomorphic images of the standard solid cylinder {\it topological solid cylinders\/}.
Given a rigid solid cylinder $C$ denote the axial length 
and the coaxial radius of $C$ respectively by $\mathrm{len}(C)$ and $\mathrm{rad}(C)$.
Given distinct points $a$ and $b$ in $\mathbb{R}^d$ and $r>0$,
denote by $C(a,b;r)$ the rigid solid cylinder in $\mathbb{R}^d$ 
whose axis is the line segment $[a,b]$ and whose co-axial radius is $r$.

Let $C$ be a topological solid cylinder in $\mathbb{R}^d$ 
with disjoint marked boundary balls $C^+$ and $C^-$. 
We say that an embedding $\phi$, from some domain containing $C$ into $\mathbb{R}^d$,
{\it maps $C$ across the standard solid cylinder\/} 
$B^{d-1}\times B^{1}$ if the following properties are satisfied
(see Figure~\ref{fig:cylinders2}) 
\begin{enumerate}
\item
$\phi(C)$ intersects $B^{d-1}\times B^1$,
\item
$\phi(C)$ does not intersect $\partial \left(B^{d-1}\times B^1\right)\setminus \left(B^{d-1}\times \partial B^1\right)$,
\item
$\overline{\phi(C^-)}$ and $\overline{\phi(C^+)}$ do not intersect $\overline{B^{d-1}\times B^1}$,
\item
the connected component of $\phi(C)\setminus \left(B^{d-1}\times B^1\right)$ 
whose boundary contains $\phi(C^\pm)$ has closure intersecting 
$B^{d-1}\times \{\pm 1\}$ but not $B^{d-1}\times \{\mp 1\}$.
\end{enumerate}
\begin{figure}[h] 
\begin{center} 
\psfrag{Dd-1xD1}[][]{$B^{d-1}\times B^1$} 
\psfrag{C}[][]{$C$} 
\psfrag{phi}[][]{$\phi$} 
\includegraphics[width=4.1in]{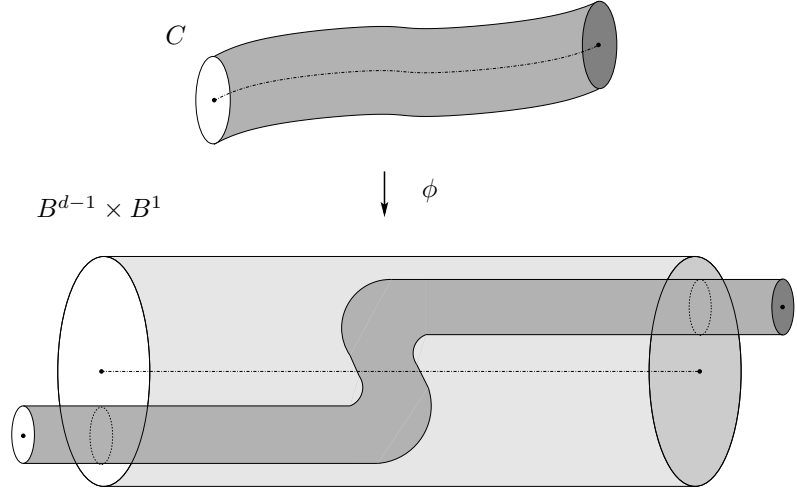}
\end{center} 
\caption[slope]{\label{fig:cylinders2} 
The cylinder $C$ maps across the standard solid 
cylinder $B^{d-1}\times B^1$ via the embedding $\phi$.} 
\end{figure} 

Given a topological solid cylinder $C'$ in $\mathbb{R}^d$, 
which is the image of the standard solid cylinder under the homeomorphism $\psi$,
we say that {\it $\phi$ maps $C$ across $C'$} if $\psi^{-1}\circ \phi$ maps $C$ across the standard solid cylinder.

Given a positive integer $N$, we say that an 
embedding $f$ maps a (topological) solid cylinder $C_0$ across 
a (topological) solid cylinder $C_1$ {\it like an $N$-branched horseshoe\/} 
if there exist pairwise disjoint subcylinders 
$C_{0,1},C_{0,2},\ldots,C_{0,N}$ of $C_0$ such that 
$f$ maps $C_{0,k}$ across $C_{1}$ for each $k=1,2,\ldots,N$. 
See Figure~\ref{fig:cylinders3}. 
%
\begin{figure}[h]
\begin{center}
\psfrag{C0}[][]{$C_{0}$} 
\psfrag{C1}[][]{$C_{1}$}
\psfrag{phi}[][]{$f$}
\psfrag{phiC0}[][]{$f(C_0)$}
\psfrag{C01}[][][1]{$C_{0,1}$}
\psfrag{C02}[][][1]{$C_{0,2}$}
\psfrag{C03}[][][1]{$C_{0,3}$}
\includegraphics[width=4.1in]{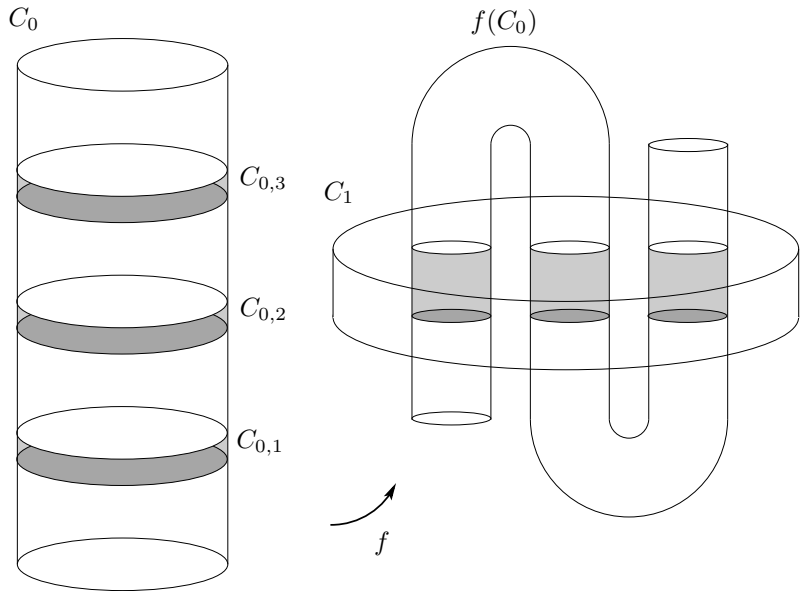}
\end{center}
\caption[slope]{\label{fig:cylinders3}
The embedding $f$ maps the cylinder $C_0$ across the cylinder $C_1$ like a $3$-branched horseshoe.
}
\end{figure}
%
For results 
concerning solid cylinders used in this section 
see Appendix~\ref{sect:cylinder_perturbations}. 
Theorem~\ref{thm:generic_little-Holder_homeomorphism}, 
stated in \S~\ref{subsect:summary_of_results}, 
follows directly from the following result. 
\begin{theorem}\label{thm:open+dense_entropy>logN-little_Holder} 
Let $0\leq \alpha< 1$. 
Assume that $f\in\mathcal{H}^{1}_{\alpha}(M)$. 
For 
each neighbourhood $\mathcal{N}$ of $f$ in 
$\mathcal{H}^{1}_{\alpha}(M)$ 
and 
each positive integer $N$ 
there exists 
$g\in \mathcal{H}^{1}_{\alpha}(M)$ 
such that 
\begin{itemize} 
\item[(i)] 
$g\in\mathcal{N}$ 
\item[(ii)] 
there exists 
a positive integer $k_0$, 
a topological solid cylinder $S$ in $M$ and 
solid sub-cylinders $S_1,S_2,\ldots,S_{N^{k_0}}$ 
such that 
$g^{k_0}$ maps $S_j$ across $S$ 
for $j=1,2,\ldots,N^{k_0}$ 
\end{itemize} 
The second property implies that 
$h_{\mathrm{top}}(g)\geq \log N$ 
and that this property is satisfied in an 
open neighbourhood of $g$. 
\end{theorem}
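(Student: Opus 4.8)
The plan is to fix the target neighbourhood $\mathcal{N}$ of $f$ and a positive integer $N$, and to build $g$ by a finite sequence of bi-Lipschitz-supported local perturbations of $f$, using Corollary~\ref{cor:Lip-alpha-perturbation} (and the post-composition variant in the Remark following it) to guarantee at each stage that we remain in $\mathcal{H}^1_\alpha(M)$. As in the proof of the H\"older Closing Lemma, it suffices to keep $g$ inside a finite intersection of sub-basic sets $\mathcal{N}_{C^\alpha}(f;(U_n,\varphi_n),(V_n,\psi_n),K_n,L_n,\epsilon_n)$; the key mechanism is the H\"older Rescaling Principle (Proposition~\ref{prop:holderrescaling1}), which says that conjugating a bi-Lipschitz perturbation into a small ball of radius $r$ contributes a factor $\mathrm{diam}^{\,1-\alpha}\to 0$ to the $C^\alpha$-seminorm, so arbitrarily large topological rearrangements localized in small enough balls cost arbitrarily little in the $C^\alpha$-Whitney topology. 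This is exactly why there is no regularity barrier.

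The construction itself proceeds as follows. First I would use recurrence: since infinite-entropy (or at least horseshoe-producing) behaviour must be created from scratch, I start from any point $y$ whose forward orbit returns close to itself — non-wandering points are dense in a residual set, but more simply one can first apply the H\"older Closing Lemma (Theorem~\ref{lem:Holder_closing}) to replace $f$ by a nearby map with a genuine periodic orbit $\{x^0,x^1,\dots,x^{p}=x^0\}$, still inside $\mathcal{N}$. Second, working in a single chart $(U_0,\varphi_0)$ around a point $z$ on this orbit, I choose a small topological solid cylinder $S$ transverse to the orbit, together with a thin tube following the orbit segment, so that under $f^{p}$ the cylinder $S$ is carried back near itself (after flowing around the loop). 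Third — the heart of the argument — I insert a local perturbation supported in a small ball meeting $S$ that bends the returning image of $S$ so that $f^{p}$-followed-by-the-perturbation maps $N^{p}$ disjoint subcylinders $S_1,\dots,S_{N^{p}}$ of $S$ each across $S$, i.e.\ acts like an $N^{p}$-branched horseshoe; equivalently, perturb so that one return map folds $S$ across itself with $N$ branches and iterate $p$ times. This folding move is a bi-Lipschitz (indeed diffeomorphic) rearrangement of a neighbourhood, exactly of the type supplied by the cylinder lemmas in Appendix~\ref{sect:cylinder_perturbations} (e.g.\ an analogue of Lemma~\ref{lem:E-perturbation}), and by the Rescaling Principle, after shrinking the support, its $C^\alpha$-seminorm — hence its $C^\alpha$-distance to the identity, and therefore the $C^\alpha$-distance of $g$ to $f$ in every affected chart — is $\le c\,\epsilon$ for a constant $c$ independent of $\epsilon$; the $C^0$-size is controlled by shrinking the support. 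Taking $\epsilon$ smaller than $\min_n \epsilon_n$ (over the finitely many sub-basic sets, with the chart-transition Lipschitz bounds absorbed as in inequality~\eqref{ineq:transition-Lip-Holder-closing}) puts $g$ in $\mathcal{N}$, giving (i), while the horseshoe is (ii) with $k_0=p$.

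For the final two implications: once $g^{k_0}$ maps $S_1,\dots,S_{N^{k_0}}$ across $S$, standard symbolic-dynamics arguments for cylinder horseshoes (again Appendix~\ref{sect:cylinder_perturbations}) produce a closed $g^{k_0}$-invariant set on which $g^{k_0}$ is semiconjugate onto a full shift on $N^{k_0}$ symbols, whence $h_{\mathrm{top}}(g^{k_0})\ge \log(N^{k_0})=k_0\log N$, and so $h_{\mathrm{top}}(g)\ge \log N$. Openness is immediate from the fact that ``$g^{k_0}$ maps $S_j$ across $S$'' is an open condition in the $C^0$-Whitney topology (it is phrased via the non-intersection and intersection conditions (1)--(4) in the definition of mapping across a cylinder, all stable under small $C^0$-perturbations), and the $C^\alpha$-Whitney topology is finer than $C^0$; hence the property in (ii) — and therefore $h_{\mathrm{top}}\ge\log N$ — persists on a whole $C^\alpha$-Whitney neighbourhood of $g$. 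The main obstacle is the second step: constructing the folding perturbation as an honest bi-Lipschitz homeomorphism of $M$ that (a) realizes the $N^{k_0}$-branched horseshoe geometry precisely, (b) is supported in a ball small enough for the Rescaling estimate to beat $\min_n\epsilon_n$, and (c) is compatible with the tube carrying the orbit segment around the loop — this is where the cylinder-perturbation lemmas of Appendix~\ref{sect:cylinder_perturbations} do the real work, and verifying that their hypotheses (disjointness / well-positioning of the subcylinders, transversality of $S$ to the orbit) can be met after the initial Closing-Lemma step is the delicate bookkeeping.
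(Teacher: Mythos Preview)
Your overall mechanism---local bi-Lipschitz perturbations made $C^\alpha$-small via the H\"older Rescaling Principle---is correct and is exactly what drives the paper's proof. But your route differs from the paper's in an important way, and it carries one genuine gap.

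The paper does \emph{not} first apply the Closing Lemma to create a periodic orbit and then glue in a single horseshoe at one point. Instead it first perturbs $f$ to a bi-Lipschitz map (possible since $f\in\mathcal{H}^1_\alpha(M)$ lies in the $C^\alpha$-closure of bi-Lipschitz maps), then takes a recurrent orbit segment $x^0,\dots,x^{k_0}$ with $x^{k_0}$ close to $x^0$, places a rigid solid cylinder near \emph{each} $x^k$, and perturbs $f$ in pairwise disjoint balls so that the cylinder at $x^k$ is sent across the cylinder at $x^{k+1}$ as an $N$-branched horseshoe (Corollary~\ref{cor:transport_cylinder+horseshoe_lip}), while a further perturbation supported in an elongated neighbourhood closes the chain back to the start (Corollary~\ref{cor:cylinder_isometry_2}). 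Composing, $g^{k_0}$ sends $N^{k_0}$ subcylinders across the first cylinder. Each local perturbation only has to correct one application of $f$, with fixed bi-Lipschitz constant $\kappa$; the $C^\alpha$ cost is then bounded via the Second Gluing Principle (Proposition~\ref{prop:Holder_gluing_ii}) using the well-positioning of the supports.

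Your gap is the missing bi-Lipschitz reduction. After your Step~1 (Closing Lemma), $g_1$ is only in $\mathcal{H}^1_\alpha(M)$, not bi-Lipschitz. The cylinder lemmas of Appendix~\ref{sect:cylinder_perturbations}, which your folding move must invoke, require bi-Lipschitz control of the map whose image is being bent; without it the image $g_1^{p}(S)$ of a small rigid cylinder can be arbitrarily distorted, and no uniform Lipschitz bound on the straightening-and-folding diffeomorphism is available independent of the scale of $S$---so the Rescaling Principle does not bite. The fix is to do as the paper does: approximate by bi-Lipschitz \emph{first}, then close (the Closing-Lemma perturbation is composition with a diffeomorphism, hence preserves bi-Lipschitz). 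Your single-point scheme then needs Corollary~\ref{cor:transport_cylinder+horseshoe_lip} applied to $g_1^{p}$ with constants depending on $\kappa^{p}$ and with $N$ replaced by $N^{p}$; that is workable since $p$ is fixed and the Rescaling Principle absorbs any fixed constant, but it is not what the paper does. Also, your parenthetical ``equivalently, perturb so that one return map folds $S$ across itself with $N$ branches and iterate $p$ times'' is not an equivalence: a perturbation supported near a single orbit point acts once per period on the orbit of $S$, so a single $N$-fold there yields only an $N$-branched horseshoe for $g^{p}$, not $N^{p}$.
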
 
\begin{proof}
Before starting the proof let us describe the idea.
Take a forward-recurrent orbit for $f$.
Take a segment of this orbit, of length $k_0$ say, 
whose start- and end-points are sufficiently close.
In pairwise disjoint neighbourhoods of each of the points in this orbit segment
take a solid cylinder.
Perturb $f$ in each of these neighbourhoods so that the solid 
cylinder maps over the next solid cylinder like an 
$N$-branched horseshoe.
Finally, `close-up' the orbit of the horseshoe by mapping 
the solid cylinder at the end of the the orbit segment across 
the solid cylinder prescribed at the start of the orbit segment.
Observe that if $h_0$ maps the solid cylinder $C_0$ 
across $C_1$ and if $h_1$ maps $C_1$ across $C_2$ 
then $h_1\circ h_0$ maps $C_0$ across $C_1$. 
Thus property (ii) will be satisfied.
The discussion below will therefore focus on showing that (i) is satisfied.

\vspace{5pt}

\noindent
{\sl Setup:\/}
Since $f\in \mathcal{H}^{1}_{\alpha}(M)$ 
we may assume, by making an arbitrarily small 
perturbation if necessary, that $f$ is 
bi-Lipschitz. 
As in the proof of the H\"older Closing Lemma 
(Theorem~\ref{lem:Holder_closing}) 
it suffices to 
consider the case when the neighbourhood $\mathcal{N}$ 
is a finite intersection of sub-basic sets of the form
\begin{equation*}
\mathcal{N}_{C^\alpha}(f; (U_n,\varphi_n),(V_n,\psi_n),K_n,L_n,\epsilon_n)
\end{equation*}
as defined in Section~\ref{sec:Holder_prelim}.
By adding to the collection of sub-basic sets if necessary, 
we may assume that the collections of neighbourhoods $\{U_n\}$ 
and $\{V_n\}$ both form open covers of $M$.
For each index $n$, fix a compact neighbourhood  
$\mathsf{U}_n$ of $K_n$ in $U_n\cap f^{-1}(V_n)$,
and a compact neighbourhood 
$\mathsf{V}_n$ of $L_n$ in $f(U_n)\cap V_n$.
Let
\begin{equation}\label{def:f_in_charts}
f_n=\psi_{n}\circ f\circ\varphi_{n}^{-1}
\colon 
\varphi_{n}(U_n\cap f^{-1}(V_n))
\longrightarrow
\psi_{n}(f(U_n)\cap V_n) \ .
\end{equation}
Below we will construct a perturbation $g$ of $f$.
When considering the $C^\alpha$-pseudo-distance 
between $f$ and $g$ corresponding to each sub-basic 
set we will use the notation
\begin{equation}\label{def:g_in_charts}
g_n=\psi_{n}\circ g\circ\varphi_{n}^{-1}
\colon 
\varphi_{n}(U_n\cap g^{-1}(V_n))
\longrightarrow
\psi_{n}(g(U_n)\cap V_n) \ .
\end{equation}
The map $g$ will be constructed from a finite sequence of local perturbations of $f$. 
These perturbations will be in charts. 
Rather than introduce another set of charts we will use the collection $\{(U_n,\varphi_n)\}$.
This is merely to simplify notation, and note that any other set of charts covering $M$ could be used just as well.
However, we will need to estimate these perturbations in each pair of charts corresponding to a sub-basic set.
To facilitate this, 
for each index $n$, 
take a compact neighbourhood $\mathsf{W}_n$ contained in $U_n$, 
with the property that $\{\mathsf{W}_n\}$ covers $M$. 
The perturbations will be supported in these sets.
Since there are only finitely many charts under consideration, 
all of which are smooth, 
and all the sets $\mathsf{W}_n$, $\mathsf{V}_n$ and $\mathsf{U}_n$ are compact, 
there exists a positive real number $c_1$ with the property
that for any chart $(U_m,\varphi_m)$
and for each index $n$,
\begin{equation}\label{ineq:transition-Lip1}
\max\left\{
\left[\varphi_m\circ\psi_n^{-1}\right]_{\mathrm{Lip},\psi_n(\mathsf{W}_m\cap f \mathsf{U}_n)}, \
\left[\psi_n\circ\varphi_m^{-1}\right]_{\mathrm{Lip},\varphi_m(\mathsf{W}_m\cap f \mathsf{U}_n)}
\right\}
\leq c_1
\end{equation}
and 
\begin{equation}\label{ineq:transition-Lip2}
\max\left\{
\left[\varphi_m\circ\varphi_n^{-1}\right]_{\mathrm{Lip},\varphi_n(\mathsf{W}_m\cap\mathsf{U}_n)}, \ 
\left[\varphi_n\circ\varphi_m^{-1}\right]_{\mathrm{Lip},\varphi_m(\mathsf{W}_m\cap\mathsf{U}_n)}
\right\}
\leq c_1 \ ,
\end{equation}
and similarly for the sets $\mathsf{V}_n$ (with $f$ replaced appropriately by $f^{-1}$).

Fix a positive real number $\epsilon$. 
This will be the order of the size of the perturbation.
Let $\delta$ be a positive real number.
This will denote the size of the support of the perturbation.
Take $\delta$ sufficiently small so that
\begin{enumerate}
\item[(a)]
$\delta$ is less than the Lebesgue number of the 
common refinement of the finite open covers 
$\{U_n\}$ and $\{V_n\}$.
Thus any ball of radius $\delta$ or less lies in 
some set of the form $U_m\cap V_n$.
\item[(b)]
$\mathsf{U}_n$ contains a $2\delta$-neighbourhood of $K_n$, 
and 
$\mathsf{V}_n$ contains a $2\delta$-neighbourhood of $L_n$.
Importantly, this implies that, 
given $x$ in $M$, 
either 
$B_M(x,\delta)\cap K_n=\emptyset$ 
or 
$B_M(x,\delta)\subset \mathsf{U}_n$, 
and similarly for $L_n$ and $\mathsf{V}_n$.
\item[(c)]
for each index $n$, whenever $x$ lies in a $\delta$-neighbourhood of $K_n$,
\begin{equation*}
\max\left\{
\left[f_n\right]_{\alpha,\overline{\varphi_n(B_M(x,\delta))}}, \ 
\left[f_n^{-1}\right]_{\alpha,\overline{f_n\circ\varphi_n(B_M(x,\delta))}}
\right\}<\epsilon
\end{equation*}
and similarly when $x$ lies in $\delta$-neighbourhood of $L_n$.
(This is possible since, as $f$ and $f^{-1}$ are bi-Lipschitz, they are both little $\alpha$-H\"older continuous.
As all charts are smooth, it follows that for each index $n$,
the maps $f_n$ and $f_n^{-1}$ are also both little $\alpha$-H\"older continuous.)
\end{enumerate}

\vspace{5pt}

\noindent
{\sl Support of the perturbation:\/}
We will also need the following notation.
Given a point $x^0$ in $M$ and an integer $k$, let 
$x^k=f^k(x^0)$. 
For any index $n$ let $x^k_n=\varphi_n(x^k)$, whenever this is defined.
A slight variation of Claim 1 in the proof of the H\"older Closing Lemma gives us the following.
\begin{claim}
There exist positive real numbers $c$ and $\kappa$ with the following property:
For each sufficiently small positive real number $\delta$, there exists 
a point $x^0$ in $M$, 
a positive integer $k_0$, and 
a chart $(U_0,\varphi_0)$
such that
\begin{enumerate}
\item[(1)]
$x^{0}$ and $x^{k_0}$ lie in $U_0$
\item[(2)]
if $r_0=|x^0_0-x^{k_0}_0|$ 
then 
\begin{equation*}
E(x^{0}_{0},x^{k_0}_{0};c\cdot r_0)\subset \varphi_0(B_M(x^0,\delta))\subset\varphi_0(\mathsf{W}_0)
\end{equation*}
\item[(3)]
$x^k_0\notin E(x^{0}_{0},x^{k_0}_{0};c\cdot r_0)$ for $0<k<k_0$
\end{enumerate}
Also, there exists a positive real number $r_1<cr_0$ such that for each integer $k$ satisfying $0<k<k_0$, 
there is a chart, denoted by $(U_k,\varphi_k)$, for which
\begin{enumerate}
\item[(4)]
$B(x^k_k,r_1)\subset \varphi_k(B_M(x^k,\delta))\subset \varphi_k(\mathsf{W}_k)$
\item[(5)]
the following sets are pairwise disjoint
\begin{equation*}
\varphi_0^{-1}(E(x^0_0,x^{k_0}_{0},c\cdot r_0)), \varphi_1^{-1}(B(x^1,r_1)),\ldots, \varphi_{k_0-1}^{-1}(B(x^{k_0-1},r_1))
\end{equation*}
and, in fact, in any chart $(U_n,\varphi_n)$ these sets are $\kappa$-well positioned, 
{\it i.e.\/}, satisfy inequality~\eqref{eq:well_positioned}.
\end{enumerate}
\end{claim}

\vspace{5pt}

\noindent
To simplify notation, set
\begin{equation*}
E=E(x^{0}_{0},x^{k_0}_{0};c\cdot r_0)
\qquad
\mbox{and}
\qquad
E_M=\varphi_{0}^{-1}(E)
\end{equation*}
and, 
for $0\leq k\leq k_0$,
\begin{equation*}
B^{k}=B(x^{k}_{k},r_1)
\qquad
\mbox{and}
\qquad
B^{k}_{M}=\varphi_{k}^{-1}(B^{k}) \ .
\end{equation*}
By shrinking $r_0$ if necessary we may thus assume that each of the sets 
$E_M$, $f(E_M)$, $B^k_M$ and $f^{-1}(B^k_M)$ are contained in a ball of radius $\delta$
or less.
\begin{comment}
\begin{figure}[t]
\begin{center}
\psfrag{S_k1}[][]{$S_{k_1}$} 
\psfrag{S_k2}[][]{$S_{k_2}$}
\psfrag{f}[][]{$f$}
\psfrag{phi}[][]{$\psi_k^{-1}\circ \phi_{1,k}\circ f_k\circ \psi_k$}
\psfrag{x_k}[][][1]{$x_k$}
\psfrag{x_k1}[][][1]{$x_{k_1}$}
\psfrag{x_k2}[][][1]{$x_{k_2}$}
\includegraphics[width=5.0in]{cylinders1.eps}
\end{center}
\caption[slope]{\label{fig:cylinders1}
The preimages $S_k$ of the rigid solid cylinders $C(a_k,b_k;\rho)$ under the charts $\psi_k$, their images under $f$, and the subsequent perturbation mapping them across $S_{k+1}$.}
\end{figure}
\end{comment}
%
\begin{figure}[t]
\begin{center}
\psfrag{M}[][]{$M$}
\psfrag{psi_k}[][]{$\varphi_{k}$}
\psfrag{psi_k+1}[][]{$\varphi_{k+1}$}
\psfrag{f}[][]{$f$}
\psfrag{f_k}[][]{$\varphi_{k+1}\circ f\circ\varphi_{k}^{-1}$}
\psfrag{phi_k}[][]{$\phi^{k}$}
\psfrag{x_k}[][]{$x^{k}$}
\psfrag{x_k+1}[][]{$x^{k+1}$}
\psfrag{psi_k(x_k)}[][]{$x^{k}_{k}$}
\psfrag{psi_k+1(x_k+1)}[][]{$x^{k+1}_{k+1}$}
\psfrag{r_3}[][]{$r_3$}
\psfrag{D_k}[][]{$B^{k}$}
\psfrag{D_k+1}[][]{$B^{k+1}$}
\psfrag{D(x_k,rho2)}[][]{$B_M(x^{k},\delta)$}
\psfrag{D(x_k+1,rho2)}[][]{$B_M(x^{k+1},\delta)$}
\psfrag{C_k}[][]{$\!\!C(a_{k},b_{k};\varrho)$}
\psfrag{C_k+1}[][]{$C(a_{k+1},b_{k+1};\varrho)$}
\psfrag{f_k(C_k)}[][]{$\varphi_{k+1}\circ f\circ\varphi_{k}^{-1}(C(a_{k},b_{k};\varrho))$}
\includegraphics[width=4.2in]{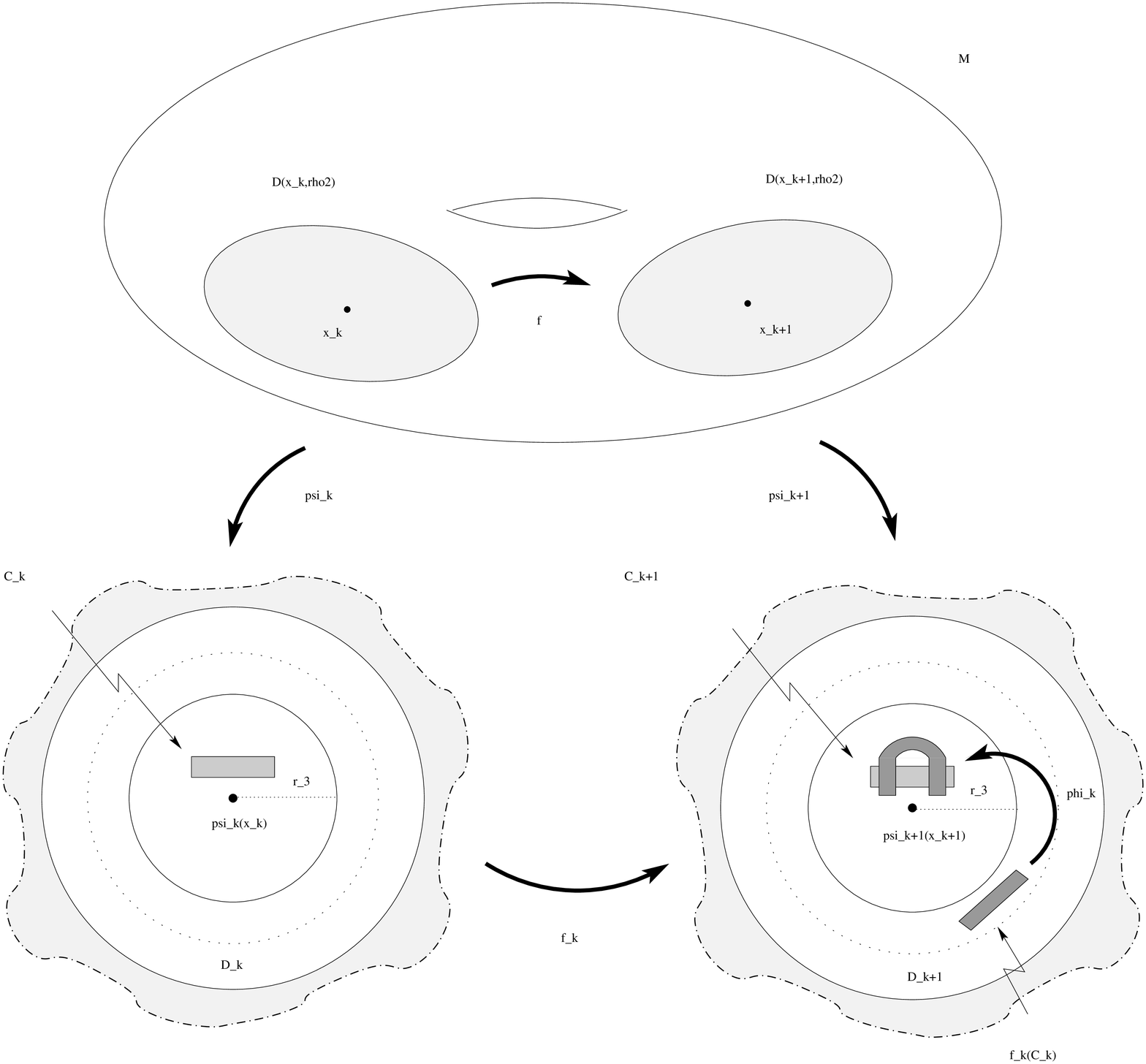}
\end{center}
\caption[slope]{\label{fig:neighbourhoods} 
The map $g_k$ is constructed as the composition of 
$\phi^k\circ\varphi_{k+1}\circ f\circ \varphi_{k}^{-1}$ and maps the cylinder $C(a_k,b_k;\varrho)$ 
across $C(a_{k+1},b_{k+1};\varrho)$ like an $N$-branched horseshoe.}
\end{figure}

\vspace{5pt}

\noindent
{\sl Construction of the perturbation:\/}
For the choice of $r_1$ given above we now take positive real numbers $r_2$ and $r_3$ as in the statement 
of Corollary~\ref{cor:transport_cylinder+horseshoe_lip}.
Take a positive real number $\varrho$ and, for each $0\leq k\leq k_0$, 
points $a_k$ and $b_k$ in $\mathbb{R}^d$, so that the
rigid solid cylinder $C(a_k,b_k;\varrho)$ in $\mathbb{R}^d$ satisfies the following properties 
\begin{enumerate}
\item[(d)]
$C(a_k,b_k;\varrho)$ 
is contained in
$B(x^k_{k},r_3)$, 
\item[(e)]
the $C(a_k,b_k;\varrho)$ are isometric to one another,
\item[(f)]
for each $k$, 
$C(a_k,b_k;\varrho)$ and 
$C(a_{k+1},b_{k+1};\varrho)$ 
satisfy the properties (i)--(iii) of Corollary~\ref{cor:transport_cylinder+horseshoe_lip}, 
where addition in the lower index is taken modulo $k_0$.
\end{enumerate}
(Since the rigid solid cylinders are isometric, property (ii) of 
Corollary~\ref{cor:transport_cylinder+horseshoe_lip} is automatically satisfied.)
Since $f$ is bi-Lipschitz we may now apply Corollary~\ref{cor:transport_cylinder+horseshoe_lip}.
Thus, for $0\leq k<k_0$,
there exists a $C^1$-smooth diffeomorphism $\phi^k$ supported in $B^{k+1}$ such that
\begin{equation*}
\phi^k\circ \varphi_{k+1}\circ f\circ\varphi_{k}^{-1}
\colon 
\varphi_{k}(U_{k}\cap f^{-1}(U_{k+1}))
\longrightarrow 
\varphi_{k+1}(f(U_{k})\cap U_{k+1})
\end{equation*}
maps the solid cylinder 
$C(a_k,b_k;\varrho)$ 
across the solid cylinder 
$C(a_{k+1},b_{k+1};\varrho)$ 
as an $N$-branched horseshoe.
(Here, addition in the lower index is taken modulo $k_0$.)
Moreover, since the ratio of length to radius of each solid cylinder was chosen independently of $\delta$, it follows that
there exists a positive real number $c_2$, independent of $\delta$, such that, for $0\leq k< k_0$,
\begin{equation}\label{eq:phi_k-Lip}
\max \left\{
\left[\phi^k\right]_{\mathrm{Lip}, B^{k+1}}, \ 
\left[(\phi^k)^{-1}\right]_{\mathrm{Lip}, B^{k+1}}
\right\}
\leq 
c_2 N \ .
\end{equation} 
Applying Corollary~\ref{cor:cylinder_isometry_2}, 
there also exists a $C^1$-smooth diffeomorphism $\phi$, supported in $E$, such that $\phi$ maps the 
solid cylinder 
$C(a_{k_0},b_{k_0};\varrho)$ 
across the solid cylinder 
$C(a_{0},b_{0};\varrho)$.
Moreover, since the radius $c\cdot r_0$ and the length $r_0$ of $E$ are comparable, with comparability constant independent of $\epsilon$,
there exists a positive real number $c_3$, also independent of $\epsilon$, such that
\begin{equation}\label{eq:phi-Lip}
\max \left\{
\left[\phi\right]_{\mathrm{Lip}, E} \ , \ 
\left[\phi^{-1}\right]_{\mathrm{Lip}, E}
\right\}
\leq 
c_3 \ .
\end{equation} 
We are now in a position to define the perturbation. 
Namely, define $g\colon M\to M$ by
\begin{equation}\label{def:g-Holder-entropy}
g=
\left\{\begin{array}{ll}
\varphi_1^{-1}\circ\phi^0\circ \varphi_{1}\circ f\circ\varphi_{0}^{-1}\circ \phi\circ\varphi_0 & \mbox{in} \ E_M\\
\varphi_{k+1}^{-1}\circ\phi^k\circ \varphi_{k+1}\circ f & \mbox{in} \ f^{-1}(B^{k+1}_M), \ 0< k<k_0\\
f & \mbox{elsewhere}
\end{array}\right. \ .
\end{equation}
Observe that $g$ is a homeomorphism.
By Corollary~\ref{cor:Lip-alpha-perturbation}, the map $g$ lies in $\mathcal{H}^{1}_{\alpha}(M)$.
\begin{comment}
Take $S=\varphi_0^{-1}(C(a_0,b_0;\varrho))$. 
As remarked in the first paragraph of the proof, since 
$g$ maps $\varphi_k^{-1}(C(a_k,b_k;\varrho))$ across $\varphi_{k+1}^{-1}(C(a_{k+1},b_{k+1};\varrho))$, it follows that 
$S$ has subcylinders $S_1, S_2,\ldots, S_{N^{k_0}}$ such that $g^{k_0}$ maps each subcylinder across $S$.
Thus property (ii) holds.
\end{comment}
Thus, when $g$ is given in charts as per~\eqref{def:g_in_charts}
we therefore have, for each index $n$, the expression
\begin{equation}\label{def:gn-Holder-entropy}
g_n=
\left\{\begin{array}{ll}
\phi^0_{n}
\circ f_n
\circ \phi_{n} 
& \mbox{in} \ \varphi_n(E_M)\\
\phi^k_{n}
\circ f_n 
& \mbox{in} \ \varphi_n(f^{-1}B^{k+1}_M), \ 0< k<k_0\\
f_n & \mbox{elsewhere}
\end{array}\right. \ ,
\end{equation}
where
\begin{equation}\label{def:phikn+phin}
\phi^k_{n}=\psi_n\circ\varphi_{k+1}^{-1}\circ\phi^k\circ \varphi_{k+1}\circ\psi_n^{-1}
\quad
\mbox{and}
\quad
\phi_{n}=\varphi_n\circ\varphi_0^{-1}\circ\phi\circ\varphi_0\circ \varphi_n^{-1} \ .
\end{equation}

\vspace{5pt}

\noindent
{\sl Size of the perturbation:\/}
Let us estimate the size of the $C^\alpha$-pseudo-distances 
between $f$ and $g$ corresponding to each of the sub-basic sets.
Fix an index $n$.
We will bound from above
\begin{equation*}
[f_n-g_n]_{\alpha,\varphi_n(K_n\cap E_M)} 
\quad
\mbox{and}
\quad
[f_n-g_n]_{\alpha,\varphi_n(K_n\cap f^{-1}B^k_M)} \ .
\end{equation*}
Consider the first quantity.
Since $E_M$ lies in a ball of radius $\delta$ or less, (b) above implies 
that either $K_n\cap E_M$ is empty or $E_M$ is contained in $\mathsf{U}_n$.
In the first case there is nothing to show.
Otherwise, by the triangle inequality 
\begin{align}\label{ineq:fn-gn-triangle}
[f_n-g_n]_{\alpha,\varphi_{n}(K_n\cap E_M)}
\leq 
[f_n-g_n]_{\alpha,\varphi_{n}(E_M)}
\leq
[f_n]_{\alpha,\varphi_{n}(E_M)}+[g_n]_{\alpha,\varphi_{n}(E_M)} \ .
\end{align}
As the mapping $\phi$ is supported in the neighbourhood $E$, 
we have the equality 
$\phi_n\circ\varphi_n(E_M)=\varphi_n(E_M)$.
From the definition~\eqref{def:f_in_charts} it follows that
$f_n\circ\varphi_n=\psi_n\circ f$,
so the H\"older Rescaling Principle (Proposition~\ref{prop:holderrescaling1}) 
applied to the expression~\eqref{def:gn-Holder-entropy}
gives
\begin{align}
\left[g_n\right]_{\alpha,\varphi_{n}(E_M)}
&\leq
\left[\phi^{0}_{n}\right]_{\mathrm{Lip},\psi_n\circ f(E_M)}
\left[f_n\right]_{\alpha,\varphi_{n}(E_M)}
\left[\phi_{n}\right]_{\mathrm{Lip},\varphi_{n}(E_M)}^\alpha \ . \label{ineq:gn-EM}
\end{align}
Consider the first and last factor on the right-hand side.
Applying the H\"older Rescaling Principle (Proposition~\ref{prop:holderrescaling1}) 
again to the expression on the right-hand side in~\eqref{def:phikn+phin}
and recalling that $E=\varphi_0(E_M)$, 
we find that
\begin{align*}
\left[\phi_{n}\right]_{\mathrm{Lip},\varphi_n(E_M)}
&\leq
\left[\varphi_n\circ\varphi_0^{-1}\right]_{\mathrm{Lip},\varphi_0(E_M)}
\left[\phi\right]_{\mathrm{Lip},E}
\left[\varphi_0\circ \varphi_n^{-1}\right]_{\mathrm{Lip},\varphi_n(E_M)} \ .
\end{align*}
To this expression we apply 
inequality~\eqref{eq:phi-Lip}, 
and,
since $E_M\subset\mathsf{U}_n\cap\mathsf{W}_0$, 
we may also apply 
inequality~\eqref{ineq:transition-Lip2}. 
Consequently $\left[\phi_{n}\right]_{\mathrm{Lip},\varphi_n(E_M)}$ is bounded from above by a constant independent of $\epsilon$.
Next, using the expression on the left-hand side in~\eqref{def:phikn+phin}, 
the H\"older Rescaling Principle (Proposition~\ref{prop:holderrescaling1}) 
implies that
\begin{align*}
\left[\phi^0_{n}\right]_{\mathrm{Lip},\psi_n(B^1_M)}
&\leq 
\left[\psi_n\circ\varphi_{1}^{-1}\right]_{\mathrm{Lip},\varphi_1(B^1_M)}
\left[\phi^{0}\right]_{\mathrm{Lip},B^1}
\left[\varphi_{1}\circ\psi_n^{-1}\right]_{\mathrm{Lip},\psi_n(B^1_M)} \ .
\end{align*}
To this expression we apply
inequality~\eqref{eq:phi_k-Lip}, and since $B^1_M\subset f(\mathsf{U}_n)\cap\mathsf{W}_1$ 
we may apply inequality~\eqref{ineq:transition-Lip1}.
Thus by the First H\"older Gluing Principle (Proposition~\ref{prop:Holder_gluing_i})
we find that $\left[\phi^0_{n}\right]_{\mathrm{Lip},\psi_n\circ f(E_M)}$ is bounded 
from above, also by a constant independent of $\epsilon$.
By the condition (c) above, this therefore implies, together with~\eqref{ineq:fn-gn-triangle} and~\eqref{ineq:gn-EM}, 
that there exists a positive
real number $c_5$, independent of $\epsilon$, such that
\begin{equation*}
[f_n-g_n]_{\alpha,\varphi_n(K_n\cap E_M)}\leq c_5 \epsilon \ .
\end{equation*}
The same argument shows that, after increasing $c_5$ by a factor independent of $\epsilon$ if necessary, that for $0\leq k<k_0$,
\begin{equation*}
[f_n-g_n]_{\alpha,\varphi_n(K_n\cap f^{-1}B^k_M)}\leq c_5 \epsilon \ .
\end{equation*}
Since $f_n=g_n$ on $\varphi_{n}(K_n)\setminus \varphi_n(E_M\cup\bigcup_k f^{-1}(B^k_M))$ 
the Second H\"older Gluing Principle (Proposition~\ref{prop:Holder_gluing_ii}) 
together with Claim 2(5) implies that there exists a positive real number $c_6$, independent of $\epsilon$, such that
\begin{align*}
[f_n-g_n]_{\alpha,\varphi_{n}(K_n)}
&\leq 
c_6c_5\epsilon \ .
\end{align*}
Also notice that the $C^0$-distance between $f$ and $g$ can be made arbitrarily small provided that 
$\epsilon$, and thus $\delta$, is sufficiently small.
Combining this observation with the preceding inequality 
we therefore find that, for $\epsilon$ sufficiently small,
\begin{equation*}
\|f_n-g_n\|_{C^\alpha(\varphi_n(K_n),\mathbb{R}^d)}<\epsilon_n \ .
\end{equation*}
The same argument applied to the inverse mappings shows that, 
after shrinking $\epsilon$ if necessary, 
we also have the inequality
\begin{equation*}
\|f_n^{-1}-g_n^{-1}\|_{C^\alpha(\psi_n(L_n),\mathbb{R}^d)}<\epsilon_n \ .
\end{equation*}
Thus, taking the minimum of all such $\delta$ and $\epsilon$ over each index $n$, 
the map $g$ must lie in the common intersection of all the sub-basic sets given above.
Hence the theorem is shown.
\end{proof}
Analogously to the homeomorphism case~\cite{Yano80} we also get the following.
\begin{corollary}\label{cor:generic_not_conjugate-Holder}
Let $M$ be a compact manifold of dimension at least two.
Let $0\leq \alpha<1$.
A generic homeomorphism in $\mathcal{H}^{1}_{\alpha}(M)$ 
is not conjugate to any diffeomorphism (or any bi-Lipschitz homeomorphism).
\end{corollary}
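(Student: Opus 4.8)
The plan is to derive Corollary~\ref{cor:generic_not_conjugate-Holder} as a formal consequence of Theorem~\ref{thm:generic_little-Holder_homeomorphism} together with the classical finiteness of topological entropy for Lipschitz (in particular $C^1$) self-maps of a compact manifold. Since topological entropy is a conjugacy invariant, the key observation is: if $f\in\mathcal{H}^1_\alpha(M)$ has infinite topological entropy, then $f$ cannot be topologically conjugate to any $C^1$-diffeomorphism, nor to any bi-Lipschitz homeomorphism, because by the bound $h_{\mathrm{top}}(h)\leq \dim_H(X)\cdot\log^+(L)$ quoted in the introduction (see~\cite[Theorem 3.2.9]{KandH}) every Lipschitz self-map of the compact manifold $M$ has finite topological entropy, and conjugacy preserves $h_{\mathrm{top}}$.

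First I would invoke Theorem~\ref{thm:generic_little-Holder_homeomorphism}: for $M$ a smooth compact manifold of dimension $d\geq 2$ and $0\leq\alpha<1$, the set
\[
\mathcal{G}=\{\,g\in\mathcal{H}^1_\alpha(M) : h_{\mathrm{top}}(g)=\infty\,\}
\]
is residual in $\mathcal{H}^1_\alpha(M)$, which is a Baire space. Next I would note that $\mathcal{G}$ is contained in the set
\[
\mathcal{G}'=\{\,g\in\mathcal{H}^1_\alpha(M) : g \text{ is not conjugate to any } C^1\text{-diffeomorphism or bi-Lipschitz homeomorphism}\,\}.
\]
Indeed, suppose $g\in\mathcal{G}$ and $g$ were conjugate, via a homeomorphism $h$, to a bi-Lipschitz homeomorphism (or $C^1$-diffeomorphism) $\psi$ of $M$; then $h_{\mathrm{top}}(\psi)=h_{\mathrm{top}}(g)=\infty$, contradicting the fact that $\psi$, being Lipschitz on the compact manifold $M$ (of finite Hausdorff dimension $d$), has finite topological entropy. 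Since $\mathcal{G}\subseteq\mathcal{G}'$ and $\mathcal{G}$ is residual, $\mathcal{G}'$ is residual as well, which is precisely the assertion that a generic homeomorphism in $\mathcal{H}^1_\alpha(M)$ is not conjugate to any diffeomorphism or bi-Lipschitz homeomorphism.

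There is essentially no obstacle here: the corollary is a soft consequence of the main theorem and the classical entropy bound, exactly mirroring how Yano deduced his Corollary from his theorem in~\cite{Yano80}. The only mild point worth making explicit is that the conjugating map $h$ need only be a homeomorphism (not itself of any particular regularity), which is fine since topological entropy is invariant under topological conjugacy regardless of the regularity of the conjugacy; and that a $C^1$-diffeomorphism of a compact manifold is in particular Lipschitz, so the two cases ($C^1$ and bi-Lipschitz) are handled uniformly by the same finiteness statement.
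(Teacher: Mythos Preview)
Your proposal is correct and is exactly the argument the paper has in mind: the paper does not give a separate proof of this corollary but simply notes that it follows ``analogously to the homeomorphism case~\cite{Yano80}'', i.e., by combining Theorem~\ref{thm:generic_little-Holder_homeomorphism} with the conjugacy invariance of topological entropy and the finiteness bound $h_{\mathrm{top}}(f)\leq \dim_H(M)\cdot\log^+(L)$ for (bi-)Lipschitz maps recalled in the introduction.
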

Recall that horseshoes possess (unique) measures of maximal entropy.
Observe that in the proof of Theorem~\ref{thm:open+dense_entropy>logN-little_Holder}, 
the worst that can happen is that the recurrent point being used already lies
in a horseshoe.
However, as the perturbation being used is arbitrarily small, if the horseshoe of the original map has $N$ branches, we may assume that the perturbed map
has a horseshoe with at least $N$ branches.
Thus, considering all possible sums of these measures over all possible horseshoes, 
we get the following corollary.
\begin{corollary}\label{cor:generic_measures_max_entropy-Holder}
Let $M$ be a compact manifold of dimension at least two.
Let $0\leq \alpha<1$.
A generic homeomorphism in $\mathcal{H}^{1}_{\alpha}(M)$ has uncountably many measures of maximal entropy.
\end{corollary}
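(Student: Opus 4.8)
The plan is to deduce Corollary~\ref{cor:generic_measures_max_entropy-Holder} from Theorem~\ref{thm:open+dense_entropy>logN-little_Holder} in essentially the same way that the genericity of \emph{infinite} entropy was deduced, but keeping careful track of the measures of maximal entropy produced by the horseshoes. First I would recall the structural fact that if $g$ maps solid sub-cylinders $S_1,\dots,S_{N^{k_0}}$ of a topological solid cylinder $S$ across $S$ under $g^{k_0}$, then the maximal invariant set inside $S$ carries a topological factor (indeed, after passing to the natural coding, a subsystem) conjugate to the full shift on $N^{k_0}$ symbols; such a horseshoe possesses a unique measure of maximal entropy, call it $\mu_g$, and $h_{\mu_g}(g)\geq\log N$. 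Moreover this horseshoe, and hence the measure $\mu_g$, persists under all sufficiently small perturbations, so the property ``$g$ has a horseshoe carrying a measure of entropy at least $\log N$'' holds on an open set.

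Next I would build the residual set. For each positive integer $N$, let $\mathcal{G}_N$ be the set of $g\in\mathcal{H}^1_\alpha(M)$ possessing finitely many pairwise ``independent'' horseshoes $H_1(g),\dots,H_{\ell}(g)$ — disjoint, invariant, transitive, each conjugate to a full shift on at least $N$ symbols — whose number $\ell=\ell(g)$ tends to infinity as one ranges over $\mathcal{G}_N$, or more simply the set of $g$ having at least $N$ disjoint horseshoes each with at least two branches. By Theorem~\ref{thm:open+dense_entropy>logN-little_Holder} together with the observation in the paragraph preceding the corollary — a small perturbation creates a new horseshoe supported near a chosen recurrent orbit segment, disjoint from any finite collection of previously existing horseshoes because one may choose the recurrent point and the tube $E$ to avoid them — each $\mathcal{G}_N$ is dense, and by persistence of horseshoes it is open. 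Hence $\mathcal{R}=\bigcap_{N\geq 1}\mathcal{G}_N$ is residual in the Baire space $\mathcal{H}^1_\alpha(M)$.

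Finally I would check that every $g\in\mathcal{R}$ has uncountably many measures of maximal entropy, i.e.\ uncountably many ergodic invariant Borel probability measures realising $h_{\mathrm{top}}(g)=+\infty$. Since $g\in\mathcal{G}_N$ for every $N$, fix for each $N$ a horseshoe $H_N$ conjugate to a full shift on $M_N\geq 2$ symbols, the $H_N$ pairwise disjoint; let $\nu_N$ be its unique measure of maximal entropy, so $h_{\nu_N}(g)=\log M_N^{1/k_{0,N}}\geq\log 2>0$ after normalising, and we may extract a subsequence along which $M_N\to\infty$, so $h_{\nu_N}(g)\to\infty$. Now for any sequence of weights $(t_N)_{N\ge1}$ with $t_N\ge0$, $\sum_N t_N=1$, and $\sum_N t_N h_{\nu_N}(g)=\infty$, the convex combination $\mu=\sum_N t_N\nu_N$ is a $g$-invariant Borel probability measure with, by affinity of metric entropy, $h_\mu(g)=\sum_N t_N h_{\nu_N}(g)=+\infty=h_{\mathrm{top}}(g)$; thus $\mu$ is a measure of maximal entropy. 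Distinct choices of the weight sequence $(t_N)$ give distinct measures (the measures $\nu_N$ have mutually singular supports), and there are uncountably many such sequences with divergent entropy sum — for instance, since $h_{\nu_N}(g)\to\infty$ one can choose a subsequence on which $h_{\nu_{N_j}}(g)\ge 2^j$ and then let $t_{N_j}$ range over any summable-to-a-fixed-value family with $\sum_j t_{N_j}2^j=\infty$, a family parametrised by an uncountable set. The main obstacle is the bookkeeping in the density step: one must verify that the horseshoe produced by the perturbation in Theorem~\ref{thm:open+dense_entropy>logN-little_Holder} can always be taken disjoint from a prescribed finite family of pre-existing horseshoes, which follows because the recurrent orbit segment and the elongated neighbourhood $E$ and tubes $B^k_M$ supporting the perturbation can be chosen inside an arbitrarily small ball about a recurrent point lying off the (closed, nowhere dense) union of the finitely many given horseshoes, leaving those horseshoes untouched; once this is in hand the rest is the standard Baire-category and entropy-affinity argument.
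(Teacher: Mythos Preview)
Your approach is essentially the same as the paper's: produce, on a residual set, a sequence of horseshoes whose maximal-entropy measures have entropies tending to infinity, and then take countable convex combinations with divergent entropy sums.  The paper's own argument is a one-line sketch (``considering all possible sums of these measures over all possible horseshoes''); your final paragraph fills this in correctly, using affinity of metric entropy and mutual singularity.

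There is, however, one unnecessary complication that creates a small gap.  You insist that the horseshoes be pairwise \emph{disjoint}, and justify this by choosing the recurrent point, and hence the support of the perturbation, off the finitely many previously constructed horseshoes.  But it is not clear that such a recurrent point exists: the non-wandering set of the perturbed map could, a priori, be entirely contained in the union of the horseshoes already built (think of Axiom~A examples whose basic sets are precisely those horseshoes together with a few sinks and sources that your next perturbation might destroy).  The paper avoids this issue altogether: it simply uses the residual set already produced by Theorem~\ref{thm:open+dense_entropy>logN-little_Holder}, noting that the perturbation is small enough that any previously existing horseshoe persists by structural stability, whether or not the new one overlaps it.  One therefore obtains, for every $g$ in the residual set, horseshoes $\Lambda_N$ with $h_{\nu_N}(g)\geq\log N$; passing to a subsequence with strictly increasing entropies, the $\nu_N$ are pairwise distinct \emph{ergodic} measures and hence mutually singular regardless of whether their supports meet.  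That mutual singularity is all you need to conclude that different weight sequences $(t_N)$ give different measures $\sum t_N\nu_N$.  With this adjustment your argument goes through; the disjoint-support bookkeeping can simply be dropped.
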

Next recall the following.
Let $X$ be a compact topological space.
Let $\mathcal{M}(X)$ denote the set of Borel probability measures on $X$.
Given $f\in C^0(X,X)$, let $\mathcal{M}(X,f)$ denote the set of $f$-invariant Borel probability measures.
For any $\phi\in C^0(X,\mathbb{R})$, let $P(f,\phi)$ denote the {\it pressure} of $f$ wth respect to $\phi$.
Then the Variational Principle~\cite[Section 9]{WaltersBook} states that
\begin{equation*}
P(f,\phi)=\sup_{\mu\in \mathcal{M}(X,f)}\left( h_\mu(f)+\int \phi\,d\mu\right) \ .
\end{equation*}
Recall that $\mu\in\mathcal{M}(X,f)$ is an {\it equilibrium state} for $(f,\phi)$ if
\begin{equation*}
P(f,\phi)=h_\mu(f)+\int \phi\,d\mu \ .
\end{equation*}
\begin{lemma}
If $h_\mathrm{top}(f)=+\infty$ then the set of equilibrium states of $(f,\phi)$ is independent of $\phi\in C^0(X,\mathbb{R})$.
\end{lemma}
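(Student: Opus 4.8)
The plan is to exploit the fact that infinite topological entropy forces $P(f,\phi)=+\infty$ for \emph{every} continuous potential $\phi$, which in turn makes the equilibrium condition vacuous in the same way for all $\phi$. First I would recall the elementary bounds on pressure coming from the Variational Principle: for any $\phi\in C^0(X,\mathbb{R})$ and any $\mu\in\mathcal{M}(X,f)$ we have
\begin{equation*}
h_\mu(f)+\int\phi\,d\mu\;\le\;h_\mu(f)+\|\phi\|_{C^0}\;\le\;h_\mu(f)+\|\phi\|_{C^0},
\end{equation*}
and likewise $h_\mu(f)-\|\phi\|_{C^0}\le h_\mu(f)+\int\phi\,d\mu$. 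Taking the supremum over $\mu\in\mathcal{M}(X,f)$ and using the Variational Principle, this gives
\begin{equation*}
h_\mathrm{top}(f)-\|\phi\|_{C^0}\;\le\;P(f,\phi)\;\le\;h_\mathrm{top}(f)+\|\phi\|_{C^0}.
\end{equation*}
Hence $h_\mathrm{top}(f)=+\infty$ implies $P(f,\phi)=+\infty$ for every $\phi\in C^0(X,\mathbb{R})$.

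Next I would unwind what ``$\mu$ is an equilibrium state for $(f,\phi)$'' means when $P(f,\phi)=+\infty$. By definition $\mu$ is an equilibrium state iff $h_\mu(f)+\int\phi\,d\mu=P(f,\phi)=+\infty$. Since $\int\phi\,d\mu$ is finite (indeed bounded by $\|\phi\|_{C^0}$) for every $\mu\in\mathcal{M}(X,f)$, this equality holds if and only if $h_\mu(f)=+\infty$. Crucially, this last condition does not involve $\phi$ at all. Therefore
\begin{equation*}
\{\mu\in\mathcal{M}(X,f):\mu\text{ is an equilibrium state for }(f,\phi)\}=\{\mu\in\mathcal{M}(X,f):h_\mu(f)=+\infty\},
\end{equation*}
and the right-hand side is manifestly independent of $\phi\in C^0(X,\mathbb{R})$. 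This proves the lemma.

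I do not anticipate a genuine obstacle here; the statement is essentially a formal consequence of the Variational Principle together with the boundedness of $\phi\mapsto\int\phi\,d\mu$ uniformly over probability measures. The only minor point worth stating carefully is the two-sided pressure bound $|P(f,\phi)-h_\mathrm{top}(f)|\le\|\phi\|_{C^0}$, which some readers might want spelled out; it follows immediately by adding $\pm\|\phi\|_{C^0}$ inside the supremum defining $P(f,\phi)$ and comparing with the $\phi\equiv 0$ case, where the Variational Principle reduces to $P(f,0)=h_\mathrm{top}(f)$. One should also note that the set of equilibrium states so described could in principle be empty (infinite entropy does not guarantee an invariant measure of infinite entropy in general), but the lemma as stated only asserts $\phi$-independence of this set, which holds regardless.
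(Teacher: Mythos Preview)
Your proof is correct and follows essentially the same approach as the paper: both show $P(f,\phi)=+\infty$ from $h_{\mathrm{top}}(f)=+\infty$, then use the finiteness of $\int\phi\,d\mu$ to conclude that the equilibrium condition reduces to $h_\mu(f)=+\infty$, which is $\phi$-independent. Your derivation of the pressure bound is slightly more explicit than the paper's (which simply cites Walters), and your closing remark about possible emptiness of the equilibrium set is a nice clarification not present in the original.
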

\begin{proof}
By~\cite[Section 9.2]{WaltersBook}, 
$h_\mathrm{top}(f)=+\infty$ implies that $P(f,\phi)=+\infty$, for all $\phi\in C^0(X,\mathbb{R})$.
But $P(f,\phi)=+\infty$ implies that any equilibrium state $\mu$ must satisfy either 
$h_\mu(f)=+\infty$, 
or $\int \phi\,d\mu=+\infty$.
However, $\mu$ is a probability measure, so and continuous function $\phi$ satisfies
$|\int \phi\,d\mu|\leq |\phi|_X |\int\,d\mu|=|\phi|_X<\infty$.
Consequently, any equilibrium state $\mu$ must satisfy $h_\mu(f)=+\infty$. 
Conversely, any $\mu\in\mathcal{M}(X,f)$ with $h_\mu(f)=+\infty$
is an equilibrium state for any $\phi\in C^0(X,\mathbb{R})$.
The result follows.
\end{proof}
Combining this with Theorem~\ref{thm:generic_little-Holder_homeomorphism} 
we therefore get the following corollary.
\begin{corollary}\label{cor:generic_eqm_states-Holder}
Let $M$ be a compact manifold of dimension at least two.
Let $0\leq \alpha <1$.
For a generic homeomorphism $f$ in $\mathcal{H}^1_\alpha(M)$, 
the set of equilibrium states of $(f,\phi)$ is independent of $\phi\in C^0(M,\mathbb{R})$.
In fact, generically the set of equilibrium states, for any $\phi\in C^0(M,\mathbb{R})$, 
coincides with the set of measures of maximal entropy.  
\end{corollary}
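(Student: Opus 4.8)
The plan is to deduce the corollary directly from Theorem~\ref{thm:generic_little-Holder_homeomorphism} and the Lemma immediately preceding the statement, after extracting one extra piece of information from the proof of that Lemma. First I would fix the residual set to work on: by Theorem~\ref{thm:generic_little-Holder_homeomorphism} there is a residual subset $\mathcal{R}$ of $\mathcal{H}^{1}_{\alpha}(M)$ consisting of homeomorphisms $f$ with $h_{\mathrm{top}}(f)=+\infty$, so it suffices to establish both assertions of the corollary for an arbitrary fixed $f\in\mathcal{R}$.

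Next I would invoke a sharpened form of the preceding Lemma. That Lemma already tells us that, since $h_{\mathrm{top}}(f)=+\infty$, the set of equilibrium states of $(f,\phi)$ is independent of $\phi\in C^{0}(M,\mathbb{R})$. Inspecting its proof, what is actually shown is that $P(f,\phi)=+\infty$ for every $\phi$, and that — since every continuous $\phi$ on the compact space $M$ is bounded — a measure $\mu\in\mathcal{M}(M,f)$ is an equilibrium state for $(f,\phi)$ if and only if $h_{\mu}(f)=+\infty$. This characterisation is visibly independent of $\phi$, so
\[
\{\,\mu : \mu\text{ is an equilibrium state of }(f,\phi)\,\}=\{\,\mu\in\mathcal{M}(M,f) : h_{\mu}(f)=+\infty\,\}
\]
for every $\phi\in C^{0}(M,\mathbb{R})$.

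Finally I would match the right-hand side with the measures of maximal entropy. By the variational principle~\cite[Theorem 8.6]{WaltersBook}, $\sup_{\mu\in\mathcal{M}(M,f)}h_{\mu}(f)=h_{\mathrm{top}}(f)=+\infty$, so a measure of maximal entropy for $f$ is, by definition, an invariant measure with $h_{\mu}(f)=+\infty$; hence the set of measures of maximal entropy of $f$ is exactly $\{\mu\in\mathcal{M}(M,f) : h_{\mu}(f)=+\infty\}$, which by the previous display is the common set of equilibrium states. This proves both claims on $\mathcal{R}$, hence generically.

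I do not expect a serious obstacle here, since all the real work sits in Theorem~\ref{thm:generic_little-Holder_homeomorphism} and in the preceding Lemma; the only point requiring care is conceptual, namely that one must read ``measure of maximal entropy'' as ``invariant measure of infinite entropy'' once $h_{\mathrm{top}}=+\infty$, and that the argument nowhere asserts this set is nonempty. (Its nonemptiness — indeed uncountability — for generic $f$ is the separate content of Corollary~\ref{cor:generic_measures_max_entropy-Holder}, obtained from suitably weighted countable convex combinations of the maximal measures of the horseshoes furnished by Theorem~\ref{thm:open+dense_entropy>logN-little_Holder}; but the equality of the two sets of measures claimed here holds in any case, trivially so in the nongeneric situation where both are empty.)
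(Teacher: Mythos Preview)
Your proposal is correct and follows exactly the approach indicated in the paper, which deduces the corollary in one line by combining Theorem~\ref{thm:generic_little-Holder_homeomorphism} with the preceding Lemma; you have simply unpacked that combination explicitly, including the identification (already present in the Lemma's proof) of equilibrium states with invariant measures of infinite entropy.
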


\section{Part II -- Sobolev Mappings}
\subsection{Preliminaries.}\label{sec:Sobolev_prelim} 
Let us recall some basic definitions and facts about Sobolev functions and maps. 
For details on the material here 
we strongly recommend \cite{GR90}, \cite{MRSY} and~\cite{Zi}. Here and throughout, all open domains in Euclidean spaces will 
be assumed to have piecewise-smooth boundaries. 

\subsubsection*{Sobolev functions} 
Let $\Omega\subseteq \mathbb{R}^d$ be open, and let $k\in \mathbb{N}$ and $1\leq p<\infty$. 
Recall that a measurable function $u\colon\Omega\to \mathbb{R}$ is in the Sobolev class $W^{k,p}(\Omega)$ if $u$ has 
distributional partial derivatives of all orders up to $k$ and, for each multi-index 
$\alpha=(\alpha_1,\alpha_2,\ldots,\alpha_d)\in \mathbb{N}^d$ with $|\alpha|=\sum_{i=1}^d \alpha_i\leq k$, the 
corresponding distributional partial derivative 
$D^{\alpha}u$ belongs to $L^p(\Omega)$. 
The space $W^{k,p}(\Omega)$ is a Banach space under the norm
\begin{equation*}
\|u\|_{k,p}\;=\; \sum_{|\alpha|\leq k} \|D^{\alpha}u\|_p \ ,
\end{equation*}
where $\|\cdot\|_p$ denotes the standard $L^p$-norm in $\Omega$. 
It is known that every Sobolev function $u$ is 
absolutely continuous on lines (ACL), 
{\it i.e.\/}, 
its restriction to Lebesgue almost every straightline 
(parallel to some coordinate axis) is absolutely continuous~\cite[Section 1.1.3]{Maz'yaBook}. 
It is also known that $u$ is differentiable Lebesgue almost everywhere in $\Omega$ provided that $p>d$.
(This was proved for $d=2$ by Cesari~\cite{Cesari1941} and for arbitrary $d$ by Calder{\'o}n~\cite{Calderon1951}.) 

\subsubsection*{Sobolev maps}
Let us consider a measurable map $f\colon\Omega\to \mathbb{R}^d$. 
We say that $f$ is a {\it Sobolev map\/} in 
the class $W^{k,p}$ if, writing $f=(f_1,f_2,\ldots,f_d)$, each 
component $f_i\in W^{k,p}(\Omega)$. 
Note that such a map has a {\it formal\/} Jacobian matrix 
$Df(x)=(\partial_{x_j}f_i(x))_{1\leq i,j\leq d}$ defined 
at Lebesgue almost every point $x\in \Omega$. 

The space of Sobolev maps in the class $W^{k,p}$, which we denote by $W^{k,p}(\Omega,\mathbb{R}^d)$, can be made 
into a Banach space in several equivalent ways. One natural way is to define, for $f\in W^{k,p}(\Omega,\mathbb{R}^d)$, 
its Sobolev norm by 
$\|f\|_{W^{k,p}(\Omega,\mathbb{R}^d)} = \sum_{i=1}^{d} \|f_i\|_{k,p}$, where $f_i$, $i=1,\ldots,d$, are the components 
of $f$. With this norm $W^{k,p}(\Omega,\mathbb{R}^d)$ is a Banach space.

\subsubsection*{Continuous Sobolev maps}
We are not interested in {\it all\/} Sobolev maps, 
only in those that are {\it continuous\/} up to the boundary. 
Let us write 
\begin{equation*}
\mathbb{W}^{k,p}\left(\Omega, \mathbb{R}^d\right) 
= 
W^{k,p}\left(\Omega,\mathbb{R}^d\right) \cap C^0\left({\overline{\Omega}}, \mathbb{R}^d\right) \ .
\end{equation*}
We need a topology on this space. 
Rather than giving a general definition covering all cases, we 
restrict ourselves to the cases when $k=1$ and $p\geq 1$ is arbitrary; these are the only cases that will be 
relevant in the present paper. We define a norm in 
$\mathbb{W}^{1,p}\left(\Omega,\mathbb{R}^d\right)$ 
as follows. 
First, given a $d\times d$ matrix $A=(a_{ij})$, we define its {\it norm\/} to be $|A|=\sum_{i,j=1}^{d} |a_{ij}|$. 
Given 
$f\in \mathbb{W}^{1,p}\left(\Omega,\mathbb{R}^d\right)$, 
let 
\begin{equation*}
 \|f\|_{\mathbb{W}^{1,p}(\Omega,\mathbb{R}^d)}
\;=\; 
\|f\|_{C^0(\Omega)} 
+\left(\int_{\Omega} |Df(x)|^p\,d\mu(x)\right)^{\frac{1}{p}} \ .
\end{equation*}
This defines a norm, and with this norm $\mathbb{W}^{1,p}\left(\Omega,\mathbb{R}^d\right)$ is a Banach space.

\subsubsection*{Sobolev homeomorphisms}
Let $f\colon\Omega\to f(\Omega)\subseteq \mathbb{R}^d$ 
be an orientation-preserving homeomorphism 
(continuous up to the boundary of $\Omega$), and 
suppose that $f\in \mathbb{W}^{1,p}\left(\Omega,\mathbb{R}^d\right)$. 
Observe that differentiability properties of $f$, or more generally for open mappings, are better than for general mappings.
More precisely, $f$ is differentiable almost everywhere in $\Omega$ provided that $p>d-1$.
(This was proved for $d=2$ by Gehring and Lehto~\cite{GehringLehto}, and for arbitrary $d$ by V{\"a}is{\"a}l{\"a}~\cite{Vaisala1965}.
In fact, in dimension two Gehring and Lehto showed the result also holds when $p=1$.) 
We denote by $J_f(x)=\mathrm{det}Df(x)$ the {\it Jacobian determinant\/} of $f$ at $x$ in $\Omega$. 
\begin{comment}
When can it be shown that $J_f(x)=\mathrm{det}Df(x)$ is the Jacobian of $f$ 
in the sense of measure theory?
\end{comment}

The {\it chain rule\/} for Sobolev maps 
does not always hold. For instance, for 
$d>2$ there exist homeomorphisms 
$f\colon (0,1)^d\to (0,1)^d$ 
such that $f$ and $f^{-1}$ are of class $\mathbb{W}^{1,d-1}$ 
but both $f$ and $f^{-1}$ have zero Jacobian 
matrix at Lebesgue almost every point (see \cite{DHS}). 
However, in the present paper, we will 
need the chain rule for the composition 
of two Sobolev homeomorphisms only in 
the case when one of them is a 
{\it diffeomorphism\/}. 
In this case, the following result is available.
\begin{lemma}[Chain rule]\label{chainrule}
Let $U, V\subseteq \mathbb{R}^d$ be open domains. 
Let $f\in \mathbb{W}^{1,p}\left(U, \mathbb{R}^d\right)$ be a homeomorphism onto its image, 
and let $\phi\colon V\to \mathbb{R}^d$ be a $C^1$-diffeomorphism onto its image. 
Assume that either (a) $d>2$ and $p>d-1$; or (b) $d=2$ and $p\geq 1$. 
 Then the following statements are true.
 \begin{enumerate}
  \item[(i)] If $\phi(V)\subseteq U$, then $f\circ \phi\in \mathbb{W}^{1,p}\left(V,\mathbb{R}^d\right)$, the composition $f \circ \phi$ is differentiable 
  almost everywhere and 
  \begin{equation*}\label{chain1}
   D(f\circ\phi)(x) \;=\; Df(\phi(x))D\phi(x) \ \ \ \textrm{for Lebesgue a.e.}\ \ \ x\in V\ .
  \end{equation*}
 \item[(ii)] If $f(U)\subseteq V$, then $\phi\,\circ f\in \mathbb{W}^{1,p}\left(U,\mathbb{R}^d\right)$, the composition $\phi\,\circ f$ is differentiable 
  almost everywhere and 
  \begin{equation}\label{chain2}
   D(\phi\circ f)(x) \;=\; D\phi(f(x))D\phi(x) \ \ \ \textrm{for Lebesgue a.e.}\ \ \ x\in U\ .
  \end{equation}
 \end{enumerate}

\end{lemma}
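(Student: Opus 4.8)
The plan is to reduce the statement to the classical chain rule for Sobolev maps composed with diffeomorphisms, using the ACL (absolutely continuous on lines) characterization of Sobolev functions together with the a.e. differentiability results quoted just above the statement. First I would observe that both conclusions are local, so it suffices to prove them on relatively compact subdomains; this lets me assume all the maps are defined and continuous on neighbourhoods of the closures and that the derivatives of $\phi$ and $\phi^{-1}$ are uniformly bounded.

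For part (ii), the composition $\phi\circ f$, I would argue as follows. Since $f\in\mathbb{W}^{1,p}(U,\mathbb{R}^d)$ is a homeomorphism onto its image, the differentiability results of Gehring--Lehto (for $d=2$, including $p=1$) and V\"ais\"al\"a (for $d>2$, $p>d-1$) guarantee that $f$ is differentiable at Lebesgue-a.e.\ point of $U$. At every such point $x$, since $\phi$ is a genuine $C^1$-diffeomorphism, the ordinary chain rule of differential calculus applies and gives that $\phi\circ f$ is differentiable at $x$ with $D(\phi\circ f)(x)=D\phi(f(x))\,Df(x)$. It then remains to check that this a.e.-defined derivative is the distributional derivative of $\phi\circ f$ and lies in $L^p$. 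The $L^p$ bound is immediate from $|D(\phi\circ f)(x)|\le \|D\phi\|_\infty\,|Df(x)|$ and $Df\in L^p$. To identify it with the distributional derivative I would use the ACL characterization: $f$ is ACL, so on a.e.\ line parallel to a coordinate axis $f$ is absolutely continuous; composing an absolutely continuous curve with the $C^1$ map $\phi$ yields an absolutely continuous curve, whose classical derivative agrees with the chain-rule expression a.e.; integrating and applying Fubini shows $\phi\circ f$ is ACL with the stated partial derivatives, and an ACL function whose partials are in $L^p$ is in $W^{1,p}$. Continuity up to the boundary is clear since $f$ and $\phi$ are continuous there, so $\phi\circ f\in\mathbb{W}^{1,p}$.

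For part (i), $f\circ\phi$, the argument is symmetric but slightly more delicate because the ``inner'' map is now the diffeomorphism. Here I would again use that $f$ is differentiable a.e.\ and that $\phi$ is a diffeomorphism, so $\phi$ maps null sets to null sets and, crucially, maps a.e.\ coordinate line to a $C^1$ curve along which $f$ (being ACL, hence absolutely continuous on a.e.\ line, and in fact absolutely continuous along rectifiable curves avoiding a suitable null set by the a.e.\ differentiability) remains absolutely continuous. Thus $f\circ\phi$ is absolutely continuous along a.e.\ coordinate line, with classical derivative $Df(\phi(x))D\phi(x)$ a.e.; the $L^p$ bound follows by the change of variables $y=\phi(x)$, using that the Jacobian of $\phi$ and its inverse are bounded, so $\int|Df(\phi(x))D\phi(x)|^p\,dx\le \|D\phi\|_\infty^p\,\|J_{\phi^{-1}}\|_\infty\int|Df(y)|^p\,dy<\infty$. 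Again ACL plus $L^p$ partials gives $f\circ\phi\in W^{1,p}$, and continuity is inherited.

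The main obstacle is the step justifying that the pointwise chain-rule formula actually is the distributional derivative — i.e.\ that composition with a $C^1$-diffeomorphism preserves absolute continuity on a.e.\ line in the precise sense needed. For part (ii) this is routine since the diffeomorphism is on the outside. For part (i) one must be careful: a $C^1$-diffeomorphism need not send a generic axis-parallel segment to an axis-parallel segment, so one cannot directly invoke the ACL property of $f$ on coordinate lines; instead one needs that $f$ is absolutely continuous on almost every line in an arbitrary direction (or on almost every $C^1$ curve from a suitable foliation), which does follow from the a.e.\ differentiability of $f$ together with $Df\in L^p$, but this is exactly where the hypotheses $p>d-1$ (resp.\ $p\ge1$ when $d=2$) are essential and where the counterexample mentioned (the $\mathbb{W}^{1,d-1}$ homeomorphism with vanishing Jacobian) shows the borderline cannot be crossed. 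I would handle this by quoting the standard fact — available in the references \cite{GR90}, \cite{MRSY}, \cite{Zi} — that under these hypotheses a Sobolev homeomorphism satisfies the chain rule when pre- or post-composed with a $C^1$-diffeomorphism, and organise the proof around the ACL/a.e.-differentiability dichotomy above.
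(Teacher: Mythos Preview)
Your proposal is correct and follows essentially the same strategy as the paper: first invoke the known fact that pre- and post-composition by a $C^1$-diffeomorphism preserve $\mathbb{W}^{1,p}$, then use the a.e.\ differentiability of the Sobolev homeomorphism (Gehring--Lehto / V\"ais\"al\"a) to identify the derivative. The paper is terser: for part~(i) it simply cites the chain rule for Sobolev functions in Ziemer~\cite[Theorem~2.2.2]{Zi} rather than redoing the ACL argument, and for part~(ii) it verifies the formula pointwise by writing Taylor expansions of $f$ and of $\phi\circ f$ at a common point of differentiability and comparing. Your route via the ACL characterization is a legitimate alternative and has the advantage of making explicit why the pointwise formula coincides with the distributional derivative, at the cost of the extra care you rightly flag in part~(i) about $\phi$ not preserving coordinate lines; the paper sidesteps that issue entirely by deferring to the cited references.
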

Given a $C^1$-diffeomorphism $\phi$ 
it is known that both pre- and post-composition operators 
$f\mapsto f\circ \phi$ and 
$f\mapsto \phi\circ f$ 
map $\mathbb{W}^{1,p}$ to $\mathbb{W}^{1,p}$ (in the appropriate domains). 
See, for instance, \cite{ArendtKreuter}.
For the proof of part (i), 
one combines the chain rule for Sobolev {\it functions\/} 
as stated, say, in~\cite[Theorem 2.2.2, p. 52]{Zi} 
with the fact that $D(f\circ \phi)$ exists Lebesgue almost everywhere. 
For the proof of part (ii), 
note that the set of points $x$ where {\it both\/} 
sides of~\eqref{chain2} are defined has full measure; 
then one may write the first-order Taylor expressions 
for both $f$ and $\phi\circ f$ at $x$ in the direction $v$, 
substitute them into the expression $\phi\circ f(x+tv)$, 
and compare the resulting expressions 
(after reminding oneself that $\phi$ is differentiable {\it everywhere}). 

\subsubsection*{Spaces of bi-Sobolev homeomorphisms}
Let $\Omega,\Omega^*\subset\mathbb{R}^d$ be bounded open 
sets with piecewise-smooth boundary and let $1\leq p,p^*< \infty$.
Denote by $\mathcal{S}^{p,p^*}\left(\Omega,\Omega^*\right)$ 
the space of orientation-preserving homeomorphisms 
$f\colon\Omega\to\Omega^*$
such that 
$f\in \mathbb{W}^{1,p}\left(\Omega,\mathbb{R}^d\right)$, 
$f^{-1}\in \mathbb{W}^{1,p^*}\left(\Omega^*,\mathbb{R}^d\right)$, 
and $f$ and $f^{-1}$ extend continuously to the boundaries 
of $\Omega$ and $\Omega^*$ respectively.
Provided that $\Omega$ and $\Omega^*$ are chosen so that 
$\mathcal{S}^{p,p^*}\left(\Omega,\Omega^*\right)$ is non-empty,
$\mathcal{S}^{p,p^*}\left(\Omega,\Omega^*\right)$ is a complete 
metric space when endowed with the distance function
\begin{equation*}
\rho(f,g)=
\bigl\|f-g\bigr\|_{\mathbb{W}^{1,p}(\Omega,\mathbb{R}^d)}
+\bigl\|f^{-1}-g^{-1}\bigr\|_{\mathbb{W}^{1,p^*}(\Omega^*,\mathbb{R}^d)}
\end{equation*}
for all $f,g\in\mathcal{S}^{p,p^*}(\Omega,\Omega^*)$. 
In particular, $\mathcal{S}^{p,p^*}(\Omega,\Omega^*)$ is a Baire space.

As in the H\"older case, there are several 
ways to define Sobolev classes on spaces 
more general than Euclidean domains.
This can be done when the space is a manifold 
with a smooth Riemannian metric 
(or more generally a smooth connection), and
it can also be done for embedded manifolds 
in $\mathbb{R}^n$ (see, {\it e.g.\/},~\cite{SchoenUhlenbeck}).
Instead, we will construct and use a 
topology analogous to that constructed 
in the H\"older case in \S~\ref{sec:Holder_prelim}.

First consider bi-Sobolev homeomorphisms 
between manifolds $M$ and $N$.
Let $1\leq p, p^*<\infty$.
Denote by $\mathbb{W}^{1,p}(M,N)$ 
the space of maps $f$ from $M$ to $N$ such that, 
for any pair of charts $(U,\varphi)$ on $M$ and $(V,\psi)$ on $N$, 
the map $\psi\circ f\circ\varphi$ is $\mathbb{W}^{1,p}$ on $\psi(U\cap f^{-1}(V))$.
Let $\mathcal{S}^{p,p^*}(M,N)$ denote the space of homeomorphisms $f$ from $M$ to $N$ such that
$f\in \mathbb{W}^{1,p}(M,N)$ and $f^{-1}\in \mathbb{W}^{1,p^*}(N,M)$.
In the case when $M$ and $N$ coincide we 
denote this space by $\mathcal{S}^{p,p^*}(M)$.

We define the {\it (weak) $(p,p^*)$-Sobolev-Whitney topology\/} on $\mathcal{S}^{p,p^*}(M,N)$
analogously to the (weak) $C^\alpha$-Whitney topology constructed in \S~\ref{sec:Holder_prelim}.
Namely we define sets 
$\mathcal{N}_{\mathbb{W}^{1,p},\mathbb{W}^{1,p^*}}(f;(U,\varphi),(V,\psi),K,L,\epsilon)$ 
in exactly the same way as we defined the sub-basic sets
$\mathcal{N}_{C^\alpha}(f;(U,\varphi),(V,\psi),K,L,\epsilon)$, 
except that we replace the $C^\alpha$-distance between maps and their inverses
with the $\mathbb{W}^{1,p}$-distance between maps and $\mathbb{W}^{1,p^*}$-distance between their inverses.
Observe that the compact sets $K$ and $L$ used in this construction are required to be the closure of open sets
whose boundaries are piecewise-smooth.
The collection of sets of the form 
\begin{equation*}
\mathcal{N}_{\mathbb{W}^{1,p},\mathbb{W}^{1,p^*}}(f;(U,\varphi),(V,\psi),K,L,\epsilon)
\end{equation*}
then forms a subbasis for a topology which we call the 
{\it (weak) $(p,p^*)$-Sobolev-Whitney topology}.

As in the H\"older case, the $(p,p^*)$-Sobolev-Whitney topology is Hausdorff and satisfies the following.
\begin{proposition}
For $1\leq p,p^*<\infty$,
and each pair of smooth compact manifolds $M$ and $N$ (possibly with boundary),
the space $\mathcal{S}^{p,p^*}(M,N)$, 
endowed with the weak $(p,p^*)$-Sobolev-Whitney topology, 
satisfies the Baire property. 
\end{proposition}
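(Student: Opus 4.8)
The plan is to prove that $\mathcal{S}^{p,p^*}(M,N)$, equipped with the weak $(p,p^*)$-Sobolev-Whitney topology, is completely metrizable; the Baire property then follows from the classical Baire category theorem. Since $M$ and $N$ are compact, I would first fix \emph{finite} atlases $\{(U_i,\varphi_i)\}_{i\in I}$ of $M$ and $\{(V_j,\psi_j)\}_{j\in J}$ of $N$, together with finite families of compact sets $K_i\subset U_i$ and $L_j\subset V_j$, each the closure of an open set with piecewise-smooth boundary, and with $\bigcup_i\mathrm{int}(K_i)=M$, $\bigcup_j\mathrm{int}(L_j)=N$. The first step is then to check, exactly as for the $C^r$-Whitney topology on compact manifolds~\cite[Chapter 2]{HirschBook}, that this finite data already generates the weak topology: the key elementary observation is that if $\mathrm{int}(K_i)\cap f^{-1}(V_j)\neq\emptyset$ then $K_i$ is a compact subset of the open set $U_i\cap f^{-1}(V_j)$, so any $g$ that is $C^0$-close to $f$ satisfies $g(K_i)\subseteq V_j$ and $g^{-1}(L_j)\subseteq U_i$; hence finitely many sub-basic sets $\mathcal{N}_{\mathbb{W}^{1,p},\mathbb{W}^{1,p^*}}(f;(U_i,\varphi_i),(V_j,\psi_j),K_i,L_j,\epsilon)$, with $\epsilon$ rational, form a local base at $f$, and the topology is second countable.

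Next I would exhibit an explicit complete metric. Using the fixed distance functions $d_M$, $d_N$, define for $f,g\in\mathcal{S}^{p,p^*}(M,N)$
\begin{equation*}
\rho(f,g)=\sup_{x\in M}d_N\bigl(f(x),g(x)\bigr)+\sup_{y\in N}d_M\bigl(f^{-1}(y),g^{-1}(y)\bigr)+\sum_{i,j}\min\bigl\{1,\rho_{ij}(f,g)\bigr\},
\end{equation*}
where $\rho_{ij}(f,g)$ is the sum of $\|\psi_j\circ f\circ\varphi_i^{-1}-\psi_j\circ g\circ\varphi_i^{-1}\|_{\mathbb{W}^{1,p}(\varphi_i(K_i),\mathbb{R}^d)}$ and $\|\varphi_i\circ f^{-1}\circ\psi_j^{-1}-\varphi_i\circ g^{-1}\circ\psi_j^{-1}\|_{\mathbb{W}^{1,p^*}(\psi_j(L_j),\mathbb{R}^d)}$, set to $+\infty$ (hence truncated to $1$) unless both maps send $K_i$ into $V_j$ and $L_j$ into $U_i$. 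One checks that $\rho$ is a genuine metric (the $C^0$-terms guarantee it separates points) and that, by the previous paragraph together with the stabilisation of the effective domains $\varphi_i(K_i\cap g^{-1}(V_j))$ under $C^0$-perturbation, $\rho$-convergence coincides with convergence in the weak topology.

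For completeness, let $(g_n)$ be $\rho$-Cauchy. The $C^0$-terms give uniform limits $g_n\to g$ and $g_n^{-1}\to h$ for some continuous $g\colon M\to N$, $h\colon N\to M$; passing to the limit in $g_n\circ g_n^{-1}=\mathrm{id}_N$ and $g_n^{-1}\circ g_n=\mathrm{id}_M$ shows $g$ is a homeomorphism with $g^{-1}=h$ (orientation-preservation is a closed condition under uniform limits of homeomorphisms, via a degree argument). Fixing $i,j$ with $\mathrm{int}(K_i)\cap g^{-1}(V_j)\neq\emptyset$, for large $n$ one has $g_n(K_i)\subseteq V_j$ and $\bigl(\psi_j\circ g_n\circ\varphi_i^{-1}\bigr)$ is $\mathbb{W}^{1,p}$-Cauchy on $\varphi_i(K_i)$, hence converges there in $\mathbb{W}^{1,p}$; since it converges uniformly to $\psi_j\circ g\circ\varphi_i^{-1}$, the limit must be $\psi_j\circ g\circ\varphi_i^{-1}$, whose distributional derivative therefore lies in $L^p$. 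Thus $g\in\mathbb{W}^{1,p}(M,N)$, symmetrically $h=g^{-1}\in\mathbb{W}^{1,p^*}(N,M)$, so $g\in\mathcal{S}^{p,p^*}(M,N)$ and $\rho(g_n,g)\to0$.

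I expect the only delicate point to be the bookkeeping of chart domains: $\rho_{ij}$ is measured on the \emph{fixed} set $\varphi_i(K_i)$ but is a priori only meaningful once the relevant iterate lands in the correct chart, so one must argue that the Cauchy condition, the uniform convergence, and the hypothesis that each $K_i$ (resp.\ $L_j$) is the closure of an open set with piecewise-smooth boundary together force the $L^p$-limit of the partial derivatives to be the genuine weak derivative of the limit map on $\mathrm{int}(K_i)$. This is exactly the place where one invokes the ACL characterisation of Sobolev maps and the stability of weak derivatives under $L^p$-convergence recalled in \S\ref{sec:Sobolev_prelim}; everything else is a routine transcription of the $C^r$-Whitney argument of~\cite[Chapter 2]{HirschBook}.
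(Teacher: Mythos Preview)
The paper does not actually prove this proposition: it is simply asserted, with the remark that the situation is analogous to the $C^r$-Whitney topology (cf.~\cite[Chapter~2]{HirschBook}) and to the H\"older-Whitney case treated earlier in the same fashion. The only place where the paper gives any argument is for bounded Euclidean domains $\Omega,\Omega^*$, where it observes that the explicit distance
\[
\rho(f,g)=\|f-g\|_{\mathbb{W}^{1,p}(\Omega,\mathbb{R}^d)}+\|f^{-1}-g^{-1}\|_{\mathbb{W}^{1,p^*}(\Omega^*,\mathbb{R}^d)}
\]
makes $\mathcal{S}^{p,p^*}(\Omega,\Omega^*)$ into a complete metric space. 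Your strategy of proving complete metrizability by gluing together such chart-wise distances is therefore exactly the natural one, and in spirit it is what the paper has in mind.

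That said, your ``key elementary observation'' is false as written: from $\mathrm{int}(K_i)\cap f^{-1}(V_j)\neq\emptyset$ it does \emph{not} follow that $K_i\subset U_i\cap f^{-1}(V_j)$, since $K_i$ may well protrude outside $f^{-1}(V_j)$. Consequently your finite family $\{K_i,L_j\}$ need not generate the topology, because for a given $f$ there may be no pair $(i,j)$ with $f(K_i)\subset V_j$ at all unless the $K_i$'s are chosen small relative to the modulus of continuity of $f$, which you cannot do uniformly over $\mathcal{S}^{p,p^*}(M,N)$. The standard repair (as in Hirsch's treatment of the $C^r$ case) is to work not with a single finite cover but with a \emph{countable} refining system of compact sets $K_i^{(m)}$ whose diameters tend to zero, so that for every $f$ and every $i$ one eventually has $f(K_i^{(m)})\subset V_j$ for some $j$; the resulting countable family still yields a complete metric via a weighted sum. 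With this correction your completeness argument goes through essentially unchanged.
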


\subsubsection*{Further comments}
The following facts, concerning certain special classes of Sobolev homeomorphisms, 
are worth mentioning even though they will not be used in the present paper.
The homeomorphism $f$ is said to be a map of {\it finite distortion\/} if 
the quotient $K_f(x)=|Df(x)|^d/J_f(x)$ is finite almost everywhere in $\Omega$. When $K=\|K_f\|_{\infty}<\infty$, 
we say that $f$ is a {\it $K$-quasiconformal homeomorphism\/}. Thus a $K$-quasiconformal homeomorphism 
satisfies the inequality $|Df(x)|^d \leq KJ_f(x)$ almost everywhere. 
An inequality in the opposite direction is possible for general homeomorphisms in $\mathbb{W}^{1,p}(\Omega,\mathbb{R}^d)$ 
for sufficiently large $p$, as shown by the following easy lemma. 

\begin{lemma}\label{detJac}
If $f\in \mathbb{W}^{1,d}(\Omega,\mathbb{R}^d)$ is a homeomorphism, then the determinant Jacobian $J_f$ belongs to $L^1(\Omega)$. 
\end{lemma}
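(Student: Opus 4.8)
The plan is to show that for a homeomorphism $f\in \mathbb{W}^{1,d}(\Omega,\mathbb{R}^d)$ the (formal) Jacobian determinant $J_f=\det Df$ lies in $L^1(\Omega)$ by a pointwise-in-measure argument combined with an elementary observation. First I would note that, by the arithmetic-geometric mean inequality (or Hadamard's inequality for determinants), at Lebesgue almost every point $x\in\Omega$ where $Df(x)$ exists one has
\begin{equation*}
|J_f(x)| = |\det Df(x)| \leq \prod_{i=1}^d |\nabla f_i(x)| \leq \left(\frac{1}{d}\sum_{i=1}^d |\nabla f_i(x)|^2\right)^{d/2} \leq C_d\, |Df(x)|^d \ ,
\end{equation*}
where $C_d$ depends only on $d$ and $|Df(x)|$ is the matrix norm fixed in the text. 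Here one uses that a homeomorphism in $\mathbb{W}^{1,p}(\Omega,\mathbb{R}^d)$ with $p=d>d-1$ is differentiable almost everywhere, by the result of V\"ais\"al\"a (and Gehring--Lehto in dimension two) quoted just above the statement, so that $Df(x)$ and hence $J_f(x)$ are genuinely defined a.e.

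Next I would integrate this pointwise bound. Since $f\in\mathbb{W}^{1,d}(\Omega,\mathbb{R}^d)$, each component $f_i$ is in $W^{1,d}(\Omega)$, so $|Df|\in L^d(\Omega)$; equivalently $\int_\Omega |Df(x)|^d\,d\mu(x)<\infty$. Combining with the displayed inequality gives
\begin{equation*}
\int_\Omega |J_f(x)|\,d\mu(x) \leq C_d \int_\Omega |Df(x)|^d\,d\mu(x) < \infty \ ,
\end{equation*}
which is precisely the assertion $J_f\in L^1(\Omega)$. (One does not even need positivity of $J_f$ coming from orientation-preservation; the bound controls $|J_f|$ directly.)

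The only genuine subtlety — and what I expect to be the main point to get right rather than a true obstacle — is making sure the algebraic inequality $|\det A|\le C_d|A|^d$ is stated with the correct norm convention: the text defines $|A|=\sum_{i,j}|a_{ij}|$, so one should either invoke equivalence of norms on the finite-dimensional space of $d\times d$ matrices (the determinant is a continuous, degree-$d$ homogeneous polynomial in the entries, hence bounded by a constant times the $d$-th power of any norm), or expand the determinant as a sum of $d!$ monomials each of which is a product of $d$ entries and bound each monomial by $|A|^d$, giving $C_d=d!$. Either way the constant depends only on $d$. Everything else is a routine application of the a.e.\ differentiability theorem and the definition of the $\mathbb{W}^{1,d}$-norm given earlier in this subsection.
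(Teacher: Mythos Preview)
Your proposal is correct and follows essentially the same route as the paper: establish the pointwise bound $|J_f(x)|\le C_d|Df(x)|^d$ and integrate. The paper obtains the constant via the direct determinant expansion (giving $C_d=d!$), which you also mention as an alternative; your appeal to a.e.\ differentiability is harmless but unnecessary, since $Df$ is already defined entrywise a.e.\ from the distributional derivatives and the determinant is just a polynomial in these entries.
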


\begin{proof}
Writing $f_{ij}=\partial_{x_j}f_i$ for the components of the matrix $Df(x)$, we have by definition of the determinant
\begin{equation*}
J_f(x)
\;=\; 
\mathrm{det}Df(x)
\;=\; 
\sum_{\sigma\in S_d} (-1)^{\mathrm{sign}(\sigma)}f_{1\sigma(1)}(x)f_{2\sigma(1)}(x)\cdots f_{d\sigma(d)}(x) \ .
\end{equation*}
Taking absolute values on both sides and taking into account that $|f_{ij}(x)|\leq |Df(x)|$, we deduce that 
\begin{equation}\label{ineq:W1,d-jacobian}
|J_f(x)|
\;\leq\; 
d! \, |Df(x)|^d \ .
\end{equation}
Integrating both sides, we deduce that $\|J_f\|_{L^1(\Omega)}\;\leq\; d! \, \|Df\|_{L^d(\Omega)}^d < \infty$, and hence the result. 
\end{proof}

We say that a homeomorphism $f$ satisfies {\it Lusin's $N$-property\/}  
if $f$ maps Lebesgue null-sets onto Lebesgue null-sets. 
Suppose we know that $Df(x)$ exists Lebesgue almost everywhere, that $J_f$ is integrable, and that 
\begin{equation}\label{jacobian} 
\mu(f(E))\leq \int_{E} J_f\,d\mu \ 
\end{equation}
for each measurable set $E$ in the domain of $f$. Then, clearly, $f$ has Lusin's $N$-property.  
It has been proved by Reshetnyak in \cite{Re} that every homeomorphism 
$f\in \mathbb{W}^{1,d}(\Omega,\mathbb{R}^d)$ has Lusin's $N$-property. 
Despite appearances, this non-trivial result 
does not follow directly from Lemma \ref{detJac}, since we don't know a priori that~\eqref{jacobian} holds. 
%
%
%
%
%
%
%
%
%
\subsection{The Sobolev Closing Lemma}\label{sec:Sobolev_Closing}
As in \S~\ref{sec:holderclosing}, we consider spaces of 
homeomorphisms on smooth compact manifolds of dimension greater than one.
Here we prove a version of Pugh's $C^1$-Closing Lemma for bi-Sobolev mappings.
\begin{theorem}[Sobolev Closing Lemma]\label{thm:sobolevclosing}
Let $M$ be a smooth compact manifold of dimension $d$.
For $d=2$ and $1\leq p,p^*<\infty$; or $d>2$ and $d-1< p,p^*<\infty$, the following holds:
Take $f\in \mathcal{S}^{p,p^*}(M)$ and
let $y$ be a non-wandering point of $f$.
For 
each neighbourhood $W$ of $y$ in $M$ and 
each neighbourhood $\mathcal{N}$ of $f$ in $\mathcal{S}^{p,p^*}(M)$ 
there exists 
$g$ in $\mathcal{N}$ and a point $x$ in $W$ such that $x$ is a periodic point of the map $g$.
\end{theorem}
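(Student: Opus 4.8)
The plan is to mirror the proof of the H\"older Closing Lemma (Theorem~\ref{lem:Holder_closing}) essentially line for line, replacing the H\"older Rescaling and Gluing Principles with their Sobolev counterparts and replacing the ``little H\"older'' smallness mechanism with the absolute continuity of the $L^p$-norm of $Df$. First I would reduce, exactly as in Remark~\ref{rmk:sub-basic_nhds-Holder_closing}, to the case where $\mathcal{N}$ is a finite intersection of sub-basic neighbourhoods $\mathcal{N}_{\mathbb{W}^{1,p},\mathbb{W}^{1,p^*}}(f;(U_n,\varphi_n),(V_n,\psi_n),K_n,L_n,\epsilon_n)$, adjoining one more chart so that $y$ lies in $U_0\cap f^{-1}(V_0)$, and choose compact neighbourhoods $\mathsf{U}_n,\mathsf{V}_n,\mathsf{W}_0$ and transition-map Lipschitz bounds $c_1$ as in inequality~\eqref{ineq:transition-Lip-Holder-closing}. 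Then I would invoke Lemma~\ref{lem:non-wandering} and the associated Claim to produce a point $x^0$, an integer $k_0$ and an elongated neighbourhood $E=E(x^0_0,x^{k_0}_0;cr_0)$ with $E\subset\varphi_0(B_M(y,\delta))$, $x^k_0\notin E$ for $0<k<k_0$, and apply Lemma~\ref{lem:E-perturbation} to obtain a $C^1$-diffeomorphism $\phi$ supported in $E$ with $\phi(x^{k_0}_0)=x^0_0$ and $[\phi]_{\mathrm{Lip}}\leq c_1$. Define $g$ by post-composing $f$ with $\varphi_0^{-1}\circ\phi\circ\varphi_0$ on $E_M=\varphi_0^{-1}(E)$ and leaving $g=f$ elsewhere; since $\phi$ is a $C^1$-diffeomorphism, part (ii) of the Chain Rule (Lemma~\ref{chainrule}) guarantees $g\in\mathcal{S}^{p,p^*}(M)$, and as in the H\"older case $g^{k_0}(x^{k_0})=x^{k_0}$, giving the required periodic point in $W$.

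The one genuinely new ingredient is the size estimate. In the H\"older argument the smallness of the perturbation came from condition (c), i.e.\ $[f_n]_{\alpha,\varphi_n(B(y,\delta))}=o(1)$ because bi-Lipschitz maps are little $\alpha$-H\"older. The Sobolev analogue is that if $f\in\mathbb{W}^{1,p}$ then $\int_A|Df|^p\,d\mu\to 0$ as $\mu(A)\to 0$ by absolute continuity of the Lebesgue integral; so I would impose, as the replacement for (c), that $\delta$ be small enough that $\int_{\varphi_n(B_M(y,\delta))}|Df_n|^p\,d\mu<\epsilon^p$ and likewise for $f_n^{-1}$ with exponent $p^*$. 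Then, writing $g_n=\psi_n\circ g\circ\varphi_n^{-1}$, on $\varphi_n(E_M)$ we have $g_n=\phi_n\circ f_n$ with $\phi_n=\varphi_n\circ\varphi_0^{-1}\circ\phi\circ\varphi_0\circ\varphi_n^{-1}$ a $C^1$-diffeomorphism whose derivative is bounded, uniformly in $\epsilon$, by the transition bounds and $[\phi]_{\mathrm{Lip}}\leq c_1$; the chain rule (Lemma~\ref{chainrule}(ii)) gives $D g_n = (D\phi_n\circ f_n)\,Df_n$ a.e., hence
\begin{equation*}
\int_{\varphi_n(E_M)}|Dg_n|^p\,d\mu
\leq
\bigl(\sup|D\phi_n|\bigr)^p\int_{\varphi_n(E_M)}|Df_n|^p\,d\mu
\leq C\epsilon^p \ .
\end{equation*}
Since $f_n\equiv g_n$ off $\varphi_n(E_M)$, the $\mathbb{W}^{1,p}$-distance $\|f_n-g_n\|_{\mathbb{W}^{1,p}(\varphi_n(K_n),\mathbb{R}^d)}$ is bounded by $\|f_n-g_n\|_{C^0}$ plus $(\int_{\varphi_n(E_M)}|Df_n-Dg_n|^p)^{1/p}\leq (C^{1/p}+1)\epsilon$. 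The $C^0$-term is made small by shrinking the support of $\phi$ (which does not affect the derivative estimate since the latter is scale-invariant in the same way the H\"older estimate is not, but here it is simply the integrability control that does the work). The same computation applied to the inverses, using Lemma~\ref{chainrule}(i) for $g_n^{-1}=f_n^{-1}\circ(\text{diffeo})$ and the exponent $p^*$, controls the $\mathbb{W}^{1,p^*}$-distance. Taking the minimum of the admissible $\epsilon$ over the finitely many indices $n$ puts $g$ in $\mathcal{N}$.

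I expect the main obstacle to be bookkeeping rather than conceptual: one must verify that the chain rule is actually applicable at each step, which is why the hypotheses $d=2$ with $p,p^*\geq 1$ or $d>2$ with $p,p^*>d-1$ appear — they are exactly the hypotheses of Lemma~\ref{chainrule}, needed both for $g\in\mathcal{S}^{p,p^*}(M)$ and for the a.e.\ identity $Dg_n=(D\phi_n\circ f_n)Df_n$ used in the estimate. A secondary subtlety is that the compact sets $K_n,L_n$ and the support $E_M$ must be chosen with piecewise-smooth boundaries so that the sub-basic sets are well defined and so that one may localize the $\mathbb{W}^{1,p}$-norm to $\varphi_n(K_n\cap E_M)$ and then glue; this is the Sobolev analogue of the First and Second H\"older Gluing Principles, and amounts to nothing more than additivity of the integral $\int_{A\cup B}=\int_A+\int_B$ over the decomposition of $\varphi_n(K_n)$ into the part inside $\varphi_n(E_M)$ and the part outside, so no separate gluing lemma is even required. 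Everything else transcribes verbatim from the proof of Theorem~\ref{lem:Holder_closing}.
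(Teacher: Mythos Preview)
Your approach is correct and in fact more direct than the paper's. Both proofs transcribe the construction of $g$ verbatim from the H\"older case; the difference lies entirely in the size estimate. The paper builds a \emph{sequence} of perturbations $g_m$, shows $Dg_{m,n}\to Df_n$ in measure, extracts an a.e.-convergent subsequence, and for the pre-composition direction invokes the fact that a.e.\ convergence together with convergence of $L^p$-norms implies $L^p$-convergence (the norm convergence itself being obtained by a change of variables and dominated convergence). Your single-perturbation route replaces all of this by the triangle inequality, the chain-rule pointwise bound, a change of variables in the pre-composition integral, and the absolute continuity of $A\mapsto\int_A|Df_n|^p\,d\mu$ on small sets. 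This is precisely the mechanism the paper itself adopts later in its second proof of the Sobolev infinite-entropy theorem, so it certainly works; it just was not the path chosen for the Closing Lemma. What the paper's route buys is a clean separation (post-composition via a pointwise majorant and dominated convergence, pre-composition via the Rudin fact); what yours buys is the avoidance of subsequences altogether.

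Two points to tidy. First, your composition direction is muddled: you define $\phi_n$ via the domain charts $\varphi_n$ but then write $g_n=\phi_n\circ f_n$, which would require $\phi_n$ to act in the codomain charts $\psi_n$. Either choice closes the orbit, but the support of $g_n-f_n$ is $\varphi_n(E_M)$ only in the pre-composition case $g_n=f_n\circ\phi_n$; for post-composition it is $\varphi_n(f^{-1}(E_M))$, and your condition (c') must be imposed on the correct set. Second, the inverse is not literally ``the same computation'': whichever direction you pick for $g$, the opposite one appears for $g^{-1}$, and the pre-composition direction genuinely needs the change-of-variables step
\[
\int_A|DF(\psi(x))|^p\,dx=\int_{\psi(A)}|DF|^p\,J_{\psi^{-1}}\,dy\le K'\int_A|DF|^p
\]
(using $\psi(A)=A$ and the Jacobian bound from Lemma~\ref{lem:E-perturbation}). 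You gesture at this but should make it explicit.
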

%
\begin{proof}
Our approach will be the same as in the proof of the H\"older Closing Lemma
(Theorem~\ref{lem:Holder_closing}).
The first significant difference is that, 
given a finite collection of sub-basic sets for the $(p,p^*)$-Sobolev-Whitney topology,
\begin{equation*}
\mathcal{N}_{\mathbb{W}^{1,p},\mathbb{W}^{1,p^*}}(f;(U_n,\varphi_n),(V_n,\psi_n),K_n,L_n,\varepsilon(n))
\end{equation*}
rather than constructing a single perturbation and then showing it lies in each sub-basic set,
we will construct a sequence of perturbations converging to our original map in the $C^0$-topology, 
such that the sequence eventually lands inside each of our (finitely many) sub-basic sets.

\vspace{5pt}

\noindent
{\sl Setup:\/}
We adopt the notation of the proof of Theorem~\ref{lem:Holder_closing}.
In particular, 
$f_n$, $\mathsf{U}_n$, $\mathsf{V}_n$, $\mathsf{W}_0$, etc. 
are as before. 
Given a sequence of perturbations $g_m$ of $f$, we denote 
by $g_{m,n}$ the map $g_m$ in the pair of charts $(U_n,\varphi_n)$ and $(V_n,\psi_n)$.

As mentioned above, we take a decreasing sequence $\epsilon_{m}$ of positive real numbers tending to zero, 
denoting the order of the size of the $m$th perturbation (instead of just $\epsilon$ as in the H\"older case).
Similarly, we take a decreasing sequence $\delta_{m}$ of positive real numbers converging to zero,
denoting the size of the support of the local perturbation (instead of just $\delta$). 
We assume that all $\delta_{m}$ are chosen so that
\begin{enumerate}
\item[(a)]
$B_M(y,\delta_{m})$ is contained in $W\cap\mathsf{W}_0$
\item[(b)]
for each $n$, 
$\mathsf{U}_n$ contains a $2\delta_{m}$-neighbourhood of the compact set $K_n$, and
$\mathsf{V}_n$ contains a $2\delta_{m}$-neighbourhood of the compact set $L_n$
\item[(c)]
For any $n$,
given an arbitrary ball $B$ in $\mathsf{U}_n$ of radius $\delta_m$ or less,
the image $f_n\circ\varphi_n(B)=\psi_n\circ f(B)$ has diameter $\epsilon_m$ or less.
\item[(d)]
For any $n$,
given an arbitrary ball $B$ in $\mathsf{V}_n$ of radius $\delta_m$ or less,
the image $f_n^{-1}\circ\psi_n(B)=\varphi_n\circ f^{-1}(B)$ has diameter $\epsilon_m$ or less.
\end{enumerate}
Observe that (c) and (d) are possible by compactness of $\mathsf{U}_n$ and $\mathsf{V}_n$ respectively.

\vspace{5pt}

\noindent
{\sl Construction of the perturbation:\/}
For each $m$, the construction of the perturbation $g_m$ is identical to the H\"older case.
Namely, by Claim 1 in the proof of the H\"older Closing Lemma (Theorem~\ref{lem:Holder_closing}),
for each $m$ there is a point $x_m^0$ and integer $k_m$ such that $x_m^0$ and $x_m^{k_m}=f^{k_m}(x_m^0)$ lie in $B_M(y,\delta_m)$ and 
no other points in the orbit segment $x_m^0, x_m^1,\ldots,x_m^{k_m}$ lie in $B_M(y,\delta_m)$.
We set $E_m$, $E'_m$, $E_{m,M}$, and $E_{m,M}'$ as before and define $\phi_m$ via Lemma~\ref{lem:E-perturbation}.
Then we define
\begin{equation*}
g_m=\left\{
\begin{array}{ll}
f\circ \varphi_0^{-1}\circ\phi_m\circ \varphi_0 & \mbox{in} \ E_{m,M}\\
f & \mbox{elsewhere}
\end{array}\right. \ .
\end{equation*}
As in the H\"older case, it will be important to note that, for each $n$,
\begin{equation*}
g_{m,n}=\left\{\begin{array}{ll}
f_n\circ\phi_{m,n} & \mbox{in} \ \varphi_n(E_{m,M})\\
f_n & \mbox{elsewhere}
\end{array}\right. \ .
\end{equation*}
where $\phi_{m,n}$ denotes $\phi_m$ expressed in the chart $(U_n,\varphi_n)$, {\it i.e.\/},
\begin{equation*}
\phi_{m,n}=\varphi_n\circ\varphi_0^{-1}\circ\phi_m\circ\varphi_0\circ\varphi_n^{-1} \ .
\end{equation*}
It is clear that $g_m$ is a homeomorphism.
Since composition of a Sobolev map with a smooth map is again Sobolev, it also follows that $g_m$ lies in $\mathcal{S}^{p,p^*}(M)$.
By the same argument as in the H\"older case, 
$g_m^{k_m}(x_m^{k_m})=x_m^{k_m}$.
Thus it just remains to show that $g_m$ will lie in $\mathcal{N}$ for $m$ sufficiently large.

\vspace{5pt}

\noindent
{\sl Size of the perturbation:\/}
For each $n$, it suffices to estimate the
\begin{enumerate}
\item[(i)]
$\mathbb{W}^{1,p}$-pseudo-distance between $f_n$ and $g_{m,n}$ on $\Omega_n=\varphi_n(K_n)$
\item[(ii)]
$\mathbb{W}^{1,p^*}$-pseudo-distance between $f_n^{-1}$ and $g_{m,n}^{-1}$ on $\Omega_n^*=\psi_n(L_n)$
\end{enumerate}
Below we will construct a subsequence $m_1,m_2,\ldots$ of the natural numbers such that
\begin{equation*}
\lim_{j\to\infty}\| f_n-g_{m_j,n} \|_{\mathbb{W}^{1,p^*}(\Omega_n,\mathbb{R}^d)}
=0
=\lim_{j\to\infty}\| f_n^{-1}-g_{m_j,n}^{-1} \|_{\mathbb{W}^{1,p^*}(\Omega_n^*,\mathbb{R}^d)} \ .
\end{equation*}
Applying this inductively for each $n$, taking a subsequence at each step, will then give the result.
 
Consider (i).
Observe that $f$ and $g_m$ only differ on $E_{m,M}$, and hence that the sets $f(E_{m,M})$ and $g_{m}(E_{m,M})$ agree.
Since $E_{m,M}$ is contained in $B_M(y,\delta_m)$, {\it i.e.\/}, a ball of radius $\delta_m$, by (b) above, 
either 
$E_{m,M}$ is contained in 
$\mathsf{U}_n$, 
or 
$E_{m,M}$ is disjoint from $K_n$.
In the second case there is nothing to prove.
Thus we focus on the first case.
Recall that
\begin{equation*}
\|f_n-g_{m,n}\|_{\mathbb{W}^{1,p}(\Omega_n,\mathbb{R}^d)}
=
\|f_n-g_{m,n}\|_{C^0(\Omega_n)}
+
\left(\int_{\Omega_n}|Df_n-Dg_{m,n}|^p \,d\mu\right)^{\frac{1}{p}} \ .
\end{equation*}
As $f$ and $g_m$ differ only on $E_{m,N}$ it follows that $f_n$ and $g_{m,n}$ differ only on $\varphi_n(E_{m,M})$.
Since $E_{m,M}$ is contained in a ball of radius $\delta_m$, it follows from (c) above that
$f_m(\varphi_n(E_{m,M}))$, and hence $g_{m,n}(\varphi_n(E_{m,M}))$, are contained in some ball of radius $\epsilon_m$.
Consequently
\begin{equation*}
\|f_n-g_{m,n}\|_{C^0(\Omega_n)}\leq \epsilon_m \ .
\end{equation*}
It therefore suffices to show that there is some subsequence $J$ of the natural numbers such that
\begin{equation}\label{eq:W1p-semi-norm}
\liminf_{J\ni m\to\infty}\int_{\Omega_n}|Df_n-Dg_{m,n}|^p \,d\mu=0 \ .
\end{equation}
In fact, it will be slightly easier to show this on the slightly larger set $\varphi_n(\mathsf{U}_n)$.
Namely we will show that for some subsequence $J$ of natural numbers
\begin{equation}\label{eq:W1p-semi-norm_ii}
\liminf_{J\ni m\to\infty}\int_{\varphi_n(\mathsf{U}_n)}|Df_n-Dg_{m,n}|^p \,d\mu=0 \ .
\end{equation}
Obviously, \eqref{eq:W1p-semi-norm} follows directly from~\eqref{eq:W1p-semi-norm_ii}.
The chain rule (Lemma~\ref{chainrule} (i)) gives
\begin{equation*}
Dg_{m,n}(x)=Df_n(\phi_{m,n}(x))D\phi_{m,n}(x)
\end{equation*}
for Lebesgue almost every $x\in\varphi_n(\mathsf{U}_n)$.
Let $u_{ij}^{(n)}$, $v_{ij}^{(m,n)}$ and $w_{ij}^{(m,n)}$, $1\leq i,j\leq d$, denote the entries 
of the matrices $Df_n$, $D\phi_{m,n}$ and $Dg_{m,n}$ so that, for Lebesgue almost every $x\in\varphi_n(\mathsf{U}_n)$
\begin{equation*}
w_{ij}^{(m,n)}(x)=\sum_{1\leq k\leq d} u_{ik}^{(n)}(\phi_{m,n}(x))v_{kj}^{(m,n)}(x) \ .
\end{equation*}
Thus, to show that the integral in~\eqref{eq:W1p-semi-norm_ii} can be made arbitrarily small we must show that for $1\leq i,j\leq d$,
\begin{equation}\label{eq:W1p-semi-norm_iii}
\lim_{J\ni m\to \infty} \left\|w_{ij}^{(m,n)}-u_{ij}^{(n)}\right\|_{L^p}=0 \ .
\end{equation}
Since the diameter of the support of $\phi_{m,n}$ tends to zero
and $\|D\phi_{m,n}\|_{C^0}$ is uniformly bounded over $m$ and $n$,
we know that 
$\phi_{m,n}\to\mathrm{id}$ in $\varphi_n(\mathsf{U}_n)$ in measure, and 
$D\phi_{m,n}\to \mathrm{id}_{\mathbb{R}^d}$ in measure, both as $m\to\infty$.
Thus $v_{kj}^{(m,n)}\to \delta_{kj}$ in measure as $m\to\infty$ 
(where $\delta_{kj}$ denotes the Kronecker delta).
Hence $w_{ij}^{(m,n)}\to u_{ij}^{(n)}$ in measure as $m\to\infty$.
By a well-known result in measure theory (see for instance~\cite{Fo})
this implies that there exists a subsequence $J_1$ of the natural numbers such that for $1\leq i,j\leq d$,
we have $w_{ij}^{(m,n)}(x)\to u_{ij}^{(n)}(x)$ as $J_1\ni m\to\infty$, for Lebesgue almost every $x\in\varphi_n(\mathsf{U}_n)$.
Now we use the following fact from measure theory (see~\cite[p. 76]{Ru}), valid for arbitrary measure spaces with a positive measure:

\medskip
{\sl Fact:\/} 
{\it{Suppose $\sigma\in L^r$, $\sigma_m\in L^r$ where $1<r<\infty$. 
If 
$\sigma_m(x)\to \sigma(x)$ almost everywhere 
and 
$\|\sigma_m\|_{L^r}\to \|\sigma\|_{L^r}$ as $m\to\infty$, 
then 
$\lim_{m\to\infty}\|\sigma_m-\sigma\|_{L^r}=0$\/}}. 
\medskip

For each $i$ and $j$, $1\leq i,j\leq d$, we apply the above fact in the case 
$r=p$,
$\sigma=u_{ij}^{(n)}$ and
$\sigma_m=w_{ij}^{(m,n)}$, for $m\in J_1$.
(As $f_n$ and $g_{m,n}$ lie in $\mathbb{W}^{1,p}$ 
it follows that $w_{ij}^{(m,n)}$ and $u_{ij}^{(n)}$ lie in $L^p$ for all $i$ and $j$, $1\leq i,j\leq d$.)
We already know that $w_{ij}^{(m,n)}(x)\to u_{ij}^{(n)}(x)$ for Lebesgue almost every $x$ along $J_1$.
Hence we only need to check that
$\|w_{ij}^{(m,n)}\|_{L^p}\to \|u_{ij}^{(n)}\|_{L^p}$ as $J_1\ni m\to \infty$.
It suffices to show that there exists a subsequence $J_2\subseteq J_1$ for which
\begin{equation}\label{eq:Lp-convergence}
\lim_{J_2\ni m\to\infty}\int_{\varphi_n(\mathsf{U}_n)} \left|w_{ij}^{(m,n)}\right|^p\,d\mu(x)
=
\int_{\varphi_n(\mathsf{U}_n)}\left|u_{ij}^{(n)}(x)\right|^p \,d\mu(x) \ .
\end{equation}
Observe that $E_{m,M}$ as is contained in $\mathsf{U}_n$, 
$\phi_{m,n}(\varphi_n(\mathsf{U}_n))=\varphi_n(\mathsf{U}_n)$.
Thus, applying the change of variables 
$y=\phi_{m,n}(x)$ 
we can write
\begin{align}
&
\int_{\varphi_n(\mathsf{U}_n)}\left|w_{ij}^{(m,n)}(x)\right|^p \,d\mu(x)
=
\int_{\varphi_n(\mathsf{U}_n)}
\Bigl|\sum_{1\leq k\leq d} u_{ik}^{(n)}(\phi_{m,n}(x))v_{kj}^{(m,n)}(x) \, \Bigr|^p \,d\mu(x)\notag\\
&=
\int_{\varphi_n(\mathsf{U}_n)}
\Bigl|\sum_{1\leq k\leq d} u_{ik}^{(n)}(y)v_{kj}^{(m,n)}(\phi_{m,n}^{-1}(y)) \, \Bigr|^p J_{\phi_{m,n}^{-1}}(y) \,d\mu(y) \ ,\label{eq:Lp-change_of_variables}
\end{align}
where 
$J_{\phi_{m,n}^{-1}}(y)=\det D\phi_{m,n}^{-1}(y)$ denotes the Jacobian of $\phi_{m,n}^{-1}$ at $y$.
Note, using the change of variables formula here is legitimate as $\phi_{m,n}$ is a diffeomorphism.
As before, we have $\phi_{m,n}^{-1}\to \mathrm{id}$ in measure and $J_{\phi_{m,n}^{-1}}\to 1$ in measure as $m$ tends to infinity,
so, 
passing to a subsequence $J_2\subseteq J_1$ if necessary, 
we can once again assume convergence at Lebesgue almost every point of $\varphi_n(\mathsf{U}_n)$.
Thus we now know that
$v_{kj}^{(m,n)}(\phi_{m,n}^{-1}(y))\to \delta_{kj}$ 
and that 
$J_{\phi_{m,n}^{-1}}(y)\to 1$
for Lebesgue almost every $y$ in $\varphi_n(\mathsf{U}_n)$, 
as $J_2\ni m\to\infty$. 
Hence the integrand in~\eqref{eq:Lp-change_of_variables} converges to $|u_{ij}^{(n)}(y)|^p$ for Lebesgue almost every $y$ in $\varphi_n(\mathsf{U}_n)$.
Since 
$\phi_{m,n}$ leaves $\varphi_n(\mathsf{U}_n)$ invariant, 
and since 
$\|D\phi_{m,n}\|_{C^0}\leq K$ for some constant $K$ independent of $m$, we have that
\begin{equation}\label{ineq:Dphi_mn-entries-Linfty}
\bigl\|v_{kj}^{(m,n)}\circ\phi_{m,n}^{-1}\bigr\|_\infty
\leq
\bigl\|v_{kj}^{(m,n)}\bigr\|_\infty
\leq
\left\|D\phi_{m,n}\right\|_{C^0}
\leq 
K \ .
\end{equation}
For example, by inequality~\eqref{ineq:W1,d-jacobian} above, we also have
\begin{equation}\label{ineq:Dphi_mn-jacobian-bdd}
J_{\phi_{m,n}^{-1}}(y)
=\det D\phi_{m,n}^{-1}(y)
\leq d!\, \|D\phi_{m,n}^{-1}(y)\|_{C^0}^d
\leq d!\, K^d \ .
\end{equation}
Combining inequalities~\eqref{ineq:Dphi_mn-entries-Linfty} and~\eqref{ineq:Dphi_mn-jacobian-bdd}, 
we deduce from Lebesgue's Dominated Convergence Theorem that
\begin{align*}
&
\lim_{J_2\ni m\to\infty}
\int_{\varphi_n(\mathsf{U}_n)} 
\Bigl|\sum_{1\leq k\leq d} u_{ik}^{(n)}(y) v_{kj}^{(m,n)}\left(\phi_{m,n}^{-1}(y)\right) \, \Bigr|^p J_{\phi_{m,n}^{-1}}(y)\,d\mu(y) \notag\\
&=
\int_{\varphi_n(\mathsf{U}_n)} \left|u_{ij}(y)\right|^p\,d\mu(y) \ .
\end{align*}
This proves inequality~\eqref{eq:Lp-convergence},
which in turn show -- given the fact stated above -- that~\eqref{eq:W1p-semi-norm_iii} holds for $J=J_2$.
Hence~\eqref{eq:W1p-semi-norm} is satisfied and this concludes the proof of part (i).

Now consider (ii).
Recall that
\begin{equation*}
\left\|f_n^{-1}-g_{m,n}^{-1}\right\|_{\mathbb{W}^{1,p^*}(\Omega_n^*,\mathbb{R}^d)}
=
\left\|f_n^{-1}-g_{m,n}^{-1}\right\|_{C^0(\Omega_n^*)}
+
\left(\int_{\Omega_n^*} \left|Df_n^{-1}-Dg_{m,n}^{-1}\right|^{p^*}\,d\mu\right)^{\frac{1}{p^*}} \ .
\end{equation*}
By the same argument as that given in part (i), the hypotheses (b) and (d) imply that
\begin{equation*}
\left\|f_n^{-1}-g_{m,n}^{-1}\right\|_{C^0(\Omega_n^*)}\leq \epsilon_m \ .
\end{equation*}
Hence it suffices to show that, for some subsequence $J'\subseteq J$,
\begin{equation}\label{eq:W1p-semi-norm_inv}
\liminf_{J'\ni m\to\infty}\int_{\Omega_n^*} \left|Df_n^{-1}-Dg_{m,n}^{-1}\right|^{p^*}\,d\mu \;=\;0 \ .
\end{equation}
Since $\Omega_n^*$ is contained in $\psi_n(\mathsf{V}_n)$, \eqref{eq:W1p-semi-norm_inv} this will follow if we can show that
\begin{equation}\label{eq:W1p-semi-norm_inv_ii}
\liminf_{J'\ni m\to\infty}\int_{\psi_n(\mathsf{V}_n)} |Df_n^{-1}-Dg_{m,n}^{-1}|^{p^*}\,d\mu \;=\;0 \ .
\end{equation}
Let $\sigma_{m,n}=\left|Df_n^{-1}-Dg_{m,n}^{-1}\right|^{p^*}$.
Once more by the chain rule (Lemma~\ref{chainrule}(ii)), 
for Lebesgue almost every $x\in\psi_n(\mathsf{V}_n)$ we have
\begin{equation*}
Dg_{m,n}^{-1}(x)\;=\;D\phi_{m,n}^{-1}\left(f_{n}^{-1}(x)\right)Df_n^{-1}(x) \ .
\end{equation*}
Moreover, $\sigma_{m,n}\in L^1\left(\psi_n(\mathsf{V}_n)\right)$ since
\begin{align*}
\left|Df_{n}^{-1}-D\phi_{m,n}^{-1}\circ f_{n}^{-1} Df_n^{-1}\right|^{p^*}
&\leq \left|\mathrm{id}_{\mathbb{R}^d}-D\phi_{m,n}^{-1}\circ f_n^{-1}\right|^{p^*} \left|Df_{n}^{-1}\right|^{p^*}\\
&\leq (1+K)^{p^*} \left|Df_{n}^{-1}\right|^{p^*} \ ,
\end{align*}
where we have used that $\left\|D\phi_{m,n}^{-1}\right\|_{C^0}\leq K$.
We claim that the sequence $(\sigma_{m,n})_{m\in\mathbb{N}}$ converges in measure to the zero function.
This happens because
\begin{align*}
\mu\left(\left\{ x : \left| \sigma_{m,n}(x) \right|>0\right\}\right)
&=
\mu\left(\left\{ x : \left| \mathrm{id}_{\mathrm{R}^d}-D\phi_{m,n}^{-1}(f_{n}^{-1}(x)) \right| >0\right\}\right)\\
&\leq
\mu \left(f_{n}(\varphi_{n}(E_{m,M}))\right) \ .
\end{align*}
But $\mu\left(f_{n}(\varphi_n(E_{m,M}))\right)\to 0$ as $m\to \infty$, since $\mathrm{diam}\left(E_{m,M}\right)\to 0$ and $f_{n}$ is uniformly continuous.
Once more, this implies that there exists a subsequence $J'$ of $J$ such that $\sigma_{m,n}(x)\to 0$ for Lebesgue almost every $x$ in $\psi_n\left(\mathsf{V}_n\right)$.
By Lebesgue's Dominated Convergence Theorem it follows that
\begin{equation*}
\lim_{J'\ni m\to\infty}\int_{\psi_n(\mathsf{V}_n)}\sigma_{m,n}\,d\mu \;=\;0 \ ,
\end{equation*}
which shows that~\eqref{eq:W1p-semi-norm_inv_ii} and hence~\eqref{eq:W1p-semi-norm_inv} holds.
This concludes part (ii), and hence the proof is complete.
\end{proof}
\subsection{Genericity of infinite topological entropy for Sobolev mappings}\label{sec:Sobolev-infinite_entropy}
In this section we prove that infinite topological 
entropy is a generic property in the Sobolev context. 

First, we give an argument specific to dimension two.
The novelty in this approach is that it recovers the 
``na\"{\i}ve'' argument where a periodic point is 
first `blown-up' to a periodic disk, 
and then a horseshoe with an appropriate number of 
branches is `glued-in' to this disk.
(See the first comment in Section~\ref{sect:open_problems} for more details.)
It is based on a generalised version of the 
Rad\'o-Kneser-Choquet Theorem (see~\cite{IKO,AS}). 
However, there is no known generalisation of this 
result to higher dimensions.
In fact, there are explicit counterexamples to the 
classical Rad\'o-Kneser-Choquet Theorem, 
see~\cite[Section 3.7]{DurenBook}.

Secondly, we present an argument analogous to that 
used in the H\"older case above, and is applicable 
in all dimensions greater than one.
As much of the argument is the same as in the H\"older 
case, we only give a sketch, drawing attention to 
where modifications are necessary.

\subsubsection{First argument}\label{subsubsect:first_argument}
Our goal in this section is to show that infinite 
topological entropy is a generic property for 
homeomorphisms of compact surfaces in certain 
Sobolev classes. 
More precisely, we will prove the following result.
\begin{theorem}\label{infsobothm1}
Let $1<p<\infty$. 
Let $M$ be a compact oriented surface. 
The set of orientation-preserving Sobolev homeomorphisms 
in $\mathcal{S}^{p,1}(M)$ with infinite topological entropy 
contains a residual subset of $\mathcal{S}^{p,1}(M)$. 
\end{theorem}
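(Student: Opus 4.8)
The plan is to deduce Theorem~\ref{infsobothm1} from an open-and-dense statement, just as Theorem~\ref{thm:generic_little-Holder_homeomorphism} was deduced from Theorem~\ref{thm:open+dense_entropy>logN-little_Holder}. For each $N\in\mathbb{N}$ let $\mathcal{E}_N$ be the set of $g\in\mathcal{S}^{p,1}(M)$ for which there exist a positive integer $k_0$, a topological solid cylinder $S\subset M$ and pairwise disjoint sub-cylinders $S_1,\dots,S_{N^{k_0}}$ with $g^{k_0}$ mapping each $S_j$ across $S$. As in the H\"older case this is a $C^0$-open condition among homeomorphisms (hence open in the $(p,1)$-Sobolev--Whitney topology) and it forces $h_{\mathrm{top}}(g)\ge\log N$. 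Since $\mathcal{S}^{p,1}(M)$ is a Baire space, once each $\mathcal{E}_N$ is shown to be dense the set $\bigcap_N\mathcal{E}_N$ is residual and consists of homeomorphisms with $h_{\mathrm{top}}=+\infty$. So everything reduces to the following: given $f\in\mathcal{S}^{p,1}(M)$, a basic neighbourhood $\mathcal{N}=\bigcap_n \mathcal{N}_{\mathbb{W}^{1,p},\mathbb{W}^{1,1}}(f;(U_n,\varphi_n),(V_n,\psi_n),K_n,L_n,\epsilon_n)$ and $N\in\mathbb{N}$, produce $g\in\mathcal{N}\cap\mathcal{E}_N$.

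Since $M$ is compact the recurrent set of $f$ is non-empty, so by the Sobolev Closing Lemma (Theorem~\ref{thm:sobolevclosing}), whose hypotheses hold here because $d=2$, after an arbitrarily small perturbation we may assume $f$ itself has a periodic point $x$ of some period $k$, with $x,f(x),\dots,f^{k-1}(x)$ lying in pairwise disjoint charts. Now comes the ``na\"{\i}ve'' move: blow $x$ up to a periodic disk and glue in a horseshoe with $N^{k}$ branches. Choose a small round disk $D$ about $x$ so that $D,f(D),\dots,f^{k-1}(D)$ are pairwise disjoint and $D\cup f^{k}(D)$ lies in a slightly larger disk $D'$ about $x$ which is still disjoint from $f(D),\dots,f^{k-1}(D)$. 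Pick disjoint sub-disks $S_1,\dots,S_{N^{k}}$ of $D$ and look at their images $f^{k}(S_1),\dots,f^{k}(S_{N^{k}})\subset f^{k}(D)\subset D'$. I would then seek a homeomorphism $\Theta$ of $M$, equal to the identity off $D'$, which takes each $f^{k}(S_j)$ and lays it across $D$ like a horseshoe branch, and set $g=\Theta\circ f$. A short computation (using that the orbit of $D$ avoids $D'$ at the intermediate steps) gives $g^{k}|_D=\Theta\circ f^{k}|_D$, so that $g^{k}(S_j)=\Theta(f^{k}(S_j))$ crosses $D$ for every $j$, i.e.\ $g\in\mathcal{E}_N$; and $g$ is a homeomorphism of $M$.

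The delicate point is that $f^{k}(D)$ and the $f^{k}(S_j)$ are only Jordan domains, with possibly non-rectifiable boundaries, so $\Theta$ can be neither a $C^1$-diffeomorphism nor bi-Lipschitz of $\overline{D'}$; a blunt choice such as $\Theta=(\text{horseshoe model})\circ (f^{k}|_D)^{-1}$ would pull the mere $\mathbb{W}^{1,1}$-regularity of $f^{-1}$ into the $\mathbb{W}^{1,p}$-norm of $g$ and break membership in $\mathcal{S}^{p,1}$. The remedy, and this is where the generalised Rad\'o--Kneser--Choquet theorem enters, is to build $\Theta$ from $p$-harmonic homeomorphisms: decompose $D'$ and its target into finitely many Jordan pieces encoding the branch combinatorics, prescribe the boundary correspondences, and let $\Theta$ be the $p$-harmonic extension on each piece. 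By \cite{AS,IKO} each such extension onto a convex target is injective and is a diffeomorphism in the interior, with Dirichlet $p$-energy controlled by that of its boundary data. Since $\Theta$ is a diffeomorphism on the open disk $D'$, the Sobolev chain rule (Lemma~\ref{chainrule}) applies on compact subsets of $D'$; hence $g=\Theta\circ f\in\mathbb{W}^{1,p}$ there, and $g^{-1}=f^{-1}\circ\Theta^{-1}\in\mathbb{W}^{1,1}$ --- it is exactly on the inverse that $p^{*}=1$ is used, a change of variables turning $\int|Dg^{-1}|$ into $\int|Df^{-1}|\,|D\Theta|$ up to constants, finite since $f^{-1}\in\mathbb{W}^{1,1}$ and $\Theta$ is a diffeomorphism on the relevant compact set. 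Thus $g\in\mathcal{S}^{p,1}(M)$.

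Finally one checks $g\in\mathcal{N}$. The $C^0$-parts of the defining pseudo-distances vanish in the limit because the support $f^{-1}(D')$ of the perturbation shrinks to a point and $f,f^{-1}$ are uniformly continuous. For the $\mathbb{W}^{1,p}$-part on a chart, $Df-Dg=(\mathrm{id}-D\Theta\circ f)\,Df$ is supported on $f^{-1}(D')$, so its $L^p$-norm is bounded by a constant depending on $N$ and $k$ (through the fixed combinatorial model, but not on the size of $D'$, by scale-invariance of first derivatives together with the energy control of the $p$-harmonic extension) times $\|Df\|_{L^p(f^{-1}(D'))}$, which goes to zero by absolute continuity of the integral; symmetrically for the $\mathbb{W}^{1,1}$-part of the inverse. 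Should this uniform control on $\Theta$ --- equivalently, on the shape of $f^{k}(D)$ as $D$ contracts --- be awkward to obtain directly, one runs the argument as in the proof of Theorem~\ref{thm:sobolevclosing}: replace the single perturbation by a sequence $g_m\in\mathcal{E}_N$ with supports shrinking to the orbit of $x$ and $g_m\to f$ in $C^0$, and extract, via the measure-theoretic facts used there (convergence in measure, the $L^r$-fact, dominated convergence), a subsequence converging to $f$ in the $(p,1)$-Sobolev--Whitney topology while each term keeps the horseshoe; eventually it enters $\mathcal{N}$. The main obstacle is precisely the simultaneous construction of $\Theta$ with the three required features --- (i) realising the $N^{k}$-fold branch combinatorics, (ii) being a diffeomorphism in the interior so the Sobolev chain rule applies and $g=\Theta\circ f$ stays in $\mathcal{S}^{p,1}$, and (iii) having $\mathbb{W}^{1,p}$-energy (and $\Theta^{-1}$ having $\mathbb{W}^{1,1}$-energy) under uniform control as the domain contracts --- which is exactly what the $p$-harmonic Rad\'o--Kneser--Choquet theorem delivers; the absence of such a theorem in $\mathbb{R}^{3}$ (indeed the existence of counterexamples) is why this route is not expected to work in higher dimensions.
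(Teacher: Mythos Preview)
Your overall architecture (reduce to open-and-dense of each $\mathcal{E}_N$, close up a recurrent orbit via Theorem~\ref{thm:sobolevclosing}, then perform surgery near the periodic orbit with the $p$-harmonic Rad\'o--Kneser--Choquet theorem) matches the paper. But your deployment of RKC is different from the paper's and leaves a real gap.

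The paper does not try to encode horseshoe combinatorics in $p$-harmonic pieces. It uses RKC once per periodic point in the simplest possible way (the ``Replacement Trick'', Proposition~\ref{prop:replacement_trick}): on a single topological disk $\mathcal{O}=f^{-1}(B_k)$, replace $f$ by the $p$-harmonic homeomorphism $\phi:\overline{\mathcal{O}}\to\overline{B_k}$ with $\phi|_{\partial\mathcal{O}}=f|_{\partial\mathcal{O}}$. The target is a round disk, so convex; the bound $\mathcal{E}_p(\phi)\le\mathcal{E}_p(f|_\mathcal{O})$ is immediate from minimality, and the right-hand side is made small by a pigeonhole over many nested candidate disks; the inverse energy comes from Lemma~\ref{inverse-energy-lemma}; and $\partial\mathcal{O}$ is arranged to be null via Remark~\ref{rmk:elementary_remark}. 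The resulting map is now $C^\infty$ near the periodic orbit, and from here the paper proceeds entirely by \emph{smooth} surgery: blow up the periodic point to a periodic disk on which $g^k=\mathrm{id}$ (Proposition~\ref{prop:blow-up}), then glue in a compactly supported smooth $N^k$-horseshoe. These later perturbations are bi-Lipschitz with uniform constants and supported on sets of arbitrarily small measure, so $\mathcal{S}^{p,1}$-smallness is routine.

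Your proposed $\Theta$, by contrast, is only sketched and has concrete problems. First, RKC yields a diffeomorphism only when the target piece is convex; you have not said how to cut the horseshoe model and the complement $D'\setminus f^k(D)$ into finitely many convex pieces that glue to a homeomorphism equal to the identity on $\partial D'$ while matching a decomposition of $D'$ along the Jordan domains $f^k(S_j)$. Second, even granting such a decomposition, your assertion ``$\Theta$ is a diffeomorphism on the open disk $D'$'' is false: it would be a diffeomorphism only on the interior of each piece, and the piece boundaries---which include the irregular curves $\partial f^k(S_j)$---may have positive area, so the glued map need not lie in $W^{1,p}$ and the chain rule of Lemma~\ref{chainrule} does not apply across them. (The paper's Remark~\ref{rmk:elementary_remark}, which makes null boundaries available, works precisely because one is free to choose among concentric round circles; you have no such freedom once the pieces are dictated by the combinatorics.) Third, the energy claim ``controlled by that of its boundary data'' needs a competitor in $W^{1,p}$ with the prescribed boundary values; none is provided, and $f$ cannot serve since it does not send the domain pieces where you want. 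The subsequence fallback does not help: it still requires each $g_m\in\mathcal{S}^{p,1}(M)$ and an integrable dominant for $|Dg_m|^p$, neither of which you have established. The paper's Replacement Trick sidesteps all of this by smoothing first and only then building the horseshoe with ordinary diffeomorphisms.
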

This theorem will be deduced from a corresponding 
result for maps in the plane, which we proceed to state.

Take $1<p<\infty$, which we assume to be fixed throughout this section. 
Let $\Omega$ and $\Omega^*$ be bounded open sets in the plane.
We conform with the notation introduced in \S~\ref{sec:Sobolev_prelim}. 
In particular, we denote by $\rho$ the Sobolev distance in 
$\mathcal{S}^{p,1}(\Omega, \Omega^*)$. 
Namely
\begin{equation}
\rho(f,g)=
\|f-g\|_{\mathbb{W}^{1,p}(\Omega,\mathbb{R}^2)}+\|f^{-1}-g^{-1}\|_{\mathbb{W}^{1,1}(\Omega^*,\mathbb{R}^2)} \ .
\end{equation}
We assume that 
$\Omega\cap \Omega^*\neq \emptyset$, 
and for each $k$,  $0\leq k\leq \infty$, 
we denote by $\Omega_k=\Omega_k(f)$ the subset 
of points $x\in \Omega$ such that $f^j(x)$ 
is defined for all $0\leq j< k$.
Recall that a point $x\in\Omega_\infty$ is 
(forward) recurrent if it belongs to its 
own $\omega$-limit set.
The set of recurrent points is called the {\it recurrent set}. 
Let us denote by 
$\mathcal{S}_{\infty}^{p,1}(\Omega,\Omega^*)$ 
the {\it closure\/} of the set of all those
Sobolev homeomorphisms 
$f\in \mathcal{S}^{p,1}(\Omega,\Omega^*)$ 
with non-trivial recurrent set.

For each $n\in \mathbb{N}$, let us denote by 
$\mathcal{G}_{n}$ the set of all 
$g\in \mathcal{S}^{p,1}(\Omega,\Omega^*)$ 
for which there exist 
$k\in \mathbb{N}$ and 
a topological disk $D\Subset \Omega_k(g)$ 
such that 
$g^k|_{D}\colon D\to g^k(D)$ is an $n^k$-branched horseshoe map. 
Note that 
$\mathcal{G}_{n}\subset \mathcal{S}_{\infty}^{p,1}(\Omega,\Omega^*)$, 
for all $n\in \mathbb{N}$. 
Note also that if 
$g\in \mathcal{G}_{n}$ 
then 
$h_{\mathrm{top}}(g^k)\geq \log{(n^k)}=k\log{n}$, 
and therefore we have
\begin{equation*}
g\in \mathcal{G}_{n}\; \implies \; h_{\mathrm{top}}(g)\geq \log{n}\ .
\end{equation*}
With this notation we can now state our theorem as follows. 
\begin{theorem}\label{infsobothm2}
Let $1<p<\infty$.
For each $n\in \mathbb{N}$ 
the set 
$\mathcal{G}_{n}$ 
is dense in 
$\mathcal{S}^{p,1}_\infty(\Omega,\Omega^*)$.
Consequently, the set of homeomorphisms with infinite entropy in $\mathcal{S}_\infty^{p,1}(\Omega,\Omega^*)$
contains a residual subset of $\mathcal{S}_{\infty}^{p,1}(\Omega,\Omega^*)$. 
\end{theorem}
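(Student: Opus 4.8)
The plan is to establish the density statement and then read off the residuality assertion from it by a Baire argument.

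\emph{Reduction of the second assertion.} Granting that each $\mathcal{G}_n$ is dense in $\mathcal{S}^{p,1}_\infty(\Omega,\Omega^*)$, I would first observe that $\mathcal{G}_n$ is in fact \emph{open}: if $g\in\mathcal{G}_n$ is witnessed by an integer $k$, a topological disk $\hat D\Subset\Omega_k(g)$ and disjoint subdisks $D_1,\dots,D_{n^k}\subset\hat D$ with $g^k$ mapping each $D_j$ across $\hat D$, then, since the finitely many iterates involved depend continuously on $g$ in the $C^0$-topology on the relevant compacta and the ``maps across'' relation is $C^0$-open (cf.\ Appendix~\ref{sect:cylinder_perturbations}), the same data witness membership of any sufficiently $C^0$-close map; as $\rho$ dominates the $C^0$-distance, $\mathcal{G}_n$ is $\rho$-open. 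Since $\mathcal{S}^{p,1}_\infty(\Omega,\Omega^*)$ is a closed subspace of the complete metric space $\mathcal{S}^{p,1}(\Omega,\Omega^*)$, it is a Baire space, so $\mathcal{R}=\bigcap_{n\ge 1}\mathcal{G}_n$ is residual; and $g\in\mathcal{R}$ forces $h_{\mathrm{top}}(g)\ge\log n$ for every $n$, hence $h_{\mathrm{top}}(g)=+\infty$.

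\emph{Density: reducing to a periodic return map.} Fix $n$, a map $f\in\mathcal{S}^{p,1}_\infty(\Omega,\Omega^*)$ and $\varepsilon>0$. By definition of the closure we may assume $f$ has a nontrivial recurrent set, so pick a recurrent point $y$ of $f$. Using the planar version of the Sobolev Closing Lemma (Theorem~\ref{thm:sobolevclosing}) together with recurrence of $y$ — exactly as one passes from a closing lemma to its recurrent-orbit corollary — I would produce $g_1\in\mathcal{S}^{p,1}(\Omega,\Omega^*)$ with $\rho(f,g_1)<\varepsilon/2$ possessing a periodic point $p$ of some period $k$ contained in a topological disk $\hat D$ whose iterates $\hat D,g_1(\hat D),\dots,g_1^{k-1}(\hat D)$ are pairwise disjoint, with $\hat D$ (hence all its iterates) taken so small that $g_1$ and $g_1^{-1}$ have as small oscillation there as we like. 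The latter plays the role of conditions (c)–(d) in the proof of Theorem~\ref{thm:sobolevclosing} and will make the forthcoming perturbation $\rho$-small.

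\emph{Gluing in a horseshoe via $p$-harmonic extension.} The key step is to modify $g_1$ only along the last leg of this periodic loop so that the new first-return map to $\hat D$ becomes an $n^k$-branched horseshoe. Concretely, fix a Jordan domain $R$ arranged to meet $\hat D$ in exactly $n^k$ crossing strips, and an orientation-preserving boundary correspondence $\partial\hat D\to\partial R$ realizing this combinatorics and agreeing near $\partial\hat D$ with the old return map $g_1^k|_{\partial\hat D}$; let $H\colon\hat D\to R$ be the $p$-harmonic extension of this correspondence, and let $D_1,\dots,D_{n^k}\subset\hat D$ be the $H$-preimages of the crossing strips. By the generalised Rad\'o--Kneser--Choquet theorem for $p$-harmonic mappings (\cite{IKO,AS}), $H$ is a homeomorphism of $\hat D$ onto $R$ with $H\in\mathbb{W}^{1,p}(\hat D,\mathbb{R}^2)$ and $H^{-1}\in\mathbb{W}^{1,1}(R,\mathbb{R}^2)$ — this borderline regularity of the inverse is precisely what confines the argument to $p^*=1$. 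Now define $g$ to equal $g_1$ off $g_1^{k-1}(\hat D)$ and to equal $H\circ\bigl(g_1^{k-1}|_{\hat D}\bigr)^{-1}$ on $g_1^{k-1}(\hat D)$; since $H=g_1^k$ near $\partial\hat D$, this $g$ is an orientation-preserving homeomorphism of $\Omega$ onto $\Omega^*$ whose $k$-th return map to $\hat D$ is $H$, so $g\in\mathcal{G}_n$ with witnesses $k$, $\hat D$ and $D_1,\dots,D_{n^k}$. Moreover $g\in\mathcal{S}^{p,1}(\Omega,\Omega^*)$: on each piece $g$ and $g^{-1}$ are compositions of the Sobolev maps $g_1^{\pm1}$, of smooth maps, and of $H$ and $H^{-1}$, so the chain rule (Lemma~\ref{chainrule}) and the Sobolev analogues of the Gluing Principles apply.

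\emph{Smallness, and the main obstacle.} It remains to check $\rho(g_1,g)<\varepsilon/2$. The $C^0$-part is immediate from the small diameter of $\hat D$ and the small oscillation of $g_1,g_1^{-1}$ there. For the $\mathbb{W}^{1,p}$- and $\mathbb{W}^{1,1}$-seminorms, $Dg-Dg_1$ (resp.\ $Dg^{-1}-Dg_1^{-1}$) is supported on a set whose measure goes to $0$ with $\mathrm{diam}(\hat D)$, while $|Dg|^p$ and $|Dg^{-1}|$ are integrable there, so absolute continuity of the integral gives the estimate, exactly as in the proof of Theorem~\ref{thm:sobolevclosing}. Combining, $\rho(f,g)<\varepsilon$ with $g\in\mathcal{G}_n$, which proves density. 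I expect the genuine difficulty to be concentrated in the $p$-harmonic step: the horseshoe region $R$ is manifestly non-convex, so the classical Rad\'o--Kneser--Choquet theorem does not apply, and one must either invoke its extension to the relevant non-convex targets from~\cite{IKO,AS} or assemble $H$ from finitely many Rad\'o--Kneser--Choquet-admissible pieces glued along smooth arcs by bi-Lipschitz maps (using the Sobolev Gluing Principle to stay in $\mathcal{S}^{p,1}$) — all while ensuring that the merely $\mathbb{W}^{1,1}$ boundary data inherited from $g_1^k$ still yields a homeomorphic extension with $\mathbb{W}^{1,1}$ inverse.
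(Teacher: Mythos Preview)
Your reduction of the second assertion to the first via openness and Baire is fine and matches the paper. The density argument, however, has a genuine gap in how you deploy the Rad\'o--Kneser--Choquet theorem.

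The paper does \emph{not} use the $p$-harmonic extension to build the horseshoe. Its role is far more modest: given the periodic orbit of $g_1$, the Replacement Trick (Proposition~\ref{prop:replacement_trick}) replaces $g_1$ on a small topological disk $\mathcal{O}$ about each orbit point by the $p$-harmonic homeomorphism $\phi$ having the \emph{same boundary values} as $g_1$, so that $\phi$ maps $\mathcal{O}$ onto the round disk $\mathcal{O}^*=g_1(\mathcal{O})$. The target is convex, so Theorem~\ref{rado-kneser-choquet} applies directly and yields a $C^\infty$-diffeomorphism on the interior. Crucially, since $\phi$ minimises $p$-energy among all extensions of $g_1|_{\partial\mathcal{O}}$, one has $\mathcal{E}_p(\phi)\le\mathcal{E}_p(g_1|_{\mathcal{O}})$ --- the original map is a competitor --- and this is precisely what makes the perturbation $\rho$-small. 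The resulting $g_2$ is now smooth near the periodic orbit, and from there the paper proceeds by entirely smooth methods: a $C^2$-isotopy (Proposition~\ref{prop:blow-up}) flattens $g_2$ to a translation on tiny disks around the orbit, after which a standard bi-Lipschitz $n$-branched horseshoe is glued in by affine rescaling, with Sobolev-smallness coming from the Lipschitz bound times the small support area.

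Your approach instead asks Rad\'o--Kneser--Choquet to produce a homeomorphism onto the horseshoe-shaped region $R$. As you yourself note, $R$ is non-convex, and injectivity of the $p$-harmonic extension genuinely fails for non-convex targets; the references~\cite{IKO,AS} do not remove this hypothesis. Worse, your gluing condition forces $H|_{\partial\hat D}=g_1^k|_{\partial\hat D}$, hence $\partial R=g_1^k(\partial\hat D)$, so $R$ is not freely chosen but equals $g_1^k(\hat D)$, which has no reason to cross $\hat D$ in $n^k$ strips. Finally, your smallness argument (``$|Dg|^p$ integrable, absolute continuity'') is circular: as $\hat D$ shrinks, $H$ changes with it, and without a fixed integrable dominator or a competitor sharing boundary values there is no a priori bound on $\mathcal{E}_p(H)$. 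The paper's energy estimate works exactly because the target is $g_1(\mathcal{O})$ itself, making $g_1|_{\mathcal{O}}$ an admissible competitor --- a trick unavailable when the target is a prescribed horseshoe.
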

\begin{remark}
As $\mathcal{S}_\infty^{p,1}(\Omega,\Omega^*)$ is closed in $\mathcal{S}^{p,1}(\Omega,\Omega^*)$ and $\mathcal{S}^{p,1}(\Omega,\Omega^*)$
has the Baire property, it follows that $\mathcal{S}_\infty^{p,1}(\Omega,\Omega^*)$ also has the Baire property.
\end{remark}
It will be straightforward to deduce 
Theorem~\ref{infsobothm1} 
from 
Theorem~\ref{infsobothm2}. 
As for the latter, note that the second assertion 
in the statement is an immediate consequence of the 
first.
Namely, since the $\rho$-distance in 
$\mathcal{S}^{p,1}(\Omega,\Omega^*)$ 
is greater than 
the $C^0$-distance, and since topological horseshoe 
maps are stable under small $C^0$ perturbations, 
it follows that each $\mathcal{G}_{n}$ is open in 
$\mathcal{S}_\infty^{p,1}(\Omega,\Omega^*)$. 
Therefore the proof of Theorem~\ref{infsobothm2} 
will be complete once we show that each 
$\mathcal{G}_{n}$ is {\it dense\/} in 
$\mathcal{S}_{\infty}^{p,1}(\Omega,\Omega^*)$.  

The geometric idea behind the proof of such a 
density result is very simple, and can informally 
be described as follows. 
Starting with an arbitrary 
$f\in \mathcal{S}_{\infty}^{p,1}(\Omega,\Omega^*)$, 
the first step is to apply the Sobolev Closing Lemma 
to get  
$g_1\in \mathcal{S}_{\infty}^{p,1}(\Omega,\Omega^*)$ 
close to $f$ which has a periodic orbit. 
The second step is to then perform a surgery on $g_1$ 
in order to get a new 
$g_2\in  \mathcal{S}_{\infty}^{p,1}(\Omega,\Omega^*)$ 
close to $g_1$ which still has the same periodic orbit 
as $g_1$ but which is now a smooth diffeomorphism in a 
neighbourhood of that periodic orbit. 
The third step is to use a bump 
function argument to replace $g_2$ by yet another homeomorphism 
$g_3\in \mathcal{S}_{\infty}^{p,1}(\Omega,\Omega^*)$ 
close to $g_2$, 
still with the same periodic orbit as $g_2$, 
having now a periodic cycle of {\it disks\/} around that periodic orbit on which $g_3$ moves 
points about by rigid translations -- in particular, if $k$ is the period, there is a disk $D$ around a point of the periodic cycle 
such that $g_3^k|_{D}$ is the identity. The fourth and final step is to perform another (smooth) surgery to replace 
$g_3$ by a new $g_4\in \mathcal{S}_{\infty}^{p,1}(\Omega,\Omega^*)$ very close to $g_3$ having the same periodic disk $D$ as $g_3$, but now with the 
property that $g_4^k|_{D}$ is a horseshoe map with compact support in $D$ and appropriately high entropy. The only difficult step is the 
second. The Sobolev surgery used in this step requires us to introduce the notion of a {\it $p$-harmonic map} as it uses a generalised version of a non-trivial 
theorem due to Rad\'o, Kneser and Choquet (Theorem \ref{rado-kneser-choquet} below), and is inspired by~\cite{IKO}.

\subsubsection*{$p$-Harmonic maps.}

As is customary, we identify $\mathbb{R}^2$ 
with the complex plane $\mathbb{C}$. 
Let $1<p<\infty$ and let 
$\Omega \subset \mathbb{C}$ 
be a bounded open set. A function 
$u\colon \Omega \to \mathbb{R}$ 
is said to be {\it $p$-harmonic\/} 
if $u\in W^{1,p}(\Omega)$ and 
\begin{equation*}
 \mathrm{div}\left(|\nabla u|^{p-2}\nabla u\right)\;=\;0
\end{equation*}
in the sense of distributions. 
Here and throughout, $\nabla$ denotes 
the gradient operator, and obviously 
`$\mathrm{div}$' denotes the divergence 
operator. 
Note that $p$-harmonic for $p=2$ simply 
means harmonic in the usual sense. 
It is a fact (from the theory of elliptic 
partial differential equations) that 
$p$-harmonic functions are minimisers for 
the so-called $p$-energy functional
\begin{equation*}
 \mathcal{E}_p(u)\;=\; \int_{\Omega} |\nabla u|^{p}\,d\mu\ , 
\end{equation*}
where as before $\mu$ denotes Lebesgue measure. 
The norm of the gradient is the standard Euclidean norm, 
namely 
$|\nabla u|= \sqrt{u_x^2+u_y^2}$. 
\begin{definition}
A homeomorphism 
$f=u+iv\colon\Omega \to \Omega^*\subset \mathbb{C}$ 
is said to be \emph{coordinate-wise $p$-harmonic}, 
or simply \emph{$p$-harmonic}, if its 
components 
$u,v\colon \Omega \to \mathbb{R}$ 
are both $p$-harmonic. 
\end{definition}
By analogy with the case of real functions, 
given a map 
$f=u+iv\in W^{1,p}(\Omega,\mathbb{C})$, 
we define its $p$-energy as the sum of the 
$p$-energies of its real and imaginary parts, 
{\it i.e.\/},
\begin{equation*}
 \mathcal{E}_p(f)\;=\; \int_{\Omega} \left(|\nabla u|^{p}+|\nabla v|^{p}\right)\,d\mu\ , 
\end{equation*}
Just as in the case of real functions, 
$p$-harmonic homeomorphisms are minimisers 
of the $p$-energy. 

It is easily seen that the $p$-energy of 
$f\in W^{1,p}(\Omega,\mathbb{C})$ 
controls the $L^p$-norm of $|Df|$ and vice-versa.
Indeed, since in the present context we have 
$|Df|=|u_x|+|u_y|+|v_x|+|v_y|$, 
we have the double inequality
\begin{equation}\label{norm-energy}
\mathcal{E}_p(f)\;\leq\; \int_{\Omega} |Df|^p\,d\mu\;\leq\; c_p \mathcal{E}_p(f)\ ,
\end{equation}
where $c_{p}>1$ is a constant depending only on $p$ {\footnote{In fact, one can take $c_p=2^{\frac{3p}{2}}$.}} 

The only non-trivial fact we will use about $p$-harmonic homeomorphisms is the following generalization due to 
Alessandrini and Sigalotti \cite{AS} of a theorem due to Rad\'{o}, Kneser and Choquet. The formulation below is adapted 
from~\cite{IKO}.

\begin{theorem}\label{rado-kneser-choquet}
 Let $D,D^*$ be two Jordan domains in the plane. 
 Assume that both Jordan curves $\partial D$, $\partial D^*$ are positively oriented, 
 and that $D^*$ is \emph{convex}. 
 Given $1<p<\infty$ and a homeomorphism $h\colon \partial D\to \partial D^*$ which preserves orientation, 
 there exists an orientation-preserving homeomorphism $\phi\colon \overline{D}\to \overline{D}^*$ such that $\phi|_{\partial D}\equiv h$ and 
 $\phi$ is $p$-harmonic. Moreover, $\phi|_{D}$ is a $C^\infty$-diffeomorphism onto $D^*$, and in particular its Jacobian is everywhere 
 positive in $D$. 
\end{theorem}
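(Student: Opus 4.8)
The plan is to follow the classical three-step pattern behind every Radó--Kneser--Choquet-type statement --- existence of the extension, interior regularity, and univalence --- the first two being standard in the present nonlinear setting and the third being the genuinely hard point, coinciding with the theorem of Alessandrini and Sigalotti~\cite{AS} that one would ultimately invoke.

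\emph{Step 1: existence of the $p$-harmonic extension.} A Jordan domain is regular for the $p$-Laplacian (each boundary point admits a barrier), so Perron's method for the quasilinear equation $\mathrm{div}(|\nabla u|^{p-2}\nabla u)=0$ solves the Dirichlet problem with continuous boundary data $\mathrm{Re}\,h$, yielding $u\in C(\overline D)\cap W^{1,p}_{\mathrm{loc}}(D)$; when $\mathrm{Re}\,h$ admits a $W^{1,p}(D)$-extension --- the case in our intended applications, where $h$ will be smooth --- one may alternatively minimise the $p$-energy $\mathcal E_p$ over the affine class of $W^{1,p}$-maps with that trace, strict convexity of $t\mapsto|t|^p$ giving existence and uniqueness. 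Applying this to $\mathrm{Re}\,h$ and $\mathrm{Im}\,h$ separately and setting $\phi=u+iv$ produces a coordinate-wise $p$-harmonic map with $\phi|_{\partial D}\equiv h$.

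\emph{Step 2: interior regularity.} By regularity theory for the $p$-Laplacian, $u,v\in C^{1,\alpha}_{\mathrm{loc}}(D)$; since $\dim=2$ the complex gradient of a $p$-harmonic function behaves like a quasiregular mapping, so its critical set is discrete and near a critical point $z_0$ one has $u-u(z_0)=\mathrm{Re}\,(\chi(z)^{j})$ for some integer $j\geq 2$ and a local homeomorphism $\chi$ fixing $z_0$. Hence $u$ is real-analytic away from its critical set (where the equation is uniformly elliptic with analytic coefficients), and $\{u=u(z_0)\}$ is locally a union of $2j$ arcs through $z_0$; the same description holds for $v$.

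\emph{Step 3: univalence (the main obstacle).} Here one must show $\phi(D)\subseteq D^*$, $\phi(\overline D)=\overline{D^*}$, and, crucially, $J_\phi>0$ throughout $D$. This is where \emph{convexity of $D^*$} is used: for each $c$ the super-level set $\{z\in\partial D:\mathrm{Re}\,h(z)>c\}$ is a single sub-arc of $\partial D$ (its image under $h$ is a single arc of the convex curve $\partial D^*$), and likewise for $\mathrm{Im}$; on the other hand, if $\nabla u(z_0)$ and $\nabla v(z_0)$ were linearly dependent at some interior $z_0$, the arcs of $\{u=u(z_0)\}$ and $\{v=v(z_0)\}$ emanating from $z_0$ would partition a punctured neighbourhood of $z_0$ into sectors of locally constant sign of $(u-u(z_0),v-v(z_0))$, and --- since these level sets cannot close up (maximum principle applied to each component) and must therefore run out to $\partial D$ --- the rigid boundary picture above is incompatible with the branching at $z_0$. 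Once $J_\phi>0$ everywhere, $\nabla u$ and $\nabla v$ never vanish, so $\phi|_D$ is a $C^\infty$ local diffeomorphism; and since the winding number of $h$ about each point of $D^*$ equals $1$, the local-degree count forces exactly one preimage for each point of $D^*$ and none outside $\overline{D^*}$, so $\phi\colon D\to D^*$ is a $C^\infty$-diffeomorphism extending to a homeomorphism $\overline D\to\overline{D^*}$ with everywhere-positive Jacobian. The real difficulty is that, unlike the harmonic case $p=2$, a linear combination $\langle\phi,e\rangle$ of the two components is not $p$-harmonic, so Choquet's classical reduction to a one-dimensional monotonicity argument in each direction $e$ is unavailable --- only translations and dilations of the target preserve coordinate-wise $p$-harmonicity. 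Making the sign/level-set combinatorics with $u$ and $v$ treated separately rigorous is precisely the content of the Alessandrini--Sigalotti theorem~\cite{AS} (see also the exposition in~\cite{IKO}), which is the step I would cite as a black box rather than reprove here.
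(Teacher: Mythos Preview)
The paper does not prove this theorem at all: it is stated as a known result, attributed to Alessandrini and Sigalotti~\cite{AS} with the formulation adapted from~\cite{IKO}, and used as a black box in the Replacement Trick. Your proposal is consistent with this --- you add exposition around the standard existence and interior-regularity steps, but you explicitly acknowledge that the decisive univalence step (Step~3) is exactly the Alessandrini--Sigalotti theorem and that you would cite it rather than reprove it; so in substance you are doing the same thing as the paper, just with more surrounding commentary.
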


When we have a diffeomorphism $\phi$ in $W^{1,p}$, the $p$-energy of $\phi$ always bounds the $1$-energy of $\phi^{-1}$. 
This is the content of the following simple lemma, which will be used in combination with Theorem \ref{rado-kneser-choquet}. 

\begin{lemma}\label{inverse-energy-lemma}
 Let $\phi\colon D\to D^*$ be $C^1$-diffeomorphism between two bounded domains in the plane, and suppose $\phi\in W^{1,p}(D,\mathbb{C})$ for some 
 $1<p<\infty$. Then $\phi^{-1}\in W^{1,1}(D^*,\mathbb{C})$, and in fact
 \begin{equation}\label{inverse-energy}
  \mathcal{E}_{1}(\phi^{-1})\;\leq\; 4\left(\mathrm{Area}(D)\right)^{1-\frac{1}{p}}\mathcal{E}_p(\phi)^{\frac{1}{p}}\ .
 \end{equation}
\end{lemma}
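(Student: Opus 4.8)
The plan is to write the $1$-energy of $\phi^{-1}$ as an integral over $D^*$, change variables back to $D$ using the fact that $\phi$ is a $C^1$-diffeomorphism with everywhere-positive Jacobian $J_\phi>0$, and then apply H\"older's inequality. First I would recall the relation between the derivative of the inverse and the derivative of $\phi$: at a point $w=\phi(z)$ one has $D\phi^{-1}(w)=\bigl(D\phi(z)\bigr)^{-1}$, so for a $2\times 2$ invertible matrix $A=D\phi(z)$ with $\det A = J_\phi(z)$ the inverse is $A^{-1} = \tfrac{1}{\det A}\,\mathrm{adj}(A)$, and hence $|D\phi^{-1}(w)| = |A^{-1}| \le \dfrac{|\mathrm{adj}(A)|}{|\det A|}$. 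For $2\times 2$ matrices the adjugate is just a rearrangement (with signs) of the entries of $A$, so $|\mathrm{adj}(A)| = |A|$ in the entrywise $\ell^1$-matrix norm being used in the paper; thus $|D\phi^{-1}(\phi(z))| \le |D\phi(z)| / J_\phi(z)$.

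Next I would perform the change of variables $w=\phi(z)$ in the integral defining $\mathcal{E}_1(\phi^{-1})$, which is legitimate since $\phi$ is a $C^1$-diffeomorphism. This gives
\begin{equation*}
\mathcal{E}_1(\phi^{-1}) \;\le\; \int_{D^*} |D\phi^{-1}(w)|\,d\mu(w)
\;=\; \int_{D} |D\phi^{-1}(\phi(z))|\,J_\phi(z)\,d\mu(z)
\;\le\; \int_{D} |D\phi(z)|\,d\mu(z) \ ,
\end{equation*}
where the first inequality comes from $|\nabla u^{-1}|+|\nabla v^{-1}|\le |D\phi^{-1}|$ (writing $\phi^{-1}=u^{-1}+iv^{-1}$ in the notation of the paper) and the last uses the pointwise bound from the previous step, the factors $J_\phi(z)$ cancelling exactly. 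Finally I would apply H\"older's inequality on $D$ with exponents $p$ and $p/(p-1)$:
\begin{equation*}
\int_{D} |D\phi(z)|\,d\mu(z) \;\le\; \bigl(\mathrm{Area}(D)\bigr)^{1-\frac1p}\Bigl(\int_{D}|D\phi(z)|^p\,d\mu(z)\Bigr)^{\frac1p}
\;\le\; \bigl(\mathrm{Area}(D)\bigr)^{1-\frac1p}\,\bigl(c_p\,\mathcal{E}_p(\phi)\bigr)^{\frac1p}\ ,
\end{equation*}
using the right-hand inequality in~\eqref{norm-energy}. Absorbing the constant — since $c_p \le 2^{3p/2}$, one has $c_p^{1/p}\le 2^{3/2}<4$ — yields $\mathcal{E}_1(\phi^{-1}) \le 4\,(\mathrm{Area}(D))^{1-1/p}\,\mathcal{E}_p(\phi)^{1/p}$, and in particular $\phi^{-1}\in W^{1,1}(D^*,\mathbb{C})$. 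This also confirms the claim $\phi^{-1}\in W^{1,1}$ since the right-hand side is finite whenever $\phi\in W^{1,p}(D,\mathbb{C})$ and $D$ is bounded.

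The only slightly delicate point — hardly an obstacle — is justifying the change of variables and the identity $D\phi^{-1}(\phi(z)) = (D\phi(z))^{-1}$ in the $W^{1,1}$ sense rather than merely pointwise; this is immediate here because $\phi$ is assumed to be a genuine $C^1$-diffeomorphism (not just a Sobolev homeomorphism), so the classical inverse function theorem and the $C^1$ change-of-variables formula apply directly, and no appeal to the more subtle Sobolev chain rule (Lemma~\ref{chainrule}) is needed. One should also take a moment to check that the entrywise matrix norm $|A|=\sum_{i,j}|a_{ij}|$ used throughout the paper indeed satisfies $|\mathrm{adj}(A)|=|A|$ for $2\times 2$ matrices, which is a one-line verification, and that the constant $4$ comfortably accommodates $c_p^{1/p}$.
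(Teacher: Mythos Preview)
Your proof is correct and follows essentially the same route as the paper: change variables from $D^*$ back to $D$ using the explicit $2\times 2$ inverse-matrix formula (so that the Jacobian factor cancels), then apply H\"older's inequality. The only cosmetic difference is bookkeeping of the constant $4$: the paper treats the four entries $U_x,U_y,V_x,V_y$ separately, obtaining for each the bound $(\mathrm{Area}(D))^{1-1/p}\mathcal{E}_p(\phi)^{1/p}$ and then summing, whereas you package the same computation via $|\mathrm{adj}(A)|=|A|$ and recover the $4$ at the end from $c_p^{1/p}=2^{3/2}<4$.
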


\begin{proof}
There is no loss of generality in assuming that $\phi$ preserves orientation. Let us write $\phi=u+iv$ and $\phi^{-1}=U+iV$.
We need to bound
\begin{align}\label{4integrals}
&\mathcal{E}_{1}(\phi^{-1})
\;=\; \int_{D^*} |D\phi^{-1}|\,d\mu \notag\\
&\;=\; \int_{D^*} |U_x|\,d\mu + \int_{D^*}|U_y|\,d\mu + \int_{D^*}|V_x|\,d\mu +\int_{D^*}|V_y|\,d\mu
\end{align}
in terms of the $p$-energy of $\phi$. We proceed to bound each of the four integrals in the right-hand side of~\eqref{4integrals}. 
By the chain rule we have $D\phi\circ \phi^{-1}\cdot D\phi^{-1}=\mathrm{id}$, and therefore
\begin{equation*}
D\phi^{-1} 
  =   \left[\begin{matrix} U_x& U_y\\V_x& V_y\end{matrix}\right] 
\;=\; \frac{1}{J_{\phi}\circ \phi^{-1}}\left[\begin{matrix} \,v_y\circ \phi^{-1} & - u_y\circ \phi^{-1}\\ 
                                                               -v_x\circ \phi^{-1} & \,u_x\circ \phi^{-1}\end{matrix}\right] 
  =   \left( D\phi\circ \phi^{-1} \right)^{-1}  \ ,
\end{equation*}
where $J_\phi=\mathrm{det}(D\phi)=u_xv_y-u_yv_x>0$ is the Jacobian of $\phi$. Note also that $J_\phi\circ \phi^{-1}= (J_{\phi^{-1}})^{-1}$. 
Comparing entries in the matrices above, we get
\begin{align*}
 U_x=\;v_y\circ\phi^{-1}\cdot J_{\phi^{-1}}\ \ \ &,\ \ \ U_y= -u_y\circ \phi^{-1}\cdot J_{\phi^{-1}} \ \ ,\\
 V_x= -v_x\circ\phi^{-1}\cdot J_{\phi^{-1}}\ \ \ &,\ \ \ V_y= \;u_x\circ \phi^{-1}\cdot J_{\phi^{-1}} \ \ .
\end{align*}
From the first of the four inequalities above, we deduce by a simple application of the change of variables formula that
\begin{equation*}
 \int_{D^*} |U_x|\,d\mu \;=\; \int_{D^*} |v_y|\circ \phi^{-1}\cdot J_{\phi^{-1}}\,d\mu\;=\; \int_{D} |v_y|\,d\mu\ .
\end{equation*}
But H\"older's inequality tells us that
\begin{equation*}
 \int_{D} |v_y|\,d\mu\;\leq\; \left( \int_{D} d\mu   \right)^{1-\frac{1}{p}} \left(\int_{D} |v_y|^p\,d\mu\right)^{\frac{1}{p}}\ .
\end{equation*}
This obviously implies that
\begin{equation*}
 \int_{D^*} |U_x|\,d\mu \;\leq\; \left(\mathrm{Area}(D)\right)^{1-\frac{1}{p}} \left(\int_{D} |v_y|^p\,d\mu\right)^{\frac{1}{p}}
 \;\leq\; \left(\mathrm{Area}(D)\right)^{1-\frac{1}{p}}\mathcal{E}_p(\phi)^{\frac{1}{p}}\ .
\end{equation*}
The same estimate holds for the remaining three integrals in the right-hand side of \eqref{4integrals}. Adding 
up all these estimates yields \eqref{inverse-energy}, as desired. 
\end{proof}

\subsubsection*{Replacement trick}
The following proposition shows 
that we can always replace a Sobolev 
homeomorphism in 
$\mathcal{S}^{p,1}(\Omega,\Omega^*)$ 
by another which is very close to it 
and is in fact smooth in the neighbourhood 
of a point specified in advance. 
In the proof we will implicitly use, 
in addition to the auxiliary results 
of the previous section, the following 
elementary remark.
\begin{remark}\label{rmk:elementary_remark} 
If 
$f\colon\Omega\to \Omega^*$ 
is a homeomorphism and $a^*\in \Omega^*$ 
and $r_0>0$ satisfy 
$\overline{D}(a^*, r_0)\subset \Omega^*$, 
then for all but countably many $r\in [0,r_0]$ 
the Jordan curve 
$f^{-1}(\partial D(a^*,r))$ 
has zero Lebesgue measure. 
\end{remark}
\begin{proposition}\label{prop:replacement_trick}
Let 
$f\colon\Omega\to \Omega^*$ 
be a Sobolev homeomorphism in 
$\mathcal{S}^{p,1}(\Omega,\Omega^*)$, 
and let 
$a\in \Omega$ and $a^*\in \Omega^*$ 
satisfy $f(a)=a^*$. 
Then for each $\epsilon>0$ there exists 
a topological disk $\mathcal{O}^*$, 
compactly contained in $\Omega^*$, and 
$g\in \mathcal{S}^{p,1}(\Omega,\Omega^*)$ 
having the following properties:
\begin{enumerate}
\item[(i)] 
Both $\mathcal{O}^*$ and its pre-image $\mathcal{O}=f^{-1}(\mathcal{O}^*)$ have diameter less than $\epsilon$. 
\item[(ii)] 
The map $g$ agrees with $f$ on $\Omega\setminus \mathcal{O}$.
\item[(iii)] 
The restriction $g|_{\mathcal{O}}:\, \mathcal{O}\to \mathcal{O}^*$ is a $C^{\infty}$ diffeomorphism. 
\item[(iv)] 
The map $g$ is $\epsilon$-close to $f$, {\it i.e.\/}, $\rho(f,g)<\epsilon$. 
\item[(v)] 
We have $a\in \mathcal{O}$, $a^*\in \mathcal{O^*}$, and the equality $g(a)=a^*$ holds true. 
\end{enumerate}
\end{proposition}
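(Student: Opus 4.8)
The strategy is to cut out a small round disk around $a^*$ in the target, pull it back to a small topological disk around $a$ in the source, and glue a $p$-harmonic diffeomorphism in between. Let me think through the steps.

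First, I'd use Remark~\ref{rmk:elementary_remark}: since $f$ is continuous and $f(a)=a^*$, for each small $r_0>0$ with $\overline{D}(a^*,r_0)\Subset\Omega^*$ I can find some $r\in(0,r_0)$ such that the Jordan curve $\gamma=f^{-1}(\partial D(a^*,r))$ has zero Lebesgue measure, and by continuity of $f$ and $f^{-1}$ I can further shrink $r_0$ so that both $D(a^*,r)$ and $f^{-1}(D(a^*,r))$ have diameter less than $\epsilon$ and, crucially, so that the $\mathbb{W}^{1,p}$-mass of $f$ over $\mathcal{O}:=f^{-1}(D(a^*,r))$ and the $\mathbb{W}^{1,1}$-mass of $f^{-1}$ over $\mathcal{O}^*:=D(a^*,r)$ are both less than $\epsilon/10$ (absolute continuity of the integral, since $\mu(\mathcal{O})\to0$ as $r_0\to0$). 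Set $\mathcal{O}^*=D(a^*,r)$, a genuine round disk, hence convex — this convexity is exactly what Theorem~\ref{rado-kneser-choquet} needs. Set $\mathcal{O}=f^{-1}(\mathcal{O}^*)$, a Jordan domain containing $a$, with $\partial\mathcal{O}=\gamma$.

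Next, apply Theorem~\ref{rado-kneser-choquet} with $D=\mathcal{O}$, $D^*=\mathcal{O}^*$, and boundary data $h=f|_{\partial\mathcal{O}}\colon\partial\mathcal{O}\to\partial\mathcal{O}^*$, which is an orientation-preserving homeomorphism between the (positively oriented) Jordan curves. This yields an orientation-preserving homeomorphism $\psi\colon\overline{\mathcal{O}}\to\overline{\mathcal{O}^*}$, $p$-harmonic, with $\psi|_{\partial\mathcal{O}}=h$, and $\psi|_{\mathcal{O}}$ a $C^\infty$-diffeomorphism with positive Jacobian. Define
\begin{equation*}
g=\begin{cases} f & \text{on } \Omega\setminus\mathcal{O},\\ \psi & \text{on } \overline{\mathcal{O}}.\end{cases}
\end{equation*}
Since $\psi$ and $f$ agree on $\partial\mathcal{O}$ and each is a homeomorphism onto its (matching) image, $g$ is a homeomorphism $\Omega\to\Omega^*$; it is smooth on $\mathcal{O}$, agrees with $f$ off $\mathcal{O}$, and $g(a)=\psi(a)\in\mathcal{O}^*$, with $a^*=f(a)$ — wait, here is the subtlety: $\psi(a)$ need not equal $a^*$. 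To fix (v), I post-compose $\psi$ on $\mathcal{O}$ with a smooth diffeomorphism of $\overline{\mathcal{O}^*}$ fixing $\partial\mathcal{O}^*$ and carrying $\psi(a)$ to $a^*$; since $\mathcal{O}^*$ is a round disk and both points lie in it, such a diffeomorphism exists, can be taken $C^0$-small and with derivative controlled, and (by shrinking $r_0$ further so this correction is tiny in $\mathbb{W}^{1,p}$) it does not disturb the estimates. This gives $g(a)=a^*$, properties (i)–(iii) and (v).

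The main obstacle is property (iv), the Sobolev estimate $\rho(f,g)<\epsilon$. Since $f$ and $g$ agree outside $\mathcal{O}$ (resp. $\mathcal{O}^*$ for the inverses), we have
\begin{equation*}
\rho(f,g)=\|f-g\|_{C^0}+\|f^{-1}-g^{-1}\|_{C^0}+\Bigl(\int_{\mathcal{O}}|Df-D\psi|^p\,d\mu\Bigr)^{1/p}+\int_{\mathcal{O}^*}|Df^{-1}-D\psi^{-1}|\,d\mu.
\end{equation*}
The two $C^0$-terms are at most $\mathrm{diam}(\mathcal{O}^*)+\mathrm{diam}(\mathcal{O})<2\epsilon$-small since both maps send $\mathcal{O}\to\mathcal{O}^*$. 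For the $\mathbb{W}^{1,p}$-term, bound it by $(\int_{\mathcal{O}}|Df|^p)^{1/p}+(\int_{\mathcal{O}}|D\psi|^p)^{1/p}$; the first is small by the absolute-continuity choice above. For the second I need a bound on $\mathcal{E}_p(\psi)$: since $p$-harmonic maps minimize $p$-energy among $W^{1,p}$-maps with the same boundary trace, and $f|_{\mathcal{O}}$ is such a competitor, $\mathcal{E}_p(\psi)\le\mathcal{E}_p(f|_{\mathcal{O}})\le\int_{\mathcal{O}}|Df|^p\,d\mu$, which is small — then apply~\eqref{norm-energy} to pass between $|D\psi|^p$ and $\mathcal{E}_p(\psi)$. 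For the $\mathbb{W}^{1,1}$-term on the inverses, bound by $\int_{\mathcal{O}^*}|Df^{-1}|+\mathcal{E}_1(\psi^{-1})$; the first is small by absolute continuity, and Lemma~\ref{inverse-energy-lemma} gives $\mathcal{E}_1(\psi^{-1})\le 4(\mathrm{Area}(\mathcal{O}))^{1-1/p}\mathcal{E}_p(\psi)^{1/p}$, which tends to $0$ as $r_0\to0$ since both $\mathrm{Area}(\mathcal{O})\to0$ and $\mathcal{E}_p(\psi)\to0$. Thus every term can be made $<\epsilon/5$ by choosing $r_0$ small enough, and (iv) follows. Finally, that $g\in\mathcal{S}^{p,1}(\Omega,\Omega^*)$ (not merely that $\rho(f,g)$ is finite) follows because $g$ is a homeomorphism, is $\mathbb{W}^{1,p}$ on $\mathcal{O}$ and equals the $\mathbb{W}^{1,p}$-map $f$ elsewhere (and symmetrically for $g^{-1}$), using that Sobolev regularity glues across the measure-zero curve $\gamma$.
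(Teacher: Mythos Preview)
Your argument for properties (i)--(iv) follows the same line as the paper's --- small disk, Rad\'o--Kneser--Choquet $p$-harmonic replacement, energy minimisation to bound $\mathcal{E}_p(\psi)$, then Lemma~\ref{inverse-energy-lemma} for $\mathcal{E}_1(\psi^{-1})$ --- and your use of absolute continuity of the integral (shrinking $r$ forces $\mu(\mathcal{O}),\mu(\mathcal{O}^*)\to0$, hence the relevant Sobolev masses vanish) is in fact cleaner than the paper's two-stage pigeonhole. Keeping $\mathcal{O}^*=D(a^*,r)$ centred at $a^*$ also gives you $a\in\mathcal{O}$ and $a^*\in\mathcal{O}^*$ for free, which the paper does not have.

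The gap is in your fix for (v). You assert that the correcting diffeomorphism $\theta$ of $\overline{\mathcal{O}^*}$, fixing $\partial\mathcal{O}^*$ and sending $\psi(a)$ to $a^*$, can be taken ``with derivative controlled''. But any such $\theta$ has Lipschitz constant governed by the ratio $|\psi(a)-a^*|\big/\mathrm{dist}(\psi(a),\partial\mathcal{O}^*)$ (compare Lemma~\ref{lem:E-perturbation}), and you have no a~priori information on where the $p$-harmonic extension sends $a$: nothing prevents $\psi(a)$ from landing arbitrarily close to $\partial\mathcal{O}^*$, which would force $\|D\theta\|_\infty$ to be arbitrarily large. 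Your proposed remedy, ``shrinking $r_0$ further'', is circular: changing $r_0$ changes $\psi$, hence $\psi(a)$, hence the bound you need on $\theta$. So the product $\|D\theta\|_\infty^p\cdot\mathcal{E}_p(\psi)$ is not shown to be small.

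The paper circumvents this by abandoning the requirement that the correction be supported in $\mathcal{O}^*$. It builds a \emph{sequence} $f_n$ satisfying (i)--(iv) on shrinking disks $\mathcal{O}_n\subset D(a,\epsilon_n)$, picks any $a_n\in\mathcal{O}_n$, and conjugates globally: $g_n=\psi_n^{-1}\circ f_n\circ\varphi_n$, where $\varphi_n,\psi_n$ are diffeomorphisms supported in $D(a,2\epsilon_n)$ and $D(a^*,2\epsilon_n)$ (not in $\mathcal{O}_n,\mathcal{O}_n^*$) sending $a\mapsto a_n$ and $a^*\mapsto f_n(a_n)$. Since the displacement is at most $\epsilon_n$ while the support has radius $2\epsilon_n$, Lemma~\ref{lem:E-perturbation} gives \emph{uniformly} bounded $C^1$-norms for $\varphi_n,\psi_n$. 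Then $Dg_n\to Df$ and $Dg_n^{-1}\to Df^{-1}$ in measure, and a subsequence plus dominated-convergence argument (as in the proof of Theorem~\ref{thm:sobolevclosing}) yields $\rho(g_{n_k},f)\to0$. Any sufficiently large $g_{n_k}$ then does the job. You could repair your proof by adopting this global-conjugation-plus-subsequence device in place of the local correction $\theta$.
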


\begin{proof}
First we prove that for each $\epsilon>0$ 
there exists a topological disk $\mathcal{O}^*$ 
and 
$g\in \mathcal{S}^{p,1}(\Omega,\Omega^*)$ 
such that the properties (i)--(iv) hold. 
We take care of property (v) only at the 
end of the proof. We proceed by steps in 
the following way. 
Let us choose $0<\epsilon_0<\epsilon$. 
(How small $\epsilon_0$ needs to be  will 
be determined in the course of the argument). 
 
\begin{figure}[t]
\begin{center}
\psfrag{f}[][]{ $f$} 
\psfrag{b}[][][1]{$B_k$}
\psfrag{fb}[][][1]{$\!\!\!\!\!\!\!\!f^{-1}\!(\!B_k\!)$} 
\psfrag{D}[][][1]{$\Delta$}
\psfrag{fD}[][][1]{$f^{-1}(\Delta)$}
\includegraphics[width=3.5in]{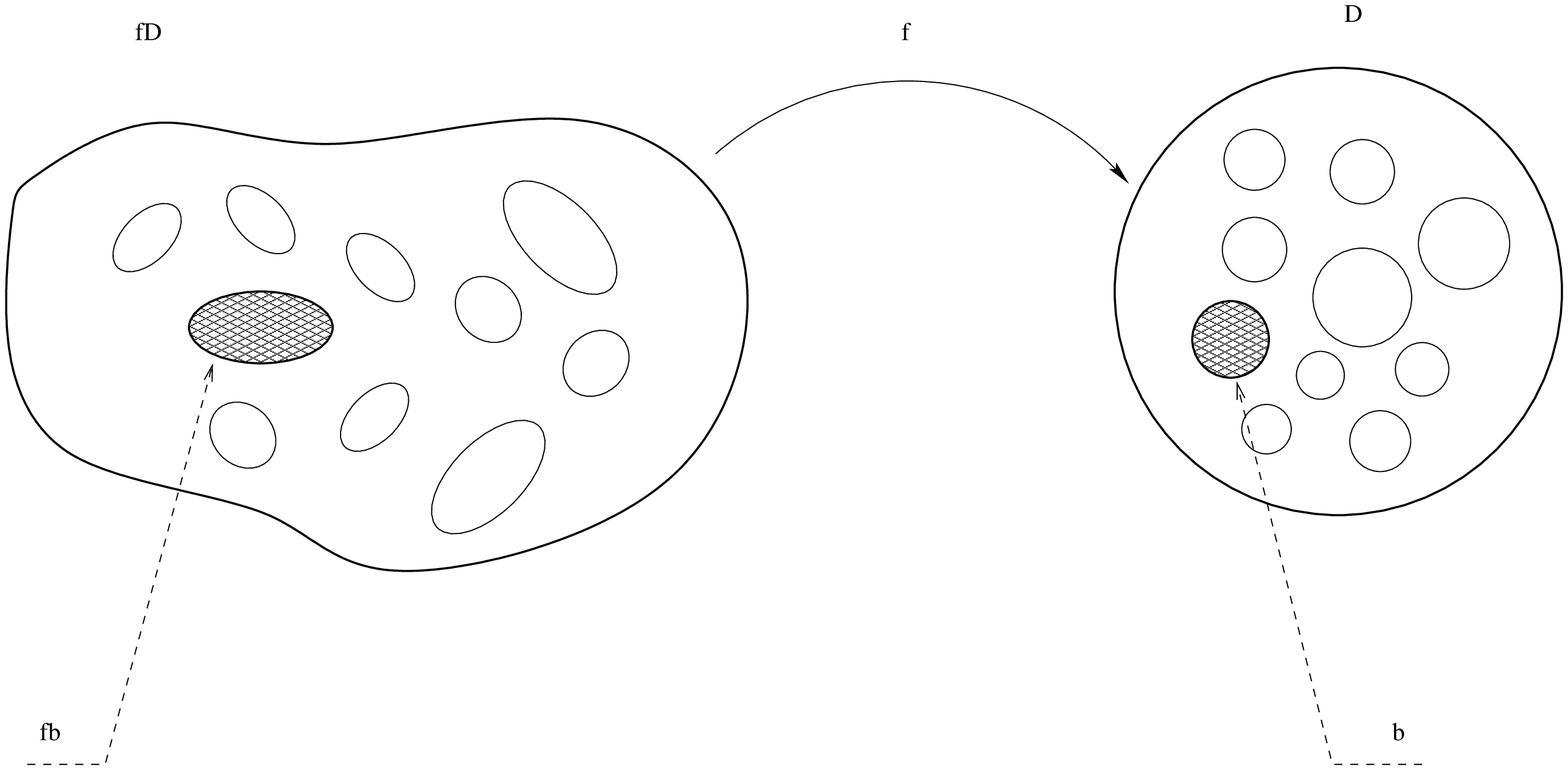}
\end{center}
\caption[topdisk]{\label{topdisk} A topological disk carrying small Sobolev norm for $f$.}
\end{figure}

 \begin{enumerate}
  \item[(1)] 
  By uniform continuity of $f^{-1}$, there exists 
  $0<\delta<\epsilon_0/2$ 
  such that 
  $D(a^*,\delta)\subset \Omega^*$ 
  and 
  $\mathrm{diam}\left(f^{-1}(D(a^*,\delta))\right)<\epsilon_0$.
  \item[(2)] 
  Let $N\in \mathbb{N}$ satisfy 
  $N>\epsilon_0^{-1}\max\left\{c_p\mathcal{E}_p(f),c_1\mathcal{E}_1(f^{-1})\right\}$. 
  Choose $N$ pairwise disjoint balls (disks) 
  $\Delta_1, \Delta_2,\ldots, \Delta_N\subset D(a^*,\delta)$ 
  and note that, by the inequalities~\eqref{norm-energy},
  \begin{equation*}
   \frac{1}{N}\sum_{j=1}^{N} \int_{\Delta_j} |Df^{-1}|\,d\mu
   \;\leq\;
   \frac{1}{N}\int_{D(a^*,\delta)} |Df^{-1}|\,d\mu
   \;<\; 
   \epsilon_0\ .
  \end{equation*}
  Therefore at least one of the disks $\Delta_1,\Delta_2,\ldots,\Delta_N$, call it $\Delta$, satisfies 
  \begin{equation*}
   \int_{\Delta} |Df^{-1}|\,d\mu \;<\; \epsilon_0 \ .
  \end{equation*}
  Now choose pairwise disjoint balls $B_1,B_2,\ldots,B_N\subset \Delta$ 
  for which we have $\mu(f^{-1}(\partial B_j))=0$, for all $1\leq j\leq N$. 
  This is possible by Remark~\ref{rmk:elementary_remark}.
  See Figure~\ref{topdisk}. Then
  \begin{equation*}
   \frac{1}{N}\sum_{j=1}^{N} \int_{f^{-1}(B_j)} |Df|^p\,d\mu
   \;\leq\; 
   \frac{1}{N}\int_{f^{-1}(\Delta)} |Df|^p\,d\mu
   \;<\; 
   \epsilon_0\ ,
  \end{equation*}
  and from this it follows that there exists $k\in \{1,2,\ldots,N\}$ such that
  \begin{equation}\label{smallnorm_init}
  \int_{f^{-1}(B_k)} |Df|^p\,d\mu
  \;<\; 
  \epsilon_0\ .
  \end{equation}
  Let us define $\mathcal{O}^*=B_k$ and $\mathcal{O}=f^{-1}(B_k)$. 
  Then the above considerations imply that
  \begin{equation}\label{smallnorm}
  \mathcal{E}_p(f|_{\mathcal{O}})<\epsilon_0\ 
  \ \ \textrm{and}\ \ \ 
  \mathcal{E}_1(f^ {-1}|_{\mathcal{O}^ *})<\epsilon_0\ .
  \end{equation}

  \item[(3)] 
  Let 
  $\phi\colon \overline{\mathcal{O}} \to \overline{\mathcal{O}^*}$ 
  be the $p$-harmonic homeomorphism with 
  $\phi|_{\partial \mathcal{O}}\equiv f|_{\partial \mathcal{O}}$ 
  whose existence is guaranteed by Theorem~\ref{rado-kneser-choquet}. 
  From~\eqref{smallnorm} above and since $\phi$ minimises $p$-energy, 
  we know that 
  $\mathcal{E}_p(\phi) \leq \mathcal{E}_p(f|_{\mathcal{O}}) < \epsilon_0$. 
  \item[(4)] 
  Define $g\colon \Omega\to \Omega^*$ by setting 
  \begin{equation*}
  g(z)=
  \left\{\begin{array}{ll}
  f(z) & z\in \Omega\setminus \mathcal{O}\\ 
  \phi(z) &z\in \mathcal{O}
  \end{array}\right. \ \ .
  \end{equation*}
  Then $g$ is a homeomorphism. 
  Since 
  $\mu(\partial \mathcal{O})=\mu(f^{-1}(\partial B_k))=0$, 
  we see that 
  $g\in W^{1,p}(\Omega,\mathbb{C})$. 
  Similarly, since 
  $\mu(\partial\mathcal{O}^*)=\mu(\partial B_k)=0$, 
  we also have 
  $g^{-1}\in W^{1,1}(\Omega^*,\mathbb{C})$. 
  Hence 
  $g\in \mathcal{S}^{p,1}(\Omega, \Omega^*)$. 
  Moreover, 
  $g|_{\mathcal{O}}\equiv \phi\colon \mathcal{O}\to \mathcal{O}^*$ is a $C^{\infty}$-diffeomorphism.
  \item[(5)] 
  Let us now estimate the distance $\rho(f,g)$. 
  Since the support of $f-g$ lies in 
  $\mathcal{O}=f^{-1}(\mathcal{O}^*)=g^{-1}(\mathcal{O}^*)$, we have 
   \begin{equation}\label{fgeq1}
   \|f-g\|_{C^0(\Omega)}\leq \mathrm{diam}(\mathcal{O}^*)\;=\;2\delta\;<\;\epsilon_0\ .
   \end{equation}
   Likewise, since the support of $f^{-1}-g^{-1}$ lies in 
   $\mathcal{O}^*$, we have 
   \begin{equation}\label{fgeq2}
   \|f^{-1}-g^{-1}\|_{C^0(\Omega^*)}\leq \mathrm{diam}(\mathcal{O})<\epsilon_0 \ .
   \end{equation}
   Moreover, we have 
   \begin{align*}
    \|Df - Dg\|_{L^p(\Omega)} \;&=\; \|D(f|_{\mathcal{O}}) -D\phi\|_{L^p(\mathcal{O})}  \\
      &\leq \|D(f|_{\mathcal{O}})\|_{L^p(\mathcal{O})} + \|D\phi\|_{L^p(\mathcal{O})} \\
      &\leq \; c_p^{\frac{1}{p}}\left[\mathcal{E}_p(f|_{\mathcal{O}})^{\frac{1}{p}} + \mathcal{E}_p(\phi)^{\frac{1}{p}}\right] \ . 
   \end{align*}
   Using step (3) above we deduce that
   \begin{equation}\label{fgeq3}
    \|Df - Dg\|_{L^p(\Omega)} \;\leq\; 2(c_p\epsilon_0)^{\frac{1}{p}}\ .
   \end{equation}
   Finally, applying Lemma~\ref{inverse-energy-lemma} to $\phi$ with $D^*=\mathcal{O}^*$ and $D=\mathcal{O}$, and taking into account 
   that the area of $\mathcal{O}$ is less than $\pi\epsilon_0^2<1$ (if $\epsilon_0$ is small enough), we get
   \begin{align*}
    \|Df^{-1} - Dg^{-1}\|_{L^1(\Omega^*)}\;&=\; \|D(f^{-1}|_{\mathcal{O}^*}) - D\phi^{-1}\|_{L^1(\mathcal{O}^*)} \\
             &\leq\; c_1\left[\mathcal{E}_1(f^{-1}|_{\mathcal{O}^*}) + \mathcal{E}_1(\phi^{-1})\right]  \\
             &\leq\; c_1\left[\mathcal{E}_1(f^{-1}|_{\mathcal{O}^*})  + 4\mathcal{E}_p(\phi)^{\frac{1}{p}}\right]  
             \ .
   \end{align*}
   Again using step (3) and the second inequality in \eqref{smallnorm}, we deduce that  
   \begin{equation}\label{fgeq4}
   \|Df^{-1} - Dg^{-1}\|_{L^1(\Omega^*)}\;\leq\; 5c_1\epsilon_0^{\frac{1}{p}} \ .
   \end{equation}
   Putting together \eqref{fgeq1}, \eqref{fgeq2}, \eqref{fgeq3}, and \eqref{fgeq4}, it follows that $\rho(f,g)<\epsilon$, provided 
   $\epsilon_0$ is chosen so small that $2\epsilon_0+2(c_p\epsilon_0)^{\frac{1}{p}}+5c_1\epsilon_0^{\frac{1}{p}}<\epsilon$. 
 \end{enumerate}
   \noindent
   The proposition is almost proved. The only problem is 
   that the map $g$ we constructed above does not necessarily satisfy property (v). 
   We fix this problem as follows. 
   The argument we have given so far proves that 
   for each $n\in \mathbb{N}$ there exist: 
   \begin{itemize}
   \item[(a)] 
   a homeomorphism $f_n\in \mathcal{S}^{p,1}(\Omega, \Omega^*)$ which is $\epsilon_n$-close to $f$ in the 
   Sobolev metric, where $\epsilon_n=2^{-n}$, say; and 
   \item[(b)] 
   two topological disks $\mathcal{O}_n\subset D(a,\epsilon_n)\subset \Omega$ and 
   $\mathcal{O}^*_n\subset D(a^*,\epsilon_n)\subset \Omega^*$ with $f_n(\mathcal{O}_n)=\mathcal{O}^*_n$ such that $f_n|_{\mathcal{O}_n}$ is a 
   $C^{\infty}$-diffeomorphism and such that $f_n|_{\Omega\setminus\mathcal{O}_n}\equiv f|_{\Omega\setminus\mathcal{O}_n}$. 
   \end{itemize}
   For each $n$, choose a point $a_n\in \mathcal{O}_n$ and 
   let $a_n^*=f_n(a_n)\in \mathcal{O}^*_n$. 
   Using Lemma \ref{lem:E-perturbation}, we find a smooth diffeomorphism $\varphi_n\colon\Omega\to \Omega$  
   with support in the disk 
   $D(a,2\epsilon_n)$ such that $\varphi_n(a)=a_n$, and with the property that the $C^1$-norms of $\varphi_n$ and $\varphi_n^{-1}$ are 
   bounded by a constant independent 
   of $n$. In the same way we find a smooth diffeomorphism $\psi_n\colon\Omega^*\to \Omega^*$  with support in the disk $D(a^*,2\epsilon_n)$ 
   such that $\psi_n(a^*)=a_n^*$, also with the property that the $C^1$-norms of $\psi_n$ and $\psi_n^{-1}$ are bounded independently of $n$.
   Now let $g_n\colon\Omega\to \Omega^*$ be the homeomorphism $g_n=\psi_n^{-1}\circ f_n\circ \varphi_n\in \mathcal{S}^{p,1}(\Omega, \Omega^*)$. 
   We clearly have  $g_n\to f$ and $g_n^{-1}\to f^{-1}$ uniformly in $\Omega$ and $\Omega^*$, respectively. 
   By an argument analogous to the one used in the proof of the Sobolev Closing Lemma (Theorem~\ref{thm:sobolevclosing}) we know that 
   there exists a subsequence $n_k\to \infty$ such that 
   $Dg_{n_k}\to Df$ and $Dg_{n_k}^{-1}\to Df^{-1}$ {\it in measure\/}. 
   Passing to a further subsequence if necessary, we may assume that both $Dg_{n_k}$ and $Dg_{n_k}^{-1}$ converge pointwise Lebesgue 
   almost everywhere to $Df$ and $Df^{-1}$, respectively. 
   Then, just as in the proof of Theorem~\ref{thm:sobolevclosing}, a simple application of Lebesgue's Dominated Convergence Theorem shows that 
   $|Dg_{n_k}- Df|\to 0$ in $L^p(\Omega)$, and $|Dg_{n_k}^{-1}- Df^{-1}|\to 0$ in $L^1(\Omega^*)$. This shows that $\rho(g_{n_k},f)\to 0$ as 
   $k\to \infty$. 
   But then any $g=g_{n_k}$ for sufficiently large $k$ satisfies all five properties in the statement. This completes the proof. 
   \end{proof}

\subsubsection*{Blow-up}
The next proposition shows that, 
for smooth diffeomorphisms, 
in the neighbourhood of a periodic orbit, 
the map may be replaced by a translation. 
\begin{proposition}\label{prop:blow-up}
Let $\Omega,\Omega^*\subseteq \mathbb{R}^d$ 
be open domains with $\Omega$ path-connected. 
Let 
$f\in C^2(\Omega,\Omega^*)$ 
be an orientation-preserving embedding 
with periodic point $x_0$ of minimal period $k$.
There exists $C>0$ with the following property.
For each $r_0>0$ sufficiently small there exists 
\begin{itemize}
\item[(i)] 
an embedding $g\in C^2(\Omega,\Omega^*)$,
\item[(ii)]
concentric disks $D_{1,j}\subset D_{0,j}$ about $f^j(x_0)$ in $\Omega$, 
of radius $r_0$ or less,
for each $j=0,1,\ldots,k-1$, 
\end{itemize}
such that 
\begin{itemize}
\item[(a)]
$g|_{D_{1,j}}$ is a translation from $D_{1,j}$ to $D_{1,j+1}$ (addition taken modulo $k$), 
for each $j=0,1\ldots,k-1$,
\item[(b)]
$g|_{\Omega\setminus\bigcup_{j=0}^{k-1} D_{0,j}}=f$, 
and
\item[(c)]
$d_{\mathrm{Lip}}(f,g), \ d_{\mathrm{Lip}}(f^{-1},g^{-1})<C$.
\end{itemize}
\end{proposition}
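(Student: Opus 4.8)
The plan is to work one orbit point at a time using a cutoff-rescaling argument. Since $x_0$ has minimal period $k$, the points $x_0, f(x_0),\dots,f^{k-1}(x_0)$ are distinct, so we may fix pairwise disjoint open balls $V_j$ about $f^j(x_0)$ with $f(V_j)\Subset V_{j+1}$ for each $j$ (indices mod $k$); here we use that $f\in C^2$ and $f(f^j(x_0))=f^{j+1}(x_0)$. We will choose the disks $D_{0,j}\subset D_{1,j}$ of radius at most $r_0$ inside $V_j$, and it will turn out that the constant $C$ depends only on the $C^2$-norm of $f$ near the orbit (in particular on the operator norms of $Df(f^j(x_0))$ and a Lipschitz bound for $Df$ on a neighbourhood of the orbit), and \emph{not} on $r_0$.

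First I would reduce to a single step: it suffices to construct, for each $j$, an embedding that agrees with $f$ outside $D_{0,j}$ and that sends $D_{1,j}$ onto $D_{1,j+1}$ by the translation $z\mapsto z - f^j(x_0) + f^{j+1}(x_0)$, while keeping a uniform bi-Lipschitz bound; composing/patching these $k$ local modifications (their supports $D_{0,j}$ are disjoint) yields $g$, and properties (a), (b) follow immediately. For the single step, after translating both coordinates so that $f^j(x_0)=0$ and $f^{j+1}(x_0)=0$, write $f(z) = A z + R(z)$ near $0$, where $A = Df(0)$ is an orientation-preserving linear isomorphism and $\|R(z)\|\le \tfrac12 L\|z\|^2$, $\|DR(z)\|\le L\|z\|$ on $V_j$, with $L$ controlled by $\|f\|_{C^2}$. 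Fix a smooth cutoff $\chi\colon[0,\infty)\to[0,1]$ with $\chi\equiv 1$ on $[0,\tfrac12]$, $\chi\equiv 0$ on $[1,\infty)$, $|\chi'|\le 3$, and for radius parameters $0<r_1<r_0$ to be chosen define
\begin{equation*}
g(z) \;=\; f(z) \;-\; \chi\!\left(\tfrac{\|z\|}{r_0}\right)\Big(f(z) - \widetilde A z\Big),
\end{equation*}
where $\widetilde A$ is a fixed orientation-preserving linear map (for instance $\widetilde A = \mathrm{id}$, after a further outer linear change of coordinates absorbed into the charts) interpolating so that $g$ equals $f$ for $\|z\|\ge r_0$ and equals $\widetilde A z$ for $\|z\|\le \tfrac12 r_0$. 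Choosing $r_1 = \tfrac12 r_0$ and $D_{0,j} = \{\|z\|<r_0\}$, $D_{1,j}=\{\|z\|<\tfrac12 r_0\}$, and then (inside the linear-map region) composing with one more small cutoff that deforms $\widetilde A z$ to the genuine translation, we obtain the desired model; the key estimate is that $\|g(z)-f(z)\| = \chi(\|z\|/r_0)\|f(z)-\widetilde A z\|$ and $\|D g - Df\|$ pick up a term $\frac{1}{r_0}|\chi'|\,\|f(z)-\widetilde A z\|$, and since $\|f(z)-\widetilde A z\| \le \|(A-\widetilde A)z\| + \|R(z)\| \lesssim r_0$ on $D_{0,j}$, the factor $r_0$ cancels the $1/r_0$, giving a bound on $[g]_{\mathrm{Lip}}$ and $[g-f]_{\mathrm{Lip}}$ depending only on $\|A\|$, $\|\widetilde A\|$, $L$ and $\|\chi'\|_\infty$. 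To control $g^{-1}$ one checks (again for $r_0$ small, so that $g$ is a $C^1$-small perturbation of the linear/affine model on $D_{0,j}$ and hence an embedding there) that $Dg$ stays within a fixed distance of the set of orientation-preserving isomorphisms with bounded condition number, so $\|Dg^{-1}\|$ is bounded uniformly; this is where smallness of $r_0$ is genuinely used, to keep $Dg$ invertible with controlled inverse.

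The main obstacle I anticipate is item (c): getting a bound on $d_{\mathrm{Lip}}(f,g)$ and on $d_{\mathrm{Lip}}(f^{-1},g^{-1})$ that is \emph{independent of $r_0$}, rather than merely finite. The subtlety is that naive cutoff perturbations of a nonlinear map cost $O(1/r_0)$ in the derivative; the point is that one must interpolate not between $f$ and a translation directly, but between $f$ and its own linearization $Df(f^j(x_0))$, because the error $f(z)-Df(0)z$ is quadratically small and so contributes $O(r_0)$ to the $C^0$-error of the perturbation and hence $O(1)$ to the Lipschitz error after dividing by $r_0$; the passage from the linearization to an honest translation is then an $r_0$-independent linear interpolation whose Lipschitz cost depends only on $\|Df(f^j(x_0))\|$, which in turn is bounded by $\|f\|_{C^2}$ along the (finite) orbit. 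Once the per-step bound is uniform, taking $C$ to be $k$ times (a fixed multiple of) that bound, and noting the supports $D_{0,j}$ are disjoint so the estimates do not compound, completes the argument; the hypothesis that $\Omega$ is path-connected is used only to ensure $g$ is a global embedding onto $\Omega^*$ (it agrees with the embedding $f$ outside a compact set and is a local homeomorphism everywhere, hence proper onto its image).
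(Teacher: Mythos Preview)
Your two-step plan (first replace $f$ by its linearization $A=Df(0)$ on an inner disk, then replace $A$ by the identity on a smaller disk) is sound in outline, and your Step~1 is correct: because $R(z)=f(z)-Az$ is $O(\|z\|^2)$, the cutoff perturbation $f(z)-\chi(\|z\|/r_0)R(z)$ is a $C^1$-perturbation of $f$ of size $O(r_0)$, hence an embedding for small $r_0$, with Lipschitz distance to $f$ tending to $0$.

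The gap is Step~2. You write that the passage from the linearization to the translation is ``an $r_0$-independent linear interpolation,'' and that $g$ is ``a $C^1$-small perturbation of the linear/affine model'' for small $r_0$. Neither claim holds in general. If you interpolate $Az$ and $z$ by $g_2(z)=(1-\eta)\,Az+\eta\,z$ with $\eta=\eta(\|z\|/r_0)$, then on the transition annulus $Dg_2$ differs from $A$ by a term of size $\asymp\|A-I\|$ (from the $\eta'$ contribution), which is \emph{not} small as $r_0\to 0$; and for general $A\in GL_+(d,\mathbb{R})$ this can push $Dg_2$ out of $GL_+$. Concretely, in $d=2$ with $A$ a rotation by $\pi$, the straight-line path $(1-t)A+tI$ is singular at $t=1/2$, and your $g_2$ fails to be injective. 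So the very step you flagged as the obstacle really is one: a direct cutoff formula does not produce an embedding with uniform bi-Lipschitz control.

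The paper resolves this by a different mechanism. It first builds a $C^2$ isotopy $F_t$ from the identity to $f$ on a disk $D_0$ (Alexander's trick from $f$ to $Df(0)$, composed with a smooth path $M_t$ in $GL_+$ from $Df(0)^{-1}$ to $I$). Let $X_t$ be the time-dependent vector field generating $F_t$. Multiply $X_t$ by a radial bump $\beta$ that vanishes near the origin, and integrate the resulting vector field $Y_t=\beta X_t$; call the time-one map $g$. Because $g$ is the time-one map of a flow, it is automatically a $C^2$ diffeomorphism, and $g\equiv\mathrm{id}$ on the inner disk where $\beta\equiv 0$ while $g\equiv f$ near $\partial D_0$. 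The uniform bound $d_{\mathrm{Lip}}(f,g)\le K$ then comes from a Gr\"onwall-type estimate on $\partial_s\partial_x(F_s-G_s)$, which is controlled by $\|f\|_{C^2}$ and the chosen path in $GL_+$, independently of $r_0$. This vector-field/flow trick is precisely what replaces your Step~2 and sidesteps the injectivity issue.
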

\begin{proof}
The Proposition will follow if we can show it 
in the simplified case when $x_0=0$ is a fixed point.
Namely, it suffices to show when the origin is fixed that 
there exists a $C^2$-embedding $g$ and concentric 
disks $D_1\subset D_0$ about $x_0=0$ in $\Omega$ such that 
$g|D_1=\mathrm{id}$, 
$g|(\Omega\setminus D_0)=f$, 
and 
$d_{\mathrm{Lip}}(f,g)<C$.
The general case then follows by 
applying appropriate translations, 
choosing 
isometric disks $D_{0,j}$ about $f^j(x_0)$ which 
are pairwise disjoint, and applying the special case inductively.
The first part of the construction is standard, and may be found in, {\it e.g.\/}, Hirsch~\cite{HirschBook}.
However, the estimate afterwards, although straightforward, could not be found in the literature, so we include it for completeness.

First, 
take a disk $D_0=D(0,r_0)$ about the origin, contained in $\Omega$. 
Construct a $C^2$-smooth isotopy 
\begin{equation*}
A\colon [0,1]\times D_0\to \Omega^*,
\qquad
A_0=Df(0), \ 
A_1=f,
\end{equation*}
{\it i.e.\/}, a smooth isotopy between $f|_{D'}$ and $Df(0)|_{D'}$.
This may be done via Alexanders' trick.
Next, we take a $C^2$-smooth isotopy 
\begin{equation*}
M\colon [0,1]\to \mathrm{GL}(2,\mathbb{R}), 
\qquad
M_0=Df(0)^{-1}, \
M_1=\mathrm{id} ,
\end{equation*}
{\it i.e.\/}, a smooth isotopy between 
$\mathrm{id}$
and
$Df(0)$. 
Such an isotopy exists as $GL_+(2,\mathbb{R})$ is connected and $f$ is orientation-preserving. 
\begin{comment}
More specifically, let $K(\theta_0)A(\rho_0)N(n_0)$ denote the $KAN$-decomposition of 
$\det(Df(0))^{1/2} Df(0)^{-1}$.
Recall that $\mathrm{id}$ has $KAN$-decomposition 
given by
$K(\theta_1)A(\rho_1)N(n_1)=K(0)A(1)N(0)$.
Let 
$s(t)=(\theta_t,\rho_t,n_t)\colon [0,1]\to S^1\times \mathbb{R}^2$ 
denote the obvious parametrisation of the straight line between 
the matrices
$\det(Df(0))^{1/2} Df(0)^{-1}$ 
and
$\mathrm{id}$ 
in these coordinates.
Then 
\begin{equation}
M_t=\det(Df(0))^{(t-1)/2} K(\theta_t)A(\rho_t)N(n_t) \qquad t\in[0,1]
\end{equation}
\end{comment}
Define
\begin{equation*}
F\colon [0,1]\times D_0\to \Omega^*, 
\qquad
F(t,x)=M_t\cdot A(t,x) \ . 
\end{equation*} 
Then
$F_0=\mathrm{id}$ and $F_1=f$.
Thus $F$ is a $C^2$-smooth isotopy between the identity and $f$.
Let $X_t$ denote the time-dependent vector field on (a subset of) $\Omega$
induced by $F_t$.
Let $X=\partial_t\times X_t$ denote the corresponding vector field on
$\bigcup_{t\in [0,1]}(\{t\}\times F_t(D_0))$
induced by the fat isotopy 
$\bar F(t,x)=(t,F_t(x))$.

We construct a new isotopy $G$ as follows. 
Since $\partial D_0$ is compact and $0$ is fixed by the isotopy
there exists a positive $r<r_0$ such that 
$|F_t(x)|>r$ for all $x\in \partial D_0$ and $t\in [0,1]$.
Take a bump function 
$\beta\in C^\infty([0,r],\mathbb{R})$ 
such that 
$\beta|_{[0,r/3]}\equiv 0$ 
and 
$\beta|_{[2r/3,r]}\equiv 1$.
Define the vector field on 
$\bigcup_{t\in [0,1]}(\{t\}\times F_t(D_0))$
given by
\begin{equation*}
Y=\left\{\begin{array}{ll}
\partial_t\times \beta X_t & \mbox{in} \ [0,1]\times D(0,r)\\
X & \mbox{otherwise}
\end{array}\right. \ .
\end{equation*}
Since $\beta\equiv 1$ in a neighbourhood of $r$, 
the vector field $Y$ is smooth.
Let 
$Y_t$ denote the corresponding time-dependent vector field, 
{\it i.e.\/}, $Y=\beta X_t$.
Let 
$G\colon [0,1]\times D_0\to \Omega^*$ 
denote the corresponding $C^2$-smooth isotopy.
Denote by $g$ the time-one map.

Set $D_1=D(0,r_1)$ where $r_1=r/3$.
Since $Y_t|_{D_1}\equiv 0$ for all $t$, 
by construction we have that 
$g|_{D_1}\equiv \mathrm{id}$. 
Also, as $g$ agrees with $f$ on a collared 
neighbourhood of $\partial D_0$ in $D_0$ 
it extends smoothly to a map, which we also 
denote by $g$, on the whole of $\Omega$.

Since 
$X=Y$ outside of $[0,1]\times D_0$ and 
$|F_t(x)|>r$ for all $x\in \partial D_0$, 
it follows that 
$F_t(x)=G_t(x)$ for all $x$ in a neighbourhood 
of $\partial D_0$ in $D_0$ and all $t\in [0,1]$.
Hence, in this neighbourhood of $\partial D_0$ 
in $D_0$, the time-one maps agree, {\it i.e.\/}, $f=g$.

It remains to estimate 
$d_{\mathrm{Lip}}(f,g)$ and 
$d_{\mathrm{Lip}}(f^{-1},g^{-1})$.
Observe that there exists a positive $K$, 
depending upon $f$ only, such that for all $t\in [0,1]$,
\begin{equation*}
\max_{x\in D_0}\left\|\partial_s\partial_{x}\left(F_t-G_t\right)(x)\right\|, \  
\max_{x\in F_s(D_0)}\left\|\partial_s\partial_{x}\left(F_t^{-1}-G_t^{-1}\right)(x)\right\|
\leq K \ .
\end{equation*}
This may be seen, for example, by observing that, for each $t\in [0,1]$ and $x\in D_0$, 
\begin{equation*}
\partial_t F_t(x)=X_t\left(F_t(x)\right) \quad  \mbox{and} \quad 
\partial_t G_t(x)=Y_t\left(G_t(x)\right) \ ,
\end{equation*}
so changing the order of differentiations 
and applying the chain rule together with 
the explicit expression for $Y$ in terms 
of $X$ gives the bound.
(Observe that the estimate for the inverses 
requires changing the sign of the time parameter.)

Fix distinct points $x_0$ and $x_1$. 
Define, for all $t\in [0,1]$,
\begin{equation*}
\varphi(t)=\left|\left(F_t(x_0)-G_t(x_0)\right)-\left(F_t(x_1)-G_t(x_1)\right)\right| \ .
\end{equation*}
Let $z\colon [0,|x_0-x_1|]\to \Omega$ be an 
arclength parametrisation of a smooth curve 
in $\Omega$ between $z(0)=x_0$ and $z(|x_0-x_1|)=x_1$.
Since $F_0=G_0$ we find that
\begin{align*}
\varphi(t)
&=\left|\int_0^t \partial_s\left[\left(F_s(x_0)-G_s(x_0)\right)-\left(F_s(x_1)-G_s(x_1)\right)\right]\,ds\right|\\
&=\left|\int_0^t \int_0^{|x_0-x_1|}\partial_s\partial_{x}\left(F_s-G_s\right)(z(u))\dot{z}(u)\,du\,ds\right|\\
&\leq \int_0^t\int_0^{|x_0-x_1|}|\partial_s\partial_{x}\left(F_s-G_s\right)(z(u))\dot{z}(u)|\,du\,ds\\
&\leq \int_0^t\int_0^{|x_0-x_1|}\|\partial_s\partial_{x}\left(F_s-G_s\right)(z(u))\|\,du\,ds\\
&\leq t|x_0-x_1|\max_s\max_{z}\|\partial_s\partial_{x}\left(F_s-G_s\right)(z)\| \ .
\end{align*}
Hence, for each $t\in [0,1]$, 
$\left[F_t-G_t\right]_{\mathrm{Lip}}\leq tK$.
Therefore, setting $t=1$ gives 
\begin{equation*}
[f-g]_{\mathrm{Lip}}\leq K \ .
\end{equation*}
Since $d_{C^0}(f,g)$ can be made arbitrarily small by making $r_0$, the radius of $D_0$, sufficiently small, 
the uniform bound on $d_{\mathrm{Lip}}(f,g)$ follows.
A similar argument also gives the bound for $d_{\mathrm{Lip}}(f^{-1},g^{-1})$.
\end{proof}
\begin{remark}
The above statement holds more generally in the $C^r$-category, $r\geq 2$.
\end{remark}
\begin{proof}[Proof of Theorem~\ref{infsobothm2}]
As mentioned in the paragraph following the statement of Theorem~\ref{infsobothm2},
it suffices to show that, for each positive integer $n$, the set
$\mathcal{G}_{n}$
is dense in the space
$\mathcal{S}^{p,1}_\infty(\Omega,\Omega^*)$.
Thus, given a positive real number $\epsilon$ and a mapping $f$ in $\mathcal{S}^{p,1}_\infty(\Omega,\Omega^*)$,
we wish to show that there exists
$g\in\mathcal{S}^{p,1}_\infty(\Omega,\Omega^*)$
such that
$\rho(f,g)<\epsilon$
and, for some positive integer $k$,
$g^k$ possesses a horseshoe with $n^k$ branches.

Since 
$f\in \mathcal{S}^{p,1}_\infty(\Omega,\Omega^*)$, 
there exists a point 
$y\in \Omega$ 
which is a forward recurrent point for $f$.
In particular, it is non-wandering.
By the Sobolev Closing Lemma (Theorem~\ref{thm:sobolevclosing}),
there exists
$g_1\in\mathcal{S}^{p,1}_\infty(\Omega,\Omega^*)$,
a point $x\in \Omega$ and a positive integer $k$ such that $\rho(f,g_1)<\epsilon/4$ and $g_1^k(x)=x$.
Assume that $k$ is the minimal period of $x$, and let $x_j=g_1^j(x)$ for $j=0,1,\ldots,k-1$.

Applying the Replacement Trick 
(Proposition~\ref{prop:replacement_trick}) 
inductively around each $x_j$ we find that there exists
a map $g_2\in\mathcal{S}^{p,1}(\Omega,\Omega^*)$, 
and 
topological disks $\mathcal{O}_j^*$ about $x_{j+1}$ (where addition is taken mod $k$)
such that 
\begin{enumerate}
\item[(1)] 
$\mathcal{O}_j^*$, 
and its preimage 
$\mathcal{O}_j=g_1^{-1}(\mathcal{O}_j^*)$, 
have diameter less that $\epsilon/4$
\item[(2)]
the collection of sets $\mathcal{O}_j$, $j=0,1,\ldots,k-1$, are pairwise disjoint,
\item[(3)] 
$g_2$ agrees with $g_1$ outside of $\bigcup_{j=0}^{k-1} \mathcal{O}_j$, 
\item[(4)] 
$g_2\colon(\mathcal{O}_j,x_j)\to(\mathcal{O}_j^*,x_{j+1})$ is smooth for $j=0,1,\ldots,k-1$,
\item[(5)]
$\rho(g_1,g_2)<\epsilon/4$. 
\end{enumerate}
Applying Proposition~\ref{prop:blow-up}
to the restriction $g_2|_{\bigcup_{j=0}^{k-1} \mathcal{O}_j}$, 
we find that there exists a positive real number $C$ 
such that for any positive $r$ sufficiently small, 
there exists a positive
$r'<r$ and 
a $C^2$-smooth embedding 
$g_3\colon \bigcup_{j=0}^{k-1} \mathcal{O}_j\to\bigcup_{j=0}^{k-1}\mathcal{O}_j^*$ 
such that 
\begin{enumerate}
\item[(6)]
$D(x_j,r)\subset \mathcal{O}_j$ for all $j$,
\item[(7)]
$g_3|_{D(x_j,r')}$ is a translation, for all $j$,
\item[(8)]
$g_3$ agrees with $g_2$ outside of $\bigcup_{j=0}^{k-1} D(x_j,r)$,  
\item[(9)]
$\bigl[g_2-g_3\bigr]_{\mathrm{Lip}}, \bigl[g_2^{-1}-g_3^{-1}\bigr]_{\mathrm{Lip}}<C$.
\end{enumerate}
Observe that we may extend $g_3$ to $\Omega$ by 
setting it equal to $g_2$ outside $\bigcup_{j=0}^{k-1}\mathcal{O}_j$.
We wish to estimate the $\rho$-distance between $g_2$ and $g_3$.
First, by shrinking $r$ if necessary we may assume that 
$d_{C^0}(g_2,g_3), d_{C^0}(g_2^{-1},g_3^{-1})<\epsilon/8$.
Since $g_2$ and $g_3$ agree outside $\bigcup_{j=0}^{k-1} D(x_j,r)$ and 
since there exists $K$ such that, for any smooth map $G$ on a compact domain $\Omega$, 
$|DG(z)|\leq K[G]_{\mathrm{Lip}}$ for any $z\in\Omega$, we find
\begin{align*}
\int_\Omega \left|Dg_2-Dg_3\right|^p\,d\mu
&=\int_{\bigcup_{j=0}^{k-1} D(x_j,r)}\left|Dg_2-Dg_3\right|^p\,d\mu\\
&\leq K^p\bigl[g_2-g_3\bigr]^p_{\mathrm{Lip}}\sum_{j=0}^{k-1} \mu\left(D(x_j,r)\right)\\
&=K^pC^p\pi k r^2 \ .
\end{align*}
Hence, by shrinking $r$ again we may assume that 
$\left[g_2-g_3\right]_{W^{1,p},\Omega}<\epsilon/8$.
Adopting the same argument for the inverse, we may therefore assume that $r$ has been chosen sufficiently small so that 
$\rho(g_2,g_3)<\epsilon/4$.

Now define $g$ as follows.
Let $h$ denote a standard $n$-branched horseshoe of the unit disk $D(0,1)$, fixing a neighbourhood of the boundary.
We may assume that, for some constant $c$ independent of $n$, $[h]_{\mathrm{Lip}},[h^{-1}]_{\mathrm{Lip}}\leq cn$. 
Choose $r''<r'$ and let $a_j\colon D(0,1)\to D(x_j,r')$ be given by $a_j(z)=r''z+x_j$.
Define
\begin{equation*}
g(z)=\left\{\begin{array}{ll}
g_3\circ a_j\circ h\circ a_j^{-1}(z) & z\in D(x_j,r''), \ \mbox{some} \ j\\
g_3 & \mbox{otherwise}
\end{array}\right. \ .
\end{equation*} 
Since Lipschitz constants are invariant under affine rescaling, we find that
\begin{equation*}
[g_3-g]_{\mathrm{Lip}}, [g_3^{-1}-g^{-1}]_{\mathrm{Lip}}\leq 1+cn \ .
\end{equation*}
By the same argument as before,
since $g_3$ and $g$ agree outside $\bigcup_{j=0}^{k-1} D(x_j,r'')$, 
for the constant $K$ defined as above,
we find
\begin{align*}
\int_\Omega |Dg_3-Dg|^p\,d\mu
&=\int_{\bigcup_{j=0}^{k-1} D(x_j,r'')}|Dg_3-Dg|^p\,d\mu\\
&\leq K[g_3-g]_{\mathrm{Lip}}^p\sum_{j=0}^{k-1}\mu(D(x_j,r''))\\
&\leq K(1+cn)^p\pi k(r'')^2 \ .
\end{align*}
A similar estimate holds for the inverses.
Therefore, choosing $r''$ sufficiently small, 
we may assume that $\rho(g_3,g)<\epsilon/4$.
Thus
\begin{equation*}
\rho(f,g)\leq \rho(f,g_1)+\rho(g_1,g_2)+\rho(g_2,g_3)+\rho(g_3,g)<\epsilon \ .
\end{equation*}
Finally, observe that since $g|_{D(x_j,r'')}$ is a translation from $x_j$ to $x_{j+1}$, it follows that 
$g^k|_{D(x_0,r'')}$ is topologically conjugate to $h^k$.
Thus $g$ lies in $\mathcal{G}_{n}$ and $\rho(f,g)<\epsilon$.
Hence $\mathcal{G}_{n}$ is dense, and the theorem follows.
\end{proof}
\begin{proof}[Proof of Theorem~\ref{infsobothm1}]
The proof is totally analogous to the proof of Theorem~\ref{infsobothm2} above.
Namely, let $\mathcal{G}_n$ denote the subset of $g\in\mathcal{S}^{p,1}(M)$ 
for which some iterate $g^k$ possesses an $n^k$-branched horseshoe.
By the argument preceding the statement of Theorem~\ref{infsobothm2}, 
$\mathcal{G}_n$ is open in $\mathcal{S}^{p,1}(M)$.
Thus, to prove Theorem~\ref{infsobothm1} it suffices to show that $\mathcal{G}_n$ is dense.

Take $f\in \mathcal{S}^{p,1}(M)$ and a neighbourhood $\mathcal{N}$ of $f$ in $\mathcal{S}^{p,1}(M)$.
As $M$ is compact, the non-wandering set of $f$ is non-empty.
Take a non-wandering point $y$ in $M$ and apply the Sobolev Closing Lemma.
Then there exists $g_1\in \mathcal{N}$ with periodic point $x$ of some mimimal period $k$.
For $j=0,1,\ldots,k-1$, 
take charts $(U_j,\varphi_j)$ about $x_j=g_1^j(x)$
with pairwise disjoint domains and ranges.
Define
\begin{equation*}
\Omega=\bigcup_{j=0}^{k-1} \varphi_j(U_j\cap g_1^{-1} (U_{j+1})) \quad \mbox{and} \quad
\Omega^*=\bigcup_{j=0}^{k-1} \varphi_{j+1}(g_1(U_j)\cap U_{j+1}) \ ,
\end{equation*}
where, as usual, addition is taken modulo $k$. 
Consider the map $G_1\colon\Omega\to\Omega^*$ which, for $j=1,2,\ldots,k-1$, is defined on $\varphi_j(U_j\cap g_1^{-1}(U_{j+1}))$ by 
\begin{equation*}
G_1=
\varphi_{j+1}\circ g_1\circ \varphi_j^{-1} \ .
\end{equation*}
Then this defines a map in 
$\mathcal{S}^{p,1}(\Omega,\Omega^*)$.
In fact it lies in 
$\mathcal{S}^{p,1}_\infty(\Omega,\Omega^*)$ 
as it possesses a periodic orbit.
Applying Theorem~\ref{infsobothm2}, 
we find that for each positive $\epsilon$ there exists 
$G_2\in \mathcal{S}^{p,1}(\Omega,\Omega^*)$ with 
$\rho(G_1,G_2)<\epsilon$, 
such that in a neighbourhood of $\varphi_0(x_0)$, 
$G_2$ possesses an $n^k$-branched horseshoe, 
and
outside of this neighbourhood $G_2$ coincides with $G_1$.
Consequently, $G_2$ induces a Sobolev homeomorphism 
$g_2$ in $\mathcal{S}^{p,1}(M)$, also with the property 
that $g_2^k$ possesses an $n^k$-branched horseshoe. 
Hence $g_2$ lies in $\mathcal{G}_n$.
Moreover, from the definition of the Sobolev-Whitney topology on $\mathcal{S}^{p,1}(M)$, 
for $\epsilon$ sufficiently small, $g_2$ can be chosen to lie in any neighbourhood of $g_1$. 
Thus we may assume $g_2$ lies in $\mathcal{N}$.
Consequently $\mathcal{G}_n$ is dense in $\mathcal{S}^{p,1}(M)$ and the theorem follows.
\end{proof}

\subsubsection{Second argument.}
The argument in the H\"older case 
can be adapted to give a proof of Theorem~\ref{thm:generic_Sobolev_homeomorphism}.
As in the H\"older case, this follows directly from the following result.
\begin{theorem}
Let $M$ be a compact manifold of dimension $d$.
Assume either 
\begin{enumerate}
\item[(a)]
$d=2$ and $1\leq p, p^*<\infty$;
\item[(b)]
$d>2$ and $d-1<p, p^*<\infty$.
\end{enumerate}
Let $f\in \mathcal{S}^{p,p^*}(M)$.
For each neighbourhood $\mathcal{N}$ of $f$ 
in $\mathcal{S}^{p,p^*}(M)$ and each positive 
integer $N$ there exists 
$g\in \mathcal{S}^{p,p^*}(M)$ 
such that
\begin{itemize}
\item[(i)]
$g\in\mathcal{N}$
\item[(ii)]
there exists a positive integer $k_0$, a topological solid cylinder $S$ in $M$ and solid sub-cylinders $S_1, S_2,\ldots, S_{N^{k_0}}$
such that $g^{k_0}$ maps $S_j$ across $S$ for $j=1,2,\ldots, N^{k_0}$ 
\end{itemize}
The second property implies that $h_{\mathrm{top}}(g)\geq \log N$ and that this property is satisfied in an open neighbourhood of $g$.
\end{theorem}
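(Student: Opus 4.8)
The plan is to follow the structure of the proof of Theorem~\ref{thm:open+dense_entropy>logN-little_Holder} almost verbatim, replacing the H\"older estimates with the Sobolev estimates developed for the Sobolev Closing Lemma (Theorem~\ref{thm:sobolevclosing}). As in the H\"older case, property (ii) is purely topological: if $h_0$ maps a solid cylinder $C_0$ across $C_1$ like an $N$-branched horseshoe and $h_1$ maps $C_1$ across $C_2$ like an $N$-branched horseshoe, then $h_1\circ h_0$ maps $C_0$ across $C_2$ like an $N^2$-branched horseshoe; iterating along a closed orbit segment of length $k_0$ yields $g^{k_0}$ mapping $S$ across itself like an $N^{k_0}$-branched horseshoe, which gives $h_{\mathrm{top}}(g)\geq\log N$, and stability of the cylinder-crossing configuration under $C^0$-small perturbations gives openness. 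So the work is entirely in establishing (i).

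First I would reduce, as before, to the case where $\mathcal{N}$ is a finite intersection of sub-basic sets $\mathcal{N}_{\mathbb{W}^{1,p},\mathbb{W}^{1,p^*}}(f;(U_n,\varphi_n),(V_n,\psi_n),K_n,L_n,\epsilon_n)$, and enlarge the collection so that $\{U_n\}$ and $\{V_n\}$ cover $M$; fix compact neighbourhoods $\mathsf{U}_n, \mathsf{V}_n, \mathsf{W}_n$ as in the H\"older proof. Since $f$ is now only Sobolev, not bi-Lipschitz, I cannot make a preliminary reduction to the bi-Lipschitz case; instead, mimicking the Sobolev Closing Lemma, I would construct a \emph{sequence} $g_m$ of perturbations converging to $f$ in $C^0$, each built from diffeomorphisms supported in shrinking neighbourhoods $E_{m,M}$ (the elongated neighbourhood closing up the orbit segment, via Corollary~\ref{cor:cylinder_isometry_2}) and $B^k_{m,M}$ (where the horseshoe maps are glued in, via Corollary~\ref{cor:transport_cylinder+horseshoe_lip}), and then extract a subsequence landing in $\mathcal{N}$. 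Crucially, I must verify that Corollaries~\ref{cor:transport_cylinder+horseshoe_lip} and~\ref{cor:cylinder_isometry_2} still apply: they require $f$ (restricted to the relevant small balls) to transport one rigid cylinder to a set satisfying certain geometric conditions, and since the cylinder radii and aspect ratios are chosen independently of $m$, uniform continuity of $f$ and $f^{-1}$ suffices to arrange the hypotheses for $m$ large — here I use that $\mathrm{diam}(f(B))$ and $\mathrm{diam}(f^{-1}(B))$ can be made smaller than any prescribed $\epsilon_m$ for balls $B$ of radius $\delta_m$ (the analogue of conditions (c)--(d) in the proof of Theorem~\ref{thm:sobolevclosing}).

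For the size estimates, I would proceed chart by chart exactly as in Theorem~\ref{thm:sobolevclosing}. The $C^0$-distances $\|f_n-g_{m,n}\|_{C^0}$ and $\|f_n^{-1}-g_{m,n}^{-1}\|_{C^0}$ are bounded by $\epsilon_m$ by the diameter control. For the $\mathbb{W}^{1,p}$ (resp. $\mathbb{W}^{1,p^*}$) seminorms, the chain rule (Lemma~\ref{chainrule}, whose hypotheses $d=2,p\geq1$ or $d>2,p>d-1$ are exactly the hypotheses of the present theorem) gives $Dg_{m,n}=Df_n(\phi_{m,n})\,D\phi_{m,n}$ on the portion where a horseshoe/closing diffeomorphism $\phi_{m,n}$ is inserted; the $C^0$-norms of $D\phi_{m,n}$ and $D\phi_{m,n}^{-1}$ are bounded uniformly in $m$ (by a constant that does depend on $N$, namely of order $c\,N$, but $N$ is fixed), the supports shrink to measure zero, so $\phi_{m,n}\to\mathrm{id}$, $D\phi_{m,n}\to\mathrm{id}_{\mathbb{R}^d}$ in measure. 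Passing to a subsequence for pointwise a.e.\ convergence, applying the Riesz-type Fact from~\cite{Ru} together with the change-of-variables computation and Lebesgue dominated convergence (dominating as in~\eqref{ineq:Dphi_mn-entries-Linfty}--\eqref{ineq:Dphi_mn-jacobian-bdd}, with $N$-dependent but $m$-independent bounds), one gets $\|Df_n-Dg_{m,n}\|_{L^p}\to 0$ along that subsequence, and likewise for the inverses in $L^{p^*}$. Diagonalizing over the finitely many charts $n$ yields a single subsequence along which $g_m\to f$ in the Sobolev-Whitney sense, so $g_m\in\mathcal{N}$ eventually. The main obstacle is bookkeeping: one must check that the various diffeomorphisms $\phi^k, \phi$ from the cylinder lemmas in Appendix~\ref{sect:cylinder_perturbations} genuinely have uniformly (in $m$) bounded $C^1$-norms and have supports whose diameters tend to zero — that is, that the geometric constructions of the H\"older proof survive when $f$ is merely a continuous Sobolev homeomorphism rather than bi-Lipschitz; this is where the hypothesis $p,p^*>d-1$ (ensuring a.e.\ differentiability and the chain rule) is genuinely needed, and it is the step requiring the most care, though no new ideas beyond those in Theorems~\ref{thm:open+dense_entropy>logN-little_Holder} and~\ref{thm:sobolevclosing} combined.
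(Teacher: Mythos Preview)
Your proposal contains a genuine gap at exactly the point you flag as ``the main obstacle''. Corollary~\ref{cor:transport_cylinder+horseshoe_lip} (and the underlying Lemmas~\ref{lem:bi-Lipschitz_cylinder_separation} and~\ref{lem:transport_cylinder_lip}) explicitly require $f\in\mathcal{H}^{1}(\Omega_0,\Omega_1)$, and the constants $K_0,K_1,\gamma$ depend on the bi-Lipschitz constant $\kappa$. Uniform continuity of $f$ and $f^{-1}$ does \emph{not} suffice: without bi-Lipschitz control, the image $f(C_0)$ of a thin rigid cylinder can be arbitrarily distorted, so the straightening diffeomorphism $\phi^k$ produced by those lemmas need not have $C^1$-norm bounded independently of $m$. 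This breaks precisely the uniformity you need for the dominated-convergence step, since your dominating function involves $\|D\phi_{m,n}\|_{\infty}$.

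The paper closes this gap in a different way: its very first sentence is ``We may assume, by making a small perturbation if necessary, that $f$ is bi-Lipschitz'', exactly as in the H\"older proof. After that reduction, the paper does \emph{not} run a sequence-and-subsequence argument as you propose; instead it constructs a single perturbation $g$ and bounds $[f_n-g_n]_{W^{1,p}}$ directly by $O(\epsilon^{1/p})$, replacing the H\"older condition (c) by an absolute-continuity condition (c$'$) on $\int_{\varphi_n(B(x,\delta))}|Df_n|^p\,d\mu$, and Claim~2(5) by a summability condition (5$'$) on $\sum_{B\in\mathcal{B}}\int_{\varphi_n(f^{-1}B)}|Df_n|^p\,d\mu$. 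The estimates then proceed via H\"older's inequality, the change-of-variables formula, and the explicit $L^\infty$-bounds on $D\phi_n^k$, $D\phi_n$, $J_{\phi_n^{-1}}$ coming from Appendix~\ref{sect:cylinder_perturbations}. Your subsequence machinery from Theorem~\ref{thm:sobolevclosing} would also work for the estimate portion \emph{once} the bi-Lipschitz reduction is in place, but it is not needed.
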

\begin{proof}
The strategy of proof is the same as for the H\"older case 
(Theorem~\ref{thm:open+dense_entropy>logN-little_Holder}). 
Specifically, the notation, setup and construction of the perturbation 
will be the same as in that case.
We will only remark on the necessary changes, 
such as the choice of sizes of neighbourhoods, etc., 
and will go through the estimates for the size of the perturbation in more detail.

\vspace{5pt}

\noindent
{\sl Setup:\/}
We may assume, by making a small perturbation if necessary, that $f$ is bi-Lipschitz.
Take a finite collection of sub-basic sets
\begin{equation}\label{eq:subbasic-Sobolev-entropy}
\mathcal{N}_{\mathbb{W}^{1,p},\mathbb{W}^{1,p^*}}(f;(U_n,\varphi_n), (V_n,\psi_n), K_n,L_n,\varepsilon_n)
\end{equation}
We will use the notation $f_n$, $\mathsf{U}_n$, $\mathsf{V}_n$, $\mathsf{W}_n$, $c_1$, etc., as before.
Thus $c_1$ satisfies inequalities~\eqref{ineq:transition-Lip1} and~\eqref{ineq:transition-Lip2} for $f$ and $\mathsf{U}_n$,
as well as their counterparts for $f^{-1}$ and $\mathsf{V}_n$.
Given a perturbation $g$ of $f$ and an index $n$, 
let $g_n$ denote the map $g$ expressed in the pair of charts $(U_n,\varphi_n)$ and $(V_n,\psi_n)$.

Fix a positive real number $\epsilon$.
This will denote the order of the size of the perturbation.
Let $\delta$ be a positive real number.
This will denote the size of the support of the perturbation.
Take $\delta$ sufficiently small so that 
properties (a) and (b) from the proof of the H\"older case are satisfied, together with the following property
\begin{enumerate}
\item[(c')]
for each index $n$, whenever $x$ lies in a $\delta$-neighbourhood of $K_n$,
\begin{equation*}
\int_{\varphi_n(B(x,\delta))}|Df_n|^p\,d\mu \leq \epsilon
\end{equation*}
and similarly, whenever $x$ lies in a $\delta$-neighbourhood of $L_n$,
\begin{equation*}
\int_{\psi_n(B(x,\delta))}|Df_n^{-1}|^p\,d\mu \leq \epsilon \ .
\end{equation*}
\end{enumerate}
(Observe that this is possible 
as the $\delta$-neighbourhoods of $K_n$ and $L_n$ 
are contained in the compact neighbourhoods 
$\mathsf{U}_n$ and $\mathsf{V}_n$ respectively.)

\vspace{5pt}

\noindent
{\sl Support of the perturbation:\/}
Observe that Claim 2 from the proof of Theorem~\ref{thm:open+dense_entropy>logN-little_Holder} 
also holds for Sobolev mappings.
However, rather than Claim 2(5) we will require the following, 
which holds by taking $r_1$ sufficiently small
\begin{enumerate}
\item[(5')]
the following sets are pairwise disjoint
\begin{equation*}
\varphi_0^{-1}\left(E(x_0^0,x_0^{k_0};c\cdot r_0)\right), 
\varphi_1^{-1}(B(x^1,r_1)),\ldots, \varphi_{k_0-1}^{-1}(B(x^{k_0-1},r_1)) \ .
\end{equation*}
In fact, if $\mathcal{B}$ denotes the collection of all such sets then, for any $n$,
\begin{equation*}
\sum_{B\in \mathcal{B} : f^{-1}B\subset\mathsf{U}_n} \int_{\varphi_n(f^{-1}B)} |Df_n|^p\,d\mu <\epsilon
\end{equation*}
and
\begin{equation*}
\sum_{B\in\mathcal{B} : B\subset \mathsf{V}_n} \int_{\psi_n(B)} |Df_n|^p\,d\mu <\epsilon \ .
\end{equation*}
\end{enumerate}

\vspace{5pt}

\noindent
{\sl Construction of the perturbation:\/}
Define $g$ as in the H\"older case, for the choice of of neighbourhoods (or more specifically, the choice of $r_1$) as described above.
\begin{comment}
Observe that
\begin{equation}\label{def:g-Sobolev-entropy}
g=\left\{
\begin{array}{ll}
\varphi_{1}^{-1}\circ\phi^{0}\circ\varphi_{1}\circ f\circ\varphi_{0}^{-1}\circ \phi\circ\varphi_{0} & \mbox{in} \ E_M\\
\varphi_{k+1}^{-1}\circ\phi^{k}\circ \varphi_{k+1}\circ f & \mbox{in} \ f^{-1}(B_M^{k+1}), \ \mbox{for} \ 0<k<k_0\\
f & \mbox{elsewhere}
\end{array}
\right.
\end{equation}
\end{comment}
Then $g$ is a homeomorphism.
Pre- and post-composing by smooth mappings preserves the space of bi-Sobolev mappings.
Thus the map $g$ also lies in $\mathcal{S}^{p,p^*}(M)$.
As in the H\"older case, when $g$ is expressed in the pair of 
charts $(U_n,\varphi_n)$ and $(V_n,\psi_n)$, for each index $n$, we also have
\begin{equation}\label{def:gn-Sobolev-entropy}
g_n=\left\{\begin{array}{ll}
\phi_n^0\circ f_n\circ \phi_n & \mbox{in} \ \varphi_n(E_M)\\
\phi_n^k\circ f_n & \mbox{in} \ \varphi_n(f^{-1} (B_M^{k+1})), \ \mbox{for} \ 0<k<k_0\\
f_n & \mbox{elsewhere}
\end{array}\right. \ ,
\end{equation}
where
\begin{equation*}
\phi_n^k=\psi_n\circ\varphi_{k+1}^{-1}\circ \phi^k\circ \varphi_{k+1}\circ \psi_n^{-1} 
\quad
\mbox{and}
\quad
\phi_n=\varphi_n\circ\varphi_0^{-1}\circ\phi\circ\varphi_0\circ\varphi_n^{-1} \ .
\end{equation*}
\vspace{5pt}

\noindent
{\sl Size of the perturbation:\/}
We will show that if $\epsilon$ is taken sufficiently small, then $g$ lies in each sub-basic set
~\eqref{eq:subbasic-Sobolev-entropy}
Fix an index $n$.
\begin{comment}
Recall that
\begin{align}
\|f_n-g_n\|_{\mathbb{W}^{1,p}(\varphi_n(K_n),\mathbb{R}^d)}
=
\|f_n-g_n\|_{C^0(\varphi_n(K_n))}
+\left(\int_{\varphi_n(K_n)}|Df_n-Dg_n|^{p}\,d\mu\right)^{\frac{1}{p}} \ .
\end{align}
\end{comment}
The argument as in the H\"older case, shows that
\begin{equation}
\|f_n-g_n\|_{C^0(\varphi_n(K_n))}
=O(\epsilon)
=\|f_n^{-1}-g_n^{-1}\|_{C^0(\psi_n(L_n))} \ .
\end{equation}
Thus we only need to consider the 
$W^{1,p}$-semi-norm of $f_n-g_n$ together with the
$W^{1,p^*}$-semi-norm of $f_n^{-1}-g_n^{-1}$.
Consider the $W^{1,p}$-semi-norm.
Observe that each preimage
$f^{-1}(B_M^{k+1})$ 
lies in a ball of radius $\delta$, and thus, by property (b) above, each 
$f^{-1}(B_M^{k+1})$ either lies inside $\mathsf{U}_n$ or is disjoint from $K_n$. 
Similarly for $E_M$.
Let $\mathcal{B}_n$ denote the collection of all sets 
$E_M$ and 
$f^{-1}(B_M^{k+1})$, $0<k<k_0$, 
that are contained in $\mathsf{U}_n$.
Consequently, by~\eqref{def:gn-Sobolev-entropy},
on $\mathsf{U}_n$ the map $g_n$ is expressible as either
$g_n=\Phi_n\circ f_n\circ \phi_n$ 
or 
$g_n=\Phi_n\circ f_n$,
depending upon whether $E_M$ lies in $\mathsf{U}_n$ or not. 
Here $\Phi_n$ is the composition of all $\phi_n^k$ such that $f^{-1}(B_M^{k+1})$ is contained in $\mathsf{U}_n$.
(Observe that, as the supports of the $\phi^k$ are pairwise disjoint, the maps $\phi^k$ commute.
Thus the order in which the $\phi_n^k$ are composed does not matter.)
Then
\begin{align}
[f_n-g_n]_{W^{1,p},\varphi_n(K_n)}
&\leq
[f_n-g_n]_{W^{1,p},\varphi_n(\mathsf{U}_n)}\notag\\
&=
\left(\sum_{B\in\mathcal{B}_n}\int_{\varphi_n(B)}|Df_n-Dg_n|^p\,d\mu\right)^{\frac{1}{p}}\notag\\
&\leq
\sum_{B\in\mathcal{B}_n}\left(\int_{\varphi_n(B)}|Df_n-Dg_n|^p\,d\mu\right)^{\frac{1}{p}} \ . \label{eq:Lp-dist-fn-gn}
\end{align}
First consider the case when $B\in\mathcal{B}_n$ is of the form $f^{-1}(B_M^{k+1})$.
Since $\phi_n^k$ is smooth and of compact support it follows that $|\mathrm{id}-D\phi_n^k|$ is bounded, not just essentially bounded.
By H\"older's inequality
\begin{align}
&
\left(\int_{\varphi_n(f^{-1}(B_M^{k+1}))}|Df_n-Dg_n|^p\,d\mu\right)^{\frac{1}{p}}\notag\\
&\leq 
\left(\int_{\varphi_n(f^{-1}(B_M^{k+1}))}|\mathrm{id}-D\phi_n^k\circ f_n|^p|Df_n|^p\,d\mu\right)^{\frac{1}{p}}\notag\\
&\leq
\|\mathrm{id}-D\phi_n^k\|_{L^\infty}
\left(\int_{\varphi_n(f^{-1}(B_M^{k+1}))}|Df_n|^p\,d\mu\right)^{\frac{1}{p}} \ . \label{ineq:Bk}
\end{align}
Before consider the case when $B$ in $\mathcal{B}_n$ equals $E_M$, as in~\eqref{eq:Lp-dist-fn-gn}, observe that the mapping
$h_n=\phi_n^0\circ f_n$ is also Sobolev.
Thus, by the chain rule (Lemma~\ref{chainrule} (ii)) we have
\begin{align}
\left(\int_{\varphi_n(E_M)}|Dh_n|^p\,d\mu\right)^{\frac{1}{p}}
&=
\left(\int_{\varphi_n(E_M)}|D\phi_n^0\circ f_n|^p |Df_n|^p\,d\mu\right)^{\frac{1}{p}}\notag\\
&\leq
\|D\phi_n^{0}\|_{L^\infty}\left(\int_{\varphi_n(E_M)}|Df_n|^p\,d\mu\right)^{\frac{1}{p}} \ . \notag
\end{align}
Now return to the case $B=E_M$.
Observe that
\begin{equation}
\left(\int_{\varphi_n(E_M)}|Df_n-Dg_n|^p\,d\mu\right)^{\frac{1}{p}}\leq A_1+A_2+A_3 \ ,\notag 
\end{equation}
where
\begin{align}
A_1
&=\left(\int_{\varphi_n(E_M)}|Df_n-Dh_n|^p\,d\mu\right)^{\frac{1}{p}} \ , \notag\\
A_2
&=\left(\int_{\varphi_n(E_M)}|Dh_n-Dh_n\circ\phi_n|^p\,d\mu\right)^{\frac{1}{p}} \ ,\notag\\
A_3
&=\left(\int_{\varphi_n(E_M)}|Dh_n\circ\phi_n-Dg_n|^p\,d\mu\right)^{\frac{1}{p}} \ . \notag
\end{align}
The same argument as for inequality~\eqref{ineq:Bk} gives
\begin{align}
A_1
&=
\left(\int_{\varphi_n(E_M)}|Df_n-Dh_n|^p\,d\mu\right)^{\frac{1}{p}}\notag\\
&\leq 
\|\mathrm{id}-D\phi_n^0\circ f_n\|_{L^\infty}
\left(\int_{\varphi_n(E_M)}|Df_n|^p\,d\mu\right)^{\frac{1}{p}} \ ,\notag
\end{align}
while the triangle inequality, together with the change of variables formula and H\"older's inequality,
implies that
\begin{align}
A_2
&\leq 
\left(\int_{\varphi_n(E_M)}|Dh_n|^p\,d\mu\right)^{\frac{1}{p}}
+
\left(\int_{\phi_n^{-1}\circ \varphi_n(E_M)}|Dh_n|^p J_{\phi_n^{-1}}\,d\mu\right)^{\frac{1}{p}}\notag\\
&\leq
\left(1+\|J_{\phi_n^{-1}}\|_{L^\infty}^{\frac{1}{p}}\right)
\|D\phi_n^0\|_{L^\infty}
\left(\int_{\varphi_n(E_M)}|Df_n|^p\,d\mu\right)^{\frac{1}{p}} \ .\notag
\end{align}
The chain rule (Lemma~\ref{chainrule} (i)) together with H\"older's inequality also implies that
\begin{align}
A_3
&=
\left(\int_{\varphi_n(E_M)} |Dh_n\circ\phi_n-Dh_n\circ\phi_n D\phi_n|^p\,d\mu\right)^{\frac{1}{p}} \notag\\
&\leq
\|\mathrm{id}-D\phi_n\|_{L^\infty} \|D\phi_n^0\|_{L^\infty}\left(\int_{\varphi_n(E_M)}|Df_n|^p\,d\mu\right)^{\frac{1}{p}}  \ .\notag
\end{align}
By~\eqref{ineq:transition-Lip1} and~\eqref{ineq:transition-Lip2},
together with Lemma~\ref{lem:E-perturbation} and Corollary~\ref{cor:transport_cylinder+horseshoe_lip}, 
we know that
$\|D\phi_n^k\|_{L^\infty}$, 
$\|D\phi_n\|_{L^\infty}$, and
$\|J_{\phi_n^{-1}}\|_{L^\infty}$ 
(each taken on the set $\varphi_n(\mathsf{U}_n)$) 
are bounded from above by
some constant independent of $\epsilon$.
Therefore, by the above considerations together with 
property (c') in the case $B=E_M$ and 
property (5') in the case $B=f^{-1}(B^{k+1}_M)$,
we find that
\begin{align}
[f_n-g_n]_{W^{1,p},\varphi_n(K_n)}
=
O\left(\sum_{B\in\mathcal{B}_n}\left(\int_{B} |Df_n|^p\,d\mu\right)^{\frac{1}{p}}\right) 
=
O(\epsilon^{\frac{1}{p}}) \ .\notag
\end{align}
As the above argument deals with both pre- and post-composition by smooth maps, the same argument gives
$[f_n^{-1}-g_n^{-1}]_{W^{1,p^*},\psi_n(L_n)}=O(\epsilon^{\frac{1}{p^*}})$
and hence, as $\epsilon$ was arbitrary, the result follows.
\end{proof}
We also get the analogue of Yano's 
Corollary on conjugacy classes, and of 
Corollary~\ref{cor:generic_not_conjugate-Holder} 
above, in the Sobolev setting.
\begin{corollary}\label{cor:generic_not_conjugate-Sobolev}
Let $M$ be a compact manifold of dimension $d$.
Assume either
\begin{enumerate}
\item[(a)]
$d=2$ and $1\leq p, p^*<\infty$;
\item[(b)]
$d>2$ and $d-1<p, p^*<\infty$.  
\end{enumerate}
A generic homeomorphism in $\mathcal{S}^{p,p^*}(M)$ 
is not topologically conjugate to any diffeomorphism (or any bi-Lipschitz homeomorphism).
\end{corollary}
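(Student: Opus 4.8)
The plan is to obtain this corollary as a formal consequence of the genericity of infinite topological entropy (Theorem~\ref{thm:generic_Sobolev_homeomorphism}), combined with two facts already recalled in the introduction: that topological entropy is invariant under topological conjugacy, and that every diffeomorphism --- indeed every bi-Lipschitz homeomorphism --- of a compact manifold has \emph{finite} topological entropy.

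First I would invoke Theorem~\ref{thm:generic_Sobolev_homeomorphism}: under the stated hypotheses on $d$, $p$ and $p^*$, the set
\[
\mathcal{I} \;=\; \bigl\{ f \in \mathcal{S}^{p,p^*}(M) \ :\ h_{\mathrm{top}}(f) = +\infty \bigr\}
\]
contains a residual subset of $\mathcal{S}^{p,p^*}(M)$. It therefore suffices to show that no $f \in \mathcal{I}$ is topologically conjugate to a diffeomorphism, or to a bi-Lipschitz homeomorphism, of $M$. Suppose, for contradiction, that some $f \in \mathcal{I}$ satisfies $f = h^{-1}\circ \psi \circ h$ for a homeomorphism $h$ of $M$ and a map $\psi$ which is either $C^1$ or bi-Lipschitz. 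Since $h$ is a homeomorphism of a compact metric space, it and its inverse are uniformly continuous, so topological entropy is preserved under this conjugacy and $h_{\mathrm{top}}(\psi) = h_{\mathrm{top}}(f) = +\infty$. On the other hand, if $\psi$ is bi-Lipschitz with Lipschitz constant $L$ then the bound $h_{\mathrm{top}}(\psi)\le \dim_H(M)\cdot\log^+(L) = d\cdot\log^+(L)$ quoted in the introduction (see~\cite[Theorem 3.2.9]{KandH}) gives $h_{\mathrm{top}}(\psi)<\infty$; and if $\psi$ is merely $C^1$ then $\psi$ is Lipschitz on the compact manifold $M$, so the same bound applies (alternatively, one cites Ito~\cite{Ito70} and Bowen~\cite{Bowen71} directly). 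Either way $h_{\mathrm{top}}(\psi)<\infty$, contradicting the previous line.

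I do not expect any serious obstacle: the content is entirely in the main theorem, and the rest is bookkeeping with already-quoted results. The only point meriting a word of care is that ``not conjugate to any diffeomorphism'' is to be understood relative to the fixed smooth structure on $M$; but since the finiteness of $h_{\mathrm{top}}$ holds for any Lipschitz self-map in any chart-compatible metric, and since Lipschitz-ness in one smooth atlas is equivalent to Lipschitz-ness in another, the argument is insensitive to these choices, and the same statement holds verbatim with ``diffeomorphism'' replaced by ``bi-Lipschitz homeomorphism''.
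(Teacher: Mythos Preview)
Your argument is correct and is exactly the approach the paper takes: the corollary is stated as an immediate consequence of Theorem~\ref{thm:generic_Sobolev_homeomorphism} together with the conjugacy-invariance of topological entropy and the finiteness of entropy for bi-Lipschitz (hence $C^1$) maps recalled in the introduction. The paper does not even write out a separate proof, simply noting that this is the analogue of Yano's corollary and of Corollary~\ref{cor:generic_not_conjugate-Holder}.
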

The arguments preceding Corollary~\ref{cor:generic_measures_max_entropy-Holder} also gives us the following.
\begin{corollary}\label{cor:generic_measures_max_entropy-Sobolev}
Let $M$ be a compact manifold of dimension $d$.
Assume either
\begin{enumerate}
\item[(a)]
$d=2$ and $1\leq p, p^*<\infty$;
\item[(b)]
$d>2$ and $d-1<p, p^*<\infty$.  
\end{enumerate}
A generic homeomorphism in $\mathcal{S}^{p,p^*}(M)$ 
has uncountably many measures of maximal entropy.
\end{corollary}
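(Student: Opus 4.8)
The plan is to reproduce, essentially verbatim, the argument given for the Hölder case (the paragraph preceding Corollary~\ref{cor:generic_measures_max_entropy-Holder}), with the only change that the role of Theorem~\ref{thm:open+dense_entropy>logN-little_Holder} is now played by its Sobolev analogue established immediately above. The point is that, once genericity of the horseshoe configurations is in hand, measures of maximal entropy, horseshoes and the ergodic decomposition are purely topological/measure-theoretic objects, so the regularity of the space of maps plays no further role.

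Concretely, first I would introduce, for each positive integer $N$, the set $\mathcal{G}_N\subseteq\mathcal{S}^{p,p^*}(M)$ of those $g$ for which some iterate $g^{k_0}$ maps $N^{k_0}$ pairwise disjoint solid sub-cylinders of a topological solid cylinder $S$ across $S$. By the Sobolev analogue of Theorem~\ref{thm:open+dense_entropy>logN-little_Holder} (properties (i)--(ii) together with the stability assertion at the end of its statement), each $\mathcal{G}_N$ is open and dense; since $\mathcal{S}^{p,p^*}(M)$ is a Baire space, $\mathcal{R}=\bigcap_{N\ge 1}\mathcal{G}_N$ is residual, and every $g\in\mathcal{R}$ satisfies $h_{\mathrm{top}}(g)\ge\log N$ for all $N$, hence $h_{\mathrm{top}}(g)=+\infty$.

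Then I would fix $g\in\mathcal{R}$ together with, for each $N$, the associated data $\bigl(k_0(N),S^{(N)},S^{(N)}_1,\dots,S^{(N)}_{N^{k_0(N)}}\bigr)$, abbreviating $k_0=k_0(N)$. Such a configuration yields a compact $g^{k_0}$-invariant set carrying a subsystem that factors onto the full shift $\Sigma_{N^{k_0}}$ on $N^{k_0}$ symbols; lifting the Bernoulli measure of maximal entropy of $\Sigma_{N^{k_0}}$ along this factor map, and choosing an extreme point of the (nonempty, compact, convex) fiber over it, gives an \emph{ergodic} $g^{k_0}$-invariant measure $\nu_N$ with $h_{\nu_N}(g^{k_0})\ge k_0\log N$. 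Averaging along the orbit, $\mu_N:=\tfrac1{k_0}\sum_{j=0}^{k_0-1}(g^{j})_*\nu_N$ is a $g$-invariant, $g$-ergodic probability measure with $h_{\mu_N}(g)=\tfrac1{k_0}h_{\nu_N}(g^{k_0})\ge\log N$. Passing to a subsequence one may assume the entropies $h_{\mu_N}(g)$ are strictly increasing to $+\infty$, so the $\mu_N$ are pairwise distinct ergodic measures; keeping a further subsequence along which $h_{\mu_N}(g)\ge 4^{N}$, for every probability vector $(t_N)_{N\ge 1}$ with all $t_N>0$ (reserving, say, mass $2^{-N}$ for each index and distributing the remainder freely within the simplex — an uncountable family of such vectors) one gets, by countable affinity of Kolmogorov--Sinai entropy~\cite{WaltersBook}, $h_{\sum_N t_N\mu_N}(g)=\sum_N t_N h_{\mu_N}(g)=+\infty$, while uniqueness of the ergodic decomposition makes $\sum_N t_N\mu_N\mapsto(t_N)$ injective. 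Since $h_{\mathrm{top}}(g)=+\infty$, each $\sum_N t_N\mu_N$ is a measure of maximal entropy, so $g$ has uncountably many of them; as $\mathcal{R}$ is residual, this is the corollary.

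The main obstacle is the passage from countably many distinct ergodic measures of growing entropy to an \emph{uncountable} family of distinct invariant measures of infinite entropy; this is precisely the step handled by uniqueness of the ergodic decomposition together with countable affinity of metric entropy, both of which are standard (e.g.~\cite{WaltersBook}) and insensitive to the Hölder/Sobolev distinction. Everything else — openness and density of $\mathcal{G}_N$, the horseshoe-to-shift factor map, lifting invariant measures along factor maps, and the orbit-averaging formula $h_{\mu_N}(g)=\tfrac1{k_0}h_{\nu_N}(g^{k_0})$ — is either already granted by the Sobolev theorem's statement or is standard and carries over from the Hölder proof unchanged.
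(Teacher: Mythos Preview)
Your proposal is correct and follows essentially the same approach as the paper: the paper's proof simply points back to the paragraph preceding Corollary~\ref{cor:generic_measures_max_entropy-Holder}, which says to take the (unique) measures of maximal entropy on the horseshoes produced by the density theorem and then form ``all possible sums of these measures over all possible horseshoes.'' Your write-up is considerably more detailed than the paper's (in particular, your subsequence trick ensuring $\sum t_N h_{\mu_N}=\infty$ and your appeal to uniqueness of the ergodic decomposition make explicit what the paper leaves implicit), but the underlying idea is identical.
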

As in the H\"older case, by the argument following Corollary~\ref{cor:generic_measures_max_entropy-Holder} we also get the following.
\begin{corollary}\label{cor:generic_eqm_states-Sobolev}
Let $M$ be a compact manifold of dimension $d$.
Assume either
\begin{enumerate}
\item[(a)]
$d=2$ and $1\leq p, p^*<\infty$;
\item[(b)]
$d>2$ and $d-1<p, p^*<\infty$.  
\end{enumerate}
For a generic homeomorphism $f$ in $\mathcal{S}^{p,p^*}(M)$ 
the set of equilibrium states of $(f,\phi)$, is independent of $\phi\in C^0(X,\mathbb{R})$.
In fact, generically the set of equilibrium states, for any $\phi\in C^0(X,\mathbb{R})$ coincides with the set of measures of maximal entropy.

\end{corollary}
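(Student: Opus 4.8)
The plan is to proceed exactly as in the H\"older setting (Corollary~\ref{cor:generic_eqm_states-Holder}), simply substituting the Sobolev genericity statement for the H\"older one. First I would invoke Theorem~\ref{thm:generic_Sobolev_homeomorphism}: under hypothesis (a) or (b) on $(d,p,p^*)$, there is a residual set $\mathcal{R}\subseteq\mathcal{S}^{p,p^*}(M)$ consisting of homeomorphisms $f$ with $h_{\mathrm{top}}(f)=+\infty$. Since $\mathcal{S}^{p,p^*}(M)$ is a Baire space, it suffices to verify the two assertions of the corollary for each individual $f\in\mathcal{R}$.

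So fix $f\in\mathcal{R}$. A compact manifold is in particular a compact metric space, so the variational principle and the pressure formalism recalled just before Corollary~\ref{cor:generic_eqm_states-Holder} apply. The Lemma stated there then gives, for this $f$, that $P(f,\phi)=+\infty$ and hence that the set of equilibrium states of $(f,\phi)$ does not depend on $\phi\in C^0(M,\mathbb{R})$; this is the first assertion. For the second, I would reuse the computation inside the proof of that Lemma: when $P(f,\phi)=+\infty$, every equilibrium state $\mu$ of $(f,\phi)$ must satisfy $h_\mu(f)=+\infty$ (because $|\int\phi\,d\mu|\le\|\phi\|_{C^0}<\infty$), so $\mu$ is a measure of maximal entropy for $f$; conversely, since $h_{\mathrm{top}}(f)=+\infty$, any measure of maximal entropy $\mu$ has $h_\mu(f)=+\infty$ and is therefore an equilibrium state of $(f,\phi)$ for every continuous $\phi$. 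Hence, for each $f\in\mathcal{R}$ and each $\phi\in C^0(M,\mathbb{R})$, the equilibrium states of $(f,\phi)$ are precisely the measures of maximal entropy of $f$.

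There is essentially no obstacle here: the corollary is a purely formal consequence of Theorem~\ref{thm:generic_Sobolev_homeomorphism} together with the already-proved Lemma, just as in the H\"older case. The only point requiring (minor) care is bookkeeping --- checking that the hypotheses on $(d,p,p^*)$ needed for Theorem~\ref{thm:generic_Sobolev_homeomorphism} coincide with those assumed in the corollary, and that the Lemma, phrased for a general compact space, indeed covers the manifold $M$ with the topological entropy defined in the introduction. If one additionally wanted the generic set of equilibrium states to be nonempty (in fact uncountable), one could intersect $\mathcal{R}$ with the residual set furnished by Corollary~\ref{cor:generic_measures_max_entropy-Sobolev}, but this is not needed for the statement as given.
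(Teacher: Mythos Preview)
Your proposal is correct and follows exactly the approach the paper intends: the corollary is stated without a separate proof, the text simply referring back to the argument preceding Corollary~\ref{cor:generic_eqm_states-Holder} and replacing Theorem~\ref{thm:generic_little-Holder_homeomorphism} by Theorem~\ref{thm:generic_Sobolev_homeomorphism}. Your careful bookkeeping (matching the hypotheses on $(d,p,p^*)$ and noting that $M$ is a compact metric space so the Lemma applies) is precisely what is needed.
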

%
%
\section{Concluding Remarks and Open Problems.}\label{sect:open_problems}
The following is a list of comments and open problems suggested by this work.
\begin{enumerate}
\item
To prove genericity of infinite entropy in a given smoothness category {\sc Cat} there 
is also a ``na\"{i}ve'' perturbation argument 
whereby we perturb a given initial map to some map with
entropy $\log n$ or greater. 
The strategy is as follows.  
First, the initial map will possess a non-wandering point so,
by the Closing Lemma, this map can be perturbed to a map with a periodic orbit of some period $k$.
Next, we perturb this new map by `blowing-up' this periodic point 
to a periodic ball, also of period $k$, on which the $k$th iterate is the identity.
Finally, a horseshoe with $n^k$ branches is `glued-in' to this periodic ball.
Then the $k$th iterate of this final map will have a horseshoe with entropy $\log n^k$, and thus the original map will have entropy at least $\log n$.
This was the approach in Section~\ref{subsubsect:first_argument}.
For details in the continuous case, see also~\cite[Proof of Lemma 3.1]{GW}.
However, to `blow-up' we require the Annulus Conjecture to hold in {\sc Cat}.
\begin{Annulus}
In any dimension $d$, for any {\sc Cat}-embedding $\varphi\colon B^d\to\mathbb{R}^d$ 
with the property that $\varphi(B^d)\Subset B^d$, the sets 
$B^d\setminus \varphi(B^d)$ and $[0,1)\times S^{d-1}$ are {\sc Cat}-homeomorphic.
\end{Annulus}
In the continuous category, this was shown by Quinn, following work by Kirby and others~\cite{Edwards1984}.
Sullivan~\cite{Sullivan1979} proved that the Annulus Conjecture holds for the quasiconformal and bi-Lipschitz categories. 
(See also~\cite{TukiaVaisala1982a,TukiaVaisala1982b}.)
Therefore we ask: 

\medskip

\noindent
{\sl Question:}
Does the Annulus Conjecture hold for bi-H\"older or bi-Sobolev mappings? 

\medskip

If the answer is yes, for either bi-H\"older or bi-Sobolev, then we can recover our results on genericity of infinite entropy more easily 
via the argument in~\cite[Proof of Lemma 3.1]{GW} and the Closing Lemmas proved in this paper.
\item
Via Morrey's inequality or otherwise, one can show that, on a bounded open set $\Omega$ in $\mathbb{R}^d$ with appropriate ({\it i.e.\/}, Lipschitz) boundary, 
$W^{1,p}(\Omega)$ embeds continuously into $C^{\alpha}(\Omega)$ where $\alpha=1-\frac{d}{p}$.
\begin{itemize}
\item
Does a similar embedding result hold for the spaces of bi-Sobolev and bi-H\"older homeomorphisms, with the Sobolev-Whitney and H\"older-Whitney topologies, introduced in this paper?
\item
If such an embedding exists, is its image dense?
If so, most of the results in Part I of this paper would follow from those in Part II. 
\end{itemize}
\item
Can the results in this paper be generalised to arbitrary metric measure spaces where the measure satisfies a doubling condition?
Note that H\"older maps are easily defined for these space, while a definition of Sobolev maps has been given by Haj\l{}asz (see, {\it e.g.\/},~\cite{Hajlasz}).
\item
Does the Sobolev Closing Lemma hold for $1\leq p, p^*<d$?
\item
Can either the Sobolev or H\"older Closing Lemma be used to give, via an approximation argument, a new proof of Pugh's $C^1$-Closing Lemma?
\end{enumerate} 

\subsection*{Acknowledgements}
We would like to thank everyone who supported us throughout this project.
We thank IME-USP, ICERM (Brown University), Imperial College London, and the CUNY Graduate Center for their hospitality.
Finally, special thanks go to 
Charles Pugh for several useful remarks and references to the literature concerning the Closing Lemma,
Michael Benedicks for useful discussions on maps with low regularity, 
Fr\'ed\'eric H\'elein for giving one of the authors (P.H.) a copy of the preprint~\cite{SchoenUhlenbeck},
and Dennis Sullivan and \'{E}tienne Ghys for their comments and questions.
\begin{appendix}
%
%
%
%
%
%
%
%
%
%
%
%
%
%
%
%
%
\section{Planar homeomorphisms with infinite topological entropy}

In this section we build a family of examples of 
orientation-preserving homeomorphisms of the plane 
having compact support and infinite topological entropy 
which are bi-H\"older of every exponent. More precisely, 
we construct for each $p>0$ an orientation-preserving 
homeomorphism $f\colon \mathbb{R}^2\to\mathbb{R}^2$, 
with compact support and $h_{\mathrm{top}}(f)=\infty$, 
such that $f,f^{-1}\in C^{\Lambda_p}(\mathbb{R}^2)$, 
where $\Lambda_p$ is the modulus of continuity $\Lambda_p(t)=t|\log{t}|^{p}$. 
In particular, $f$ and $f^{-1}$ are $\alpha$-H\"older continuous for all $0<\alpha<1$. 

\subsubsection*{Terminology} Given a domain $\Omega\subseteq \mathbb{R}^2\equiv \mathbb{C}$, we denote by $W^{1,2}_{\mathrm{loc}}(\Omega)$ 
the Sobolev space of maps $\phi:\Omega\to \mathbb{R}^2$ such that the partial derivatives 
$\phi_z,\phi_{\overline{z}}$ exist Lebesgue almost everywhere in $\Omega$ and 
are locally in $L^2$. The {\it distortion\/} (or {\it dilatation\/}) of a map $\phi:\Omega\to \mathbb{R}^2$ is the function $K_\phi$ given by
\begin{equation*}
K_\phi
\;=\;
\frac{|\phi_z|+|\phi_{\overline{z}}|}{|\phi_z|-|\phi_{\overline{z}}|} \ .
\end{equation*}
The map $\phi$ is said to have {\it finite distortion\/} if $K_\phi(z)<\infty$ for Lebesgue a.e. $z\in \Omega$. We say that 
$\phi$ is a {\it regular map with finite distortion\/} if $\phi\in W^{1,2}_{\mathrm{loc}}(\Omega)$, its Jacobian $J_\phi=|\phi_z|^2-|\phi_{\overline{z}}|^2$ 
is locally integrable and $\phi$ has finite distortion. Such a map is differentiable almost everywhere, and 
the inequality $|D\phi(z)|^2\leq K_\phi(z)J_\phi(z)$ holds for Lebesgue a.e. $z\in \Omega$. 
Examples of regular maps with finite distortion include local diffeomorphisms, as well as quasiconformal 
homeomorphisms of (some portion of) the plane. (See, {\it e.g.},~\cite{HenclKoskelaBook}.)

\subsubsection*{Tool} The main tool in the construction below is the following result due to Goldstein and Voldop'yanov, 
a proof of which can be found in \cite[pp.~530--534]{AIM}.

\begin{proposition}\label{regmap}
Let $f\colon\Omega\to \mathbb{R}^2$ be a regular map of finite distortion. Then $f$ is continuous, and for all $z,w\in \Omega$ we have
\begin{equation}\label{GVineq}
\left|f(z)-f(w)\right|^2
\;\leq\; 
\frac{2\pi{\displaystyle{\int}_{2D}} |Df|^2}{\log{\left(e+ \dfrac{\mathrm{diam}(D)}{|z-w|}\right)}}
\end{equation}
for every disk $D$ such that $z,w\in D\subset 2D\subset \Omega$.  
{\footnote{Here, $2D$ is the disk concentric with $D$ having twice the radius.}}
\end{proposition}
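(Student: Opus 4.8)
## Proof proposal for Proposition (Goldstein–Voldop'yanov estimate)

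The plan is to present this as a known result, citing \cite[pp.~530--534]{AIM}, while sketching the essential analytic mechanism so the reader sees why the bound~\eqref{GVineq} is plausible and how it is used in the sequel. The structure of the argument is a capacity/modulus estimate for the ring domain between concentric circles, combined with the conformal invariance properties of the $L^2$-norm of the differential in the plane. First I would reduce to the case where $z,w$ are diametrically placed inside a disk $D$ with $2D\subset\Omega$, and set $\delta=|z-w|$, $R=\mathrm{diam}(D)$; the goal is to bound $|f(z)-f(w)|$ in terms of the Dirichlet energy $\int_{2D}|Df|^2$.

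The key step is the standard ring-domain argument. Consider the family of concentric circles $\partial D(x_0,t)$ for $\delta/2\lesssim t\lesssim R$, where $x_0$ is the common center; since $f$ is a regular map of finite distortion it lies in $W^{1,2}_{\mathrm{loc}}$, hence on almost every such circle the restriction of $f$ is absolutely continuous (ACL property), and the image curve $f(\partial D(x_0,t))$ separates $f(z)$ from $f(w)$ once $t$ is large enough that both points lie strictly inside. Therefore the length $\ell(t)$ of $f(\partial D(x_0,t))$ is at least $2|f(z)-f(w)|$ (or a comparable multiple) for such $t$. Writing $\ell(t)\leq \int_0^{2\pi}|Df(x_0+te^{i\theta})|\,t\,d\theta$, squaring, applying Cauchy--Schwarz in $\theta$, then dividing by $t$ and integrating $dt/t$ over the annular range, one obtains
\begin{equation*}
|f(z)-f(w)|^2\,\log\!\Bigl(\tfrac{cR}{\delta}\Bigr)\;\lesssim\;\int_{2D}|Df|^2 \ ,
\end{equation*}
which is~\eqref{GVineq} after absorbing constants and replacing $\log(cR/\delta)$ by the slightly smaller $\log\bigl(e+R/\delta\bigr)$ to make the inequality hold trivially when $\delta$ is comparable to or larger than $R$. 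Continuity of $f$ then follows from the same estimate by letting $w\to z$.

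The one genuine subtlety — and the step I expect to require the most care — is justifying that for almost every radius $t$ the image circle $f(\partial D(x_0,t))$ actually \emph{separates} $f(z)$ from $f(w)$, together with the length estimate $\ell(t)\geq 2|f(z)-f(w)|$. For an arbitrary $W^{1,2}$ map this can fail, but for a regular map of finite distortion one has the degree-theoretic / monotonicity properties (openness and discreteness being unavailable in general, one uses instead the oscillation bound for the precise representative on good circles) that make the separation argument go through; this is exactly where the hypotheses ``$J_f\in L^1_{\mathrm{loc}}$'' and ``finite distortion'' enter, and it is the content of the cited pages of \cite{AIM}. Since we will only apply Proposition~\ref{regmap} to explicit homeomorphisms built as compositions of diffeomorphisms and quasiconformal pieces, for which these separation properties are classical, I would state the proposition in full generality, give the ring-domain sketch above, and refer to \cite[pp.~530--534]{AIM} (or \cite{HenclKoskelaBook}) for the complete proof of the separation lemma.
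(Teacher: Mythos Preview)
Your approach is consistent with the paper's own treatment: the paper does not prove Proposition~\ref{regmap} at all but simply states it as a known result due to Goldstein and Voldop'yanov, referring the reader to \cite[pp.~530--534]{AIM} for a proof. Your proposal therefore already goes beyond what the paper does, by supplying the standard ring-domain/length--area sketch; this is correct and helpful, but strictly speaking there is nothing in the paper to compare against.
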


Note that, for disks $D$ whose diameters are comparable to the distance $|z-w|$, one can safely replace the denominator on the 
right-hand side of~\eqref{GVineq} by a constant. This will suffice for our purposes.

\subsubsection*{Ingredients} 
Let $Q=[0,1]^2\subset \mathbb{R}^2$ be the unit square, 
and consider the concentric square 
$R=[\frac{1}{3},\frac{2}{3}]^2\subset Q$. 
The ingredients in our construction are a sequence  of 
homeomorphisms 
$g_n\colon Q\to Q$, $n\geq 1$, 
with the following properties:
\begin{enumerate}
\item[(i)] 
The support of each $g_n$ is contained in $R$;
\item[(ii)] 
For all $n\geq 1$, $g_n$ and $g_n^{-1}$ are regular maps of finite distortion;
\item[(iii)] 
There exist $C>0$ and $p>0$ such that, for each $n\geq 1$,  
\begin{equation*}
|Dg_n(z)|\leq Cn^{p} \ \ \ \ \textrm{and}\ \ \ \ |Dg_n^{-1}(z)|\leq Cn^{p}
\end{equation*} 
for Lebesgue almost every $z\in R$;
\item[(iv)] 
We have $h_{\mathrm{top}}(g_n)\to \infty$ as $n\to \infty$. 
\end{enumerate}
For instance, one could take $g_n$ to be a $C^1$-smooth horseshoe map with $n$ branches supported in the square $R$, so that 
$h_{\mathrm{top}}(g_n)=\log{n}$. Such a horseshoe can be built so as to satisfy (iii) at {\it every\/} point. 
 
\subsubsection*{Construction} 
Let us write $(0,1]=\bigcup_{n=1}^{\infty} I_n$, where 
\begin{equation*}
I_n
\;=\;
\left[\frac{1}{2^n}, \frac{1}{2^{n-1}}\right]\ .
\end{equation*}
Let $Q_n=I_n\times I_n\subset Q$, and let 
$f_n=A_n\circ g_n\circ A_n^{-1}\colon Q_n\to Q_n$, 
where 
$A_n\colon Q\to Q_n$ is the affine map
\begin{equation*}
A_n(x,y)
\;=\;
\left(\frac{x+1}{2^n}, \frac{y+1}{2^n}\right) \ .
\end{equation*}
Note that $|Df_n|=|Dg_n\circ A_n^{-1}|$, so that $|Df_n|\leq Cn^p$, by (iii) above. 
Note also that the support of $f_n$ is contained in the rescaled square $R_n=A_n(R)$ so, 
in particular, $f_n|_{\partial Q_n}\equiv \mathrm{id}$. 
Next, define $f\colon\mathbb{R}^2\to \mathbb{R}^2$ as follows:
\begin{equation*}
f(z)
\;=\;
\left\{\begin{array}{ll} 
{f_n(z)} & \ \ \textrm{if}\ \ z\in Q_n\ , \\
{}       & {} \\
{z}      & \ \ \textrm{if}\ \ z\in\mathbb{R}^2\setminus \bigcup_{n=1}^{\infty} Q_n\ .
\end{array}\right.
\end{equation*}
Then it is clear that $f$ is an orientation-preserving homeomorphism of the plane (see Figure \ref{figsquare1}). 
\begin{figure}[t]
\begin{center}
\psfrag{Q}[][]{ $Q$} 
\psfrag{q1}[][][1]{$Q_1$}
\psfrag{q2}[][][1]{$Q_2$} 
\psfrag{q3}[][][1]{$Q_3$}
\psfrag{f1}[][][1]{$f_1$}
\psfrag{f2}[][][1]{$f_2$}
\psfrag{f3}[][][1]{$f_3$}
\includegraphics[width=3.5in]{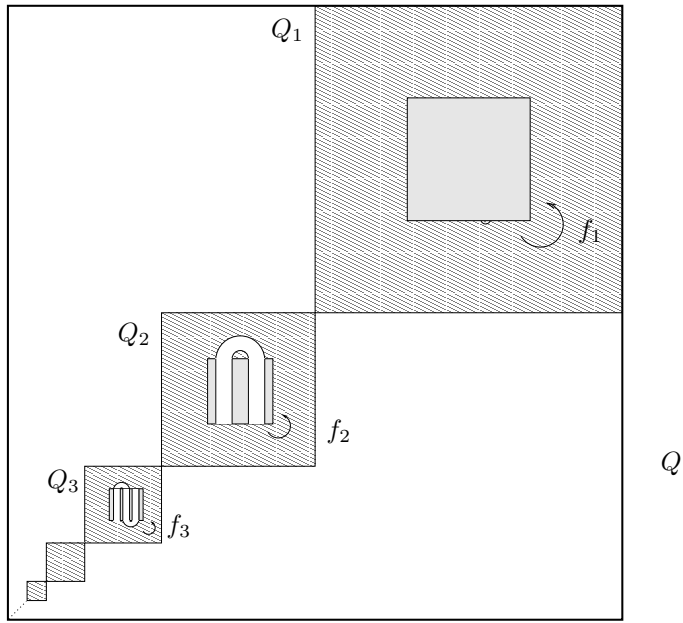}
\end{center}
\caption[slope]{\label{figsquare1} Sewing the maps $f_n$ together with the identity yields $f$.}
\end{figure}
\begin{theorem}
The homeomorphism $f$ and its inverse $f^{-1}$ are both regular maps with finite distortion, and have infinite topological entropy. 
Moreover, there exist constants $C>0$ and $\delta>0$
such that, for all $z,w\in \mathbb{R}^2$ with $|z-w|<\delta$ we have
\begin{equation}\label{gauge}
|f(z)-f(w)|
\;\leq\; 
C|z-w|\left(\log{\frac{1}{|z-w|}}\right)^{p}\ ,
\end{equation}
and similarly for $f^{-1}$. In particular, $f$ and $f^{-1}$ are $\alpha$-H\"older continuous for every $0<\alpha<1$. 
\end{theorem}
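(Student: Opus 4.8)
The plan is to verify the three asserted properties of $f$ in turn: that $f$ and $f^{-1}$ are regular maps of finite distortion, that they have infinite topological entropy, and that they satisfy the modulus-of-continuity estimate~\eqref{gauge} (from which $\alpha$-H\"older continuity for every $0<\alpha<1$ is immediate, since $t|\log t|^p = o(t^\alpha)$ as $t\to 0$). First I would record the basic structural fact that each building block $f_n = A_n\circ g_n\circ A_n^{-1}$ is supported in the rescaled square $R_n=A_n(R)\Subset Q_n$, so that $f_n|_{\partial Q_n}\equiv\mathrm{id}$; hence gluing the $f_n$ with the identity produces a genuine homeomorphism of $\mathbb{R}^2$ with compact support (contained in $\overline{Q}$), and the same holds for $f^{-1}$, whose restriction to $Q_n$ is $A_n\circ g_n^{-1}\circ A_n^{-1}$.

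\textbf{Regularity and finite distortion.}
Next I would check that $f\in W^{1,2}_{\mathrm{loc}}(\mathbb{R}^2)$ with locally integrable Jacobian and finite distortion. Away from the origin this is local: on each $Q_n$, $f$ agrees with $f_n$, which is a regular map of finite distortion by property (ii) of the ingredients together with the chain rule (affine changes of coordinates preserve all of these properties), and on the open complement of $\bigcup \overline{Q_n}$ the map $f$ is the identity; the pieces match continuously along the boundaries $\partial Q_n$, which have measure zero, so the distributional derivative of $f$ is the almost-everywhere derivative assembled from the pieces. Using $|Df_n|=|Dg_n\circ A_n^{-1}|\le Cn^p$ from property (iii), and $|Q_n|=4^{-n}$, one gets
\begin{equation*}
\int_{\mathbb{R}^2}|Df|^2\,d\mu \le \sum_{n\ge 1}\int_{Q_n}|Df_n|^2\,d\mu + (\text{finite identity contribution}) \le \sum_{n\ge 1} C^2 n^{2p}\,4^{-n} + \text{const} < \infty,
\end{equation*}
so $f\in W^{1,2}$ globally (in particular the origin, a single point, causes no trouble); $J_f\in L^1$ follows from $|J_f|\le |Df|^2$, and $K_f<\infty$ a.e.\ since it is finite on each $Q_n$ and the origin is null. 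The identical computation with $g_n^{-1}$ in place of $g_n$, again using property (iii), handles $f^{-1}$.

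\textbf{Entropy and the continuity gauge.}
For the entropy: each $g_n$ is conjugate (by the affine similarity $A_n$) to $f_n$, and $f_n$ sits inside $f$ as the return-type dynamics on the invariant square $Q_n$ (indeed $f(Q_n)=Q_n$ and $f|_{Q_n}=f_n$); since topological entropy of a map is at least the entropy of its restriction to any invariant compact subset, and entropy is a conjugacy invariant, $h_{\mathrm{top}}(f)\ge h_{\mathrm{top}}(f_n)=h_{\mathrm{top}}(g_n)$ for every $n$, which tends to $\infty$ by property (iv); hence $h_{\mathrm{top}}(f)=\infty$, and likewise for $f^{-1}$ (a horseshoe and its inverse have the same entropy). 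Finally, for~\eqref{gauge}, the main work is to feed the global bound $\int_{\mathbb{R}^2}|Df|^2<\infty$ into the Goldstein--Vodop'yanov inequality of Proposition~\ref{regmap}. Given $z,w$ with $|z-w|<\delta$ small, choose a disk $D$ with $z,w\in D\subset 2D\subset \mathbb{R}^2$ and $\mathrm{diam}(D)$ comparable to $|z-w|$; then~\eqref{GVineq} gives
\begin{equation*}
|f(z)-f(w)|^2 \;\le\; \frac{2\pi\int_{2D}|Df|^2}{\log\!\left(e+\frac{\mathrm{diam}(D)}{|z-w|}\right)} \;\le\; C'\int_{2D}|Df|^2 .
\end{equation*}
This is where the delicate point lies: the naive global bound only yields $|f(z)-f(w)|\le C|z-w|^0$, which is useless, so one must instead estimate $\int_{2D}|Df|^2$ by the \emph{square of the right-hand side of~\eqref{gauge}}. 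The key observation is that a disk $2D$ of radius $\sim|z-w|$ can meet only those squares $Q_n$ with $4^{-n}\gtrsim|z-w|$, i.e.\ with $n \lesssim \log\frac{1}{|z-w|}$, and on each such $Q_n$ one has $\int_{Q_n}|Df_n|^2 \le C^2 n^{2p}4^{-n}$; a more careful version, localizing to $2D\cap Q_n$ and using that $2D$ has area $\lesssim |z-w|^2$ together with $|Df_n|\le Cn^p$, gives $\int_{2D\cap Q_n}|Df|^2 \le C^2 n^{2p}|z-w|^2$, and summing over the $O(\log\frac1{|z-w|})$ relevant indices (or more precisely over those $n$ with $2^{-n}\gtrsim|z-w|$, taking $n_{\max}\sim\log_2\frac1{|z-w|}$) yields
\begin{equation*}
\int_{2D}|Df|^2 \;\le\; C'' |z-w|^2 \sum_{n=1}^{n_{\max}} n^{2p} \;\le\; C''' |z-w|^2 \left(\log\tfrac{1}{|z-w|}\right)^{2p+1}.
\end{equation*}
Taking square roots gives~\eqref{gauge} with $p$ replaced by $p+\tfrac12$; since the ingredient exponent $p>0$ was arbitrary, one may as well have started from a smaller $p$, or simply absorb the extra half-power and restate the conclusion with the exponent $p$ — in any case $t|\log t|^{p'}=o(t^\alpha)$ for every $\alpha<1$, so $\alpha$-H\"older continuity for all $0<\alpha<1$ follows. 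The same argument with $g_n^{-1}$ gives the estimate for $f^{-1}$. The main obstacle is precisely this last step: one must exploit the geometric decay of the squares $Q_n$ (their diameters shrink geometrically while the derivative bounds grow only polynomially) to convert the divergent-looking sum into the logarithmic gauge, rather than applying Proposition~\ref{regmap} crudely.
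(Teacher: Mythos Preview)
Your treatment of regularity, finite distortion, and entropy is correct and essentially matches the paper. The gap is in the modulus-of-continuity step.

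The assertion that ``a disk $2D$ of radius $\sim|z-w|$ can meet only those squares $Q_n$ with $2^{-n}\gtrsim|z-w|$'' is false, and the argument built on it does not yield a valid bound. Take $z\in R_m$ and $w\in R_n$ with $n\gg m$ (this is exactly the paper's sub-case~(b)): then $|z-w|\sim 2^{-m}$, while $|w|\sim 2^{-n}$, so the disk $2D$ of radius $\sim 2^{-m}$ reaches to within distance $\sim 2^{-n}$ of the origin and therefore meets \emph{every} $Q_k$ with $k\ge m-O(1)$. These are the small squares, with $2^{-k}\lesssim|z-w|$, precisely the opposite of what you claim. Summing your per-term bound $C^2k^{2p}|z-w|^2$ over this infinite index set diverges, so neither the exponent $p+\tfrac12$ nor any finite exponent follows from the argument as written.

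The repair is to bound $\mathrm{Area}(2D\cap Q_k)$ by $\min\bigl(\mathrm{Area}(2D),\mathrm{Area}(Q_k)\bigr)$ rather than by $\mathrm{Area}(2D)$ alone. For the small squares $k\ge m$ one then gets
\[
\sum_{k\ge m} C^2 k^{2p}\,4^{-k}\;\asymp\; m^{2p}4^{-m}\;\asymp\; |z-w|^2\Bigl(\log\tfrac{1}{|z-w|}\Bigr)^{2p},
\]
which after a square root gives the sharp exponent $p$ in~\eqref{gauge}. In the complementary situation where $2D$ stays well away from the origin (e.g.\ $z,w$ in the same $R_m$ with $|z-w|\ll 2^{-m}$), the disk meets only $O(1)$ squares $Q_k$, each with $k\sim\log_2(1/|z-w|)$, and your bound $k^{2p}|z-w|^2$ already suffices without any genuine summation. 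The paper organises exactly this dichotomy as an explicit case analysis (same $R_m$; different $R_m,R_n$; one point outside $\bigcup R_n$), choosing the disk $D$ adaptively in each case; your more uniform approach via Proposition~\ref{regmap} can be pushed through, but only once the two regimes for $\mathrm{Area}(2D\cap Q_k)$ are distinguished.
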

\begin{proof}
We have, rather trivially, 
\begin{equation*}
h_{\mathrm{top}}(f)
=\sup{ h_{\mathrm{top}}(f_n)}
=\sup{ h_{\mathrm{top}}(g_n)}
=\infty\ .
\end{equation*}
In other words, $f$ has infinite topological entropy, and hence so does $f^{-1}$. 
Thus, the real issue is to verify the regularity of $f$ (and $f^{-1}$). Throughout the proof, we shall denote by 
$C_1,C_2,\ldots$ positive absolute constants (it is possible to keep track of how they depend 
on the above data of the construction, but this will not be relevant for our purposes). 

First, if $D$ is any open disk such that $Q\subset D$, then by (iii) above we have
\begin{align*}
\int_{D}|Df|^2 
\;&=\; \int_{D\setminus \bigcup_{n=1}^{\infty} Q_n} |Df|^2 + \int_{\bigcup_{n=1}^{\infty}Q_n} |Df|^2\\
&\leq\; {\mathrm{Area}(D)} + C\sum_{n=1}^{\infty} n^{2p}{\mathrm{Area}(Q_n)} \\ 
&=\; {\mathrm{Area}(D)} + C\sum_{n=1}^{\infty} n^{2p}2^{-2n} <\infty\ .\\
\end{align*}
Since $f$ equals the identity outside $D$, this shows that $f\in W^{1,2}_{\mathrm{loc}}(\mathbb{R}^2)$. 

Next, we verify the modulus of continuity statement. We do it only for $f$, the verification for $f^{-1}$ being entirely analogous. 
Suppose $z,w\in \mathbb{R}^2$ are any two points such that 
\begin{equation*}
|z-w|
\;<\; 
\delta
\;=\;
\min\left\{\frac{1}{16}, e^{-p}\right\} 
\end{equation*}
Then there are three cases to consider:

\noindent{\it First case:\/} 
We have 
$z,w\in \mathbb{R}^2\setminus \bigcup_{n=1}^{\infty}R_n$. 
In this case, $f(z)=z$ and $f(w)=w$, 
and there is nothing to prove.

\noindent{\it Second case:\/} 
We have 
$z,w\in \bigcup_{n=1}^{\infty}R_n$. 
In this case, we may assume without loss of generality that 
$z\in R_m$ and $w\in R_n$ with $m\leq n$. There are two sub-cases to consider:
\begin{enumerate}
\item[(a)] $m=n$: 
in this sub-case, we let $D$ be the closed disk having the segment joining $z$ to $w$ as  
diameter. Note that, by construction, $2D\subset Q_m$. Applying the inequality \eqref{GVineq} of 
Proposition \ref{regmap}, we get
\begin{align}\label{estimate1}
|f(z)-f(w)|\;&\leq\; C_1 \left(\int_{2D} |Df|^2\,\right)^{\frac{1}{2}} \notag \\
&\leq\; 
C_2 \left(m^{2p}\mathrm{Area}(2D)\right)^{\frac{1}{2}}\;=\; 2C_2\sqrt{\pi} m^p|z-w|\ .
\end{align}
However, since $|z-w|<2^{-m}$, we have 
\begin{equation}\label{estimate2}
m\;<\;\frac{1}{\log{2}}\log{\frac{1}{|z-w|}} \ .
\end{equation}
Combining \eqref{estimate1} with \eqref{estimate2} yields \eqref{GVineq} in this sub-case. 
\item[(b)] $m<n$: in this sub-case, the distance between $z$ and $w$ is comparable to the diameter 
of $R_m$; in fact,
\begin{equation}\label{estimate3}
\frac{\sqrt{2}}{2}\frac{1}{2^m}
\;\leq\; 
|z-w|
\;\leq\; 
\sqrt{2}\frac{5}{3}\frac{1}{2^m}\ .
\end{equation}
Note that, since we are assuming that $|z-w|<\frac{1}{16}$, it follows from the first inequality in \eqref{estimate3} 
that $m>2$. Let $D$ be the disk with center at the origin and radius $\sqrt{2}\times 2^{-m+1}$, which contains 
both $z$ and $w$. Then its double $2D$ contains precisely the squares $Q_k$ with $k\geq m-1$. 
Thus, we have
\begin{align*}
\int_{2D}|Df|^2 
\;&=\; 
\int_{2D\setminus \bigcup_{k=m-1}^{\infty} Q_k} |Df|^2 + \int_{\bigcup_{k=m-1}^{\infty}Q_k} |Df|^2\\
&\leq\; 
{\mathrm{Area}(2D)} + C\sum_{k=m-1}^{\infty} k^{2p}{\mathrm{Area}(Q_k)} \\ 
&=\; 
{\mathrm{Area}(2D)} + C\sum_{k=m-1}^{\infty} k^{2p}2^{-2k} .\\
\end{align*}
Applying the inequality \eqref{GVineq} of Proposition \ref{regmap}, we get
\begin{equation}\label{estimate4}
|f(z)-f(w)|
\;\leq\; 
C_3\left({\mathrm{Area}(2D)} + C\sum_{k=m-1}^{\infty} k^{2p}2^{-2k}\right)^{\frac{1}{2}}
\end{equation}
Now, on the one hand we have 
\begin{equation}\label{estimate5}
\mathrm{Area}(2D)
=
\pi \left(\sqrt{2} \cdot 2^{-m+2}\right)
<
64\pi|z-w|^2\ ,
\end{equation}
where we have used \eqref{estimate3}. On the other hand, we can estimate the series in \eqref{estimate4} as follows. 
We have (by the integral test)
\begin{equation*}
 \sum_{k=m-1}^{\infty} k^{2p}2^{-2k}\;<\; \int_{m-2}^{\infty} x^{2p} e^{-(2\log{2})x}\,dx\ .
\end{equation*}
Here we use the following general fact{\footnote{See for instance \cite[pp.~103--104]{BM}.}}: for all $a>0$, all
$\lambda>1$ and all $\nu\in \mathbb{N}$, 
\begin{equation*}
\int_{a}^{\infty} x^\nu e^{-\lambda x}\,dx
\;=\; 
e^{-\lambda a}\sum_{j=0}^{\nu} 
\frac{\nu!}{j!}\frac{a^j}{\lambda^{\nu-j+1}} \;<\; (\nu+1)! a^{\nu}e^{-\lambda a}\ .
\end{equation*}
Applying this fact with $a=m-2$, $\lambda=2\log{2}$ and $\nu=2p$, we deduce using \eqref{estimate3} that
\begin{equation}\label{estimate6}
\sum_{k=m-1}^{\infty} k^{2p}2^{-2k}
\;<\; 
C_4m^{2p}2^{-2m}\;<\; C_5|z-w|^2\left(\log{\frac{1}{|z-w|}} \right)^{2p}\ .
\end{equation}
Combining \eqref{estimate4} with \eqref{estimate5} and \eqref{estimate6} yields~\eqref{GVineq} in this sub-case as well. 
\end{enumerate}

\noindent{\it Third case:\/} 
We have $z\in R_m$ for some $m$, but 
$w\in \mathbb{R}^2\setminus \bigcup_{n=1}^{\infty}R_n$. 
In this case, join $z$ to $w$ by a straight 
line segment and let $w'$ be the unique point at the boundary of $R_m$ belonging to that line segment. 
We have, of course, 
$|z-w|=|z-w'|+ |w'-w|$. 
Moreover, 
$f(w)=w$ and $f(w')=w'$, 
and since $z$ and $w'$ fall in the second case above, we have 
\begin{align}\label{estimate7}
|f(z)-f(w)|
\;&\leq\; 
|f(w)-f(w')|+|f(w')-f(z)| \notag\\ 
&\leq\; 
|w-w'|+C_6|z-w'|^{p}\log{\frac{1}{|z-w'|}}\ .
\end{align}
Since 
$\max\left\{|w-w'|,|w'-z|\right\}\leq |z-w|<\delta$ 
and the function $t\mapsto t(\log{(1/t)})^{p}$ is increasing for $0<t< e^{-p}$, 
we see that~\eqref{estimate7} implies~\eqref{GVineq}.
This finishes the proof of our theorem.
\end{proof}
For each positive integer $n$, by performing the same construction as above but just on the union of the squares 
$Q_n, Q_{n+1},\ldots$ 
we also get the following corollary.
\begin{corollary}
There exists a sequence of homeomorphisms $f_n$ of the unit square $[0,1]^2$
such that, for each $n$,
\begin{itemize}
\item
$f_n$ has infinite topological entropy;
\item
$f_n$ and $f_n^{-1}$ are regular maps with finite distortion;
\item
for each $\alpha\in (0,1)$, $f_n$ is bi-$\alpha$-H\"older continuous;
\end{itemize}
and such that
$\lim_{n\to\infty} f_n= \mathrm{id}$ 
where convergence is taken
\begin{itemize}
\item
in the $C^\alpha$-topology for any $\alpha\in[0,1)$,
\item
in the $W^{1,p}$-topology for any $p\in [1,\infty)$.
\end{itemize}
\end{corollary}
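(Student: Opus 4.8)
The idea is to re-run the construction preceding the theorem, but starting the indexing at $n$ rather than $1$, so that all the nontrivial dynamics is concentrated in squares of diameter at most $2^{-n+1}$, which therefore shrinks to a point as $n\to\infty$. First I would fix $n$ and, in the notation of the construction above, define $f_n$ on the unit square $Q=[0,1]^2$ by setting $f_n$ equal to the rescaled horseshoe $A_k\circ g_k\circ A_k^{-1}$ on the square $Q_k=I_k\times I_k$ for every $k\ge n$, and equal to the identity on $Q\setminus\bigcup_{k\ge n}Q_k$ (including all the squares $Q_1,\dots,Q_{n-1}$). Since each $g_k$ is supported in the interior square $R$, each summand $f_n|_{Q_k}$ agrees with the identity near $\partial Q_k$, so $f_n$ is an orientation-preserving homeomorphism of $Q$ which is the identity near $\partial Q$. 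The first two bullet points then follow exactly as in the theorem: $h_{\mathrm{top}}(f_n)=\sup_{k\ge n}h_{\mathrm{top}}(g_k)=\infty$ by property (iv) of the ingredients, and $f_n,f_n^{-1}$ are regular maps of finite distortion because on each $Q_k$ they are affine conjugates of such maps, they are the identity elsewhere, and $\int_Q|Df_n|^2\le \mathrm{Area}(Q)+C\sum_{k\ge n}k^{2p}2^{-2k}<\infty$ (and likewise for the inverse), which places $f_n$ in $W^{1,2}_{\mathrm{loc}}$; finite distortion is a pointwise condition inherited from the pieces.

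Next I would establish the bi-H\"older claim together with the convergence $f_n\to\mathrm{id}$ in the $C^\alpha$-topology. The modulus-of-continuity estimate~\eqref{gauge} for $f_n$ (and $f_n^{-1}$) is proved by the identical three-case argument given in the proof of the theorem, the only change being that the smallest square now in play is $Q_n$, so the geometric series always starts at an index $\ge n$; concretely, for $|z-w|$ small one gets $|f_n(z)-f_n(w)|\le C\,|z-w|\,(\log(1/|z-w|))^p$ with $C$ independent of $n$, and in fact one can do better: when both points lie in $\bigcup_{k\ge n}R_k$ the relevant integral $\int_{2D}|Df_n|^2$ is bounded by $\mathrm{Area}(2D)+C\sum_{k\ge n}k^{2p}2^{-2k}$, and since $|z-w|\le C\,2^{-n}$ in the only cases where $f_n$ differs from the identity, both $\mathrm{Area}(2D)$ and the tail $\sum_{k\ge n}k^{2p}2^{-2k}$ are $O(n^{2p}2^{-2n})$, so $\|f_n-\mathrm{id}\|_{C^0}=O(n^p2^{-n})\to 0$. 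For the full $C^\alpha$-norm one must also control the $\alpha$-H\"older seminorm of $f_n-\mathrm{id}$: split $[f_n-\mathrm{id}]_\alpha$ according to the size of $|z-w|$ relative to $2^{-n}$; for $|z-w|\ge 2^{-n}$ use the $C^0$-bound to get a difference quotient $O(n^p2^{-n}/2^{-n\alpha})=O(n^p2^{-n(1-\alpha)})\to 0$, while for $|z-w|<2^{-n}$ use the gauge estimate applied to $f_n$ on a single square $Q_k$, $k\ge n$, together with the bound $t(\log(1/t))^p/t^\alpha=t^{1-\alpha}(\log(1/t))^p\to 0$ as $t\to0$, uniformly once $t<2^{-n}$ with $n$ large. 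This shows $[f_n-\mathrm{id}]_\alpha\to0$ for every $\alpha\in(0,1)$, hence $f_n\to\mathrm{id}$ in $C^\alpha$; the case $\alpha=0$ is the $C^0$-statement already noted. The same reasoning applies verbatim to $f_n^{-1}$.

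Finally I would prove the $W^{1,p}$-convergence. Since $f_n$ equals the identity outside $\bigcup_{k\ge n}Q_k$ and $|Df_n|\le C k^p$ on $Q_k$, one has
\begin{equation*}
\int_{[0,1]^2}|Df_n-\mathrm{id}_{\mathbb{R}^2}|^p\,d\mu
\;\le\; C_p\sum_{k\ge n}\bigl(k^{p}+1\bigr)^p\,\mathrm{Area}(Q_k)
\;\le\; C_p'\sum_{k\ge n}k^{p^2}2^{-2k}\;\longrightarrow\;0
\end{equation*}
as $n\to\infty$, because the full series $\sum_{k\ge1}k^{p^2}2^{-2k}$ converges; combining this with the $C^0$-bound $\|f_n-\mathrm{id}\|_{C^0}\to0$ gives $\|f_n-\mathrm{id}\|_{\mathbb{W}^{1,p}}\to0$ for every $p\in[1,\infty)$, and the identical estimate for $Df_n^{-1}$ completes the argument. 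The main obstacle — really the only place requiring care rather than bookkeeping — is the $\alpha$-H\"older seminorm estimate for $f_n-\mathrm{id}$ at small scales: one must check that the constants coming out of Proposition~\ref{regmap} (i.e.\ the $C_i$'s in the proof of the theorem) are genuinely independent of $n$, and that the extra factor $|z-w|^{-\alpha}$ is absorbed by $|z-w|^{1-\alpha}(\log(1/|z-w|))^p$ uniformly over the range $|z-w|<2^{-n}$; both are straightforward once one notes that shifting the starting index from $1$ to $n$ only improves every estimate.
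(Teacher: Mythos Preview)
Your proposal is correct and follows exactly the approach the paper takes: the paper's proof is the single sentence ``perform the same construction as above but just on the union of the squares $Q_n, Q_{n+1},\ldots$,'' and you have faithfully expanded this into the necessary verifications. Your detailed checks of the $C^\alpha$- and $W^{1,p}$-convergence (splitting the H\"older seminorm at scale $2^{-n}$, and bounding the Sobolev integral by the tail of a convergent series) are sound, with the only cosmetic issue being the overloaded symbol $p$ (the ingredient exponent versus the Sobolev exponent), which the paper itself does not disambiguate.
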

\begin{remark}
A similar construction can be performed in the area-preserving category.
Namely, if you take a monotone decreasing sequence $1=r_1>r_2>\ldots$ converging to zero, and for each $k$, 
let $A_k$ denote the annulus in the plane given in polar coordinates by
$\left\{(r,\theta) : r_{k}\geq r\geq r_{k+1}\right\}$.
Let $A_k^{\mathrm{in}}$ and $A_{k}^{\mathrm{out}}$ denote the inner and outer boundary components.
Take a (weak) monotone twist map $f_k$ of the annulus $A_k$
such that 
$h_{\mathrm{top}}(f_k)=\log k$ 
and $f_k$ is a rotation in a neighbourhood of $\partial A_k$, such that 
$f_k|_{A_{k}^{\mathrm{in}}}=f_{k+1}|_{A_{k+1}^{\mathrm{out}}}$.
Define 
\begin{equation*}
f(x)=\left\{
\begin{array}{ll}
f_k(x) & x \ \in A_k\\
0 &  x=0
\end{array}\right. \ .
\end{equation*}
Then $f$ is a homeomorphism.
For appropriately chosen $r_k$, $f$ is a bi-H\"older and bi-Sobolev homeomorphism.  
\end{remark}
%
%
%
%
%
%
%
%
%
%
%
\section{Basic Moves}\label{sect:basic_moves}
We describe the basic moves from which all 
perturbations in Parts I and II are constructed.
The geometric properties of these 
perturbations also allow us to estimate 
how far they are from the identity with 
respect to the Lipschitz and H\"older 
distances.

First we introduce the following terminology, 
to be used in this and the following Appendix.
We call the midpoint of the axis of $C$ the {\it centre point\/} 
of $C$ and we call the centres of the boundary balls $C^-$ and 
$C^+$ of $C$ the {\it endpoints\/} of $C$.
We say that a topological solid cylinder $C\subset \mathbb{R}^d$ 
has {\it separated boundary balls\/} if the following property 
is satisfied: 
Denote the boundary balls of $C$ by $C^-$ and $C^+$ and denote 
the corresponding endpoints by $c^-$ and $c^+$ respectively.
Let $\Pi\subset\mathbb{R}^d$ denote the perpendicular bisector of the line segment $[c^-,c^+]$.
Then the boundary balls $C^+$ and $C^-$ 
lie in different connected components of $\mathbb{R}^d\setminus \Pi$.

\begin{lemma}[Lipschitz Gluing Principle]
Let 
$(\Omega,d_{\Omega})$ 
and 
$(\Omega',d_{\Omega'})$ 
be metric spaces, where $\Omega$ is 
geodesically convex.
Let $\Omega_1, \Omega_2,\ldots$ be 
geodesically convex, pairwise disjoint 
subdomains of $\Omega$ and let $\Omega_0=\Omega\setminus\bigcup_k\Omega_k$.
Let 
$f\in C^0(\Omega,\Omega')$ 
be a function which is Lipschitz on 
$\Omega_0,\Omega_1,\Omega_2,\ldots$.
Then $f$ is Lipschitz and
\begin{equation*}
[f]_{\mathrm{Lip}}
\leq 
\max_{k=0,1,2\ldots} [f]_{\mathrm{Lip},\overline{\Omega}_k} \ .
\end{equation*}
\end{lemma}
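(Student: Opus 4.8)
The plan is to reduce the global Lipschitz estimate to a chain of local ones, exactly as in the proof of the Second Hölder Gluing Principle, but now with the exponent $\alpha=1$ and using geodesic convexity in place of the well-positioning hypothesis. First I would fix two points $x,y\in\Omega$ and a geodesic segment $\gamma$ in $\Omega$ joining them (which exists since $\Omega$ is geodesically convex), parametrised proportionally to arclength. The segment $\gamma$ meets the pieces $\Omega_0,\Omega_1,\Omega_2,\ldots$, and since each $\Omega_k$ is itself geodesically convex, the portion of $\gamma$ lying in $\overline{\Omega}_k$ is a (connected) subarc; hence $\gamma$ decomposes into finitely many subarcs $\gamma_1,\gamma_2,\ldots,\gamma_m$, consecutive along $\gamma$, each contained in the closure of a single piece $\Omega_{k(i)}$. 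Writing $p_0=x,p_1,\ldots,p_m=y$ for the endpoints of these subarcs, we have $\sum_{i=1}^m d_\Omega(p_{i-1},p_i)=d_\Omega(x,y)$ because the $p_i$ lie in order along a geodesic.

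Next I would estimate
\begin{equation*}
d_{\Omega'}(f(x),f(y))\leq \sum_{i=1}^m d_{\Omega'}\bigl(f(p_{i-1}),f(p_i)\bigr)
\leq \sum_{i=1}^m [f]_{\mathrm{Lip},\overline{\Omega}_{k(i)}}\, d_\Omega(p_{i-1},p_i),
\end{equation*}
using the triangle inequality in $\Omega'$ and the fact that $p_{i-1},p_i\in\overline{\Omega}_{k(i)}$ together with continuity of $f$ up to the closure (so that $[f]_{\mathrm{Lip},\overline{\Omega}_k}=[f]_{\mathrm{Lip},\Omega_k}$). Bounding each constant by $L:=\max_k [f]_{\mathrm{Lip},\overline{\Omega}_k}$ and using $\sum_i d_\Omega(p_{i-1},p_i)=d_\Omega(x,y)$ gives $d_{\Omega'}(f(x),f(y))\leq L\, d_\Omega(x,y)$, which is exactly the claimed inequality. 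Since $x,y$ were arbitrary, $f$ is Lipschitz with $[f]_{\mathrm{Lip}}\leq L$.

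The only point requiring care — and the one I would expect to be the main (minor) obstacle — is the claim that a geodesic in $\Omega$ intersects each geodesically convex piece $\overline{\Omega}_k$ in a connected subarc, so that the decomposition into finitely many subarcs is legitimate. This is where geodesic convexity of the pieces is used: if $\gamma$ entered and left $\Omega_k$ and re-entered it, the two boundary crossing points together with geodesic convexity of $\Omega_k$ would force a shorter path between them inside $\overline{\Omega}_k$, contradicting that $\gamma$ is a geodesic — unless the re-entry subarc is itself a geodesic of the same length, in which case one may simply choose $\gamma$ to pass through $\overline{\Omega}_k$ only once. (Finiteness of the number of subarcs can be arranged since each crossing produces a subarc of positive length and $\gamma$ has finite length; alternatively one may note the estimate is insensitive to whether the number of pieces met is finite, as the sum of sublengths is always bounded by $d_\Omega(x,y)$.) Everything else is the routine triangle-inequality argument above, and I would not spell it out further.
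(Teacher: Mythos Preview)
Your proposal is correct and follows essentially the same route as the paper's own argument: pick a geodesic from $x$ to $y$, split it at the successive crossings of the boundaries $\partial\Omega_k$ into subsegments each lying in some $\overline{\Omega}_{r_k}$, apply the triangle inequality and the piecewise Lipschitz bounds, and use that the subsegment lengths sum to $d_\Omega(x,y)$. If anything, you are slightly more careful than the paper in spelling out why geodesic convexity of the pieces forces each intersection $\gamma\cap\overline{\Omega}_k$ to be a single subarc.
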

%
\begin{comment}
\begin{proof}
To simplify notation, let 
$\Omega_0=\Omega\setminus\bigcup_{k=1}^\infty \Omega_k$.
Choose $x,y\in \Omega$.
By taking a geodesic from $x$ to $y$ 
and intersecting it with 
$\partial\Omega_0,\partial\Omega_1,\ldots$ 
we may assume that there are consecutive points 
$x=z_0,z_1,\ldots$ 
along the geodesic so that $z_k$ and $z_{k+1}$ 
lie in some $\mathrm{cl}\Omega_{r_k}$.
Then
\begin{align}
|f(x)-f(y)|
&\leq \sum_{k=0}^\infty |f(z_k)-f(z_{k+1})|\\
&\leq \sum_{k=0}^\infty [f]_{\mathrm{Lip},\mathrm{cl}(\Omega_{r_k})}|z_{k}-z_{k+1}|\\
&\leq \left(\max_{r=0,1\ldots} [f]_{\mathrm{Lip},\mathrm{cl}(\Omega_r)}\right) \sum_{k=0}^\infty |z_{k}-z_{k+1}|\\
&= \left(\max_{r=0,1,\ldots} [f]_{\mathrm{Lip},\mathrm{cl}(\Omega_r)}\right) |x-y|
\end{align}
where the last equality follows as the $z_k$ lie along a geodesic.
Hence the lemma is shown.
\end{proof}
\end{comment}
%
\begin{lemma}[Lipschitz Bounds for Bump Functions]\label{lem:Lip_bounds_for_bump}
Given $p,q\in\mathbb{R}^d$, and positive real numbers $r_1<r_2$,
there exists $b\in C^1(\mathbb{R}^d,\mathbb{R})$ satisfying 
$b|_{E(p,q;r_1)}\equiv 1$,
$b|_{\mathbb{R}^d\setminus E(p,q;r_2)}\equiv 0$
and 
$[b]_{\mathrm{Lip}}\leq K/|r_2-r_1|$, 
where $K$ is independent of $p,q,r_1$ or $r_2$.
\end{lemma}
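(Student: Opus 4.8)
\textbf{Proof plan for the Lipschitz Bounds for Bump Functions (Lemma~\ref{lem:Lip_bounds_for_bump}).}

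The plan is to build $b$ by composing a one-variable cutoff with the distance function to the segment $[p,q]$. First I would fix the auxiliary scalar function: choose $\chi\in C^1(\mathbb{R},\mathbb{R})$ with $\chi|_{(-\infty,0]}\equiv 1$, $\chi|_{[1,\infty)}\equiv 0$, $0\le\chi\le 1$, and $\sup|\chi'|\le K_0$ for some universal constant $K_0$ (e.g. $K_0=2$; any fixed smooth monotone interpolation works, and $\chi$ does not depend on any of the data $p,q,r_1,r_2$). Next, let $\rho(x)=\mathrm{dist}\bigl(x,[p,q]\bigr)=\min_{t\in[0,1]} d\bigl(x,tp+(1-t)q\bigr)$ denote the Euclidean distance from $x$ to the line segment joining $p$ and $q$. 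This is $1$-Lipschitz on all of $\mathbb{R}^d$, being a distance function to a nonempty closed set. The elongated neighbourhood is precisely a sublevel set of $\rho$, namely $E(p,q;r)=\{x:\rho(x)<r\}$, which is exactly the definition given in the Notation subsection.

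Then I would define
\begin{equation*}
b(x)\;=\;\chi\!\left(\frac{\rho(x)-r_1}{r_2-r_1}\right)\ .
\end{equation*}
On $E(p,q;r_1)$ one has $\rho(x)<r_1$, so the argument of $\chi$ is negative and $b\equiv 1$; on the complement of $E(p,q;r_2)$ one has $\rho(x)\ge r_2$, so the argument is $\ge 1$ and $b\equiv 0$. For the Lipschitz bound, since $\chi$ is $K_0$-Lipschitz, the map $s\mapsto (s-r_1)/(r_2-r_1)$ is $1/(r_2-r_1)$-Lipschitz, and $\rho$ is $1$-Lipschitz, composition gives
\begin{equation*}
[b]_{\mathrm{Lip}}\;\le\;\frac{K_0}{r_2-r_1}\;=\;\frac{K_0}{|r_2-r_1|}\ ,
\end{equation*}
so the claim holds with $K=K_0$, independent of $p,q,r_1,r_2$.

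The only genuine technical point is regularity: I claimed $b\in C^1$, but $\rho$ is in general only Lipschitz (it fails to be differentiable on the "medial axis'' of the segment and at the endpoints $p$, $q$). This is the main obstacle, and it is handled by a standard mollification/smoothing argument rather than anything deep. One fix is to replace $\rho$ by a smoothed distance function $\tilde\rho$, built by convolving $\rho$ with a mollifier at a scale $\sigma\ll (r_2-r_1)$, or by the classical regularized-distance construction (a $C^\infty$ function comparable to $\rho$ with gradient bounded by a universal constant), and then shrink the interpolation window slightly so that $b$ is still identically $1$ on $E(p,q;r_1)$ and identically $0$ off $E(p,q;r_2)$; the Lipschitz bound degrades only by a fixed universal factor, which is absorbed into $K$. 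Alternatively, one observes that on any region where $b$ is non-constant, namely $r_1\le\rho(x)\le r_2$, the level sets of $\rho$ near the segment away from the endpoints are smooth, and near the endpoints one composes with a radial smoothing; either way the construction is routine and the constant remains universal. I would write out the mollified version since it is cleanest and most self-contained.
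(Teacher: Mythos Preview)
Your approach is essentially the same as the paper's: compose a one-variable cutoff with the distance function $\rho(x)=\mathrm{dist}(x,[p,q])$ and read off the Lipschitz bound by the chain rule. The paper builds an explicit $C^1$ profile $b_3$ on $[r_1,r_2]$ rather than rescaling a universal $\chi$, but this is cosmetic.

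Your regularity worry, however, is overstated, and the mollification detour is unnecessary. The segment $[p,q]$ is convex, and the distance function to a nonempty closed convex set in $\mathbb{R}^d$ is $C^1$ on the complement of that set (the gradient at $x$ is the unit vector from the nearest point to $x$, and the nearest-point projection onto a convex set is continuous). Hence $\rho$ is $C^1$ on $\mathbb{R}^d\setminus[p,q]\supset\{\rho\ge r_1\}$. Since your $\chi$ is smooth with $\chi'(0)=\chi'(1)=0$, the composite $b=\chi((\rho-r_1)/(r_2-r_1))$ is $C^1$ on $\{\rho>0\}$ and identically $1$ on the open set $\{\rho<r_1\}\supset[p,q]$; gluing these gives $b\in C^1(\mathbb{R}^d)$ directly. (The ``medial axis'' of a segment is just the segment itself---every exterior point has a unique nearest point---so there is no singular locus of $\rho$ in the transition region.) The paper's own argument also does not spell this out; it checks the Lipschitz bound piecewise and leaves the $C^1$ claim implicit.
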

\begin{lemma}\label{lem:E-perturbation}
Take any $p,q\in\mathbb{R}^d$ and $0<r_1<r_2$.
Then there exists $\phi\in\mathrm{Diff}_+^1(\mathbb{R}^d)$, such that
$\phi(p)=q$,
$\mathrm{supp}(\phi)\subset \overline{E(p,q;r_2)}$ and 
\begin{equation*}
\max\left\{
\bigl[\phi-\mathrm{id}\bigr]_{\mathrm{Lip}}, 
\bigl[\phi^{-1}-\mathrm{id}\bigr]_{\mathrm{Lip}}
\right\}
<K \ ,
\end{equation*}
where $K$ depends upon $|p-q|/|r_2-r_1|$ only. 
\end{lemma}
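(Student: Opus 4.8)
\textbf{Proof proposal for Lemma~\ref{lem:E-perturbation}.}
The plan is to build $\phi$ as the time-one map of a carefully chosen vector field supported in $\overline{E(p,q;r_2)}$, whose integral curves push $p$ along the segment $[p,q]$ and then cap off smoothly so that the endpoint displacement is exactly $q-p$. First I would reduce, by a rigid motion of $\mathbb{R}^d$ (which is an isometry and hence changes no Lipschitz constant), to the normalised situation $p=0$, $q=L e_1$ with $L=|p-q|$, so that the elongated neighbourhood $E(p,q;r)$ becomes the standard ``stadium'' $E(0,Le_1;r)$, and set $r=r_2-r_1$. In this model one has a natural candidate vector field: take the constant field $X_0=e_1$ on the inner stadium $E(0,Le_1;r_1)$ and multiply it by the bump function $b$ from Lemma~\ref{lem:Lip_bounds_for_bump} (applied with $p=0$, $q=Le_1$, radii $r_1<r_2$), obtaining $X=b\,e_1$, a $C^1$ vector field with $X\equiv e_1$ on $E(0,Le_1;r_1)$, $X\equiv 0$ outside $E(0,Le_1;r_2)$, and $[X]_{\mathrm{Lip}}\le [b]_{\mathrm{Lip}}\le K_0/r$ with $K_0$ absolute.

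The next step is to run the flow of $X$. Since $X$ is compactly supported and $C^1$, its flow $\{\psi_t\}$ is well defined for all $t$, each $\psi_t\in\mathrm{Diff}^1_+(\mathbb{R}^d)$ with support in $\overline{E(0,Le_1;r_2)}$, and $\psi_t=\mathrm{id}$ outside that set. Along the $e_1$-axis, for as long as the trajectory stays in the inner stadium $E(0,Le_1;r_1)$ (where $X=e_1$), the point $0$ moves as $\psi_t(0)=t e_1$; hence $\psi_L(0)=Le_1=q$, provided the whole segment $[0,Le_1]$ lies in the region where $X=e_1$ — which it does, since $[0,Le_1]\subset E(0,Le_1;r_1)$ by definition of the stadium. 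So I set $\phi=\psi_L$; then $\phi(p)=q$ and $\operatorname{supp}(\phi)\subset\overline{E(p,q;r_2)}$ as required. (A small technical point: one should make sure the flow does not drag $0$ out of the inner stadium before time $L$; because $X=e_1$ throughout $E(0,Le_1;r_1)$ and the axis segment has length exactly $L$, the trajectory reaches $Le_1$ precisely at time $L$ while still inside, so this is fine.)

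The remaining work is the Lipschitz estimate on $\phi-\mathrm{id}$ and $\phi^{-1}-\mathrm{id}$, and this is the step I expect to be the main obstacle — not because it is deep, but because it needs the standard Gronwall-type control of the variational equation to be tracked with explicit constants. The idea: writing $J_t(x)=D\psi_t(x)$, one has $\dot J_t=DX(\psi_t)\,J_t$ with $J_0=\mathrm{id}$, so $\|J_t-\mathrm{id}\|\le e^{t[X]_{\mathrm{Lip}}}-1$; moreover the relevant time is $t=L$ and $[X]_{\mathrm{Lip}}\le K_0/r$, so the bound depends only on $L/r=|p-q|/|r_2-r_1|$. Since $\phi-\mathrm{id}$ has Lipschitz constant at most $\sup_x\|D\phi(x)-\mathrm{id}\|=\sup_x\|J_L(x)-\mathrm{id}\|\le e^{K_0 L/r}-1=:K$, and since $\phi^{-1}=\psi_{-L}$ is the time-$(-L)$ flow of the same field (so the identical estimate applies with $|{-L}|=L$), we get $\max\{[\phi-\mathrm{id}]_{\mathrm{Lip}},[\phi^{-1}-\mathrm{id}]_{\mathrm{Lip}}\}<K$ with $K$ a function of $|p-q|/|r_2-r_1|$ alone. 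Finally one checks $\phi\in\mathrm{Diff}^1_+$: it is $C^1$ because $X$ is $C^1$, a diffeomorphism because it is a flow map, and orientation-preserving because $\det D\psi_t$ is continuous in $t$, equals $1$ at $t=0$, and never vanishes. This completes the argument; the only genuinely fiddly part is bookkeeping the constant $K_0$ from the bump function through the Gronwall estimate, but no new idea is needed there.
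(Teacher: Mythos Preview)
Your proposal is correct and follows essentially the same approach as the paper: both build $\phi$ as a flow map of the compactly supported $C^1$ vector field $X=b\cdot(\text{unit direction from }p\text{ to }q)$ with $b$ the bump function of Lemma~\ref{lem:Lip_bounds_for_bump}, and both control $[\phi-\mathrm{id}]_{\mathrm{Lip}}$ via a Gr\"onwall estimate in terms of $[X]_{\mathrm{Lip}}\le K_0|p-q|/|r_2-r_1|$. The only cosmetic differences are that the paper scales the vector field by $|q-p|$ and takes the time-one map (rather than the time-$L$ map of the unit field), and it applies Gr\"onwall directly to $|(\Phi_t(x)-x)-(\Phi_t(y)-y)|$ instead of to the variational equation; these are equivalent and yield the same dependence of $K$ on $|p-q|/|r_2-r_1|$.
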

\begin{proof}
Let $b\in C^1(\mathbb{R}^d,\mathbb{R})$ denote 
the bump function given in Lemma~\ref{lem:Lip_bounds_for_bump} for the 
domains $E(p,q;r_1)\subset E(p,q;r_2)$.
Observe that $[b]_{\mathrm{Lip}}\leq K/|r_2-r_1|$.
Consider the vector field
\begin{equation*}
X(x)=b(x)(q-p)
\end{equation*}
There exist positive real numbers $L$ and $M$ such that, for all $x,y\in\mathbb{R}^d$,
\begin{equation}\label{ineq:vectorfield_bounded+lipschitz}
|X(x)|\leq L, \qquad |X(x)-X(y)|\leq M|x-y| \ .
\end{equation}
(In fact, $L=|q-p|$ and $M=[b]_{\mathrm{Lip}}|q-p|\leq K|q-p|/|r_2-r_1|$.)
Let $\Phi_t$ denote the corresponding flow.
For 
$x$ in $E(p,q;r_1)$ and 
$|t|\leq L\cdot \mathrm{dist}(x,\partial E(p,q;r_1))$ 
we have that 
$\Phi_t(x)=x+t(q-p)$.
Similarly, for 
$x$ in $\mathbb{R}^d\setminus E(p,q;r_2)$ 
we have that 
$\Phi_t(x)=x$.
Let $\phi$ denote the time-one map $\Phi_1$.
Then $\phi$ is a $C^1$-smooth diffeomorphism 
with support in $\overline{E(p,q;r_2)}$ 
such that $\phi(p)=q$.
It remains to bound the Lipschitz constant of $\phi$.
Fix distinct points $x$ and $y$ in $\mathbb{R}^d$.
\begin{comment}
Define 
$f(t)=\left|\Phi_t(x)-\Phi_t(y)\right|$ for $t\in\mathbb{R}$.
Then inequality~\eqref{ineq:vectorfield_bounded+lipschitz} implies
\begin{align*}
f(t)
=\left|\Phi_t(x)-\Phi_t(y)\right|
&=\left|x-y+\int_0^t X(\Phi_s(x))-X(\Phi_s(y))\,ds\right|\\
&\leq |x-y|+\int_0^t M\left|\Phi_s(x)-\Phi_s(y)\right|\,ds\\
&= |x-y|+\int_0^t Mf(s)\,ds
\end{align*}
Applying Gr\"onwall's inequality and setting $t=1$,
we get the following inequality
\begin{equation*}
\left|\Phi_t(x)-\Phi_t(y)\right|
\leq |x-y|\exp\left(\int_0^t M \,ds\right)
=e^{Mt}|x-y|
\end{equation*}
Hence,
$[\phi]_{\mathrm{Lip}}\leq K=e^M$. 
By symmetry, the same argument shows that 
$\left[\phi^{-1}\right]_{\mathrm{Lip}}\leq K$.
\end{comment}
Define 
$f(t)=\left|(\Phi_t(x)-x)-(\Phi_t(y)-y)\right|$.
Then inequality~\eqref{ineq:vectorfield_bounded+lipschitz} implies
\begin{align*}
f(t)
&=\left|(\Phi_t(x)-x)-(\Phi_t(y)-y)\right|\\
&\leq \int_0^t\left|X(\Phi_s(x))-X(\Phi_s(y))\right|\,ds\\
&\leq \int_0^t M\left|\Phi_s(x)-\Phi_s(y)\right|\,ds\\
&\leq M\int_0^t |x-y|+\left|(\Phi_s(x)-x)-(\Phi_s(y)-y)\right|\,ds\\
&=Mt|x-y|+\int_0^t Mf(s)\,ds\ .
\end{align*}
Applying Gr\"onwall's inequality and setting $t=1$, 
we get the following inequality 
\begin{equation*}
\left[\phi-\mathrm{id}\right]_{\mathrm{Lip}}\leq K=M e^M \ .
\end{equation*}
By symmetry, the same argument show that 
$\left[\phi^{-1}-\mathrm{id}\right]_{\mathrm{Lip}}\leq K$.
Thus the lemma is shown.
\end{proof}
Applying the construction in Lemma~\ref{lem:E-perturbation} to a solid cylinder instead of a single point gives the following.
\begin{lemma}[Basic Move 1 -- Translation to the Origin]\label{lem:basic_move_1}
Let $C$ be a rigid solid cylinder in $\mathbb{R}^d$.
Let $c$ denote the centre of $C$ and let $r$ be the least positive real number such that $C$ is contained in the elongated neighbourhood $E(0,c;r)$.
Then there exists a diffeomorphism of $\mathbb{R}^d$ with support in $E(0,c;r+|c|)$ such that $\phi|_{E(0,c;r)}$ is translation from $c$ to $0$.
In particular, $\phi(C)$ is a rigid cylinder with centre $0$.
Moreover, 
$\bigl[\phi\bigr]_{\mathrm{Lip}}, 
\bigl[\phi^{-1}\bigr]_{\mathrm{Lip}}<K$, 
where $K$ is a uniform constant.
\end{lemma}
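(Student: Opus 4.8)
The plan is to reduce this statement to a direct application of Lemma~\ref{lem:E-perturbation}, so the proof should be short. First I would observe that the hypothesis says precisely that $C \subset \overline{E(0,c;r)}$, and that $E(0,c;r) \subset E(0,c;r+|c|)$ with the two elongated neighbourhoods having inner radius $r$ and outer radius $r+|c|$ respectively; the difference of radii is $|c|$, which is fixed. Applying Lemma~\ref{lem:E-perturbation} with $p = c$, $q = 0$, $r_1 = r$, $r_2 = r+|c|$ produces a diffeomorphism $\phi \in \mathrm{Diff}^1_+(\mathbb{R}^d)$ with $\phi(c) = 0$, support contained in $\overline{E(0,c;r+|c|)}$, and
\begin{equation*}
\max\left\{\bigl[\phi - \mathrm{id}\bigr]_{\mathrm{Lip}}, \bigl[\phi^{-1} - \mathrm{id}\bigr]_{\mathrm{Lip}}\right\} < K',
\end{equation*}
where $K'$ depends only on $|c - 0|/|(r+|c|) - r| = |c|/|c| = 1$. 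Hence $K'$ is an absolute constant.

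The next step is to upgrade the conclusion of Lemma~\ref{lem:E-perturbation} from a statement about $\phi$ on all of $E(0,c;r_1)$ to the claim that $\phi$ restricted to $E(0,c;r)$ is the exact translation $x \mapsto x - c$. Here I would appeal to the explicit construction inside the proof of Lemma~\ref{lem:E-perturbation}: the bump function $b$ is identically $1$ on $E(p,q;r_1)$, so the vector field $X(x) = b(x)(q-p)$ equals the constant $q - p = -c$ on that set; however one must be careful, since a trajectory starting in $E(0,c;r)$ could in principle leave it before time $1$. The cleanest fix is to note that we are free to take the inner elongated neighbourhood in Lemma~\ref{lem:E-perturbation} slightly larger than needed: apply it with $r_1' = r + |c|/2$ and $r_2' = r + |c|$ (so the ratio is still bounded, now by $2$), giving a $\phi$ which is translation by $-c$ throughout $E(0,c;r_1') \supset \overline{E(0,c;r)}$, since any point of $E(0,c;r)$ stays inside $E(0,c;r_1')$ under the flow $t \mapsto x + t(-c)$ for $t \in [0,1]$ — indeed the whole segment $[x - c, x]$ lies in a fixed bounded neighbourhood of $[0,c]$, and a short computation shows it stays within distance $r_1'$ of $[0,c]$. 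Thus $\phi|_{E(0,c;r)}$ is translation from $c$ to $0$, and consequently $\phi(C)$ is a rigid solid cylinder (a rigid motion, in fact a translation, of $C$) with centre $\phi(c) = 0$.

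Finally, since $\bigl[\phi - \mathrm{id}\bigr]_{\mathrm{Lip}} < K'$ and $\mathrm{id}$ has Lipschitz constant $1$, the triangle inequality for the Lipschitz seminorm gives $[\phi]_{\mathrm{Lip}} \le 1 + K' =: K$, and likewise $[\phi^{-1}]_{\mathrm{Lip}} \le 1 + K' = K$; the support of $\phi$ is contained in $\overline{E(0,c;r+|c|)} \subset E(0,c;r+|c|)$ after enlarging the outer radius by an arbitrarily small amount, or one simply states the support bound with the closed neighbourhood as in Lemma~\ref{lem:E-perturbation}. This $K$ is uniform (independent of $C$), as required.

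I expect the only mildly delicate point to be the verification that trajectories of the flow do not escape the inner elongated neighbourhood before time one — i.e., confirming that $\phi$ is \emph{exactly} a translation on $E(0,c;r)$ rather than merely satisfying $\phi(c) = 0$. This is a routine geometric estimate on the convexity of elongated neighbourhoods along the direction $-c$, and the slack introduced by choosing $r_1' = r + |c|/2$ handles it comfortably; everything else is a bookkeeping application of the already-proved Lemma~\ref{lem:E-perturbation}.
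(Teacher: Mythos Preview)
Your approach matches the paper's exactly: the paper's entire proof is the one-line remark ``Applying the construction in Lemma~\ref{lem:E-perturbation} to a solid cylinder instead of a single point gives the following,'' and you have filled in precisely those details, including correctly isolating trajectory containment as the only subtle point.

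One quantitative caveat worth fixing: your choice $r_1' = r + |c|/2$ does not suffice for the full claim that $\phi|_{E(0,c;r)}$ is a translation. For instance $x = 0 \in E(0,c;r)$ flows to $-c$, which lies at distance $|c|$ from $[0,c]$, and this exceeds $r_1'$ whenever $r < |c|/2$; more generally, taking $x$ near the ``$0$-end'' of $E(0,c;r)$ shows the worst trajectory endpoint sits at distance arbitrarily close to $r + |c|$ from $[0,c]$, so no choice $r_1' < r_2' \leq r + |c|$ makes $\phi$ an exact translation on \emph{all} of $E(0,c;r)$. This reflects a slight looseness in the lemma's stated support bound rather than a defect in your method. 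For every application in the paper one only needs $\phi|_C$ to be a translation, and since $C$ is centred at $c$ each trajectory satisfies $d(x - tc, [0,c]) \leq |x - tc - (1-t)c| = |x - c|$, which your inner radius comfortably accommodates.
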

The construction in Lemma~\ref{lem:E-perturbation} can also be performed with any linear flow, not just a translational flow.
The case of rotational flows and hyperbolic flows gives the following two Lemmas.
\begin{lemma}[Basic Move 2 -- Rotation about the Origin]\label{lem:basic_move_2}
Let $C$ and $C'$ be isometric rigid solid cylinders centred at the origin in $\mathbb{R}^d$, both contained in $B(0,r)$.
Then there exists a diffeomorphism $\phi$ of $\mathbb{R}^d$ with support in $B(0,2r)$ which maps $C$ isometrically onto $C'$.
Moreover, $[\phi]_{\mathrm{Lip}}, [\phi^{-1}]_{\mathrm{Lip}}<K$, 
where $K$ is a uniform constant (depending only upon the dimension $d$).
\end{lemma}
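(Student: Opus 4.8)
The plan is to repeat the flow-and-Gr\"onwall scheme from the proof of Lemma~\ref{lem:E-perturbation}, replacing the translational vector field $x\mapsto b(x)(q-p)$ by a damped \emph{linear rotational} field. First I would record the elementary geometric fact that there exists a rotation $R\in\mathrm{SO}(d)$ with $R(C)=C'$. Indeed, a rigid solid cylinder centred at the origin is determined, up to an isometry fixing $0$, by its axial half-length and its coaxial radius; since $C$ and $C'$ are isometric these coincide, and if $a,a'$ denote unit vectors along the respective axes, then for $d\geq 2$ any element of $\mathrm{SO}(d)$ rotating $a$ to $a'$ (for instance a rotation supported on the plane $\mathrm{span}\{a,a'\}$) carries $C$ onto $C'$, using that a solid cylinder is invariant under reversal of its axis. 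Writing $R$ in its orthogonal block-diagonal normal form as a direct sum of planar rotations by angles in $[-\pi,\pi]$, one obtains a skew-symmetric matrix $X$ with $e^{X}=R$ whose norm is bounded by a constant $c(d)$ depending only on $d$.

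Next I would build the perturbation. Choose $b\in C^{\infty}(\mathbb{R}^d,[0,1])$ with $b\equiv 1$ on $B(0,r)$, $b\equiv 0$ off $B(0,2r)$, and $[b]_{\mathrm{Lip}}\leq K_0/r$ for a universal constant $K_0$ (the concentric-ball version of Lemma~\ref{lem:Lip_bounds_for_bump}; concretely $b(x)=\chi(|x|/r)$ for a fixed profile $\chi$). Set $\widetilde X(x)=b(x)\,Xx$, let $\Phi_t$ be its flow, and put $\phi=\Phi_1$; this is a $C^{\infty}$-diffeomorphism of $\mathbb{R}^d$, isotopic to the identity, hence orientation-preserving. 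On $B(0,r)$ one has $\widetilde X(x)=Xx$, whose flow $e^{tX}x$ preserves the sphere $\{|y|=|x|\}$; therefore orbits issuing from $B(0,r)$ stay in $B(0,r)$, so $\phi|_{B(0,r)}=e^{X}=R$, and in particular $\phi$ carries $C$ isometrically onto $C'$. Since $\widetilde X$ vanishes off $B(0,2r)$, $\phi$ is the identity there, so $\mathrm{supp}(\phi)\subset\overline{B(0,2r)}$ (and the support can be pushed into the open ball if desired).

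Finally I would estimate the Lipschitz constants, exactly as in Lemma~\ref{lem:E-perturbation}. On $B(0,2r)$ one has $|\widetilde X(x)|\leq 2r\|X\|$, and the splitting $\widetilde X(x)-\widetilde X(y)=b(x)X(x-y)+(b(x)-b(y))Xy$ gives $|\widetilde X(x)-\widetilde X(y)|\leq M|x-y|$ with $M=\|X\|(1+2K_0)$; because $\widetilde X\equiv 0$ off $B(0,2r)$ and $b$ is continuous (so $\widetilde X$ vanishes on $\partial B(0,2r)$), this bound in fact holds on all of $\mathbb{R}^d$. Applying Gr\"onwall's inequality to $t\mapsto|\Phi_t(x)-\Phi_t(y)|$ yields $[\phi]_{\mathrm{Lip}}\leq e^{M}$, and reversing the time parameter gives $[\phi^{-1}]_{\mathrm{Lip}}\leq e^{M}$. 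As $M\leq c(d)(1+2K_0)$ depends only on $d$, we may take $K=e^{c(d)(1+2K_0)}$.

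The only genuinely delicate point is the bookkeeping that makes $K$ depend on $d$ alone: one must be sure both that the isometry $C\to C'$ can be realised by an element of $\mathrm{SO}(d)$ rather than merely $\mathrm{O}(d)$, and that its infinitesimal generator $X$ can be chosen with norm controlled independently of $C$ and $C'$. Once these are settled, the remainder is a verbatim transcription of the flow estimate in Lemma~\ref{lem:E-perturbation}.
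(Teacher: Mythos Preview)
Your proof is correct and follows essentially the same approach as the paper: the paper states (without a displayed proof) that one repeats the flow-and-Gr\"onwall construction of Lemma~\ref{lem:E-perturbation} with a damped rotational vector field in place of the translational one, and remarks that the uniformity of $K$ comes from the compactness of $\mathrm{SO}(d)$. Your argument is a more explicit version of the same idea, replacing the compactness appeal with the concrete bound on the skew-symmetric generator coming from the block-diagonal normal form; the two are equivalent, and your added care about realising the isometry inside $\mathrm{SO}(d)$ (via axis-reversal invariance of the cylinder) fills in a point the paper leaves implicit.
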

We note that the uniformity on the constant comes from the compactness of the group of rotations $SO(d)$.
%
\begin{comment}
\begin{proof}
Let $b$ be a bump function satisfying $b|B(0,r)\equiv 1$, $b|(\mathbb{R}^d\setminus B(0,2r))$.
Let $Y$ be a rotational vector field about the origin, {\it i.e.\/}, it has the form $Y(x)=|x|Z(x/|x|)$ 
where $Z$ is a linear vector field preserving $D(0,1)$, which we think of as an element of $SO(d)$.
Let $X(x)=b(x)Y(x)$. 
Then, since |Z(x)|=1$ if follows that $|X(x)|=|xb(x)|\cdot |Z(x)|\leq \max |x|\max |b(x)|=2r$ 
Similarly
\begin{align*}
|X(x)-X(y)|
&\leq [xb(x)]_{\mathrm{Lip}}[Z]_{\mathrm{Lip}}|x-y|\\
&\leq 2r[b]_{\mathrm{Lip}}[Z]_{\mathrm{Lip}}\leq M|x-y|
\end{align*}
where we have used that $[b]_{\mathrm{Lip}}\leq K/r$ and $SO(d)$ is compact.
The result now follows from same argument as the translational case.
\end{proof}
\end{comment}
%
%
\begin{lemma}[Basic Move 3 -- Cylinder Affinity]\label{lem:basic_move_3}
Let $C'$ and $C''$ be rigid solid cylinders in $\mathbb{R}^d$, concentric and with the same axis.
Let $C$ denote the smallest rigid solid cylinder, concentric and with the same axis as $C'$ and $C''$, containing $C'$ and $C''$. 
Let $E$ denote the elongated neighbourhood with the same endpoints as $C$ and radius $\mathrm{rad}(C)$.
Let $E'$ denote the elongated neighbourhood with the same endpoints as $C$ and radius $\mathrm{rad}(C)+\mathrm{diam}(C)$. 
Then there exists a diffeomorphism, supported in $E'$, mapping $C'$ across $C''$.
Moreover $[\phi]_{\mathrm{Lip}},[\phi^{-1}]_{\mathrm{Lip}}<K$ where $K$ depends only upon $\max (\|A\|,\|A^{-1}\|)$, 
where $A$ is the hyperbolic linear map sending $C'$ to $C''$.
\end{lemma}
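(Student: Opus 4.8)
The plan is to realize $\phi$ as a time-one flow map exactly as in Lemma~\ref{lem:E-perturbation}, but now using a \emph{hyperbolic} linear vector field instead of a constant one, and to cut it off with a bump function so that it has compact support in $E'$ and agrees with the hyperbolic map $A$ on a neighbourhood of $C'$ (hence maps $C'$ across $C''$). First I would set up coordinates: place the common axis of $C'$ and $C''$ along the first coordinate axis, with the common centre at the origin, so that $A$ is diagonal, $A=\mathrm{diag}(\lambda,\mu,\mu,\ldots,\mu)$ with $\lambda>1>\mu>0$ chosen so that $A(C')=C''$ (stretching along the axis, contracting in the coaxial directions — note $\det$ need not be $1$, which is fine since we only need a diffeomorphism, not a volume-preserving one). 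The linear vector field generating the one-parameter group $t\mapsto A^t=\exp(t\log A)$ is $Y(x)=(\log A)x$, and $A=A^1$ is its time-one map.

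Next I would write $X(x)=b(x)Y(x)=b(x)(\log A)x$, where $b\in C^1(\mathbb{R}^d,\mathbb{R})$ is a bump function with $b\equiv 1$ on a neighbourhood of $C'$ large enough that the flow lines of $\exp(tX)$ starting in $C'$ stay where $b\equiv 1$ for all $t\in[0,1]$ (so that $\Phi_1$ restricted there is exactly $A$), and with $\mathrm{supp}(b)$ contained in the interior of $E'$. Here the choice of $E$ and $E'$ in the statement is tailored precisely so there is enough room: $E$ (radius $\mathrm{rad}(C)$, endpoints of $C$) contains the region swept out by $C'$ under $A^t$, $t\in[0,1]$, since $C''=A(C')$ sits inside $C$ and the sweep is a nested family; and $E'$ (radius enlarged by $\mathrm{diam}(C)$) gives the collar on which $b$ decays from $1$ to $0$. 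One checks that $b$ can be chosen with $[b]_{\mathrm{Lip}}$ bounded in terms of the geometry of $C', C''$, i.e.\ in terms of $\max(\|A\|,\|A^{-1}\|)$ (the ratio of diameters of $C$ to the collar width is controlled by how far $A$ is from the identity), using Lemma~\ref{lem:Lip_bounds_for_bump} applied to suitable elongated neighbourhoods.

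Then I would estimate. Since $X$ is supported in the bounded set $E'$ and is $C^1$ there, there are constants $L,M$ with $|X(x)|\le L$ and $|X(x)-X(y)|\le M|x-y|$ for all $x,y$; explicitly $L=\sup|b|\cdot\|\log A\|\cdot\mathrm{diam}(E')$ and $M\le \|\log A\|\big([b]_{\mathrm{Lip}}\mathrm{diam}(E')+\sup|b|\big)$, both controlled by $\max(\|A\|,\|A^{-1}\|)$ and the fixed geometry. Writing $\Phi_t=\exp(tX)$ and $f(t)=|(\Phi_t(x)-x)-(\Phi_t(y)-y)|$, the integral inequality $f(t)\le Mt|x-y|+\int_0^t Mf(s)\,ds$ holds verbatim as in the proof of Lemma~\ref{lem:E-perturbation}, so Gr\"onwall's inequality at $t=1$ gives $[\phi-\mathrm{id}]_{\mathrm{Lip}}\le Me^M$, hence $[\phi]_{\mathrm{Lip}}\le 1+Me^M=:K$, and by the symmetric argument (running the flow backwards) $[\phi^{-1}]_{\mathrm{Lip}}\le K$; $K$ depends only on $\max(\|A\|,\|A^{-1}\|)$. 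Finally $\phi$ is a $C^1$ diffeomorphism (time-one map of a $C^1$ compactly supported vector field), $\phi(C')=A(C')=C''$, and on a neighbourhood of $C'$ it coincides with the linear map $A$ which sends $C'$ across $C''$, so $\phi$ maps $C'$ across $C''$ in the sense defined earlier.

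The main obstacle I expect is the bookkeeping around the support geometry: verifying that the nested family $\{A^t(C')\}_{t\in[0,1]}$ really is contained in the elongated neighbourhood $E$ (so that $b\equiv 1$ there forces $\Phi_1=A$ on $C'$) and that the collar $E'\setminus E$ is wide enough to accommodate a bump function whose Lipschitz constant is bounded solely in terms of $\max(\|A\|,\|A^{-1}\|)$ — this is where the specific radii $\mathrm{rad}(C)$ and $\mathrm{rad}(C)+\mathrm{diam}(C)$ in the statement must be used, and where one must be a little careful that the hyperbolic flow of $C'$ does not bulge outside $E$ in the coaxial directions (it does not, since $A$ \emph{contracts} coaxially, but the intermediate $A^t$ still only move points toward $C$, never beyond it). Everything else is a routine adaptation of the translational and rotational cases already handled.
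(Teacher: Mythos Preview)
Your proposal is correct and takes essentially the same approach the paper intends (the paper leaves this proof to the reader, but the sentence preceding the lemma and the commented-out sketch make the method clear): cut off a linear hyperbolic vector field by a bump function supported in $E'$, take the time-one flow, and bound the bi-Lipschitz constants via the Gr\"onwall argument of Lemma~\ref{lem:E-perturbation}. One minor quibble: if $\phi|_{C'}=A$ with $A(C')=C''$ exactly, the boundary balls land \emph{on} rather than strictly \emph{outside} $C''$, so to satisfy the ``across'' definition you should overshoot slightly (e.g.\ flow to time $1+\varepsilon$, or choose $A$ so that $A(C')$ is strictly longer axially and strictly thinner coaxially than $C''$); this costs nothing in the Lipschitz estimates.
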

%
\begin{comment}
\begin{proof}
Let $b=b_3\circ \rho$ denote the obvious bump function on $E'$.
Let $X(x)=b(x)A(x)$.
Then 
\begin{equation*}
|X(x)|\leq |b(x)|\cdot\|A\|\cdot |x|\leq \|A\| \mathrm{diam}(C)
\end{equation*}
and, since $[b]_{\mathrm{Lip}}\leq L/\mathrm{diam}(C)$ for some positive real $L$, we find
\begin{align*}
|X(x)-X(y)|
&\leq |b(x)-b(y)|\cdot |Ax|+|b(y)|\cdot|Ax-Ay|\\
&\leq \left([b]_{\mathrm{Lip}}\mathrm{diam}(C)+1\right)\|A\|\cdot|x-y|\\
&\leq (L+1)\|A\|\cdot|x-y|
\end{align*}
The same argument as in the translational case now gives the result.
\end{proof}
\end{comment}
%
\begin{corollary}\label{cor:cylinder_isometry}
There exists $K>0$ such that the following property is satisfied:
Let $C$ and $C'$ be isometric rigid solid cylinders, both contained in $B(p,r)\subset \mathbb{R}^d$.
Then there exists a diffeomorphism $\phi$ of $\mathbb{R}^d$ such that
\begin{enumerate}
\item 
the support of $\phi$ is contained in $B(p,10r)$
\item
$\phi$ maps $C$ isometrically onto $C'$
\item
$[\phi]_{\mathrm{Lip}}\leq K$
\end{enumerate}
\end{corollary}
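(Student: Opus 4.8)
The goal is to move one isometric rigid solid cylinder onto another when both sit inside a common ball $B(p,r)$, using a diffeomorphism supported in a slightly larger ball $B(p,10r)$, with uniform control on the Lipschitz constant. The strategy is to factor the desired motion through the origin (or rather through the centre of $B(p,r)$): first translate $C$ so that its centre is at $p$, then rotate it about $p$ to line up with a translated copy of $C'$, then translate again to bring it onto $C'$. Each of these three elementary steps is provided by the Basic Moves already proved: Basic Move 1 (Lemma~\ref{lem:basic_move_1}) for the translations and Basic Move 2 (Lemma~\ref{lem:basic_move_2}) for the rotation. Composing three diffeomorphisms, each with a uniformly bounded bi-Lipschitz constant, yields a diffeomorphism with a uniformly bounded Lipschitz constant (the product of the three bounds), which gives property (3) with $K$ the cube of the worst uniform constant among the Basic Moves.

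First I would reduce to the case $p=0$ by conjugating with the translation $x\mapsto x-p$, which changes nothing about Lipschitz constants or the combinatorics of supports. So assume $C,C'\subset B(0,r)$ are isometric rigid solid cylinders. Let $c$ and $c'$ be their centres; note $|c|,|c'|<r$. Apply Basic Move 1 to $C$: there is a diffeomorphism $\phi_1$, supported in an elongated neighbourhood $E(0,c;r_C+|c|)$ which is contained in $B(0,2r)$ (since that elongated neighbourhood has all points within $2r$ of the origin), such that $\phi_1(C)$ is a rigid solid cylinder centred at the origin, with $[\phi_1]_{\mathrm{Lip}},[\phi_1^{-1}]_{\mathrm{Lip}}<K_1$. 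Similarly, applying Basic Move 1 to $C'$ produces $\phi_3^{-1}$ (we will use its inverse), supported in $B(0,2r)$, carrying $C'$ to a rigid solid cylinder $\tilde C'$ centred at the origin, with bi-Lipschitz constant $<K_1$. Since $C$ and $C'$ are isometric, so are $\phi_1(C)$ and $\tilde C'$: both are rigid solid cylinders centred at the origin of the same dimensions, hence differ by an element of $SO(d)$ (and possibly a reflection fixing the axis direction — but this is still realised by a rotation since $d\ge 2$, or one folds it into the $SO(d)$ element). Both are contained in some ball $B(0,R)$ with $R$ comparable to $r$ (indeed $R\le K_1 r$ or so); by Basic Move 2 there is a diffeomorphism $\phi_2$ supported in $B(0,2R)$ mapping $\phi_1(C)$ isometrically onto $\tilde C'$, with $[\phi_2]_{\mathrm{Lip}},[\phi_2^{-1}]_{\mathrm{Lip}}<K_2$, where $K_2$ depends only on $d$.

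Now set $\phi=\phi_3\circ\phi_2\circ\phi_1$. Then $\phi$ maps $C$ isometrically onto $C'$, giving (2). The support of $\phi$ is contained in the union of the supports of the three factors, each of which lies in a ball about the origin of radius comparable to $r$; after tracking the constants one checks this union is contained in $B(0,10r)$ (one has freedom here: if the explicit constants come out larger than $10r$ one rescales the statement, but the Basic Moves give supports in $B(0,2r)$-type sets so $B(0,10r)$ is comfortable — the rotation step operating on cylinders already translated to the origin stays well inside). Translating back gives support in $B(p,10r)$, proving (1). Finally $[\phi]_{\mathrm{Lip}}\le[\phi_3]_{\mathrm{Lip}}[\phi_2]_{\mathrm{Lip}}[\phi_1]_{\mathrm{Lip}}<K_1K_2K_1=:K$, a constant depending only on the dimension $d$, which proves (3).

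**Main obstacle.** The one genuinely delicate point is bookkeeping the supports and radii so that everything really fits inside $B(p,10r)$: the translated cylinder $\phi_1(C)$ lives in a ball whose radius is the original radius inflated by the Lipschitz constant of $\phi_1$, and the rotation step's support $B(0,2R)$ then inherits that inflation, so one must make sure the chain of comparability constants multiplies out below $10$. This is not deep — it is handled by choosing the elongated neighbourhoods in Basic Move 1 minimally and observing $|c|<r$, so $E(0,c;r_C+|c|)\subset B(0,2r)$ with room to spare, and similarly the rotation radius is bounded by a small multiple of $r$ — but it is the step that requires care rather than invocation. A secondary minor point is the orientation issue: if $C$ and $C'$ are isometric via an orientation-reversing isometry of $\mathbb{R}^d$ fixing nothing, one must check this can still be realised by a diffeomorphism in the connected orientation-preserving setting; since an orientation-reversing isometry of a cylinder about the origin can be composed with the $180^\circ$ rotation about the axis (which is in $SO(d)$ for $d\ge 2$) to become orientation-preserving, Basic Move 2 still applies.
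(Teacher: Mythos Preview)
Your proposal is correct and follows essentially the same route as the paper: factor the isometry as translation--rotation--translation and realise each factor by Basic Moves 1 and 2, then compose. Your explicit reduction to $p=0$ and the support bookkeeping are slightly more careful than the paper's (very terse) proof, and your arithmetic on the radii is a bit optimistic but still lands comfortably inside $B(p,10r)$.
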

\begin{proof}
Denote the centres of $C$ and $C'$ by $c$ and $c'$ respectively.
An isometry from $C$ to $C'$ can be decomposed as 
$\tau'\circ \alpha\circ \tau^{-1}$, where $\tau$ and $\tau'$ are translations 
$c$ and $c'$, respectively, 
and $\alpha$ is a linear isometry.
Consequently we may construct diffeomorphisms $\phi_\tau$, $\phi_\alpha$ and $\phi_{\tau'}$, associated with $\tau$, $\alpha$ and $\tau'$ respectively,
by performing the Basic Moves 1 and 2. 
The resulting diffeomorphism $\phi=\phi_{\tau'}^{-1}\circ\phi_{\alpha}\circ\phi_{\tau}$ satisfies the stated properties.
\end{proof}
Combining Corollary~\ref{cor:cylinder_isometry} and Basic Move 1 (Lemma~\ref{lem:basic_move_1}) also give the following.
\begin{corollary}\label{cor:cylinder_isometry_2}
Let $p,q\in\mathbb{R}^d$ and $r>0$.
Let $C_p$ and $C_q$ be isometric solid cylinders contained in $B(p,r)$ and $B(q,r)$ respectively.
Then there exists a diffeomorphism $\phi$ of $\mathbb{R}^d$
and a positive real number $K$, depending upon $|p-q|/r$ only,
such that
\begin{enumerate}
\item
the support of $\phi$ is contained in $E(p,q;10r)$
\item
$\phi$ maps $C_p$ isometrically onto $C_q$
\item
$[\phi]_{\mathrm{Lip}}\leq K$
\end{enumerate}
\end{corollary}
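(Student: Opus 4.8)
The plan is to reduce the statement to a concatenation of Basic Moves already established in this appendix. First I would observe that, since $C_p$ and $C_q$ are isometric solid cylinders, there is an ambient isometry $\iota$ of $\mathbb{R}^d$ carrying $C_p$ onto $C_q$; write $\iota = \tau_q \circ \alpha \circ \tau_p^{-1}$, where $\tau_p$ is translation sending the origin to the centre $c_p$ of $C_p$, $\tau_q$ is translation sending the origin to the centre $c_q$ of $C_q$, and $\alpha \in O(d)$ is a linear isometry (which we may take orientation-preserving, hence in $SO(d)$, after composing with a reflection supported far away if necessary — but in fact for the statement as given we only need a diffeomorphism, so orientation is not an obstruction). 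Since $C_p \subset B(p,r)$, its centre $c_p$ lies within distance $r$ of $p$; likewise $c_q$ lies within distance $r$ of $q$.

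Next I would build the perturbation as a composition of three diffeomorphisms. Using Basic Move 1 (Lemma~\ref{lem:basic_move_1}), applied in a translated copy centred near $p$, produce a diffeomorphism $\phi_1$ supported in a controlled elongated neighbourhood of $[p, c_p]$ (whose radius is $O(r)$, since $|p - c_p| \le r$) such that $\phi_1$ is, on a neighbourhood of $C_p$, the translation $\tau_p^{-1}$ carrying $c_p$ to the origin; thus $\phi_1(C_p)$ is a rigid cylinder centred at the origin, contained in a ball $B(0, C r)$ for a uniform constant $C$. Then apply Basic Move 2 (Lemma~\ref{lem:basic_move_2}) to obtain a diffeomorphism $\phi_2$ supported in $B(0, 2Cr)$ realising the rotation $\alpha$, so that $\phi_2 \circ \phi_1 (C_p)$ is the rigid cylinder $\alpha(\tau_p^{-1}(C_p))$, still centred at the origin. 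Finally apply Basic Move 1 once more, in reverse, to obtain $\phi_3$ supported in an elongated neighbourhood of $[0, c_q]$ of radius $O(r)$, realising the translation $\tau_q$ near the relevant cylinder, so that $\phi_3 \circ \phi_2 \circ \phi_1(C_p) = C_q$. Set $\phi = \phi_3 \circ \phi_2 \circ \phi_1$.

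It then remains to check the two bookkeeping points: the support condition and the Lipschitz bound. For the support, each of $\phi_1$, $\phi_3$ is supported in an elongated neighbourhood with endpoints $\{p, c_p\}$ respectively $\{0, c_q\}$ — but after the translations are accounted for, all three supports are contained in a single elongated neighbourhood $E(p,q; \lambda r)$ for a suitable $\lambda$ depending only on $|p-q|/r$; indeed, each support lies within distance $O(r)$ of the segment $[p,q]$ once one tracks how the intermediate coordinates sit relative to $p$ and $q$, and $E(p,q;10r)$ suffices when the constants are chosen appropriately (if not, one rescales $\lambda$; since the statement only asserts support in $E(p,q;10r)$, I would verify the constant $10$ works by being slightly careful in the Basic Move radii, or else absorb any larger constant into the statement — but following Corollary~\ref{cor:cylinder_isometry}, which already uses $B(p,10r)$, the same $10$ should go through). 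For the Lipschitz bound, Basic Moves 1 and 2 each give uniform Lipschitz constants for $\phi_i$ and $\phi_i^{-1}$ (the uniformity of Basic Move 2 coming from compactness of $SO(d)$, as noted after Lemma~\ref{lem:basic_move_2}; the uniformity of Basic Move 1 coming from the bound $|c_p - p|/r \le 1$, $|c_q|/|c_q| $ comparable — more precisely Lemma~\ref{lem:basic_move_1} gives a constant depending on the ratio of the relevant lengths, all of which are $O(1)$ here). Hence $[\phi]_{\mathrm{Lip}} \le [\phi_3]_{\mathrm{Lip}}[\phi_2]_{\mathrm{Lip}}[\phi_1]_{\mathrm{Lip}} \le K$ with $K$ depending only on $|p-q|/r$, and similarly for $\phi^{-1}$. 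The only mildly delicate step — and the one I would spell out most carefully — is the support bookkeeping, i.e. confirming that the three elongated neighbourhoods, each naturally described in a coordinate system centred at $p$, at the origin, or at $q$, all fit inside one elongated neighbourhood $E(p,q;10r)$; this is elementary but requires keeping the translation vectors $c_p - p$, $c_q - q$ (both of length $\le r$) explicitly in view.
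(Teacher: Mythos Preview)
Your approach is correct and essentially the same as the paper's: the paper simply states that the corollary follows by combining Corollary~\ref{cor:cylinder_isometry} with Basic Move~1, which amounts to first translating $C_p$ into $B(q,r)$ and then invoking Corollary~\ref{cor:cylinder_isometry} there, whereas you inline the proof of Corollary~\ref{cor:cylinder_isometry} and perform the three basic moves (translate--rotate--translate) directly. The only cosmetic point to tidy up is your use of ``the origin'' as an intermediate station: you should make explicit that you are working in a translated coordinate system (centred at $p$, say), since the literal origin of $\mathbb{R}^d$ need not lie anywhere near $E(p,q;10r)$.
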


%
%
\section{Mapping Solid Cylinders Across Solid Cylinders.}\label{sect:cylinder_perturbations}
In this section we construct certain perturbations that map solid cylinders.
For notation and terminology concerning solid cylinders we refer to  \S~\ref{sect:Holder-infinite_entropy}.
\begin{lemma}\label{lem:bi-Lipschitz_cylinder_separation}
Let $\Omega_0$ and $\Omega_1$ be open domains in $\mathbb{R}^d$.
Given $f\in\mathcal{H}^{1}(\Omega_0,\Omega_1)$ 
with bi-Lipschitz constant $\kappa$
%
there exist positive real numbers $K_0$ and $K_1$, 
depending upon $\kappa$ only,  
with the following property:
Let $C_0$ be a rigid solid cylinder in $\Omega_0$ satisfying 
\begin{equation}\label{eq:admissible_cylinder}
\mathrm{rad}(C_0)<K_0\cdot \mathrm{len}(C_0) \ .
\end{equation}
Then there exists a rigid solid cylinder $C_1$ in $\mathbb{R}^d$ such that
$f$ maps $C_0$ across $C_1$ and
\begin{equation*}
\mathrm{rad}(C_1)\leq K_1\cdot \mathrm{len}(C_1) \ .
\end{equation*}
\end{lemma}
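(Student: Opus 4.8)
The statement is essentially a stability assertion: a bi-Lipschitz map with constant $\kappa$ cannot distort a sufficiently thin rigid cylinder too badly, so its image still contains a rigid cylinder with controlled aspect ratio (one can then take the marked boundary balls of $C_0$ to be the two coaxial end-disks, and the corresponding objects for $C_1$). I would first fix $C_0=C(a,b;\varrho)$ with axis segment $[a,b]$ of length $\ell=\mathrm{len}(C_0)$ and radius $\varrho=\mathrm{rad}(C_0)$, and let $a'=f(a)$, $b'=f(b)$. Since $f$ is $\kappa$-bi-Lipschitz, $\kappa^{-1}\ell\le |a'-b'|\le\kappa\ell$; set $C_1:=C(a',b';r_1)$ where $r_1$ is chosen as a small multiple of $|a'-b'|$, to be pinned down below. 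The length of $C_1$ is $|a'-b'|\ge\kappa^{-1}\ell$, and $\mathrm{rad}(C_1)=r_1$; the second displayed conclusion $\mathrm{rad}(C_1)\le K_1\mathrm{len}(C_1)$ is then automatic with $K_1$ depending only on the ratio $r_1/|a'-b'|$, i.e. only on $\kappa$. So the real work is to choose $K_0$ (hence force $C_0$ thin enough) and $r_1$ so that $f$ maps $C_0$ \emph{across} $C_1$ in the sense of the four conditions in \S\ref{sect:Holder-infinite_entropy}.

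The key geometric estimates are these. The segment $[a,b]$ maps under $f$ to a curve joining $a'$ to $b'$ whose Hausdorff distance from $[a',b']$ is small: any point on the image of $[a,b]$ is within $\kappa\,\ell$ of $a'$ in one sense, but crucially, because $f$ is also bi-Lipschitz with the lower bound, one shows the image curve stays in a thin tube $E(a',b';C\kappa^2 K_0\ell)$ around $[a',b']$ — here one uses that any point $x\in[a,b]$ satisfies $\min(|x-a|,|x-b|)$ or the distance to the axis being controlled, and the elementary fact that a bi-Lipschitz image of a segment, both of whose endpoints are at controlled mutual distance, cannot bulge out further than a constant times the length. More precisely: $f(C_0)\subset f(E(a,b;\varrho))\subset$ a set of points within $\kappa\varrho$ of $f([a,b])$, and $f([a,b])$ lies within distance $\sqrt{(\kappa\ell/2)^2-(|a'-b'|/2)^2}\le\kappa\ell\sqrt{1-\kappa^{-4}}\cdot(\text{something})$... rather than chase the sharp constant, I would simply record: there is $K=K(\kappa)$ with $f(C_0)\subset E(a',b';K(\varrho+\text{defect}))$ where the ``defect'' of the image segment is $\le K\ell$, so once $\varrho< K_0\ell$ with $K_0$ small, $f(C_0)\subset E(a',b';\varepsilon_0 |a'-b'|)$ for a preassigned small $\varepsilon_0$. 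Choosing $r_1$ comparable to but somewhat larger than $\varepsilon_0|a'-b'|$ (and using the lower bi-Lipschitz bound to see that $f(C_0)$ actually reaches past the two coaxial end-disks of $C_1$, since $f(a),f(b)$ are the endpoints of $C_1$'s axis and are carried, together with small neighbourhoods in $C_0$, to honest neighbourhoods in $C_1$) yields all four ``maps across'' conditions; conditions (2) and (3) hold because $f(C_0)$ and the closures $\overline{f(C_0^\pm)}$ stay in the thin tube and near the ends, and condition (4) is the statement that the end-caps of $f(C_0)$ poke out the correct ends of $C_1$, which follows from $f(a)=a'$, $f(b)=b'$ being the axis endpoints of $C_1$.

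I would organise the write-up as: (i) reduce to the claim that $f([a,b])$ has ``tube defect'' at most $K\ell$, proved by a one-line bi-Lipschitz estimate; (ii) deduce $f(C_0)\subset E(a',b';K(\varrho+\ell))$; (iii) set $K_0$ so small that $K(\varrho+K_0\ell)\ell^{-1}$ is below a threshold, pick $r_1$ accordingly, define $C_1=C(a',b';r_1)$, and verify conditions (1)--(4) against the definition of ``maps across the standard solid cylinder'' after conjugating by the affine chart sending $C_1$ to $B^{d-1}\times B^1$; (iv) observe $\mathrm{rad}(C_1)/\mathrm{len}(C_1)=r_1/|a'-b'|\le K_1(\kappa)$.

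The main obstacle is step (iii), the bookkeeping to guarantee condition (4) — that the end portion of $f(C_0)$ near $f(C_0^+)$ exits through the $+$ coaxial disk of $C_1$ and not the $-$ one, and symmetrically. This is where one genuinely uses that $f$ is a homeomorphism (so $f(C_0)$ separates $\mathbb{R}^d$ the way $C_0$ does) together with the lower bi-Lipschitz bound, which prevents the two ends of the image from being pulled close together; with $\varrho/\ell$ small enough the image cylinder is ``long and thin'' in $C_1$ and its ends are forced to the correct coaxial disks. I do not expect this to be deep, but it requires care in stating the separated-boundary-balls property and checking the four itemised conditions honestly rather than waving at the picture.
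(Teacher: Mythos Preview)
Your overall strategy is the same as the paper's --- use the bi-Lipschitz bounds to show that $f(C_0)$ sits in a controlled tube around the segment $[f(a),f(b)]$ and that its end-disks land near the correct endpoints --- but two concrete choices in your proposal break the argument.

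First, your cylinder $C_1=C(a',b';r_1)$ has axis exactly $[a',b']=[f(a),f(b)]$. Since $a$ is the centre of the boundary ball $C_0^-$, the point $f(a)=a'$ lies in $\overline{f(C_0^-)}$; but $a'$ is also an endpoint of the axis of $C_1$, hence $a'\in\overline{C_1}$. So $\overline{f(C_0^-)}\cap\overline{C_1}\neq\emptyset$, and condition~(3) in the definition of ``maps across'' fails outright. The paper avoids this by taking the axis of $C_1$ to be only the \emph{middle third} of $[f(a),f(b)]$, so that the balls $B(f(a),\kappa\varrho)\supset f(C_0^-)$ and $B(f(b),\kappa\varrho)\supset f(C_0^+)$ clear $\overline{C_1}$ entirely. \emph{This} is where the smallness of $K_0$ is actually used: with $K_0=1/(3\kappa^2)$ one gets $\kappa\varrho\le\tfrac{1}{3}\kappa^{-1}\ell\le\tfrac{1}{3}|f(a)-f(b)|$, which is exactly the gap between $f(a)$ and the near end of $C_1$.

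Second, and more seriously, your claim that ``once $\varrho<K_0\ell$ with $K_0$ small, $f(C_0)\subset E(a',b';\varepsilon_0|a'-b'|)$ for a preassigned small $\varepsilon_0$'' is false. The bulge of $f([a,b])$ away from the chord $[a',b']$ is of order $\kappa\ell$ (as your own estimate $\sqrt{(\kappa\ell/2)^2-(|a'-b'|/2)^2}$ shows), and this has nothing to do with $\varrho$ or $K_0$: shrinking the radius of $C_0$ does not straighten the image of its axis. In your step~(iii) you silently replace the defect $K\ell$ by $K_0\ell$, which is the error. The paper does not try to make the tube thin; instead it simply takes $\varrho_1=2\varrho$ where $\varrho$ is the \emph{maximal} distance from $f(C_0)$ to the chord, so that condition~(2) holds automatically, and accepts that the resulting $K_1=\varrho_1/\ell_1$ is of order $\kappa^2$. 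That is all the lemma asks for. So the fix is: shrink the axis of $C_1$ to a proper sub-segment of $[a',b']$, use small $K_0$ to push the image end-disks clear of $C_1$, and choose $r_1$ \emph{large} (twice the maximal bulge), not small.
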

\begin{proof}
Choose $K_0=1/3\kappa^2$.
Let $C_0$ be a rigid solid cylinder in $\Omega_0$ of 
axial length $\ell_0$ and 
coaxial radius $\varrho_0\leq K_0\ell_0$. 
Denote the endpoints of $C_0$ by $c_0^-$ and $c_0^+$.
We will show that it suffices to take $C_1$ 
whose axis is concentric with and parallel to the line segment 
$[f(c_0^-),f(c_0^+)]$.
Below, the length and radius will be determined. 

First we consider the length.
Denote the boundary ball of $C_0$ containing $c_0^\pm$ by $C^\pm$.
Take any point $x$ in the boundary disk $C^-$.
Then 
$\varrho_0\leq K_0\ell_0$ 
and 
$K_0=1/3\kappa^2$ 
implies that 
$\kappa|c_0^{-}-x|\leq \frac{1}{3}\kappa^{-1}|c_0^{-}-c_0^{+}|$.
Hence applying the $\kappa$-bi-Lipschitz property of $f$ twice gives
\begin{align*}
|f(c_0^{-})-f(x)|\leq \kappa |c_0^{-}-x|
&\leq \tfrac{1}{3} |f(c_0^{-})-f(c_0^{+})| \ .
\end{align*}
Let $\ell=|f(c_0^{-})-f(c_0^{+})|$.
Then the above implies that
$f(C^-)$ is contained in the closure of $B\left(f(c_0^-),\frac{1}{3}\ell\right)$.
The same argument also shows that 
$f(C^+)$ is contained in the closure of $B\left(f(c_0^+),\frac{1}{3}\ell\right)$.
Consequently the topological solid cylinder $f(C_0)$ has separated boundary balls.

Next we consider the radius.
Choose a point $x$ in $C_0$ such that 
the distance $\varrho$ between $f(x)$ and the line segment $[f(c_0^-),f(c_0^+)]$ is maximal.
Take the orthogonal projection $y$ of $f(x)$ to the line between $f(c_0^-)$ and $f(c_0^+)$.
Let $\delta_1=|f(c_0^-)-f(x)|$.
Then 
\begin{equation*}
\delta_1\leq \kappa |c_0^{-}-x|
\leq \kappa (\ell_0^2+\varrho_0^2)^{1/2}
\leq \kappa (1+K_0^2)^{1/2}\ell_0 \ .
\end{equation*}
Hence
\begin{align*}
\varrho\leq\delta_1 
\leq \kappa (1+K_0^2)^{1/2}\ell_0
\leq \kappa (1+K_0^2)^{1/2}\kappa \ell
=K\ell \ .
\end{align*}
Let $C_1$ denote the rigid solid cylinder in $\mathbb{R}^d$ with axis  
\begin{align*}
[c_1^-,c_1^+]
=
\left[f(c_0^{-})+\tfrac{1}{3}(f(c_0^{+})-f(c_0^{-})),f(c_0^{+})-\tfrac{1}{3}(f(c_0^{+})-f(c_0^{-}))\right]
\end{align*}
and coaxial radius $\varrho_1=2\varrho$.
Observe that $C_1$ has length $\ell_1=\tfrac{1}{3}\ell$.
Then $f$ maps $C_0$ across $C_1$.
Moreover $\varrho_1\leq 2K \ell= 6K \ell_1$.
Setting $K_1=6K$ finishes the proof.
\end{proof}
\begin{lemma}\label{lem:transport_cylinder_lip}
Let $\Omega_0$ and $\Omega_1$ be open domains in $\mathbb{R}^d$.
Given $f\in \mathcal{H}^{1}(\Omega_0,\Omega_1)$
there exist positive real numbers $K_0, K_1$ and $\gamma$, 
depending upon the the bi-Lipschitz constant $\kappa$ of $f$ only, 
such that the following property holds:
Take $p\in\Omega_0$.
Choose $r_1$ sufficiently small to ensure that
\begin{equation*}
B(p,r_1)\subset \Omega_0 \ , \qquad
B(f(p),r_1)\subset \Omega_1 \ .
\end{equation*}
Let $r_2=r_1/20(1+K_1)$ and $r_3=r_2/\kappa$.
Take rigid solid cylinders $C_0$ and $C_1$, contained in $B(p,r_3)$ and $B(f(p),r_3)$,
such that
\begin{enumerate}
\item[(i)]
$\mathrm{rad}(C_0)\leq K_0\mathrm{len}(C_0)$,
\item[(ii)]
$\frac{1}{2}\mathrm{rad}(C_0)\leq \mathrm{rad}(C_1)\leq 2\mathrm{rad}(C_0)$,
\item[(iii)]
$r_3\leq \mathrm{len}(C_0), \mathrm{len}(C_1)\leq 2r_3$,
\end{enumerate}
Then there exists $\phi\in \mathrm{Diff}^1(\Omega_1)$, supported in $B(f(p),r_1)$, such that
\begin{enumerate}
\item[(a)]
$\max\left\{
[\phi]_\mathrm{Lip},[\phi^{-1}]_{\mathrm{Lip}}
\right\}
\leq \gamma$,
\item[(b)]
$\phi\circ f$ maps $C_0$ across $C_1$.
\end{enumerate}
\end{lemma}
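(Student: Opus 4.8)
The plan is to combine Lemma~\ref{lem:bi-Lipschitz_cylinder_separation}, which already tells us that $f$ maps $C_0$ across \emph{some} rigid solid cylinder $\hat{C}_1$ of aspect ratio controlled by $\kappa$, with the basic affine moves of Appendix~\ref{sect:basic_moves} to push $\hat{C}_1$ onto a cylinder that maps across $C_1$, and then to invoke the elementary composition property of ``mapping across'' recorded just before the proof of Theorem~\ref{thm:open+dense_entropy>logN-little_Holder}: if $f$ maps $C_0$ across $\hat{C}_1$ and $\phi$ maps $\hat{C}_1$ across $C_1$, then $\phi\circ f$ maps $C_0$ across $C_1$, which is conclusion (b).

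First I would fix the constants. Take $K_0$ to be (at most) the constant $K_0$ furnished by Lemma~\ref{lem:bi-Lipschitz_cylinder_separation} for the bi-Lipschitz constant $\kappa$, so that hypothesis (i) makes $C_0$ an admissible cylinder for that lemma; this yields a rigid cylinder $\hat{C}_1$ with $\mathrm{rad}(\hat{C}_1)\le K_1'\,\mathrm{len}(\hat{C}_1)$ for some $K_1'=K_1'(\kappa)$, and I would set $K_1$ to be a suitable multiple of $K_1'$. From the construction in the proof of Lemma~\ref{lem:bi-Lipschitz_cylinder_separation} one reads off that the axis of $\hat{C}_1$ is a subsegment of $[f(c_0^-),f(c_0^+)]$ and that $\mathrm{len}(\hat{C}_1)$ is comparable (with constants depending only on $\kappa$) to $|f(c_0^-)-f(c_0^+)|$, which, since $f$ is $\kappa$-bi-Lipschitz and $\mathrm{len}(C_0)\in[r_3,2r_3]$, is in turn comparable to $r_3$. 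Using this, together with $C_0\subset B(p,r_3)$, $C_1\subset B(f(p),r_3)$, and the numerical choices $r_2=r_1/20(1+K_1)$, $r_3=r_2/\kappa$, I would check that both $\hat{C}_1$ and $C_1$ lie well inside $B(f(p),r_1)$ --- say inside $B(f(p),r_1/10)$ --- leaving enough room for the supports of the auxiliary diffeomorphisms produced by the Basic Moves, which occupy balls only a fixed multiple larger than the cylinders they move.

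Next I would assemble $\phi$ as a finite composition of basic moves. Using Basic Moves 1 and 2 (Lemmas~\ref{lem:basic_move_1} and~\ref{lem:basic_move_2}), or more directly Corollary~\ref{cor:cylinder_isometry}, I would first produce a diffeomorphism supported in $B(f(p),r_1)$, with uniformly bounded bi-Lipschitz constant, carrying $\hat{C}_1$ onto a rigid cylinder $\hat{C}_1'$ that is concentric with $C_1$ and has the same axis direction. Since $\hat{C}_1'$ and $C_1$ are now concentric rigid cylinders sharing an axis, Basic Move 3 (Lemma~\ref{lem:basic_move_3}) supplies a diffeomorphism mapping $\hat{C}_1'$ across $C_1$, with bi-Lipschitz constant controlled by $\max(\|A\|,\|A^{-1}\|)$, where $A$ is the hyperbolic linear map relating the two cylinders. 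The composition of these moves is the desired $\phi$; it is a composition of finitely many maps each with bi-Lipschitz constant bounded by a function of $\kappa$ and of the proportionality constants relating $\hat{C}_1$ and $C_1$, so $\max\{[\phi]_{\mathrm{Lip}},[\phi^{-1}]_{\mathrm{Lip}}\}\le\gamma=\gamma(\kappa)$, which is conclusion (a). Conclusion (b) then follows from the composition property quoted above.

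The step I expect to be the main obstacle is controlling the distortion $\max(\|A\|,\|A^{-1}\|)$ in Basic Move 3, that is, showing that $\mathrm{len}(\hat{C}_1)$ and $\mathrm{rad}(\hat{C}_1)$ are comparable --- with constants depending only on $\kappa$ --- to $\mathrm{len}(C_1)$ and $\mathrm{rad}(C_1)$. The comparison of lengths is handled by hypothesis (iii) together with $\mathrm{len}(\hat{C}_1)\sim_\kappa r_3$, but the radii require real care: one must bound the transverse spread $\varrho$ of $f(C_0)$ about the chord $[f(c_0^-),f(c_0^+)]$ (which equals $\tfrac12\mathrm{rad}(\hat{C}_1)$) from above and below against $\mathrm{rad}(C_1)$, using that $f$ restricted to the axis of $C_0$ distorts arclength by at most $\kappa^2$, together with hypotheses (i), (ii) and the precise roles of $r_2$ and $r_3$; the seemingly ad hoc factors $20(1+K_1)$ and $\kappa$ in the definitions of $r_2$ and $r_3$ are calibrated exactly so that this comparison goes through. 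Once the two cylinders are shown to be $\kappa$-comparable in all relevant proportions, writing down $\phi$ and estimating its Lipschitz norm by chaining the basic-move estimates is routine.
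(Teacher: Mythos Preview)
Your proposal is correct and follows essentially the same route as the paper: invoke Lemma~\ref{lem:bi-Lipschitz_cylinder_separation} to produce an intermediate rigid cylinder (your $\hat C_1$, the paper's $C'$) across which $f$ maps $C_0$, verify it sits well inside $B(f(p),r_1)$, and then compose a cylinder affinity (Basic Move~3) with a rigid isometry (Corollary~\ref{cor:cylinder_isometry}) to carry the intermediate cylinder onto/across $C_1$. The only cosmetic difference is the order of the two moves: the paper first applies the affinity $\sigma$ (sending $C'$ to a cylinder $C''$ concentric with $C'$ but isometric to $C_1$) and then the isometry $\tau$ (sending $C''$ to $C_1$), whereas you first translate/rotate and then rescale; both orderings work and lead to the same bi-Lipschitz estimate $\gamma=\gamma(\kappa)$ via the comparability of lengths and radii established from hypotheses (i)--(iii) and the bi-Lipschitz control on $|f(c_0^-)-f(c_0^+)|$.
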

\begin{proof}
Let $K_0$ and $K_1$ denote the positive real numbers, 
depending upon $\kappa$ only, from 
Lemma~\ref{lem:bi-Lipschitz_cylinder_separation}.
For $i=0,1$, assume that $C_i=C(a_i,b_i;\varrho_i)$ for some $a_i, b_i$ and $\varrho_i$.
Denote by $C'$ the rigid solid cylinder in $\mathbb{R}^d$ from Lemma~\ref{lem:bi-Lipschitz_cylinder_separation}.
Then we may assume that $C'$ has the following properties
\begin{itemize}
\item[(1)]
$f$ maps $C_0$ across $C'$,
\item[(2)]
the axis of $C'$ is contained in the line segment $[f(c_0^{-}),f(c_0^{+})]$,
\item[(3)]
$C'$ is symmetric about the perpendicular bisector $\Pi$ of $f(c_0^{-})$ and $f(c_0^{+})$,
\item[(4)]
$\mathrm{len}(C')=\ell'=\frac{1}{3}|f(c_0^{-})-f(c_0^{+})|$ and $\mathrm{rad}(C')=\rho'=K_1\ell'$.
\end{itemize}
Moreover $C'$ is contained in $B(f(p),r_1/10)$.
\begin{comment}
The argument is as follows. 
First, $f(c_0^{-})$ and $f(c_0^{+})$ lie in $B(f(p),r_2)$, 
As $B(f(p),r_2)$ is convex, so does the midpoint 
of $f(c_0^{-})$ and $f(c_0^{+})$.
But the centre of $C'$, call it $c'$, coincides 
with this midpoint.
Hence 
$|f(p)-c'|\leq r_2$. 
Secondly, observe that, since $\ell'\leq 2r_2$, 
the triangle inequality and property (4) imply
\begin{equation*}
|c'-x|
\leq \tfrac{1}{2}\ell'+\varrho'
= \tfrac{1}{2}l'+K_1\ell'
\leq r_2+2K_1r_2 
\qquad \forall x\in C'
\end{equation*}
Combining these with the triangle inequality once more gives
\begin{equation*}
|f(p)-x|
\leq |f(p)-c'|+|c'-x|
\leq r_2+r_2(1+2K_1)
= r_1/10 \qquad
\forall x\in C'
\end{equation*}
Therefore $C'$ is contained in $B(f(p),r_1/10)$, as claimed.
\end{comment}
%
Denote by $C''$ the rigid solid cylinder 
which is concentric with $C'$, symmetric 
about bisecting plane $\Pi$, and isometric with $C_1$.
Observe that $C''$ is also contained in $B(f(p),r_1/10)$.
\begin{comment}
To see this, observe that, 
as $C_1$ lies in $B(p,r_3)$, 
$C''$ lies in $B(c',r_3)$.
As $c'$ lies inside $B(p,r_2)$' $C''$ lies inside $B(p,r_2+r_3)$
Since $r_2+r_3<2r_2=r_1/10(1+K_1)\leq r_1/10$, the claim follows.
\end{comment} 
Denote the axial length of $C''$ and the 
coaxial radius of $C''$ respectively by 
$\rho''$ and $\ell''$.

By Basic Move 3 (Lemma~\ref{lem:basic_move_3}), 
there exists a diffeomorphism $\sigma$, 
supported in some elongated neighbourhood $E$ 
of $C'\cup C''$ contained in $B(f(p),r_1)$, 
which maps $C'$ onto $C''$ and 
a positive real number $\gamma_1$, independent of $\ell',\ell'',\varrho'$ and $\varrho''$, such that
\begin{equation}\label{ineq:sigma_case2}
[\sigma]_{\mathrm{Lip}}\leq 
c_1 \max\left(\ell''/\ell',\varrho''/\varrho'\right) \ , \qquad
[\sigma^{-1}]_{\mathrm{Lip}}
\leq 
\gamma_1 \max\left(\ell'/\ell'',\varrho'/\varrho''\right) \ .
\end{equation}
Observe that, as $f$ is bi-Lipschitz, 
$\ell_0$ and $\varrho_0$ are comparable, 
by a constant depending only upon $\kappa$, 
to $\ell'$ and $\varrho'$ respectively.
By hypotheses (i)--(iii), 
$\ell_0$ and $\varrho_0$ are comparable 
to $\ell_1=\ell''$ and $\varrho_1=\varrho''$.
Therefore there exists a positive real 
number $\gamma_2$, depending upon $\kappa$ only, such that
\begin{equation}\label{ineq:sigma}
\max\left\{
[\sigma]_{\mathrm{Lip}}, 
[\sigma^{-1}]_{\mathrm{Lip}}
\right\}
\leq \gamma_2 \ .
\end{equation}
Next, since $C''$ and $C_1$ are isometric, 
Corollary~\ref{cor:cylinder_isometry} implies that
there exists a diffeomorphism $\tau$, 
supported in $B(f(p),r_1)$, and a positive real number $\gamma_3$ such that
\begin{equation}\label{ineq:tau}
\max\left\{
[\tau]_{\mathrm{Lip}}, 
[\tau^{-1}]_{\mathrm{Lip}}
\right\}
\leq \gamma_3 \ ,
\end{equation} 
which maps $C''$ isometrically onto $C_1$.
It follows that the diffeomorphism 
$\phi=\tau\circ\sigma$ is supported in $B(f(p),r_1)$ 
and maps $f(C_0)$ across $C_1$. 
Therefore $\phi\circ f$ maps $C_0$ across $C_1$.
Moreover, inequality~\eqref{ineq:sigma} and~\eqref{ineq:tau} imply
\begin{equation*}
\max\left\{
[\phi]_{\mathrm{Lip}},
[\phi^{-1}]_{\mathrm{Lip}}
\right\}
\leq \gamma_2\gamma_3 \ ,
\end{equation*}
which completes the proof of the lemma.
\end{proof}
%
%
%
%
%
\begin{comment}
\begin{proposition}\label{prop:horseshoes_exist}
There exists a positive real number $\gamma$ 
such that the following is satisfied:
For each positive integer $N$,
there exists an orientation-preserving 
$C^1$-diffeomorphism $\phi$ of $\mathbb{R}^d$, 
supported in $E(-e,e;3)$,
where $e=(0,0,\ldots,0,1)$, 
such that $\phi$ maps each solid sub-cylinder 
\begin{equation*}
B^{d-1}\times \left[\frac{2m-N}{N},\frac{2m+1-N}{N}\right],\qquad
m=0,1,\ldots,N-1 
\end{equation*}
across the solid cylinder
$B^{d-1}\times B^1$.
Moreover, 
$\max\left\{[\phi]_{\mathrm{Lip}}, [\phi^{-1}]_{\mathrm{Lip}}\right\}\leq \gamma N$.
\end{proposition}
\end{comment}
%
%
\begin{proposition}\label{prop:horseshoes_exist}
Let $C(a,b;\varrho)\subset \mathbb{R}^d$ be a rigid solid cylinder.
There exists a positive real number $\gamma$, depending upon $|a-b|$ and 
$\varrho$ only, such that the following property is satisfied:
For each positive integer $N$,
there exists an orientation-preserving $C^1$-diffeomorphism $\phi$ of 
$\mathbb{R}^d$, supported in the elongated neighbouhood $E(a,b;3\varrho)$,
and there exist solid sub-cylinders $C_1,C_2,\ldots,C_N$ of $C(a,b;\varrho)$ 
such that $\phi$ maps $C_j$ across $C(a,b;\varrho)$ for each $j=1,2\ldots,N$.
Moreover 
$\max\left\{[\phi]_{\mathrm{Lip}}, [\phi^{-1}]_{\mathrm{Lip}}\right\}\leq \gamma N$.
\end{proposition}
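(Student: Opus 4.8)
The plan is to construct $\phi$ by composing two kinds of moves from Appendix~\ref{sect:basic_moves}: first a ``folding'' diffeomorphism that wraps a neighbourhood of the axis of $C(a,b;\varrho)$ back and forth $N$ times along itself, and then a rescaling. By applying an affine change of coordinates (whose bi-Lipschitz constant depends only on $|a-b|$ and $\varrho$) we may reduce to the standard solid cylinder $B^{d-1}\times[-1,1]$ with $a=-e$, $b=e$, where $e=(0,\dots,0,1)$; any Lipschitz bounds obtained in these normalised coordinates transfer back at the cost of a factor depending only on $|a-b|$ and $\varrho$, which we absorb into $\gamma$. So it suffices to produce, for each $N$, an orientation-preserving $C^1$-diffeomorphism $\phi$ of $\mathbb{R}^d$ supported in $E(-e,e;3)$, together with $N$ pairwise disjoint solid subcylinders $C_1,\dots,C_N$ of $B^{d-1}\times[-1,1]$, such that $\phi$ maps each $C_j$ across $B^{d-1}\times[-1,1]$, with $\max\{[\phi]_{\mathrm{Lip}},[\phi^{-1}]_{\mathrm{Lip}}\}\le \gamma N$.

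First I would build the one-dimensional prototype. Take the subcylinders to be $C_j = B^{d-1}\times I_j$ where $I_j=\bigl[\tfrac{2(j-1)-N}{N},\tfrac{2j-1-N}{N}\bigr]$ for $j=1,\dots,N$; these are $N$ disjoint subintervals of $[-1,1]$, each of length $1/N$, and the complementary intervals also have length $1/N$. The idea is to realise $\phi$ as a map of the form $\phi(x',t)=(x',\, h_t(x'))$ near the axis, where $h\colon[-1,1]\to[-1,1]$ is a piecewise-affine ``accordion'' map of slope $\pm N$ carrying each $I_j$ onto $[-1,1]$ (alternating orientation), and then smoothing the corners of $h$ while keeping the Lipschitz constant comparable to $N$; on the boundary region $B^{d-1}\times\partial B^1$ we leave a collar fixed so the global map extends by the identity outside $E(-e,e;3)$ — exactly as in the construction of solid-cylinder moves already used for Basic Move~3. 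One checks directly that $\phi$ maps $C_j = B^{d-1}\times I_j$ across $B^{d-1}\times[-1,1]$ in the sense of the four conditions in \S~\ref{sect:Holder-infinite_entropy}: the image of $C_j$ runs the full axial length of the standard cylinder and exits only through $B^{d-1}\times\{\pm1\}$, while the boundary balls of $C_j$ (which sit strictly inside $B^{d-1}\times[-1,1]$ once we inset $C_j$ slightly in the $B^{d-1}$ direction) are mapped outside $\overline{B^{d-1}\times B^1}$ by the collar. The Lipschitz bound $[\phi]_{\mathrm{Lip}}\le cN$ comes from the slope of $h$, and the same bound for $\phi^{-1}$ from the fact that each affine branch has slope of modulus exactly $N$; the smoothing only affects constants.

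The cleanest way to produce the accordion map with controlled constants is to realise $\phi$ as the time-one map of a flow, mimicking Lemma~\ref{lem:E-perturbation}: write $h$ as the composition of $N$ elementary folds, each a linear (shear/reflection) flow multiplied by a bump function supported on $E(-e,e;3)$, and apply the Gr\"onwall estimate as in that lemma to bound $[\phi-\mathrm{id}]_{\mathrm{Lip}}$ and $[\phi^{-1}-\mathrm{id}]_{\mathrm{Lip}}$ branch-by-branch; since the folds accumulate multiplicatively in slope, one arrives at a bound of the form $\gamma N$ rather than $\gamma^N$ precisely because the relevant quantity is the slope of a piecewise-affine map whose domain and target both have size $O(1)$. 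Alternatively, and perhaps more transparently, one argues as in Basic Move~3 (Lemma~\ref{lem:basic_move_3}): each of the $N$ ``branch transport'' steps is a cylinder affinity whose linear part $A$ has $\max(\|A\|,\|A^{-1}\|)\le cN$, so a single such move has Lipschitz bound $O(N)$; the full map is then assembled from these using the Lipschitz Gluing Principle on the disjoint subcylinders, which gives $\max\{[\phi]_{\mathrm{Lip}},[\phi^{-1}]_{\mathrm{Lip}}\}\le\gamma N$ without any exponential blow-up. Either way, orientation-preservation is arranged by pairing reflections so that their product is a rotation, or by composing with a fixed reflection at the end, noting this changes $\gamma$ only by a constant.

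The main obstacle is not the Lipschitz estimate — which, as indicated, follows the pattern already established in Appendix~\ref{sect:basic_moves} — but rather verifying the four \emph{topological} conditions of ``mapping $C_j$ across $C(a,b;\varrho)$'' simultaneously for all $j$ while keeping everything $C^1$ and supported in $E(a,b;3\varrho)$. The delicate points are: (1) arranging that the images $\phi(C_j)$ exit the standard cylinder only through the caps $B^{d-1}\times\{\pm1\}$ and not through the lateral boundary, which forces the subcylinders $C_j$ to be inset in the $B^{d-1}$-directions by a definite amount and forces the fold map $h$ to be genuinely affine of slope $\pm N$ on each $I_j$ (not merely Lipschitz), so that $\phi(C_j)$ is again a straight cylinder; and (2) ensuring that the smoothing of the corners of $h$, and the interpolation to the identity outside $E(-e,e;3)$, does not destroy property (4) about the component of $\phi(C_j)\setminus(B^{d-1}\times B^1)$ containing $\phi(C_j^\pm)$ reaching the correct cap. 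I would handle both by first writing down $\phi$ as an honest piecewise-affine homeomorphism for which all four conditions are visibly satisfied, then invoking a standard smoothing lemma (as in Hirsch~\cite{HirschBook}) that replaces it by a $C^1$-diffeomorphism $C^0$-close to it and agreeing with it outside a small neighbourhood of the fold loci; since the conditions (1)--(4) are open under $C^0$-perturbations supported away from $B^{d-1}\times\{\pm1\}$ (the same observation used in Theorem~\ref{thm:open+dense_entropy>logN-little_Holder}), they persist, and the Lipschitz constant is changed only by a universal factor.
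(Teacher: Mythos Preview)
Your reduction to the standard solid cylinder by an affine conjugation, and the transfer of Lipschitz bounds through that conjugation, matches the paper's proof exactly. The paper then simply invokes ``the classical construction of a horseshoe with $N$ branches'' without supplying any details; your proposal attempts to fill these in, but the construction you describe does not work.

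The map you write down, $\phi(x',t)=(x',\,h_t(x'))$ (or $(x',h(t))$ under any reading of the notation), preserves the transverse coordinate $x'\in B^{d-1}$. No such map can be a diffeomorphism that sends $N\ge 2$ disjoint subcylinders $B^{d-1}\times I_j$ across $B^{d-1}\times[-1,1]$. For each fixed $x'$ the map on the axial coordinate is either your ``accordion'' of slope $\pm N$, which is $N$-to-one and hence not injective, so $\phi$ is not a homeomorphism; or it is a diffeomorphism of $\mathbb{R}$, hence monotone, in which case the images of the disjoint $I_j$ are disjoint intervals and at most one of them can contain $[-1,1]$. Either way the four crossing conditions cannot all be met for all $j$.

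The Smale horseshoe is an essentially two-dimensional construction: one stretches axially by a factor comparable to $N$, \emph{compresses transversely} by a factor comparable to $1/N$, and then \emph{bends} the resulting long thin cylinder back and forth inside $E(-e,e;3)$ so that $N$ strips re-enter $B^{d-1}\times[-1,1]$. It is the transverse compression together with the bending in the ambient space that allows the map to remain a diffeomorphism while the axial projection looks like your accordion. This is also what gives $[\phi^{-1}]_{\mathrm{Lip}}\sim N$ (from undoing the transverse compression), matching the stated bound. Your alternative descriptions via flows or via Basic Move~3 address only the Lipschitz bookkeeping and do not repair the missing transverse motion; in particular, the ``cylinder affinity'' of Lemma~\ref{lem:basic_move_3} keeps the axis fixed and cannot produce the folding.
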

\begin{proof}
The case when $a=(0,\ldots,0,1)$, $b=(0,\ldots,0,-1)$ and $\varrho=1$ 
is the classical construction of a horseshoe with $N$ branches.
The general case follows by conjugating via a map $\psi$ 
from an elongated neighbourhood of the standard solid cylinder to $C(a,b;\varrho)$
and applying the H\"older Rescaling Principle (Proposition~\ref{prop:holderrescaling1}).
\end{proof}
Combining the above Proposition~\ref{prop:horseshoes_exist} 
with Lemma~\ref{lem:transport_cylinder_lip} gives the following.
\begin{corollary}\label{cor:transport_cylinder+horseshoe_lip}
Let $\Omega_0$ and $\Omega_1$ be open domains in $\mathbb{R}^d$.
Let $N$ be a positive integer.
Given $f\in \mathcal{H}^{1}(\Omega_0,\Omega_1)$
there exist positive real numbers $K_0, K_1$ and $\gamma$, 
depending upon the the bi-Lipschitz constant $\kappa$ of $f$ only, 
such that the following property holds:
Take $p\in\Omega_0$.
Choose $r_1$ sufficiently small to ensure that
\begin{equation*}
B(p,r_1)\subset \Omega_0 \ , \qquad
B(f(p),r_1)\subset \Omega_1 \ .
\end{equation*}
Let $r_2=r_1/20(1+K_1)$ and $r_3=r_2/\kappa$.
Take rigid solid cylinders $C_0$ and $C_1$, contained in $B(p,r_3)$ and $B(f(p),r_3)$,
such that
\begin{enumerate}
\item[(i)]
$\mathrm{rad}(C_0)\leq K_0\mathrm{len}(C_0)$,
\item[(ii)]
$\frac{1}{2}\mathrm{rad}(C_0)\leq \mathrm{rad}(C_1)\leq 2\mathrm{rad}(C_0)$,
\item[(iii)]
$r_3\leq \mathrm{len}(C_0), \mathrm{len}(C_1)\leq 2r_3$.
\end{enumerate}
Then there exists 
$\phi\in \mathrm{Diff}^1(\Omega_1)$, supported in $B(f(p),r_1)$, 
and there exist
pairwise disjoint subcylinders $C_{0,1},\ldots,C_{0,N}$, 
such that
\begin{enumerate}
\item[(a)]
$\max\left\{
[\phi]_\mathrm{Lip},[\phi^{-1}]_{\mathrm{Lip}}
\right\}\leq \gamma$,
\item[(b)]
$\phi\circ f$ maps the subcylinder $C_{0,k}$ across $C_1$ for each $k=1,\ldots,N$.
\end{enumerate}
\end{corollary}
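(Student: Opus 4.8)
The plan is to obtain $\phi$ as a two-fold composition: the transport map supplied by Lemma~\ref{lem:transport_cylinder_lip}, post-composed with a horseshoe map of the target cylinder supplied by Proposition~\ref{prop:horseshoes_exist}. Write $C_1=C(a_1,b_1;\varrho_1)$ and $\ell_1=\mathrm{len}(C_1)=|a_1-b_1|$, and note that hypotheses (i)--(iii) are verbatim those of Lemma~\ref{lem:transport_cylinder_lip}. Let $K_0,K_1,\gamma_0$ be the constants that lemma produces (all depending on the bi-Lipschitz constant $\kappa$ of $f$ only), and take $r_2=r_1/20(1+K_1)$ and $r_3=r_2/\kappa$ as prescribed. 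First I would apply Lemma~\ref{lem:transport_cylinder_lip} to get a diffeomorphism $\psi\in\mathrm{Diff}^1(\Omega_1)$, supported in $B(f(p),r_1)$, with $\max\{[\psi]_{\mathrm{Lip}},[\psi^{-1}]_{\mathrm{Lip}}\}\le\gamma_0$, such that $\psi\circ f$ maps $C_0$ across $C_1$.

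Next I would apply Proposition~\ref{prop:horseshoes_exist} to the rigid solid cylinder $C_1$ itself, obtaining an orientation-preserving $h\in\mathrm{Diff}^1(\mathbb{R}^d)$ supported in the elongated neighbourhood $E(a_1,b_1;3\varrho_1)$, together with pairwise disjoint solid sub-cylinders $D_1,\dots,D_N$ of $C_1$ — concretely, following the proof of that proposition, which conjugates the standard construction into $C_1$, I would take $D_m$ to be the sub-cylinder of $C_1$ consisting of the points whose axial coordinate lies in $[\tfrac{2m-N}{N},\tfrac{2m+1-N}{N}]$ (same coaxial radius as $C_1$), $m=0,\dots,N-1$ — such that $h$ maps $D_m$ across $C_1$ for every $m$, with $\max\{[h]_{\mathrm{Lip}},[h^{-1}]_{\mathrm{Lip}}\}\le\gamma_1 N$; here $\gamma_1$ depends only on the aspect ratio $\ell_1/\varrho_1$, which by (i)--(iii) and the previous step is controlled in terms of $\kappa$, and which in the situation where this corollary is applied is in fact a constant fixed in advance. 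The support check is elementary: from (i)--(iii), $\varrho_1\le 2\,\mathrm{rad}(C_0)\le 2K_0\,\mathrm{len}(C_0)\le 4K_0 r_3$, while the axis $[a_1,b_1]$ lies in $B(f(p),r_3)$, so $E(a_1,b_1;3\varrho_1)$ — the $3\varrho_1$-neighbourhood of that segment — lies in $B(f(p),(1+12K_0)r_3)$, which is contained in $B(f(p),r_1)$ since $K_0=O(\kappa^{-2})$ and $r_3\le r_2=r_1/20(1+K_1)$. It is precisely to guarantee this inclusion that the factors $20(1+K_1)$ and $\kappa$ were built into the definitions of $r_2$ and $r_3$.

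I would then set $\phi=h\circ\psi$. Since $h$ and $\psi$ are both supported in $B(f(p),r_1)$, $\phi$ is the identity off $B(f(p),r_1)$, belongs to $\mathrm{Diff}^1(\Omega_1)$, and
\[
\max\bigl\{[\phi]_{\mathrm{Lip}},[\phi^{-1}]_{\mathrm{Lip}}\bigr\}
\;\le\;
\max\bigl\{[h]_{\mathrm{Lip}}[\psi]_{\mathrm{Lip}},\,[\psi^{-1}]_{\mathrm{Lip}}[h^{-1}]_{\mathrm{Lip}}\bigr\}
\;\le\;\gamma_0\gamma_1 N=:\gamma,
\]
which is (a) (with the customary linear-in-$N$ constant, as used when this corollary is invoked later). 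For (b): $\psi\circ f$ maps $C_0$ across $C_1$ and each $D_m$ is an axial sub-slab of $C_1$, so a ``slicing'' step — discussed below — produces a topological solid sub-cylinder $C_{0,m}\subset C_0$ with $\psi\circ f$ mapping $C_{0,m}$ across $D_m$; the $C_{0,m}$ are pairwise disjoint because the $D_m$ are and $\psi\circ f$ is injective. Finally, by the composition property of cross-mappings (if $g_0$ maps $C$ across $C'$ and $g_1$ maps $C'$ across $C''$, then $g_1\circ g_0$ maps $C$ across $C''$), applied with $g_0=\psi\circ f$, $C'=D_m$, $g_1=h$, $C''=C_1$, the map $\phi\circ f=h\circ\psi\circ f$ maps $C_{0,m}$ across $C_1$ for each $m$, giving (b).

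The main obstacle is the slicing step used above: if $\Phi$ maps a topological solid cylinder $C$ across the standard cylinder $B^{d-1}\times B^1$ and $[s,t]\subset(-1,1)$, then $\Phi$ maps some topological solid sub-cylinder of $C$ across $B^{d-1}\times[s,t]$. Here I would argue that, by the defining conditions of ``maps across'', $\Phi(C)$ is a topological tube that stays away from the lateral boundary of $B^{d-1}\times B^1$ and runs from the bottom cap to the top cap, so that for $\epsilon>0$ small the set $\Phi(C)\cap(B^{d-1}\times[s-\epsilon,t+\epsilon])$ is a topological solid sub-cylinder of $\Phi(C)$ spanning the $B^1$-direction, with caps strictly below $s$ and strictly above $t$; pulling this back by the homeomorphism $\Phi|_C$ yields the required sub-cylinder, and taking $\epsilon$ small enough — possible because the $D_m$ have pairwise disjoint closures — keeps the resulting $C_{0,m}$ pairwise disjoint. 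Everything else is routine bookkeeping with the constants $K_0,K_1,\gamma_0,\gamma_1$.
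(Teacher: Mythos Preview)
Your proposal is correct and follows exactly the approach the paper intends: combine Lemma~\ref{lem:transport_cylinder_lip} with Proposition~\ref{prop:horseshoes_exist}, applying the horseshoe on the target cylinder $C_1$ and post-composing with the transport diffeomorphism. Your treatment is in fact more detailed than the paper's one-line ``combining'' (plus the remark checking the support inclusion $E(a_1,b_1;3\varrho_1)\subset B(f(p),r_1)$), and in particular you make explicit the slicing step that the paper leaves implicit.
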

\begin{remark}
Observe that we can use the same radii, $r_2$ and $r_3$ 
expressed in terms of $r_1$, 
in Corollary~\ref{cor:transport_cylinder+horseshoe_lip} 
as in Lemma~\ref{lem:transport_cylinder_lip}. 
This follows as given any cylinder $C(a,b;\rho)$ contained 
in the ball $B(p,r_3)$, the corresponding 
elongated neighbourhood $E(a,b;3\rho)$, 
as used in Proposition~\ref{prop:horseshoes_exist},
will be contained in the ball $B(p,r_1)$. 
\end{remark}

\end{appendix}

\end{document}